\theoremstyle{plain}
\newtheorem{Pocz}{Poczatek}[section]
\newtheorem{Proposition}[Pocz]{Proposition}
\newtheorem{Conjecture}[Pocz]{Conjecture}
\newtheorem{Theorem}[Pocz]{Theorem}
\newtheorem{Corollary}[Pocz]{Corollary}
\newtheorem{Lemma}[Pocz]{Lemma}
\newtheorem{Observation}[Pocz]{Observation}
\newtheorem{Question}[Pocz]{Question}
\newtheorem{Problem}[Pocz]{Problem}
\newtheorem{Example}[Pocz]{Example}
\theoremstyle{definition}
\newtheorem{Definition}[Pocz]{Definition}
\theoremstyle{remark}
\newtheorem{Remark}[Pocz]{Remark}
\DeclareMathOperator*{\diam}{diam}
\numberwithin{equation}{section}
\title[Linear algebra and unification of geometries in all scales]
{Linear algebra and unification of geometries in all scales}
\author{J.~Dydak}
\address{University of Tennessee, Knoxville, TN 37996, USA}
\email{jdydak@utk.edu}
\date{ \today
}
\keywords{asymptotic dimension, CAT(0)-spaces, coarse geometry,
coarsely n-to-1 functions, dimension-raising maps, Higson corona, Gromov boundary of hyperbolic spaces, the visual boundary of CAT(0)-spaces, \v Cech-Stone compactification, Samuel-Smirnov compactification, Freudenthal compactification}
\subjclass[2000]{Primary 54F45; Secondary 55M10}
\begin{document}
\maketitle
\begin{center}
\today
\end{center}

\tableofcontents

\begin{abstract}
We present an idea of unifying small scale (topology, proximity spaces, uniform spaces) and large scale (coarse spaces, large scale spaces). It relies on an analog of multilinear forms from Linear Algebra. Each form has a large scale compactification and those include all well-known compactifications:
Higson corona, Gromov boundary of hyperbolic spaces, the visual boundary of CAT(0)-spaces, \v Cech-Stone compactification, Samuel-Smirnov compactification, and Freudenthal compactification. 

As an application we get simple proofs of results generalizing well-known theorems from coarse topology. A new result (at least to the author) is the following (see \ref{HomeoOfHigsonImpliesLSEquivalence}):\\
\emph{A coarse bornologous function $f:X\to Y$ of metrizable large scale spaces is a large scale equivalence if and only if it induces a homeomorphism of Higson coronas.}

This paper is an extension of \cite{JD2} and, at the same time, it overrides \cite{JD2}. 
\end{abstract}

\section{Introduction}

A topology on a set $X$ is the same as a projection (i.e. an idempotent linear operator)
$cl:2^X\to 2^X$ satisfying $A\subset cl(A)$ for all $A\subset X$. That's a good way to summarize Kuratowski's closure operator.

Basic geometry on a set $X$ is a dot product $\cdot:2^X\times 2^X\to \{0,\infty\}$. Its equivalent form is an orthogonality relation on subsets of $X$. The optimal case is when the orthogonality relation satisfies a variant of parallel-perpendicular decomposition from linear algebra. Dot products are a special case of forms which act on arbitrary vectors based on a given set $X$.

We show that this concept unifies small scale (topology, proximity spaces, uniform spaces) and large scale (coarse spaces, large scale spaces). Using forms we define large scale compactifications that generalize all well-known compactifications:
Higson corona, Gromov boundary, the visual boundary of CAT(0)-spaces, \v Cech-Stone compactification, Samuel-Smirnov compactification, and Freudenthal compactification. This allows to generalize many results in coarse topology from proper metric spaces to arbitrary metric spaces or even to arbitrary large scale spaces.

\begin{Example} (see \ref{GActingOnMetricSpaces})
Let $X$ be a metric space and let $G$ be a finite group acting isometrically on X. Then $X/G$ has the same asymptotic dimension as $X$.

In case of proper metric spaces $X$, Theorem \ref{GActingOnMetricSpaces}
was proved by Daniel Kasprowski \cite{Kasp}.

\end{Example}

\begin{Example} (see \ref{GeneralCoarselyNTo1Thm})
If $n\ge 1$ and $f:X\to Y$ is a coarsely $n$-to-$1$ bornologous map of large scale spaces, then $asdim(Y)\leq asdim(Y)+n-1$.

Theorem \ref{GeneralCoarselyNTo1Thm} was proved by Austin-Virk in \cite{AV} for proper metric spaces $X$ and $Y$.

\end{Example}

\section{Multilinear forms on sets}

\begin{Definition}
The semi-group $\{0,\infty\}$ has the following binary operation:\\
1. $0+0=0$,\\
2. $0+\infty=\infty+0=\infty+\infty=\infty$.
\end{Definition}

Recall that a \textbf{bornology} $\mathcal{B}$ on a set $X$ is any family of subsets closed under finite unions 
so that $B\subset B'\in\mathcal{B}$ implies $B\in\mathcal{B}$.

\begin{Remark}
Be aware that Wikipedia \cite{WikiBornology} assumes additionally that each point of $X$ belongs to a bornology.
\end{Remark}

Notice that bornologies $\mathcal{B}$ on a set $X$ are identical with kernels of basic linear operators $\omega:2^X\to \{0,\infty\}$, i.e. functions satisfying\\
1. $\omega(\emptyset)=0$,\\
2. $\omega(C\cup D)=\omega(C)+\omega(D)$ for all $C,D\in 2^X$.

That observation leads to the following generalization:
\begin{Definition}
A \textbf{$k$-vector} in $X$ ($k\ge 1$) is a $k$-tuple $(C_1,\ldots,C_k)$ of subsets in $X$. The set of all vectors in $X$ will be denoted by $Vect(X)$.

The \textbf{concatenation} $V_1\ast V_2$ of a $k$-vector $V_1=(C_1,\ldots,C_k)$
and an $m$-vector $V_2=(D_1,\ldots,D_m)$ is the $(k+m)$-vector
$(C_1,\ldots,C_k,D_1,\ldots,D_m)$.

A \textbf{basic multilinear form} $\omega$ on $X$ is a symmetric function on all $k$-vectors of $X$ ($k\ge 1$) to $\{0,\infty\}$ satisfying the following properties:\\
1. $\omega((C_1\cup C_2)\ast V)= \omega(C_1\ast V)+ \omega(C_2\ast V)$
for any $k$-vector $V$ and any two $1$-vectors $C_1$, $C_2$,\\
2. $\omega(\emptyset)=0$,\\
3. $\omega(C\ast V)=\omega(V)$ if one of the coordinates of $V$ is contained in $C$.
\end{Definition}

\begin{Remark}
One can consider forms with values in any \textbf{join-semilattice}, i.e. a partially ordered set $(L,\leq)$ in which each two-element subset $\{a, b\}\subset L$ has a join (i.e. least upper bound). However, it is not clear what new applications would arise from that concept.
\end{Remark}

Notice that $\{0,\infty\}$ has a natural order: namely $0 < \infty$ (in fact, it is the smallest non-trivial lattice).
\begin{Lemma}\label{OmegaInequalityOne}
If $V$ and $V'$ are two $k$-vectors such that $V(i)\subset V'(i)$ for each $1\leq i\leq k$, then $\omega(V)\leq \omega(V')$.
\end{Lemma}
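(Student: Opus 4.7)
The plan is to reduce the general case to a one-coordinate-at-a-time comparison, then exploit the very simple arithmetic of $\{0,\infty\}$ together with property 1 of a basic multilinear form.

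First I would prove the following one-step statement: if $W$ is any $(k-1)$-vector and $C \subseteq C'$ are subsets of $X$, then $\omega(C\ast W) \leq \omega(C'\ast W)$. Since $C \subseteq C'$ we may write $C' = C\cup C'$, and property 1 of the definition of a basic multilinear form gives
\[
\omega(C'\ast W) \;=\; \omega((C\cup C')\ast W) \;=\; \omega(C\ast W) + \omega(C'\ast W).
\]
The values live in $\{0,\infty\}$, where the additive behaviour is such that a sum equals $0$ only when both summands are $0$. Consequently, if $\omega(C'\ast W)=0$ then $\omega(C\ast W)=0$ as well, while if $\omega(C'\ast W)=\infty$ the inequality $\omega(C\ast W)\leq\infty$ is automatic. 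Either way $\omega(C\ast W)\leq \omega(C'\ast W)$.

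Next I would iterate this one-coordinate step along all $k$ positions, using the assumed symmetry of $\omega$ to bring the position being modified into the first slot. Concretely, define a sequence of $k$-vectors $W_0,W_1,\dots,W_k$ by $W_0=V'$, and for $1\leq j\leq k$ obtain $W_j$ from $W_{j-1}$ by replacing its $j$-th coordinate $V'(j)$ by $V(j)\subseteq V'(j)$. Symmetry lets us permute the $j$-th coordinate to the front and then apply the one-step statement to conclude $\omega(W_j)\leq\omega(W_{j-1})$. Chaining the inequalities gives
\[
\omega(V) \;=\; \omega(W_k) \;\leq\; \omega(W_{k-1}) \;\leq\; \cdots \;\leq\; \omega(W_0) \;=\; \omega(V'),
\]
which is the desired conclusion.

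There is no real obstacle in this argument; the only subtle point is recognising that in $\{0,\infty\}$ the equation $a+b=0$ forces $a=b=0$, which is what converts property 1 (an equality of sums) into the monotonicity inequality. Property 3 and the vanishing on $\emptyset$ are not needed here; only property 1 and the symmetry of $\omega$ are used.
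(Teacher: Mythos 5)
Your proof is correct and is essentially the paper's own argument made explicit: the paper's proof simply says to apply Axiom 1 repeatedly so that $\omega(V)$ appears as a summand of $\omega(V')$, which is exactly your coordinate-by-coordinate decomposition $V'(j)=V(j)\cup V'(j)$ combined with the observation that in $\{0,\infty\}$ a summand is bounded above by the sum. No gaps; the use of symmetry to move each coordinate into the slot where Axiom 1 applies is the right (and implicitly assumed) detail.
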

\begin{proof}
Apply Axiom 1 repeatedly to see that $\omega(V)$ is a summand of $\omega(V')$.
\end{proof}

\begin{Corollary}\label{OmegaInequalityTwo}
$\omega(V)\leq \omega(C)$ if some coordinate of $V$ is contained in $C$.
\end{Corollary}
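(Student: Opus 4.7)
The plan is to reduce $\omega(V)$ to $\omega(C)$ by first enlarging $V$ so that every coordinate contains $C$, and then collapsing the enlarged vector down to the $1$-vector $(C)$ by repeated use of Axiom~$3$.

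By the symmetry of $\omega$ I would first relabel the coordinates of $V=(C_1,\ldots,C_k)$ so that the coordinate contained in $C$ appears first, i.e.\ $C_1\subset C$. Lemma~\ref{OmegaInequalityOne} then gives $\omega(V)\le\omega(V')$, where
\[
V'=(C,\;C_2\cup C,\;C_3\cup C,\;\ldots,\;C_k\cup C);
\]
replacing $C_1$ by $C$ is permissible because $C_1\subset C$, and replacing each other $C_i$ by $C_i\cup C$ is trivially an enlargement. The crucial feature of $V'$ is that every one of its coordinates contains $C$.

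Next I would peel off the coordinates $C_2\cup C,\ldots,C_k\cup C$ one at a time. At each step, symmetry lets me shift the coordinate to be removed, say $C_i\cup C$, into the leading position; the remaining vector still has $C$ among its coordinates, and since $C\subset C_i\cup C$, Axiom~$3$ removes the leading coordinate without changing $\omega$. After $k-1$ such reductions only the $1$-vector $(C)$ is left, so $\omega(V')=\omega(C)$. Combined with $\omega(V)\le\omega(V')$ this yields the desired inequality.

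The argument is really just a combination of monotonicity and the absorption axiom, so there is no serious obstacle. The only point requiring a little care is ensuring that at each absorption step a coordinate contained in the one being absorbed is still present; keeping $C$ itself among the coordinates throughout the collapse handles this automatically.
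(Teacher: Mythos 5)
Your proof is correct and follows essentially the same route as the paper: enlarge the coordinates via Lemma \ref{OmegaInequalityOne} and then absorb the extra coordinates one at a time using Axiom 3. The only (immaterial) difference is that the paper enlarges the other coordinates all the way to $X$ rather than to $C_i\cup C$, which makes the absorption step read in one line.
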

\begin{proof}
If $V$ is a $k$ vector, we may assume $V(k)\subset C$. Now,
$\omega(V)\leq \omega(X\ast\ldots \ast X\ast C)$ by 
\ref{OmegaInequalityOne} and the latter equals $\omega(C)$ by Axiom 3.
\end{proof}

\begin{Proposition}\label{OmegaInequalityThree}
$\omega(V\ast V')\leq \omega(V)$ for all vectors $V$ and $V'$.
\end{Proposition}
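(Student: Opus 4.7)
The plan is to reduce to the case where $V'$ is a $1$-vector, and then to absorb the extra coordinate into the ambient set $X$ using Axiom 3. The key observation is that $X$ behaves like an absorbing coordinate: every subset of $X$ lies in $X$, so Axiom 3 allows me to strip an $X$ off the front of any non-empty vector, while Lemma \ref{OmegaInequalityOne} lets me enlarge any single coordinate to $X$ at the cost of only a weak inequality.

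First I induct on the length $m$ of $V'$. Writing $V' = V'' \ast (D)$ with $V''$ of length $m-1$ and using associativity of concatenation, one has $V \ast V' = (V \ast V'') \ast (D)$. Granting the base case $m = 1$ with an arbitrary first factor, applying it with first factor $V \ast V''$ gives $\omega(V \ast V') \leq \omega(V \ast V'')$, and the inductive hypothesis applied to the pair $(V, V'')$ finishes the step. Thus everything reduces to the base case.

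For the base case let $V = (C_1, \ldots, C_k)$ with $k \geq 1$ and $V' = (D)$. Since $D \subset X$, Lemma \ref{OmegaInequalityOne} gives $\omega(V \ast (D)) \leq \omega(V \ast (X))$. The symmetry of $\omega$ then yields $\omega(V \ast (X)) = \omega((X) \ast V)$. Because $C_1 \subset X$ is a coordinate of $V$, Axiom 3 applies and gives $\omega((X) \ast V) = \omega(V)$, closing the argument.

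The only delicate point is the invocation of Axiom 3 in the base case, where the hypothesis $k \geq 1$ from the definition of a vector is essential: without at least one coordinate of $V$ contained in $X$, Axiom 3 would not be applicable. Beyond that, the proof is a short bookkeeping exercise combining Axioms $1$--$3$, symmetry, and the two preceding results.
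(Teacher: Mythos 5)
Your proof is correct and follows essentially the same route as the paper: enlarge the coordinates of $V'$ to $X$ via Lemma \ref{OmegaInequalityOne} and then absorb them using Axiom 3. The only difference is that you spell out as an explicit induction what the paper compresses into the single expression $\omega(V\ast X\ast\ldots\ast X)$.
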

\begin{proof}
By \ref{OmegaInequalityOne}, $\omega(V\ast V')\leq \omega(V\ast X\ast\ldots \ast X)$ and the latter equals $\omega(V)$ by Axiom 3.
\end{proof}

\begin{Definition}\label{FormedSpaceDefinition}
A \textbf{bornological set} $(X,\mathcal{B})$ is a set $X$ equipped with a bornology $\mathcal{B}$.

A \textbf{formed set} $(X,\omega)$ is a set $X$ equipped with a basic multilinear form $\omega$.
\end{Definition}

\begin{Proposition}
Each formed set $(X,\omega)$ induces a bornology on $X$ via
$$\mathcal{B}(\omega):=\{C\subset X\mid \omega(C)=0\}.$$

Conversely, given a bornology $\mathcal{B}$ on $X$,
$$\omega(C_1,\ldots,C_k)=0\iff C_1\cap\ldots \cap C_k\in \mathcal{B}$$
defines a basic multilinear form on $X$ that induces $\mathcal{B}$.
\end{Proposition}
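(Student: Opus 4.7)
The plan is to verify the two directions separately, with most of the work being bookkeeping against the three axioms.

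For the forward direction, I would first show that $\mathcal{B}(\omega)$ is hereditary: if $C'\subset C$ and $\omega(C)=0$, then Lemma \ref{OmegaInequalityOne} (applied to the $1$-vectors $V=(C')$ and $V'=(C)$) immediately gives $\omega(C')\leq \omega(C)=0$. Next, for closure under finite unions, given $C_1,C_2\in\mathcal{B}(\omega)$, the plan is to apply Axiom 1 with $V=(X)$ to obtain
\[
\omega(C_1\cup C_2,X)=\omega(C_1,X)+\omega(C_2,X),
\]
and then use Axiom 3 (since $C_1,C_2,C_1\cup C_2\subset X$) to collapse each pairing with $X$ and get $\omega(C_1\cup C_2)=\omega(C_1)+\omega(C_2)=0$.

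For the converse, given a bornology $\mathcal{B}$, I would define $\omega$ by the stated rule and verify the three axioms in turn. Symmetry is immediate because intersection is commutative. Axiom 3 holds because, if some coordinate $V(i)\subset C$, then $C\cap\bigcap_j V(j)=\bigcap_j V(j)$, so the two intersections belong to $\mathcal{B}$ simultaneously. Axiom 2 reduces to $\emptyset\in\mathcal{B}$, which is part of any non-empty bornology. The one step with actual content is Axiom 1: writing $W=\bigcap_j V(j)$ and using distributivity,
\[
\bigl((C_1\cup C_2)\cap W\bigr)=(C_1\cap W)\cup (C_2\cap W),
\]
so membership of the left side in $\mathcal{B}$ is equivalent, using closure of $\mathcal{B}$ under finite unions in one direction and heredity in the other, to simultaneous membership of both $C_i\cap W$ in $\mathcal{B}$. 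Translating back through $\omega$, this is exactly Axiom 1.

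Finally, to confirm that this $\omega$ induces the original $\mathcal{B}$, I would simply unfold definitions: $C\in\mathcal{B}(\omega)$ iff $\omega(C)=0$ iff $C\in\mathcal{B}$, the last equivalence being the definition of $\omega$ on $1$-vectors. The only mild subtlety is treating the empty set carefully (the Axiom 2 case), but no genuine obstacle arises; the argument is essentially a direct verification driven by the two bornology axioms.
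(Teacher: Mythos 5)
Your proof is correct, and since the paper leaves this verification to the reader, your direct check of the heredity and finite-union properties (via Lemma \ref{OmegaInequalityOne} and Axioms 1 and 3) and of the three form axioms for the intersection-based $\omega$ is exactly the intended routine argument. The one point you rightly flag, that Axiom 2 needs $\emptyset\in\mathcal{B}$, is harmless for any non-empty bornology, so there is no gap.
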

\begin{proof}
Left to the reader.
\end{proof}

\begin{Observation}
Notice that, given a bornology $\mathcal{B}$ on $X$, the form defined above is the largest form that induces $\mathcal{B}$. The smallest form inducing $\mathcal{B}$ is given by the formula
$$\omega(C_1,\ldots,C_k)=0\iff C_i\in \mathcal{B} \mbox{ for some } i.$$
\end{Observation}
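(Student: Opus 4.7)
The claim has three assertions: $\omega_{\min}$ (the form defined by the new formula) is in fact a basic multilinear form inducing $\mathcal{B}$; every form $\omega'$ inducing $\mathcal{B}$ has zero-set contained in that of $\omega_{\max}$ (the form of the previous Proposition); and symmetrically, the zero-set of $\omega_{\min}$ is contained in the zero-set of every such $\omega'$. Here, ``larger'' corresponds to ``more vectors mapped to $0$'', i.e.\ to zero-set inclusion, so that $\omega_{\max}$ and $\omega_{\min}$ become the extremes among forms whose restriction to $1$-vectors recovers $\mathcal{B}$.

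First, I would verify that $\omega_{\min}$ satisfies all the axioms. Vanishing on $\emptyset$ and symmetry are immediate. For Axiom 1, I would use that $\mathcal{B}$ is closed under finite unions and subsets, so $C_1\cup C_2\in\mathcal{B}$ is equivalent to both $C_1\in\mathcal{B}$ and $C_2\in\mathcal{B}$; a case split on whether some coordinate of $V$ lies in $\mathcal{B}$ makes both sides of the identity $\omega((C_1\cup C_2)\ast V)=\omega(C_1\ast V)+\omega(C_2\ast V)$ agree. Axiom 3 reduces to the remark that if $V(i)\subset C$ and $C\in\mathcal{B}$, then $V(i)\in\mathcal{B}$ too, so the condition ``some coordinate is in $\mathcal{B}$'' is insensitive to adjoining $C$. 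Reading off $1$-vectors shows $\omega_{\min}$ induces $\mathcal{B}$.

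For extremality, fix a form $\omega'$ inducing $\mathcal{B}$ and a $k$-vector $V=(C_1,\ldots,C_k)$. If $\omega_{\min}(V)=0$, then some $C_i\in\mathcal{B}$, so $\omega'(C_i)=0$; Corollary \ref{OmegaInequalityTwo} then gives $\omega'(V)\le\omega'(C_i)=0$, verifying the lower bound. Conversely, suppose $\omega'(V)=0$ and let $I=C_1\cap\ldots\cap C_k$. Lemma \ref{OmegaInequalityOne} applied to the constant $k$-vector $(I,\ldots,I)$ yields $\omega'(I,\ldots,I)\le\omega'(V)=0$. Iterated use of Axiom 3 (each coordinate of $(I,\ldots,I)$ is contained in the remaining ones) collapses $\omega'(I,\ldots,I)$ down to $\omega'(I)$, so $I\in\mathcal{B}$ and hence $\omega_{\max}(V)=0$. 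The only slightly subtle step, and the one I expect to be the main obstacle, is this collapse trick for $\omega'(I,\ldots,I)=\omega'(I)$; but it amounts to nothing more than inductively applying Axiom 3, after which the extremality falls out cleanly.
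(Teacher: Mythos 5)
Your proof is correct and complete; the paper itself offers no argument for this Observation (and leaves the preceding Proposition ``to the reader''), and the route you take --- verifying the axioms for the ``$\exists i$'' form via closure of $\mathcal{B}$ under unions and subsets, then squeezing an arbitrary form $\omega'$ inducing $\mathcal{B}$ between the two via Corollary \ref{OmegaInequalityTwo} on one side and the $(I,\ldots,I)$ collapse via Lemma \ref{OmegaInequalityOne} and Axiom 3 on the other --- is exactly the intended one. The one genuinely delicate point is the meaning of ``largest'': under the pointwise order on $\{0,\infty\}$-valued functions that the paper uses elsewhere (e.g.\ in \ref{FormInequality}), the intersection form is pointwise \emph{smallest}, since it has the largest zero-set; you defuse this by stating your convention explicitly and proving the two zero-set inclusions, which is the actual mathematical content of the claim either way.
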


\begin{Example}
Given a topological space $(X,\mathcal{T})$, the \textbf{basic topological form} $\omega(X,\mathcal{T})$ is defined as follows:\\
$$\omega(X,\mathcal{T})(C_1,\ldots,C_k)=0 \iff
\bigcap\limits_{i=1}^k cl(C_i)=\emptyset.$$
\end{Example}

\begin{Example}
Given a topological space $(X,\mathcal{T})$, the \textbf{basic functional form} $\omega_f(X,\mathcal{T})$ is defined as follows:\\
$$\omega_f(X,\mathcal{T})(C_1,\ldots,C_k)=0 \iff
\bigcap\limits_{i=1}^k D_i=\emptyset \mbox{ for some zero-sets } D_i\supset C_i.$$

Recall $D$ is a \textbf{zero-set} in a topological space $X$ if there is a continuous function $f:X\to [0,1]$ such that $D=f^{-1}(0)$.
\end{Example}

\begin{Example}
Given a metric space $(X,d)$, the \textbf{basic small scale form} $\omega_s(X,d)$ is defined as follows:\\
$$\omega_s(X,d)(C_1,\ldots,C_k)=0 \iff
\bigcap\limits_{i=1}^k B(C_i,r)=\emptyset$$
for some $r > 0$.
\end{Example}

\begin{Example}\label{BasicLargeScaleFormOnMetric}
Given a metric space $(X,d)$, the \textbf{basic large scale form} $\omega_l(X,d)$ is defined as follows:\\
$$\omega_l(X,d)(C_1,\ldots,C_k)=0 \iff
\bigcap\limits_{i=1}^k B(C_i,r)\mbox{ is bounded}$$
for each $r > 0$.
\end{Example}

\begin{Example}
Given a metric space $(X,d)$, the $C_0$-form $\omega_0(X,d)$ is defined as follows:\\
$$\omega_0(X,d)(C_1,\ldots,C_k)=0 \iff
\bigcap\limits_{i=1}^k B(C_i,r)\mbox{ is bounded}$$
for some $r > 0$.
\end{Example}

Recall that the \textbf{star} $st(B,\mathcal{U} )$ of a subset $B$ of $X$ with respect
to a family $\mathcal{U}$ of subsets of $X$ is the union of those elements of $\mathcal{U}$
that intersect $B$.
Given two families $\mathcal{B}$ and $\mathcal{U}$ of subsets of $X$,
$st(\mathcal{B},\mathcal{U})$ is the family $\{st(B,\mathcal{U})\}$, $B\in\mathcal{B}$,
of all stars of elements of $\mathcal{B}$ with respect to $\mathcal{U}$.

\begin{Example}\label{FormOnUniformSpaces}
Given a uniform space $(X,\mathcal{U})$, the form $\omega(\mathcal{U})$ is defined as follows:\\
$$\omega(\mathcal{U})(C_1,\ldots,C_k)=0 \iff
\bigcap\limits_{i=1}^k st(C_i,\mathcal{V})=\emptyset$$
for some uniform cover $\mathcal{V}$ of $X$.
\end{Example}

\begin{Example}
Given a topological space $(X,\mathcal{T})$ and a subset $A$ of $X$, the form $\omega(X,A,\mathcal{T})$ on $X\setminus A$ is defined as follows:\\
$$\omega(X,A,\mathcal{T})(C_1,\ldots,C_k)=0 \iff
\bigcap\limits_{i=1}^k cl(C_i)\subset X\setminus A.$$
Here the closures are taken in $X$ via $\mathcal{T}$.
\end{Example}

\section{Topology induced by forms}

\begin{Proposition}
Given a formed set $(X,\omega)$, the \textbf{topology induced} by $\omega$ on $X$ consists of sets $U$ with the property that $\omega(x,X\setminus U)=0$ for all $x\in U$.
\end{Proposition}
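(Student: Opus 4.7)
The plan is to check that the given family $\mathcal{T}(\omega)$ satisfies the three topology axioms: it contains $\emptyset$ and $X$, is closed under arbitrary unions, and is closed under finite intersections. Each of these will correspond directly to one of the axioms of a basic multilinear form or to a consequence of them.

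The trivial cases are immediate. Membership of $\emptyset$ is vacuous. For $X\in\mathcal{T}(\omega)$, one needs $\omega(x,\emptyset)=0$ for every $x\in X$, which follows from Corollary \ref{OmegaInequalityTwo} (applied to the $2$-vector $(\{x\},\emptyset)$, whose second coordinate is contained in $\emptyset$) together with Axiom 2. For arbitrary unions $U=\bigcup_\alpha U_\alpha$ with each $U_\alpha\in\mathcal{T}(\omega)$, I would pick $x\in U$ and choose $\alpha$ with $x\in U_\alpha$; then $X\setminus U\subset X\setminus U_\alpha$, so the monotonicity Lemma \ref{OmegaInequalityOne} yields $\omega(x,X\setminus U)\leq \omega(x,X\setminus U_\alpha)=0$.

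The only step with any content is closure under finite intersections, and it is where Axiom 1 enters. By induction it suffices to treat $U_1\cap U_2$ with $U_1,U_2\in\mathcal{T}(\omega)$. For $x\in U_1\cap U_2$ I would write $X\setminus(U_1\cap U_2)=(X\setminus U_1)\cup(X\setminus U_2)$, invoke the symmetry of $\omega$ to place $x$ in the tail position, and then apply Axiom 1 with $V=(x)$ to split $\omega(x,X\setminus(U_1\cap U_2))$ as $\omega(x,X\setminus U_1)+\omega(x,X\setminus U_2)=0+0=0$.

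There is essentially no obstacle: the statement is nearly definitional, and the only bit of bookkeeping is using symmetry before Axiom 1 so that the set being split appears in the first slot, as the axiom is stated.
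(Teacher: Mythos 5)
Your proof is correct and follows essentially the same route as the paper: arbitrary unions via the monotonicity Lemma \ref{OmegaInequalityOne}, and finite intersections by rewriting $X\setminus(U_1\cap U_2)$ as $(X\setminus U_1)\cup(X\setminus U_2)$ and splitting with Axiom 1. The extra checks for $\emptyset$ and $X$ and the remark about symmetry are fine but add nothing beyond what the paper leaves implicit.
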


\begin{proof}
If $x\in U\cap W$ and $\omega(x,X\setminus U)=0=\omega(x,X\setminus W)=0$, then $\omega(x,(X\setminus U)\cup (X\setminus W))=0$
and $(X\setminus U)\cup (X\setminus W)=X\setminus U\cap W$.

If each $U_s$, $s\in S$, is open and $x\in W:=\bigcup\limits_{s\in S}U_s$,
then $x\in U_t$ for some $t\in S$, so $\omega(x,X\setminus U_t)=0$.
As $X\setminus W\subset X\setminus U_t$, $\omega(x,X\setminus W)=0$
by \ref{OmegaInequalityOne}.
\end{proof}

Here is a typical construction of open sets in the topology induced by $\omega$:
\begin{Proposition}\label{ConstructionOfOpenSetsViaForms}
If $\{C_n\}_{n=1}^\infty$ is an increasing sequence of subsets of a formed set $(X,\omega)$ such that $\omega(C_k,X\setminus C_{k+1})=0$ for each $k\ge 1$, then $U:=\bigcup\limits_{i=1}^\infty C_k$ is an open set in the topology induced by $\omega$.
\end{Proposition}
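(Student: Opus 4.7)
The plan is to apply the definition of the topology induced by $\omega$ directly. To show $U=\bigcup_{k=1}^\infty C_k$ is open, I need to verify that $\omega(x, X\setminus U)=0$ for every $x\in U$.

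Pick any $x\in U$. Then $x\in C_k$ for some $k\ge 1$, so $\{x\}\subset C_k$. On the other side, since the sequence $\{C_n\}$ is increasing, $C_{k+1}\subset U$, so $X\setminus U\subset X\setminus C_{k+1}$. I now have a coordinatewise inclusion of the $2$-vector $(\{x\}, X\setminus U)$ into the $2$-vector $(C_k, X\setminus C_{k+1})$, and Lemma \ref{OmegaInequalityOne} gives
\[
\omega(x, X\setminus U)\;\leq\;\omega(C_k, X\setminus C_{k+1})\;=\;0.
\]
Hence $\omega(x,X\setminus U)=0$, and $U$ is open by the definition of the topology induced by $\omega$.

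There is no real obstacle here: the argument is just the combination of monotonicity (Lemma \ref{OmegaInequalityOne}) with the hypothesis applied at the index $k$ where $x$ first enters the chain. The only thing to be a little careful about is keeping track of the direction of the inclusions: because $\{C_n\}$ is increasing, the complements $X\setminus C_n$ are decreasing, so the complement of the union sits inside each $X\setminus C_{k+1}$, which is exactly what makes the monotonicity lemma applicable.
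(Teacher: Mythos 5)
Your proof is correct and follows essentially the same route as the paper's: both rest on the monotonicity Lemma \ref{OmegaInequalityOne} applied to the inclusions $\{x\}\subset C_k$ and $X\setminus U\subset X\setminus C_{k+1}$, with the only cosmetic difference being that you apply the lemma once to both coordinates simultaneously while the paper splits it into two steps.
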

\begin{proof}
Given $x\in U$, there is $k\ge 1$ such that $x\in C_k$.
Therefore $\omega(x,X\setminus C_{k+1})=0$. Since $X\setminus U\subset X\setminus C_{k+1}$, $\omega(x,X\setminus U)=0$ by \ref{OmegaInequalityOne}.

\end{proof}

\begin{Proposition}
The topology induced by $\omega$ is $T_1$ if and only if $\omega(x,y)=0$ whenever $x\ne y$.
\end{Proposition}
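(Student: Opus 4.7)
The plan is to unpack what $T_1$ means for the topology induced by $\omega$ and match it against the two-point condition.

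First I would handle the easier direction, assuming $\omega(x,y)=0$ whenever $x\ne y$ and showing the induced topology is $T_1$. It suffices to prove that for each $x\in X$ the complement $X\setminus\{x\}$ is open, i.e.\ that for every $y\in X\setminus\{x\}$ one has $\omega\bigl(y, X\setminus(X\setminus\{x\})\bigr)=\omega(y,\{x\})=0$. This is immediate from the hypothesis together with symmetry of $\omega$, so $X\setminus\{x\}$ is open by the definition of the induced topology and thus $\{x\}$ is closed.

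For the converse, assume the induced topology is $T_1$. Pick any $x\ne y$ in $X$. Since $\{x\}$ is closed, $U:=X\setminus\{x\}$ is open, and $y\in U$, so by definition of the induced topology $\omega(y, X\setminus U)=\omega(y,\{x\})=0$. Symmetry of $\omega$ then gives $\omega(x,y)=0$, as required.

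I expect no real obstacle here: the argument is essentially a rewriting of the definition of $T_1$ in the language of $\omega$. The only subtle point to keep in mind is the symmetry axiom in the definition of a basic multilinear form, which is what allows $\omega(y,x)$ to be identified with $\omega(x,y)$ in both directions of the equivalence.
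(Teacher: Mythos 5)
Your proof is correct and follows essentially the same route as the paper: both directions amount to unwinding the definition of the induced topology for the open set $X\setminus\{x\}$, using symmetry of $\omega$ to pass between $\omega(x,y)$ and $\omega(y,x)$. The paper's version is just a terser statement of the same argument.
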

\begin{proof}
If $\omega(x,y)=0$ whenever $x\ne y$, then
$X\setminus \{y\}$ is open.
If $X\setminus \{y\}$ is open and $x\ne y$, then $\omega(x,y)=0$.
\end{proof}

\begin{Definition}
A form $\omega$ on a set $X$ is $T_1$ if $\omega(x,y)=0$ for all different points $x,y\in X$.
\end{Definition}

Forms cannot induce topologies that are $T_0$ but not $T_1$:
\begin{Corollary}
If the topology $\mathcal{T}$ induced by a form $\omega$ on $X$ is $T_0$, then $(X,\mathcal{T})$ is $T_1$.
\end{Corollary}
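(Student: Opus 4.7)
The plan is to reduce the statement to the previous proposition, which characterizes $T_1$ induced topologies as exactly those for which $\omega(x,y)=0$ whenever $x\ne y$. So it suffices to show that if $\mathcal{T}$ is $T_0$, then $\omega(x,y)=0$ for all distinct $x,y\in X$.

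Fix distinct $x,y\in X$. By the $T_0$ hypothesis there is an open set $U\in\mathcal{T}$ containing exactly one of them. First suppose $x\in U$ and $y\notin U$. Since $U$ is open in the induced topology, the definition of $\mathcal{T}$ gives $\omega(x,X\setminus U)=0$. Because $\{y\}\subset X\setminus U$, Lemma \ref{OmegaInequalityOne} yields
\[
\omega(x,y)\leq \omega(x,X\setminus U)=0,
\]
so $\omega(x,y)=0$. By symmetry of the form, $\omega(y,x)=\omega(x,y)=0$ as well. The other case, $y\in U$ and $x\notin U$, is handled identically with the roles of $x$ and $y$ swapped.

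In either case we obtain $\omega(x,y)=0$ for the arbitrary pair of distinct points, so by the preceding proposition the induced topology $\mathcal{T}$ is $T_1$. There is essentially no obstacle here: the work is packaged into the two inputs, namely the monotonicity lemma \ref{OmegaInequalityOne} and the symmetry built into the definition of a basic multilinear form, which together force a one-sided separation coming from $T_0$ to automatically become a two-sided separation at the level of $\omega$.
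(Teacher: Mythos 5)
Your proof is correct and follows essentially the same route as the paper: use the $T_0$ hypothesis to get an open set $U$ separating $x$ from $y$, apply the definition of the induced topology to get $\omega$ of the point and $X\setminus U$ equal to $0$, shrink via \ref{OmegaInequalityOne} to conclude $\omega(x,y)=0$, and invoke the preceding proposition characterizing $T_1$. The only difference is that you spell out the monotonicity step and the symmetry explicitly, which the paper leaves implicit.
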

\begin{proof}
Given $x\ne y$ in $X$, there is an open set $U$ containing exactly one of the points $x$, $y$. Say $y\in U$ and $x\notin U$. Therefore $\omega(y,X\setminus U)=0$ implying $\omega(y,x)=0$.
\end{proof}

\begin{Proposition}
If $(X,\mathcal{T})$ is a $T_1$ topological space, then its basic topological form induces $\mathcal{T}$.
\end{Proposition}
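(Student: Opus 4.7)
The plan is to unwind both definitions and use $T_1$ only in the one place where it is genuinely needed, namely to collapse $cl(\{x\})$ to $\{x\}$. Write $\omega = \omega(X,\mathcal{T})$ and let $\mathcal{T}_\omega$ denote the topology induced by $\omega$ as in the earlier proposition. By that proposition, $U \in \mathcal{T}_\omega$ iff for every $x \in U$ one has $\omega(x, X\setminus U) = 0$, which by the definition of the basic topological form means $cl(\{x\}) \cap cl(X\setminus U) = \emptyset$.

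First I would show $\mathcal{T} \subseteq \mathcal{T}_\omega$. Suppose $U \in \mathcal{T}$. Then $X\setminus U$ is closed, so $cl(X\setminus U) = X\setminus U$. For each $x \in U$, we have $x \notin X\setminus U$, and using $T_1$ we get $cl(\{x\}) = \{x\}$, hence $cl(\{x\}) \cap cl(X\setminus U) = \{x\} \cap (X\setminus U) = \emptyset$. Thus $\omega(x, X\setminus U) = 0$ and $U \in \mathcal{T}_\omega$.

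Next I would show $\mathcal{T}_\omega \subseteq \mathcal{T}$. Suppose $U \in \mathcal{T}_\omega$. For each $x \in U$, $cl(\{x\}) \cap cl(X\setminus U) = \emptyset$, and since $x \in cl(\{x\})$ this gives $x \notin cl(X\setminus U)$. Therefore $U \subseteq X \setminus cl(X\setminus U)$, and the right-hand side is the $\mathcal{T}$-interior of $U$. Hence $U$ equals its own interior and lies in $\mathcal{T}$.

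There is essentially no obstacle here: the argument is a direct translation between the two definitions, and $T_1$ enters in exactly one line to identify $cl(\{x\})$ with $\{x\}$. Without $T_1$ one would only recover the $T_1$-fication of $\mathcal{T}$, which is why the hypothesis is sharp.
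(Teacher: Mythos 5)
Your proof is correct and follows essentially the same route as the paper's: both directions unwind $\omega(x,X\setminus U)=0$ into $cl(\{x\})\cap cl(X\setminus U)=\emptyset$, use $T_1$ only to identify $cl(\{x\})$ with $\{x\}$ in the inclusion $\mathcal{T}\subseteq\mathcal{T}_\omega$, and derive the reverse inclusion from $x\in cl(\{x\})$ (the paper merely phrases that half in terms of closed sets rather than open ones). Your write-up is a more explicit version of the paper's terse argument; there is nothing to add.
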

\begin{proof}
Let $\omega$ be the basic topological form of $X$.
$\omega(x,X\setminus U)=0$ means $cl(x)\cap cl(X\setminus U)=\emptyset$.
Consequently, if $U\in \mathcal{T}\in T_1$, then $U$ is open in the topology induced by $\omega$. Conversely, if $C$ is closed in the topology induced by $\omega$ and $x\notin C$, then $\omega(x,C)=0$, hence $x\notin cl(C)$.
Thus $C=cl(C)$ in $\mathcal{T}$.
\end{proof}

Of special interest are cases where the topology induced by a form has properties related to compactness.

\subsection{Large scale topology}
\begin{Definition}
A \textbf{large scale topological space} $(X,\mathcal{T},\mathcal{B})$ is a topological space $(X,\mathcal{T})$ in which a bornology $\mathcal{B}$
of open-closed subspaces is selected.
\end{Definition}

\begin{Definition}
A large scale topological space $(X,\mathcal{T},\mathcal{B})$ is \textbf{large scale compact} if and only if, for any family $\{U_s\}_{s\in S}$ of open subsets of $X$, $X=\bigcup\limits_{s\in S}U_s$ implies existence of a finite subset $F$ of $S$ such that
$X\setminus \bigcup\limits_{s\in F}U_s$ belongs to $\mathcal{B}$.
\end{Definition}

\begin{Proposition}\label{NormalityOfLargeScaleCompactSpaces}
1) If $(X,\mathcal{T},\mathcal{B})$ is large scale compact and topologically Hausdorff, then it is topologically regular.\\
2)  If $(X,\mathcal{T},\mathcal{B})$ is large scale compact and topologically regular, then it is topologically normal.
\end{Proposition}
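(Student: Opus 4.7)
The plan is to mimic the classical proofs that compact Hausdorff spaces are regular and that compact regular spaces are normal, with one key modification: the ``remainder'' produced by large scale compactness lies in $\mathcal{B}$, and by definition of a large scale topological space every element of $\mathcal{B}$ is clopen. This clopen-ness is what lets us cleanly remove the remainder from the open set on the ``point side'' of the separation.

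For part 1, fix a closed set $C\subset X$ and a point $x\notin C$. For each $y\in C$, pick by the Hausdorff property disjoint open $U_y\ni x$ and $V_y\ni y$. The family $\{X\setminus C\}\cup\{V_y\}_{y\in C}$ is an open cover of $X$, so by large scale compactness there are $y_1,\ldots,y_n\in C$ such that
\[
B:=X\setminus\bigl((X\setminus C)\cup V_{y_1}\cup\cdots\cup V_{y_n}\bigr)\in\mathcal{B}.
\]
Then $B\subset C$, $B$ is clopen, and $x\notin B$. Setting
\[
V:=B\cup V_{y_1}\cup\cdots\cup V_{y_n},\qquad U:=(X\setminus B)\cap U_{y_1}\cap\cdots\cap U_{y_n},
\]
$V$ is open and contains $C$, $U$ is open (because $X\setminus B$ is open) and contains $x$, and $U\cap V=\emptyset$ since $U$ avoids $B$ and each $U_{y_i}$ avoids $V_{y_i}$.

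For part 2, given disjoint closed $C_1,C_2$, use topological regularity to choose, for each $y\in C_2$, an open $V_y\ni y$ with $\overline{V_y}\cap C_1=\emptyset$. Apply large scale compactness to the cover $\{X\setminus C_2\}\cup\{V_y\}_{y\in C_2}$ to obtain $y_1,\ldots,y_n$ and a clopen $B\in\mathcal{B}$ with $B\subset C_2$ such that $C_2\subset B\cup V_{y_1}\cup\cdots\cup V_{y_n}$. Let $V:=V_{y_1}\cup\cdots\cup V_{y_n}$, so $\overline{V}\cap C_1=\emptyset$, and set
\[
W_2:=B\cup V,\qquad W_1:=(X\setminus B)\cap(X\setminus\overline{V}).
\]
Then $W_2$ is open and contains $C_2$; $W_1$ is open and contains $C_1$ (since $C_1\cap B=\emptyset$ because $B\subset C_2$, and $C_1\cap\overline{V}=\emptyset$); and $W_1\cap W_2=\emptyset$ by construction.

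The main obstacle, and what makes the argument go through beyond just copying the compact case, is handling the bornological remainder $B$: it is not in general empty, so one cannot simply read off the separating open sets from the finite subcover. The fact that $\mathcal{B}$ consists of clopen sets is precisely what is needed to absorb $B$ into the separating set on the closed-set side while still being able to form the complementary open set on the point side.
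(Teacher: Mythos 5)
Your proof is correct and follows essentially the same route as the paper: cover $X$ by the complement of the closed set together with the chosen neighborhoods, invoke large scale compactness to get a remainder $B\in\mathcal{B}$, and use the fact that elements of $\mathcal{B}$ are open-closed to absorb $B$ into the open set around the closed set while removing it from the open set on the other side. The paper only sketches part 2 as ``similar,'' whereas you carry it out explicitly, but the argument is the same.
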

\begin{proof}
1). Suppose $A$ is a closed subset of $X$ not containing $x_0$. If $x_0$ is open-closed, then $A\subset X\setminus \{x_0\}$ is disjoint from $x_0$ and we are done. Assume $x_0$ is not open.
For each point $x\in A$ choose disjoint open sets $U_x$ containing $x$ and $V_x$ containing $x_0$.
Notice $X=(X\setminus A)\cup\bigcup\limits_{x\in A}U_x$, so there is an open-closed set $B\in \mathcal{B}$ and a finite subset $F$ of $A$ such that
$X=B\cup (X\setminus A)\cup \bigcup\limits_{x\in A}U_x$ and $B$ does not contain $x_0$. Therefore, $A\subset B\cup \bigcup\limits_{x\in A}U_x$ is disjoint from
$\bigcap\limits_{x\in F}V_x\setminus B$ which contains $x_0$.\\
The proof of 2) is similar or apply 1) to $X/A$.
\end{proof}

\begin{Proposition}\label{ClosedMapProp}
A continuous map $f:(X,\mathcal{T}_X,\mathcal{B}_X)\to (Y,\mathcal{T}_Y,\mathcal{B}_Y)$ of large scale topological spaces is closed if the following conditions are satisfied:\\
1. $(X,\mathcal{T}_X,\mathcal{B}_X)$ is large scale compact.\\
2. $(Y,\mathcal{T}_Y,\mathcal{B}_Y)$ is Hausdorff.\\
3. $f(\mathcal{B}_X)\subset \mathcal{B}_Y$.
\end{Proposition}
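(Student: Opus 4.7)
The plan is to mimic the classical argument that continuous maps from compact spaces to Hausdorff spaces are closed, with a correction term coming from the bornology. Let $A$ be closed in $X$; I want to show $f(A)$ is closed in $Y$. Fix $y_0 \notin f(A)$ and, for every $x\in A$, use Hausdorffness of $Y$ to pick disjoint open sets $U_x \ni f(x)$ and $V_x \ni y_0$. Pulling back, the family $\{X\setminus A\}\cup \{f^{-1}(U_x)\}_{x\in A}$ is an open cover of $X$.

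Next I apply large scale compactness of $X$ to this cover. This gives a finite set $F\subset A$ such that
\[
B := X\setminus\Bigl((X\setminus A)\cup \bigcup_{x\in F} f^{-1}(U_x)\Bigr)
\]
belongs to $\mathcal{B}_X$. By hypothesis 3, $f(B)\in\mathcal{B}_Y$, and since $\mathcal{B}_Y$ consists of open-closed subsets of $Y$, the set $f(B)$ is in particular closed. Moreover $B\subset A$, so $y_0\notin f(B)$. From the definition of $B$ one has the inclusion
\[
f(A) \subset f(B)\cup \bigcup_{x\in F} U_x.
\]

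Finally, consider the open set
\[
W := \Bigl(\bigcap_{x\in F} V_x\Bigr)\setminus f(B),
\]
which is open because $F$ is finite and $f(B)$ is closed, and which contains $y_0$ because $y_0\in V_x$ for every $x$ and $y_0\notin f(B)$. By construction $W$ is disjoint from $f(B)$, and disjointness of $U_x$ and $V_x$ for each $x\in F$ shows $W$ is disjoint from $\bigcup_{x\in F}U_x$. Combined with the displayed inclusion, $W\cap f(A)=\emptyset$, so $y_0$ is not in the closure of $f(A)$. As $y_0$ was arbitrary, $f(A)$ is closed.

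The only subtle point — and the place where one must be careful compared with the purely compact case — is that the finite subcover produced by large scale compactness does not cover $X$; one only gets coverage up to a set $B\in\mathcal{B}_X$. The hypothesis $f(\mathcal{B}_X)\subset\mathcal{B}_Y$ together with the requirement that bornology elements are open-closed is what lets me absorb $f(B)$ into the picture by simply removing it from the candidate neighborhood $\bigcap_{x\in F}V_x$, exactly as in the proof of \ref{NormalityOfLargeScaleCompactSpaces}.
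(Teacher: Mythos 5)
Your proof is correct and follows essentially the same route as the paper's: disjoint neighborhoods from Hausdorffness, a finite subcover modulo a bornology element $B$ via large scale compactness, pushing $B$ forward by hypothesis 3, and deleting the resulting open-closed set from $\bigcap_{x\in F}V_x$. The only cosmetic difference is that the paper first reduces to showing $f(X)$ is closed and then restricts to $f|A$, while you handle a general closed $A$ directly by adjoining $X\setminus A$ to the cover.
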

\begin{proof}
It suffices to show $f(X)$ is closed and then apply it to $f|A$, $A$ any closed subset of $X$.\\
Given $y\in Y\setminus f(X)$ that is not open choose, for any $z\in f(X)$, disjoint open neighborhoods $U(z)$ of $y$ and $W(z)$ of $z$.
Since $\{f^{-1}(W(z))\}_{z\in f(X)}$ covers $X$, there exist points $z_1,\ldots,z_k$ of $f(X)$ such that $X\setminus\bigcup\limits_{i=1}^k f^{-1}(W(z_i))\in \mathcal{B}_X$. Hence, $D:=f(X)\setminus\bigcup\limits_{i=1}^k W(z_i)\in \mathcal{B}_Y$ is open-closed in $Y$, it does not contain $y$,
and $U:=\bigcap\limits_{i=1}^k U(z_i)\setminus D$ is a neighborhood of $y$ missing $f(X)$.
\end{proof}

\section{Boundaries of large scale topological spaces}

In this section we generalize \v Cech-Stone compactification to large scale compactifications of large scale topological spaces.
\begin{Definition}
The \textbf{boundary} $\partial (X,\mathcal{T},\mathcal{B})$
of a large scale topological space $(X,\mathcal{T},\mathcal{B})$ is the collection of all maximal families
$\mathcal{P}$ in $2^X$ consisting of zero-sets satisfying the following condition:
intersection of any finite subfamily of $\mathcal{P}$ is not in ${\mathcal{B}}$.

If $\{x\}\notin {\mathcal{B}}$, then we can identify $x$ with the family consisting of all zero subsets of $X$ containing $x$. This way we can talk about
$X\cup \partial (X,\mathcal{T},\mathcal{B})$.
\end{Definition}

\begin{Observation}
Technically speaking, points $x_1, x_2\in X$ such that there is no continuous function from $X$ to $[0,1]$ separating them, are identified in $\partial (X,\mathcal{T},\mathcal{B})$. Namely, they generate the same element of $\partial (X,\mathcal{T},\mathcal{B})$ consisting of all zero subsets of $X$ containing $x_1$. Thus, to consider
$X\cup \partial (X,\mathcal{T},\mathcal{B})$ in set-theoretic sense, we need $X$ to be functionally Hausdorff (i.e. every pair of different points of $X$ can be separated by a real-valued continuous function). We will continue to use notation $X\cup \partial (X,\mathcal{T},\mathcal{B})$ for the union of a quotient of $X$ and $\partial (X,\mathcal{T},\mathcal{B})$
\end{Observation}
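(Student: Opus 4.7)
The plan is to verify two linked claims packaged into the observation: first, that two points $x_1, x_2 \in X$ which cannot be separated by a continuous function into $[0,1]$ generate identical principal families $\mathcal{F}_x := \{D \mid D \text{ zero-set of } X,\ x \in D\}$ and hence collapse to the same element of $\partial(X,\mathcal{T},\mathcal{B})$; second, that the embedding $x \mapsto \mathcal{F}_x$ fails to be injective on $X$ precisely when $X$ is not functionally Hausdorff.

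First I would record the standard equivalence: some continuous $f:X \to [0,1]$ satisfies $f(x_1)=0$ and $f(x_2)=1$ if and only if some continuous $g:X\to[0,1]$ satisfies $g(x_1) \ne g(x_2)$. The nontrivial direction post-composes $g$ with the piecewise-linear clamp sending $g(x_1) \to 0$ and $g(x_2) \to 1$. Thus ``$x_1, x_2$ cannot be separated by a continuous function to $[0,1]$'' is equivalent to $f(x_1) = f(x_2)$ for every continuous $f: X \to [0,1]$. Since every zero-set has the form $D=f^{-1}(0)$, this immediately yields $x_1 \in D \iff x_2 \in D$ for every zero-set $D$, so $\mathcal{F}_{x_1} = \mathcal{F}_{x_2}$. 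By the identification rule stated just before the observation, $x_1$ and $x_2$ represent the same element of $\partial(X,\mathcal{T},\mathcal{B})$.

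For the converse direction in (b), note that if $x_1 \ne x_2$ admit a continuous separator $f$ with $f(x_1)=0$, $f(x_2)=1$, then $f^{-1}(0) \in \mathcal{F}_{x_1} \setminus \mathcal{F}_{x_2}$, so the map $x \mapsto \mathcal{F}_x$ distinguishes them. Hence global injectivity of this assignment coincides with functional Hausdorffness of $X$. When $X$ fails this property, the natural remedy, which the paper's notation absorbs tacitly, is to replace $X$ by its quotient under the equivalence $x_1 \sim x_2 \iff \mathcal{F}_{x_1} = \mathcal{F}_{x_2}$; then $(X/{\sim}) \cup \partial(X,\mathcal{T},\mathcal{B})$ is a genuine set-theoretic disjoint union.

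There is no hard step here: the observation is a formal consequence of the fact that zero-sets encode the values $0$ of continuous functions into $[0,1]$. The only mild housekeeping point is checking that $\mathcal{F}_x$ is actually a maximal family in the sense of $\partial(X,\mathcal{T},\mathcal{B})$, which follows because every cozero-set is a countable union of zero-sets: if $E$ is a zero-set with $x \notin E$, written as $E=f^{-1}(0)$, then $x$ lies in the zero-set $\{y : f(y) \ge 1/n\}$ for some $n$, which is disjoint from $E$, so adjoining $E$ to $\mathcal{F}_x$ would force a finite intersection equal to $\emptyset \in \mathcal{B}$ and violate the defining condition.
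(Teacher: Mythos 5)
Your argument is correct and is exactly the routine verification that the paper leaves implicit (the Observation is stated without any proof, so there is nothing to diverge from). The only point worth flagging is that your maximality check at the end tacitly assumes $\emptyset\in\mathcal{B}$ (and, as in the paper's preceding definition, that $\{x\}\notin\mathcal{B}$ so that $\mathcal{F}_x$ really lies in $\partial(X,\mathcal{T},\mathcal{B})$); the former holds for every nonempty bornology by downward closure, and both are the intended conventions here.
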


\begin{Remark}
In case of a bornological set $(X,\mathcal{B})$ we can define its boundary as $\partial(X,\mathcal{T},\mathcal{B})$, where $\mathcal{T}$ is the discrete topology on $X$.
\end{Remark}

\begin{Observation}\label{ObservationOnPointsAtBdOfBornology}
Suppose $A_1$ and $A_2$ are zero-subsets of $(X,\mathcal{T},\mathcal{B})$.
If $A_1\cup A_2\in \mathcal{P}\in \partial (X,\mathcal{T},\mathcal{B})$, then $A_1\in \mathcal{P}$ or $A_2\in \mathcal{P}$.
\end{Observation}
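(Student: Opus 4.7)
The proof should be a standard ultrafilter-on-zero-sets argument, adapted to the bornological setting. My plan is to argue by contradiction and leverage the maximality of $\mathcal{P}$ to extract witnesses of "badness" for both $A_1$ and $A_2$, and then combine them into a single finite subfamily whose intersection lies in $\mathcal{B}$.

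Concretely, I would suppose for contradiction that neither $A_1$ nor $A_2$ belongs to $\mathcal{P}$. Since $A_i$ is a zero-set and $\mathcal{P}$ is a \emph{maximal} family of zero-sets satisfying the finite-intersection-not-in-$\mathcal{B}$ condition, the family $\mathcal{P}\cup\{A_i\}$ must violate that condition. Because $\mathcal{P}$ alone does not violate it, the offending finite subfamily must use $A_i$. Thus for each $i=1,2$ there exists a finite subfamily $\mathcal{F}_i\subset \mathcal{P}$ such that
$$A_i\cap \bigcap \mathcal{F}_i\in \mathcal{B}.$$

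Next I would combine these. Set $\mathcal{F}:=\mathcal{F}_1\cup \mathcal{F}_2$, a finite subfamily of $\mathcal{P}$, and observe
$$(A_1\cup A_2)\cap \bigcap \mathcal{F}
= \Bigl(A_1\cap \bigcap \mathcal{F}\Bigr)\cup\Bigl(A_2\cap \bigcap \mathcal{F}\Bigr).$$
Each summand is a subset of $A_i\cap \bigcap \mathcal{F}_i\in \mathcal{B}$, hence lies in $\mathcal{B}$ by the hereditary property of bornologies; their union lies in $\mathcal{B}$ by closure under finite unions. But $A_1\cup A_2$ together with $\mathcal{F}$ is a finite subfamily of $\mathcal{P}$, so its intersection is forbidden to belong to $\mathcal{B}$ — a contradiction, and therefore $A_1\in \mathcal{P}$ or $A_2\in \mathcal{P}$.

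I do not expect any genuine obstacle here; the only point to be careful about is verifying that $A_1\cup A_2$ is itself a zero-set so that it is eligible to sit inside $\mathcal{P}$ (this is standard: a finite union of zero-sets is a zero-set, e.g. via $f_1\cdot f_2$ if $A_i=f_i^{-1}(0)$), and that the maximality argument legitimately produces the witnessing finite subfamilies $\mathcal{F}_i$. Both are formal; the heart of the proof is just the distributive identity combined with the bornology axioms.
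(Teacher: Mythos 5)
Your proof is correct and is essentially the paper's own argument: the paper likewise assumes $A_1,A_2\notin\mathcal{P}$, extracts witnesses $C,D\in\mathcal{P}$ with $A_1\cap C,\,A_2\cap D\in\mathcal{B}$, and concludes $(A_1\cup A_2)\cap C\cap D\in\mathcal{B}$, contradicting the defining property of $\mathcal{P}$. Your version with finite subfamilies $\mathcal{F}_i$ in place of single elements is just a slightly more explicit rendering of the same maximality step.
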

\begin{proof}
Suppose $A_1\cup A_2\in \mathcal{P}$ but $A_1\notin \mathcal{P}$ and $A_2\notin \mathcal{P}$. In that case there exist $C, D\in \mathcal{P}$
such that $A_1\cap C, A_2\cap D\in \mathcal{B}$. Consequently,
$(A_1\cup A_2)\cap C\cap D\in \mathcal{B}$, a contradiction.
\end{proof}

\begin{Corollary}\label{SetMinusBorno}
If $A\in \mathcal{P}$ and $B\in \mathcal{B}$, then $A\setminus B\in P$.
\end{Corollary}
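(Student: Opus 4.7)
The plan is to decompose $A$ as the union $A = (A\cap B)\cup (A\setminus B)$ and apply Observation \ref{ObservationOnPointsAtBdOfBornology}. For that observation to apply, we first have to check that both pieces of the decomposition are zero-sets. The key input is that, by the definition of a large scale topological space, every element of $\mathcal{B}$ is open-closed in $(X,\mathcal{T})$. An open-closed set $B$ is itself a zero-set, because the indicator function of $X\setminus B$ is continuous and has $B$ as its zero-set. Since intersections of two zero-sets are zero-sets (witnessed, e.g., by $f^2+g^2$), both $A\cap B$ and $A\setminus B = A\cap(X\setminus B)$ are zero-sets in $X$.

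Next I would apply Observation \ref{ObservationOnPointsAtBdOfBornology} to $A_1:=A\cap B$ and $A_2:=A\setminus B$. Since $A = A_1\cup A_2\in\mathcal{P}$, we get that either $A\cap B\in\mathcal{P}$ or $A\setminus B\in\mathcal{P}$.

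To rule out the first alternative, note that $A\cap B\subset B\in\mathcal{B}$, so $A\cap B\in\mathcal{B}$ because bornologies are closed under taking subsets. But by the very definition of $\partial(X,\mathcal{T},\mathcal{B})$, every \emph{single} element of $\mathcal{P}$ (viewed as a finite intersection of length one) must lie outside $\mathcal{B}$. This contradicts $A\cap B\in\mathcal{P}$. Therefore $A\setminus B\in\mathcal{P}$, as required.

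No step looks like a real obstacle; the only subtle point is the zero-set verification, which would disappear entirely if the definition of $\mathcal{B}$ had required open-closed subsets that are also zero-sets (which is automatic here since open-closed sets always are). Consequently, a short three-line proof should suffice, and the statement is really a direct corollary of \ref{ObservationOnPointsAtBdOfBornology} once one observes that elements of $\mathcal{B}$ are zero-sets.
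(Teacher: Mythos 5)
Your proof is correct and is exactly the intended derivation: the paper states this corollary without proof immediately after Observation \ref{ObservationOnPointsAtBdOfBornology}, and the expected argument is the decomposition $A=(A\cap B)\cup(A\setminus B)$ together with the fact that no element of $\mathcal{P}$ can lie in $\mathcal{B}$. Your extra care in checking that $A\cap B$ and $A\setminus B$ are zero-sets (since $B$ is open-closed) is a legitimate point that the paper glosses over, and it is handled correctly.
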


\begin{Definition}
$X\cup \partial (X,\mathcal{T},\mathcal{B})$ has a natural topology whose basis 
is formed (in view of \ref{NUIsaBasis}) by sets 
$$N(U):=U\cup\{\mathcal{Q}\in \partial(X,\mathcal{T},\mathcal{B})\mid X\setminus U\notin \mathcal{Q}\},$$
where $U$ is a co-zero-subset of $X$ (i.e. the complement of a zero-set in $X$).
\end{Definition}

\begin{Lemma}\label{NUIsaBasis}
$N(U)\cap N(V)=N(U\cap V)$ and $N(U)\cup N(V)=N(U\cup V)$.
\end{Lemma}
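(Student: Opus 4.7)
The plan is to prove both identities by working separately on the two pieces of $N(U)=U\cup\{\mathcal{Q}\in\partial\mid X\setminus U\notin\mathcal{Q}\}$. The set-theoretic part of $N(U)\cap N(V)$ restricted to $X$ is trivially $U\cap V$, and the restriction of $N(U)\cup N(V)$ to $X$ is $U\cup V$, since the $\partial$-component contributes nothing to points of $X$. So the real content lies in showing the boundary parts behave correctly, and this reduces to two elementary closure properties of maximal families $\mathcal{Q}\in\partial(X,\mathcal{T},\mathcal{B})$.

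First I would record the following two easy facts about any $\mathcal{Q}\in\partial(X,\mathcal{T},\mathcal{B})$:
\begin{itemize}
\item[(a)] If $C\in\mathcal{Q}$ and $C\subset D$ with $D$ a zero-set, then $D\in\mathcal{Q}$.
\item[(b)] If $C,D\in\mathcal{Q}$, then $C\cap D\in\mathcal{Q}$.
\end{itemize}
Both follow from maximality: in (a) every finite intersection involving $D$ contains the corresponding intersection involving $C$ and therefore lies outside the hereditary $\mathcal{B}$, so $\mathcal{Q}\cup\{D\}$ still satisfies the defining property, forcing $D\in\mathcal{Q}$; (b) is the same argument since $C\cap D$ is a zero-set and any finite intersection involving it is already a finite intersection of members of $\mathcal{Q}$.

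Given (a), (b), and Observation \ref{ObservationOnPointsAtBdOfBornology}, the boundary parts fall out from the De Morgan identities $X\setminus(U\cap V)=(X\setminus U)\cup(X\setminus V)$ and $X\setminus(U\cup V)=(X\setminus U)\cap(X\setminus V)$. For the intersection identity: $\mathcal{Q}\in N(U)\cap N(V)$ means neither $X\setminus U$ nor $X\setminus V$ belongs to $\mathcal{Q}$, and by \ref{ObservationOnPointsAtBdOfBornology} their union $X\setminus(U\cap V)$ is also absent, giving $\mathcal{Q}\in N(U\cap V)$; conversely, if either were in $\mathcal{Q}$, then (a) would force the bigger zero-set $X\setminus(U\cap V)$ into $\mathcal{Q}$. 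For the union identity: if both $X\setminus U$ and $X\setminus V$ lay in $\mathcal{Q}$, (b) would put $X\setminus(U\cup V)$ in $\mathcal{Q}$; and conversely absence of either of $X\setminus U, X\setminus V$ from $\mathcal{Q}$ forces absence of the smaller zero-set $X\setminus(U\cup V)$ via the contrapositive of (a).

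There is no real obstacle here; the entire lemma is bookkeeping once (a) and (b) are recorded. The only point requiring a bit of care is the handling of identified points $x\in X$ in $X\cup\partial(X,\mathcal{T},\mathcal{B})$: one should check that membership $x\in U$ agrees with $X\setminus U\notin\mathcal{P}_x$ (where $\mathcal{P}_x$ is the family of zero-sets through $x$), so that the $X$-part and $\partial$-part descriptions of the basis sets $N(U)$ are consistent across the identification. This is immediate since $X\setminus U$ is a zero-set containing $x$ iff $x\notin U$.
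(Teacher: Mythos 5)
Your proof is correct and follows essentially the same route as the paper: the paper simply reformulates Observation \ref{ObservationOnPointsAtBdOfBornology} as the biconditional ``$A_1\cup A_2\notin\mathcal{P}$ iff $A_1\notin\mathcal{P}$ and $A_2\notin\mathcal{P}$'' and declares both identities obvious, and your facts (a) and (b) are precisely the closure properties of maximal families that make that reformulation (and the De Morgan bookkeeping for the union identity) legitimate.
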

\begin{proof}
Reformulate \ref{ObservationOnPointsAtBdOfBornology} as follows:
$A_1\cup A_2\notin \mathcal{P}\in \partial(X,\mathcal{B})$ if and only if $A_1\notin \mathcal{P}$ and $A_2\notin \mathcal{P}$.
The equality $N(U)\cap N(V)=N(U\cap V)$ is now obvious as well as 
$N(U)\cup N(V)= N(U\cup V)$.
\end{proof}

\begin{Corollary}\label{BisOpenClosed}
Each $B\in \mathcal{B}$ is open-closed in $X\cup \partial (X,\mathcal{T},\mathcal{B})$.
\end{Corollary}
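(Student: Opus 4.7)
The plan is to exhibit $B$ as its own basic open neighborhood and exhibit the complement $(X\setminus B)\cup\partial(X,\mathcal{T},\mathcal{B})$ as a basic open neighborhood of everything outside $B$. Both will follow from examining which zero-sets can lie in a point $\mathcal{Q}\in\partial(X,\mathcal{T},\mathcal{B})$.

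First I would note that since $B$ is open-closed in $X$, its characteristic function is continuous, so both $B$ and $X\setminus B$ are simultaneously zero-sets and co-zero-sets. Hence $N(B)$ and $N(X\setminus B)$ are legitimate basic open sets in the definition of the topology on $X\cup\partial(X,\mathcal{T},\mathcal{B})$.

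Next I would establish two facts about any $\mathcal{Q}\in\partial(X,\mathcal{T},\mathcal{B})$: (a) $B\notin\mathcal{Q}$, because $B\in\mathcal{B}$ and the singleton subfamily $\{B\}$ of $\mathcal{Q}$ has intersection $B\in\mathcal{B}$, violating the defining condition; and (b) $X\setminus B\in\mathcal{Q}$. For (b) I would first argue that $X\in\mathcal{Q}$: $X$ is a zero-set, and adjoining $X$ to $\mathcal{Q}$ does not affect any finite intersection, so maximality forces $X\in\mathcal{Q}$. Then Corollary \ref{SetMinusBorno} applied to $A=X$ and $B\in\mathcal{B}$ yields $X\setminus B\in\mathcal{Q}$. (Alternatively, apply Observation \ref{ObservationOnPointsAtBdOfBornology} to $X=B\cup(X\setminus B)\in\mathcal{Q}$ together with $B\notin\mathcal{Q}$.)

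Combining these with the definition of $N(\cdot)$ gives the two equalities
\[
N(B)=B\cup\{\mathcal{Q}\mid X\setminus B\notin\mathcal{Q}\}=B\cup\emptyset=B,
\]
\[
N(X\setminus B)=(X\setminus B)\cup\{\mathcal{Q}\mid B\notin\mathcal{Q}\}=(X\setminus B)\cup\partial(X,\mathcal{T},\mathcal{B}).
\]
The first equality shows $B$ is open in $X\cup\partial(X,\mathcal{T},\mathcal{B})$, and the second exhibits the complement of $B$ as an open set, so $B$ is also closed. The only subtle point is verifying $X\in\mathcal{Q}$ via maximality; after that the argument is purely a bookkeeping reduction to \ref{ObservationOnPointsAtBdOfBornology} and \ref{SetMinusBorno}.
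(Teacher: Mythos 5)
Your proof is correct and follows essentially the same route as the paper: compute $N(B)=B$ and $N(X\setminus B)$ via \ref{SetMinusBorno} (equivalently \ref{ObservationOnPointsAtBdOfBornology}) and observe that these two basic open sets partition $X\cup\partial(X,\mathcal{T},\mathcal{B})$. You are in fact slightly more careful than the paper, which writes $N(X\setminus B)=X\setminus B$ where the complement must be read inside the whole compactification, i.e.\ as your $(X\setminus B)\cup\partial(X,\mathcal{T},\mathcal{B})$.
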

\begin{proof}
By \ref{SetMinusBorno}, $N(B)=B$ and $N(X\setminus B)=X\setminus B$. As $N(X)=N(B)\cup N(X\setminus B)$, both $N(B)$ and $N(X\setminus B)$ are closed.
\end{proof}

\begin{Proposition}
Suppose $C$ is a zero-subset of $X$.
$\mathcal{P}\in \partial(X,\mathcal{T},\mathcal{B})$ belongs to the closure of $C\subset X$ if and only if $C\in \mathcal{P}$.
\end{Proposition}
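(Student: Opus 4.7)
My plan is to prove both directions from the explicit description of basic open neighborhoods: $N(U)=U\cup\{\mathcal{Q}\in\partial\mid X\setminus U\notin\mathcal{Q}\}$ for a co-zero set $U$. The forward direction will be handled by contraposition, and the converse will rely on a short lemma saying that a maximal family $\mathcal{P}$ is ``upward closed'' in the class of zero-sets.

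For the forward direction, I would argue contrapositively. Suppose $C\notin\mathcal{P}$. Since $C$ is a zero-set, its complement $U:=X\setminus C$ is a co-zero set, and $X\setminus U=C\notin\mathcal{P}$, so $N(X\setminus C)$ is a basic open neighborhood of $\mathcal{P}$. This neighborhood is disjoint from $C$: on the $X$-side we have $N(X\setminus C)\cap X=X\setminus C$, and points of $\partial$ are not points of $C$. Hence $\mathcal{P}\notin\overline{C}$.

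For the converse, I first want to record the following auxiliary fact, which follows directly from maximality: if $C\in\mathcal{P}$ and $Z$ is a zero-set with $C\subset Z$, then $Z\in\mathcal{P}$. The proof is to enlarge $\mathcal{P}$ by $Z$ and verify the intersection condition; any finite subfamily containing $Z$ can be dominated below by replacing $Z$ with $C$ (using $C\subset Z$), and bornologies are closed under subsets, so the intersection stays outside $\mathcal{B}$. By maximality $Z\in\mathcal{P}$.

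With this in hand, assume $C\in\mathcal{P}$ and let $N(U)$ be an arbitrary basic open neighborhood of $\mathcal{P}$; by definition $X\setminus U\notin\mathcal{P}$. If $U\cap C=\emptyset$ then $C\subset X\setminus U$, and since $X\setminus U$ is a zero-set, the auxiliary fact forces $X\setminus U\in\mathcal{P}$, a contradiction. Therefore $N(U)\cap C\supset U\cap C\neq\emptyset$, proving $\mathcal{P}\in\overline{C}$. The only mildly delicate step is the auxiliary upward-closure lemma; everything else is direct bookkeeping with the definitions of $N(U)$ and of $\partial(X,\mathcal{T},\mathcal{B})$.
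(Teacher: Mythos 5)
Your proof follows the same route as the paper's: the direction $C\notin\mathcal{P}\Rightarrow\mathcal{P}\notin cl(C)$ via the neighborhood $N(X\setminus C)$, and the direction $C\in\mathcal{P}\Rightarrow\mathcal{P}\in cl(C)$ by showing that any basic neighborhood $N(U)$ of $\mathcal{P}$ must meet $C$, since $C\cap U=\emptyset$ would give $C\subset X\setminus U$ and hence $X\setminus U\in\mathcal{P}$. Your auxiliary upward-closure lemma is worth having in writing: the paper simply asserts ``$C\subset X\setminus U$ resulting in $X\setminus U\in\mathcal{P}$'' without comment, and your maximality argument (any finite intersection involving $Z$ contains the corresponding intersection with $Z$ replaced by $C$, and $\mathcal{B}$ is closed under passing to subsets) is exactly the justification that is being used implicitly.

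One step needs repair. In the contrapositive direction you justify $C\cap N(X\setminus C)=\emptyset$ by saying that ``points of $\partial$ are not points of $C$.'' Under the paper's conventions this is not true: a point $x\in C$ with $\{x\}\notin\mathcal{B}$ is \emph{identified} with the element of $\partial(X,\mathcal{T},\mathcal{B})$ consisting of all zero-sets containing $x$, so points of $C$ can perfectly well lie in $\partial(X,\mathcal{T},\mathcal{B})$, and a priori such a point could sit in the boundary part of $N(X\setminus C)$. What saves the argument --- and what the paper's proof checks explicitly --- is that for such a principal family $\mathcal{Q}_x$ with $x\in C$ one has $C\in\mathcal{Q}_x$ (because $C$ is a zero-set containing $x$), hence $\mathcal{Q}_x\notin N(X\setminus C)$. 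With that one-line check inserted, your proof is complete and coincides with the paper's.
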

\begin{proof} 
If $C\in \mathcal{P}$, then every base element $N(U)$ containing $P$
intersects $C$ as $C\cap U$ cannot be empty. Indeed, $C\cap U=\emptyset$ means $C\subset X\setminus U$ resulting in $X\setminus U\in \mathcal{P}$ which contradicts $\mathcal{P}\in N(U)$.

Conversely, if $C\notin \mathcal{P}$, then $\mathcal{P}\in N(X\setminus C)$ and
$C\cap N(X\setminus C)=\emptyset$. Indeed, any $\mathcal{Q}\in C\cap N(X\setminus C)$ must be equal to all zero subsets containing some $x\in C$. Hence $C\in Q$ which contradicts $\mathcal{Q}\in N(X\setminus C)$.
\end{proof}

\begin{Corollary}
The basis of $X\cup \partial (X,\mathcal{B})$ consists of open-closed sets.
\end{Corollary}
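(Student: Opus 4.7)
The plan is to show that for every basis element $N(U)$, the set $N(X\setminus U)$ is also in the basis and is complementary to $N(U)$ inside $X\cup\partial(X,\mathcal{B})$; since both are open as basis elements, they would then be each other's complements and hence closed, so the corollary would follow. I read the notation $\partial(X,\mathcal{B})$ in the sense of the earlier Remark, where the topology $\mathcal{T}$ on $X$ is taken to be discrete. Under that convention every subset of $X$ is simultaneously a zero-set and a co-zero-set, so $N(X\setminus U)$ is indeed a basis element for every $U\subset X$.

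The first step is to invoke Lemma \ref{NUIsaBasis} to compute
\[
N(U)\cap N(X\setminus U)=N(\emptyset),\qquad N(U)\cup N(X\setminus U)=N(X),
\]
which reduces the problem to the two identities $N(\emptyset)=\emptyset$ and $N(X)=X\cup\partial(X,\mathcal{B})$.

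The second step, which is the actual technical heart, is to extract these two identities from maximality of $\mathcal{Q}\in\partial(X,\mathcal{B})$. For $N(X)=X\cup\partial(X,\mathcal{B})$ the key fact is that $\emptyset\in\mathcal{B}$ automatically, so the defining condition of $\mathcal{Q}$ forbids $\emptyset\in\mathcal{Q}$; hence $\{\mathcal{Q}:\emptyset\notin\mathcal{Q}\}=\partial(X,\mathcal{B})$. For $N(\emptyset)=\emptyset$ the key fact runs the other way: assuming the non-degenerate case $X\notin\mathcal{B}$, adding $X$ to any $\mathcal{Q}$ does not change any finite intersection (since $X\cap A=A$), so by maximality $X\in\mathcal{Q}$ for every $\mathcal{Q}$, whence $\{\mathcal{Q}:X\notin\mathcal{Q}\}=\emptyset$. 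The degenerate case $X\in\mathcal{B}$ makes $\partial(X,\mathcal{B})$ empty and the corollary vacuous.

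The main obstacle I anticipate is interpretive rather than computational: the argument depends on $X\setminus U$ being co-zero, which is automatic only in the bornological reading where $\mathcal{T}$ is discrete. In a genuine large scale topological space with a non-discrete $\mathcal{T}$, $X\setminus U$ is only a zero-set, $N(X\setminus U)$ is no longer a basis element, and one would be left with the weaker conclusion of \ref{BisOpenClosed} concerning elements of $\mathcal{B}$ rather than arbitrary basis elements.
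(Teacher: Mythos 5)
Your proof is correct and follows essentially the same route as the paper's: the paper's one-line argument is precisely that in the bornological (discrete-topology) case the complement of $N(U)$ is $N(X\setminus U)$, which you verify in more detail via Lemma \ref{NUIsaBasis} together with the identities $N(\emptyset)=\emptyset$ and $N(X)=X\cup\partial(X,\mathcal{B})$. Your interpretive remark that $\mathcal{T}$ must be read as discrete matches the paper's intended reading exactly.
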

\begin{proof}
In this case the topology on $X$ is discrete and the complement of $N(U)$ is $N(X\setminus U)$.
\end{proof}

\begin{Theorem}\label{XCupBoundaryIsCompactHausdorff}
$X\cup \partial (X,\mathcal{T},\mathcal{B})$ is large scale compact and topologically normal.
\end{Theorem}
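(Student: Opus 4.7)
The plan is to verify large scale compactness and topological Hausdorff separation, and then to conclude normality by applying both parts of Proposition~\ref{NormalityOfLargeScaleCompactSpaces}.

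For large scale compactness, I start with an arbitrary open cover of $X\cup\partial(X,\mathcal{T},\mathcal{B})$, refine it to a cover by basic open sets $\{N(U_s)\}_{s\in S}$ where each $U_s$ is a cozero-subset of $X$, and seek a finite $F\subset S$ with $X\setminus\bigcup_{s\in F}U_s\in\mathcal{B}$. Suppose no such $F$ exists. Then the zero-sets $\{X\setminus U_s\}_{s\in S}$ have the property that every finite intersection $\bigcap_{s\in F}(X\setminus U_s)=X\setminus\bigcup_{s\in F}U_s$ avoids $\mathcal{B}$, so Zorn's lemma produces a maximal extension $\mathcal{P}\in\partial(X,\mathcal{T},\mathcal{B})$. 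Since the cover hits $\mathcal{P}$, one has $\mathcal{P}\in N(U_{s_0})$ for some $s_0$, contradicting $X\setminus U_{s_0}\in\mathcal{P}$. One should separately verify that once a finite $F$ achieves $X\setminus\bigcup_{s\in F}U_s\in\mathcal{B}$, the subcover $\{N(U_s)\}_{s\in F}$ automatically covers all of $\partial$: any $\mathcal{Q}\in\partial$ omitted would contain every $X\setminus U_s$, $s\in F$, and (since intersections of zero-sets are zero-sets, a short maximality argument shows any $\mathcal{Q}\in\partial$ is closed under finite intersection) would therefore contain their intersection, which lies in $\mathcal{B}$—impossible.

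For Hausdorff separation of two distinct boundary points $\mathcal{P}_1,\mathcal{P}_2$, pick $A\in\mathcal{P}_1\setminus\mathcal{P}_2$ and use maximality of $\mathcal{P}_2$ to obtain $B\in\mathcal{P}_2$ with $A\cap B\in\mathcal{B}$. Since $A\cap B$ is open-closed in $X$, its characteristic function is continuous, so adding it to the defining functions of $A$ and $B$ exhibits $A':=A\setminus(A\cap B)$ and $B':=B\setminus(A\cap B)$ as disjoint zero-sets still lying in $\mathcal{P}_1,\mathcal{P}_2$ respectively by Corollary~\ref{SetMinusBorno}. Writing $A'=f^{-1}(0)$, $B'=g^{-1}(0)$, the disjoint cozero-sets $W_1:=\{g>f\}$ and $W_2:=\{f>g\}$ contain $A'$ and $B'$, and yield basic open sets $N(W_1)\ni\mathcal{P}_1$, $N(W_2)\ni\mathcal{P}_2$ that are disjoint by Lemma~\ref{NUIsaBasis}, since $N(W_1)\cap N(W_2)=N(W_1\cap W_2)=N(\emptyset)=\emptyset$. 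The remaining cases (two points of $X$, or one point of $X$ and one boundary point) are handled analogously via the functional Hausdorff property of $X$ tacitly in force for the identifications to make set-theoretic sense. With large scale compactness and Hausdorff verified, topological normality follows at once by Proposition~\ref{NormalityOfLargeScaleCompactSpaces}.

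The principal obstacle is the Hausdorff step: the conversion of a pair $(A,B)$ of zero-sets with $A\cap B\in\mathcal{B}$ into a genuinely disjoint pair of zero-sets, which relies essentially on the definitional property that bornology elements of a large scale topological space are open-closed, so that $\chi_{A\cap B}$ can be added continuously to the defining functions.
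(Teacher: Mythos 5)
Your proof is correct and follows essentially the same route as the paper: the same maximal-family contradiction for large scale compactness, the same reduction of Hausdorffness to separating two boundary points by trimming off a bornology element (via Corollary~\ref{SetMinusBorno} and the open-closedness of members of $\mathcal{B}$) and completely separating the resulting disjoint zero-sets, and the same appeal to Proposition~\ref{NormalityOfLargeScaleCompactSpaces} for normality. The only cosmetic difference is that you spell out the complete-separation of disjoint zero-sets and the coverage of $\partial$ by the finite subfamily, which the paper leaves implicit.
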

\begin{proof}
Suppose $\{N(U)\}_{U\in \mathcal{S}}$ is a cover of $X\cup \partial (X,\mathcal{T},\mathcal{B})$
such that $X\cup \partial (X,\mathcal{T},\mathcal{B})\setminus \bigcup\limits_{U\in \mathcal{F}}N(U)$ does not belong to $\mathcal{B}$ for all finite subfamilies $\mathcal{F}$ of $\mathcal{S}$. Consider the family
$\{X\setminus \bigcup\limits_{U\in \mathcal{F}}U\}_{\mathcal{F}\subset \mathcal{S}}$, where $\mathcal{F}$ runs over all finite subfamilies of $\mathcal{S}$. That family has the property that it is closed under finite intersections and none of its elements belongs to $\mathcal{B}$. Therefore it extends to an element $\mathcal{P}$ of $\partial (X,\mathcal{T},\mathcal{B})$.
Hence, $\mathcal{P}\in N(U)$ for some $U\in \mathcal{S}$. However, $X\setminus U$ also belongs to $\mathcal{P}$, a contradiction.

To prove $X\cup \partial (X,\mathcal{T},\mathcal{B})$ is topologically normal it suffices to show it is Hausdorff. Clearly, any two points $x_1\ne x_2$ in $X$ have disjoint neighborhoods in $X\cup \partial (X,\mathcal{T},\mathcal{B})$ if $x_1\in \mathcal{B}$. Namely, it is $\{x_1\}$ and its complement.
If $\mathcal{P}\ne \mathcal{Q}$ are points of $\partial (X,\mathcal{T},\mathcal{B})$, then there is $C\in \mathcal{P}\setminus \mathcal{Q}$. Hence, there is $D\in \mathcal{Q}$ such that $C\cap D\in \mathcal{B}$. Notice $E:=D\setminus C\cap D\in \mathcal{Q}$
(see \ref{SetMinusBorno})
and there exist disjoint co-zero subsets  $U$ and $W$ of $X$ containing $C$ and $E$, respectively. Observe $\mathcal{P}\in N(U)$,
$\mathcal{Q}\in N(W)$, and $N(U)\cap N(W)=\emptyset$.
\end{proof}

\begin{Theorem}\label{FundThmOfLSCompactifications}
If $f:(X,\mathcal{T}_X,\mathcal{B}_X)\to (Y,\mathcal{T}_Y,\mathcal{B}_Y)$ is a continuous function of
large scale topological spaces such that $f^{-1}(\mathcal{B}_Y)\subset \mathcal{B}_X$, then there is a unique continuous extension
$$\tilde f:X\cup \partial (X,\mathcal{T}_X,\mathcal{B}_X)\to Y\cup \partial(Y,\mathcal{T}_Y,\mathcal{B}_Y)$$
of $f$. It is 
given by
$$\tilde f(\mathcal{P})\in \bigcap\limits_{C\in \mathcal{P}} cl(f(C))$$
for all $\mathcal{P}\in \partial (X,\mathcal{T}_X,\mathcal{B}_X)$.
\end{Theorem}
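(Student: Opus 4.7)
I define $\tilde f(\mathcal{P})$ to be the unique point of the intersection
$\bigcap_{C\in\mathcal{P}}cl(f(C))$ in $Y\cup\partial(Y,\mathcal{T}_Y,\mathcal{B}_Y),$
then verify that this intersection is nonempty and a singleton, that $\tilde f$ agrees with $f$ on $X$, that $\tilde f$ is continuous, and that the continuous extension is unique. Non-emptiness uses the hypothesis $f^{-1}(\mathcal{B}_Y)\subset\mathcal{B}_X$: if the intersection were empty, the complements $(Y\cup\partial Y)\setminus cl(f(C))$ would form an open cover of $Y\cup\partial Y$, and large scale compactness (Theorem \ref{XCupBoundaryIsCompactHausdorff}) would yield $C_1,\dots,C_n\in\mathcal{P}$ with $\bigcap_i cl(f(C_i))\in\mathcal{B}_Y$; intersecting with $Y$ and pulling back gives $C_1\cap\cdots\cap C_n\subset f^{-1}(B)\in\mathcal{B}_X$ for some $B\in\mathcal{B}_Y$, contradicting the defining property of $\mathcal{P}$. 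The singleton property uses Hausdorffness: distinct points $\mathcal{Q}_1\ne\mathcal{Q}_2$ in the intersection would admit disjoint basic neighborhoods $N(U_1),N(U_2)$ with $U_1\cap U_2=\emptyset$; the condition $\mathcal{Q}_i\in cl(f(C))$ gives $U_i\cap f(C)\neq\emptyset$, so $f^{-1}(Y\setminus U_i)\not\supset C$ for every $C\in\mathcal{P}$; yet $X=f^{-1}(Y\setminus U_1)\cup f^{-1}(Y\setminus U_2)$ lies in $\mathcal{P}$ (by maximality), so Observation \ref{ObservationOnPointsAtBdOfBornology} places one $f^{-1}(Y\setminus U_i)\in\mathcal{P}$, a contradiction. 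For $x\in X$, Hausdorffness reduces $\bigcap cl(f(C))$ to $\{f(x)\}$, confirming $\tilde f|X=f$.

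\textbf{Continuity.} Fix $\mathcal{P}$ and a basic neighborhood $N(V)\ni\tilde f(\mathcal{P})$. Applying large scale compactness to the cover $\{(Y\cup\partial Y)\setminus cl(f(C))\}_{C\in\mathcal{P}}\cup\{N(V)\}$ (which covers $Y\cup\partial Y$ since $\tilde f(\mathcal{P})$ is the only point of $\bigcap cl(f(C))$) yields $C_0:=C_1\cap\cdots\cap C_n\in\mathcal{P}$ and $B\in\mathcal{B}_Y$ with $cl(f(C_0))\cap cl(Y\setminus V)\subset B$. Because elements of the bornology on $Y\cup\partial Y$ are subsets of $Y$, we actually have $B\subset Y\setminus V$. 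Hence $f(C_0)\subset V\cup B$, so $C_0\subset f^{-1}(V)\cup f^{-1}(B)=:W$, a co-zero subset of $X$ since $B$ (being open-closed) is co-zero. The neighborhood $N(W)$ contains $\mathcal{P}$ because $C_0\cap(X\setminus W)=\emptyset\in\mathcal{B}_X$. For $\mathcal{P}'\in N(W)$, Corollary \ref{SetMinusBorno} applied with $f^{-1}(B)\in\mathcal{B}_X$ gives $f^{-1}(Y\setminus V)\notin\mathcal{P}'$, and by maximality some $C''\in\mathcal{P}'$ satisfies $C''\subset f^{-1}(V)$, so $\tilde f(\mathcal{P}')\in cl(f(C''))\subset cl(V)$. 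To exclude $\tilde f(\mathcal{P}')\in cl(Y\setminus V)$, I use that for $\mathcal{P}'\in\partial X$ one has $X\setminus f^{-1}(B)\in\mathcal{P}'$ (Observation \ref{ObservationOnPointsAtBdOfBornology}), so the immediate forward implication $f^{-1}(D)\in\mathcal{P}'\Rightarrow\tilde f(\mathcal{P}')\in cl(D)$ (applied to $D=Y\setminus B$) forces $\tilde f(\mathcal{P}')\notin B$; combined with $C_0\in\mathcal{P}'$ and $cl(f(C_0))\cap cl(Y\setminus V)\subset B$, this yields $\tilde f(\mathcal{P}')\in N(V)$. For $\mathcal{P}'\in X$ the conclusion follows directly from $V\cap B=\emptyset$ after splitting $N(W)\cap X=f^{-1}(V)\sqcup f^{-1}(B)$ along the open-closed set $f^{-1}(B)$.

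\textbf{Uniqueness and main obstacle.} The set $X$ is dense in $X\cup\partial X$ since every nonempty basic open $N(U)$ meets $X$ in $U$. Two continuous extensions $\tilde f_1,\tilde f_2$ of $f$ agreeing on the dense set $X$ must coincide globally: otherwise Hausdorffness of $Y\cup\partial Y$ would separate $\tilde f_1(\mathcal{P})\ne\tilde f_2(\mathcal{P})$ by disjoint opens, and their preimages would be disjoint neighborhoods of $\mathcal{P}$ each meeting $X$---impossible since both restrictions equal $f$ on $X$. The genuine technical difficulty lies in the continuity verification: specifically, in ensuring the containment $C_0\in\mathcal{P}'$ for all $\mathcal{P}'$ in the chosen neighborhood, which is not automatic from $\mathcal{P}'\in N(W)$ alone and requires careful combination of the open-closed nature of $B\subset Y\setminus V$ with the bornological identities \ref{SetMinusBorno} and \ref{ObservationOnPointsAtBdOfBornology}. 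This bookkeeping, rather than any deep topological input, is what I expect to be the main obstacle.
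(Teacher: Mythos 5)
Your overall skeleton matches the paper's: define $\tilde f(\mathcal{P})$ inside $\bigcap_{C\in\mathcal{P}}cl(f(C))$, get non-emptiness from large scale compactness of $Y\cup\partial(Y,\mathcal{T}_Y,\mathcal{B}_Y)$ together with $f^{-1}(\mathcal{B}_Y)\subset\mathcal{B}_X$, and get uniqueness from density of $X$ plus Hausdorffness of the target. Those parts, and your extra observation that the intersection is a singleton, are sound. The problem is the continuity verification, and you have in fact located it yourself: your derivation of $\tilde f(\mathcal{P}')\in N(V)$ for $\mathcal{P}'\in N(W)$ invokes $C_0\in\mathcal{P}'$, and this is simply false in general --- $C_0$ is a particular element of the fixed family $\mathcal{P}$, and a nearby $\mathcal{P}'$ only satisfies $X\setminus W\notin\mathcal{P}'$, where $W$ may be far larger than $C_0$, so nothing forces $C_0$ into $\mathcal{P}'$. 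No combination of \ref{SetMinusBorno} and \ref{ObservationOnPointsAtBdOfBornology} will produce it; this is not bookkeeping but a missing idea. Without it, what you actually have is $\tilde f(\mathcal{P}')\in cl(f(C''))\subset cl(V)$ with $V$ co-zero, and $cl(V)$ can perfectly well meet $cl(Y\setminus V)$, so you cannot conclude $Y\setminus V\notin\tilde f(\mathcal{P}')$.

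The missing ingredient, which is how the paper closes exactly this step, is interpolation by a zero-set. Since $\tilde f(\mathcal{P})\in N(V)$, first extract $D\in\tilde f(\mathcal{P})$ with $D\subset V$ (maximality plus \ref{SetMinusBorno}); then use complete separation of the disjoint zero-sets $D$ and $Y\setminus V$ to choose a co-zero set $W_0$ and a zero-set $E$ with $D\subset W_0\subset E\subset V$, and take $N(f^{-1}(W_0))$ as the neighborhood of $\mathcal{P}$ (one checks $\mathcal{P}\in N(f^{-1}(W_0))$ because $X\setminus f^{-1}(W_0)\in\mathcal{P}$ would put $Y\setminus W_0$ into $\tilde f(\mathcal{P})$, contradicting $D\in\tilde f(\mathcal{P})$). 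Any $\mathcal{P}'$ in that neighborhood has some $C''\in\mathcal{P}'$ with $C''\subset f^{-1}(W_0)$, whence $\tilde f(\mathcal{P}')\in cl(f(C''))\subset cl(W_0)\subset cl(E)$; because $E$ is a zero-set, this forces $E\in\tilde f(\mathcal{P}')$, and $E\cap(Y\setminus V)=\emptyset\in\mathcal{B}_Y$ then rules out $Y\setminus V\in\tilde f(\mathcal{P}')$, i.e. $\tilde f(\mathcal{P}')\in N(V)$. The point is to land in the closure of a zero-set strictly interior to $V$, where the characterization ``$\mathcal{Q}\in cl(E)$ iff $E\in\mathcal{Q}$'' applies; your $B$-avoidance device does not substitute for this.
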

\begin{proof}
First of all, let us show that $\bigcap\limits_{C\in \mathcal{P}} cl(f(C))\ne\emptyset$ for all $\mathcal{P}$, i.e. $\tilde f$ exists. Suppose $\bigcap\limits_{C\in \mathcal{P}} cl(f(C))=\emptyset$
for some $\mathcal{P}$. Since $Y\cup \partial(Y,\mathcal{T}_Y,\mathcal{B}_Y)$ is large scale compact, there exist $C_i\in P$, $1\leq i\leq k$, such that $\bigcap\limits_{i=1}^k cl(f(C_i))\in \mathcal{B}_Y$. Let $C:=\bigcap\limits_{i=1}^k C_i$. Now,
$C\in \mathcal{P}$ and $f(C)\in \mathcal{B}_Y$.
Therefore $C\in \mathcal{B}_X$, a contradiction.

Our next step is to show continuity of $\tilde f$.
Suppose $\tilde f(\mathcal{P})\in N(U)$ for some co-zero set $U$ in $Y$.
By a similar argument as above, $D\subset U$ for some
$D\in \tilde f(\mathcal{P})$, so we can choose a co-zero set $W$ in $Y$ and a zero-set $E$ in $Y$ satisfying $D\subset W\subset E\subset U$. Suppose $\mathcal{Q}\in N(f^{-1}(W))$.
There is $C\in \mathcal{Q}$, $C\subset f^{-1}(W)$. Hence
$\tilde f(\mathcal{Q})\in cl(f(C))\subset cl(W)\subset cl(E)$ which results
in $E\in \tilde f(\mathcal{Q})$. Now, $Y\setminus U\notin \tilde f(\mathcal{Q})$,
so $\tilde f(\mathcal{Q})\in N(U)$ which completes the proof of continuity of $\tilde f$ provided $\mathcal{P}\in N(f^{-1}(W))$.
It is so as $G:=X\setminus f^{-1}(W)\in \mathcal{P}$ implies
$\tilde f(\mathcal{P})\in cl(f(G))\subset cl(Y\setminus W)$,
i.e. $Y\setminus W\in \tilde f(\mathcal{P})$ which contradicts 
$D\in \tilde f(\mathcal{P})$.

The uniqueness of $\tilde f$ follows from the fact the range is Hausdorff (see \ref{XCupBoundaryIsCompactHausdorff}) and
$X$ is dense in $X\cup \partial (X,\mathcal{T}_X,\mathcal{B}_X)$.
\end{proof}

\begin{Observation}
\ref{FundThmOfLSCompactifications} is a generalization of the classical \v Cech-Stone compactification. Namely, it reduces to the basic property of \v Cech-Stone compactification in the case of bornologies being empty. See \ref{BoundaryOfLSTopInCompactCase}
for more details.
\end{Observation}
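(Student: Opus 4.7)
The plan is to verify that when both $\mathcal{B}_X$ and $\mathcal{B}_Y$ are empty, the objects constructed above coincide with the classical Čech-Stone compactification and the extension $\tilde f$ coincides with the classical Stone extension. To begin, I would unfold the definition of $\partial(X,\mathcal{T},\emptyset)$: the condition "intersection of any finite subfamily of $\mathcal{P}$ is not in $\mathcal{B}$" degenerates, when $\mathcal{B}=\emptyset$, to "every finite subfamily of $\mathcal{P}$ has nonempty intersection." Thus the maximal families $\mathcal{P}$ of zero-sets of $X$ that appear in the definition are precisely the z-ultrafilters on $X$, the standard building blocks of $\beta X$ in the Gillman-Jerison description. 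Since in this case no point of $X$ is in the bornology, every $\{x\}\subset X$ is identified in $X\cup\partial(X,\mathcal{T},\emptyset)$ with the z-ultrafilter of zero-sets containing $x$, which is the standard identification of $X$ as a subspace of $\beta X$ (assuming $X$ is functionally Hausdorff, as the Observation preceding the theorem notes).

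Next I would compare the topologies. The basis elements $N(U)=U\cup\{\mathcal{Q}:X\setminus U\notin\mathcal{Q}\}$ with $U$ cozero in $X$ coincide with the standard basis for $\beta X$ given by $\{\mathcal{Q}\in\beta X: \text{some zero-set in }\mathcal{Q}\text{ is contained in }U\}$, using \ref{ObservationOnPointsAtBdOfBornology} to swap "some zero-set inside $U$" with "$X\setminus U\notin\mathcal{Q}$." Combined with \ref{XCupBoundaryIsCompactHausdorff}, which for $\mathcal{B}=\emptyset$ yields an ordinary compact Hausdorff space, this identifies $X\cup\partial(X,\mathcal{T},\emptyset)$ with $\beta X$ for functionally Hausdorff $X$ (and, more generally, with the Čech-Stone compactification of its functionally Hausdorff quotient, cf.~\ref{BoundaryOfLSTopInCompactCase}).

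For the extension part, when $\mathcal{B}_Y=\emptyset$ the hypothesis $f^{-1}(\mathcal{B}_Y)\subset\mathcal{B}_X$ of \ref{FundThmOfLSCompactifications} is automatic, so every continuous $f:X\to Y$ admits a unique continuous extension $\tilde f:X\cup\partial(X,\mathcal{T}_X,\emptyset)\to Y\cup\partial(Y,\mathcal{T}_Y,\emptyset)$. Specializing further to a compact Hausdorff target $Y$ (where $\partial(Y,\mathcal{T}_Y,\emptyset)=\emptyset$ because every z-ultrafilter on a compact Hausdorff space converges, making the inclusion $Y\hookrightarrow Y\cup\partial Y$ a homeomorphism), the formula $\tilde f(\mathcal{P})\in\bigcap_{C\in\mathcal{P}}cl(f(C))$ is exactly the classical recipe for extending a continuous map into a compact Hausdorff space along the dense inclusion $X\hookrightarrow\beta X$.

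The main obstacle, and the reason the Observation points to a separate subsequent result \ref{BoundaryOfLSTopInCompactCase} for details, is the bookkeeping needed to move between "functionally Hausdorff" hypotheses and genuine $X$-as-subspace statements: one must check that the induced subspace topology on $X\subset X\cup\partial(X,\mathcal{T},\emptyset)$ agrees with the original $\mathcal{T}$ (which requires $\mathcal{T}$ to be the cozero topology, i.e.\ completely regular), and that points identified in $\partial$ but not in $X$ are exactly those not separated by real-valued continuous functions. Once these identifications are in place, the Observation is essentially a direct translation of the general statement into the classical language.
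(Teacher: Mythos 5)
Your proposal is correct and matches the reading the paper intends: the Observation carries no proof of its own, and your unfolding (with $\mathcal{B}=\{\emptyset\}$ the maximal families become z-ultrafilters, $N(U)$ becomes the standard basis for $\beta X$, the hypothesis $f^{-1}(\mathcal{B}_Y)\subset\mathcal{B}_X$ trivializes, and \ref{BoundaryOfLSTopInCompactCase} identifies $Y\cup\partial Y$ with $Y$ for compact Hausdorff targets) is exactly the deferred content. One small correction: under the paper's conventions $\partial(Y,\mathcal{T}_Y,\{\emptyset\})$ for compact Hausdorff $Y$ is not empty but consists of all principal z-ultrafilters and is identified with $Y$ itself via \ref{BoundaryOfLSTopInCompactCase} --- your conclusion that $Y\hookrightarrow Y\cup\partial Y$ is a homeomorphism is nevertheless the correct one.
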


\begin{Proposition}\label{BoundaryOfLSTopInCompactCase}
If $(X,\mathcal{T}_X,\mathcal{B}_X)$ is large scale compact, then for every $\mathcal{P}\in \partial(X,\mathcal{T}_X,\mathcal{B}_X)$
there is $x\in X\setminus \bigcup \mathcal{B}_X$ such that
$P$ equals the family of all zero-sets in $X$ containing $x$.
In particular, if $(X,\mathcal{T}_X,\mathcal{B}_X)$ is large scale compact and Hausdorff, then $\partial(X,\mathcal{T}_X,\mathcal{B}_X)=X\setminus \bigcup \mathcal{B}_X$ and
$id:(X,\mathcal{T}_X,\mathcal{B}_X)\to X\cup \partial(X,\mathcal{T}_X,\mathcal{B}_X)$ is a homeomorphism.
\end{Proposition}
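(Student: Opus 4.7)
The plan is to use large scale compactness to force every maximal family $\mathcal{P}\in\partial(X,\mathcal{T}_X,\mathcal{B}_X)$ to have nonempty intersection, and then use the defining property of $\mathcal{P}$ together with the fact that bornological sets are open-closed to locate a concrete point $x\in X\setminus\bigcup\mathcal{B}_X$ whose zero-set neighborhood filter is exactly $\mathcal{P}$.

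First I would record that maximality makes $\mathcal{P}$ closed under finite intersections: adding $C_1\cap C_2$ (itself a zero-set) to $\mathcal{P}$ produces only finite intersections that already occur in $\mathcal{P}$, so by maximality $C_1\cap C_2\in\mathcal{P}$. This yields the dual form I will use repeatedly: for every zero-set $D$, either $D\in\mathcal{P}$ or there is $E\in\mathcal{P}$ with $D\cap E\in\mathcal{B}_X$. Next I would show $\bigcap_{C\in\mathcal{P}}C\neq\emptyset$: otherwise $\{X\setminus C:C\in\mathcal{P}\}$ is a cover of $X$ by co-zero (hence open) sets, so large scale compactness gives $C_1,\ldots,C_k\in\mathcal{P}$ with $X\setminus\bigcup_i(X\setminus C_i)=\bigcap_i C_i\in\mathcal{B}_X$, contradicting the defining property of $\mathcal{P}$. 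Pick $x\in\bigcap\mathcal{P}$; to prove $x\notin\bigcup\mathcal{B}_X$, assume $x\in B\in\mathcal{B}_X$. Since $B$ is open-closed, $X\setminus B$ is a zero-set not containing $x$, hence not in $\mathcal{P}$, so the dual form gives $E\in\mathcal{P}$ with $E\cap(X\setminus B)\in\mathcal{B}_X$; combined with $E\cap B\subset B\in\mathcal{B}_X$ this forces $E\in\mathcal{B}_X$, a contradiction. Finally, the same dual form shows $\mathcal{P}=\{C:x\in C,\ C\text{ zero-set}\}$: any zero-set $D\ni x$ absent from $\mathcal{P}$ would produce $E\in\mathcal{P}$ with $x\in D\cap E\in\mathcal{B}_X$, putting $x$ back into $\bigcup\mathcal{B}_X$.

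For the ``in particular'' part I would invoke \ref{NormalityOfLargeScaleCompactSpaces} to get that $(X,\mathcal{T}_X)$ is topologically normal, hence Tychonoff, so co-zero sets form a basis of $\mathcal{T}_X$ and disjoint closed sets can be separated by disjoint zero-sets. The map $x\mapsto \{C\text{ zero-set}:x\in C\}$ is then bijective between $X\setminus\bigcup\mathcal{B}_X$ and $\partial(X,\mathcal{T}_X,\mathcal{B}_X)$: surjectivity is the first part, and maximality of each $\mathcal{F}_x$ for $x\notin\bigcup\mathcal{B}_X$ follows from Urysohn, since any zero-set $D$ not containing $x$ admits a zero-set $E\ni x$ disjoint from $D$, so $\mathcal{F}_x\cup\{D\}$ already fails the non-bornological-intersection condition. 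Since $N(U)\cap X=U$ for every co-zero $U$ and these $U$ form a basis of $\mathcal{T}_X$, the identity is a homeomorphism.

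I expect the decisive step to be placing $x$ outside every element of $\mathcal{B}_X$. Getting $\bigcap\mathcal{P}\neq\emptyset$ is just large scale compactness, but ruling out $x\in B\in\mathcal{B}_X$ genuinely uses the hypothesis that bornological sets are open-closed (so their complements are available as zero-sets for the maximality argument)---this is what drives the whole correspondence between $\partial$ and $X\setminus\bigcup\mathcal{B}_X$.
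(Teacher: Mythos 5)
Your proof is correct and uses the same essential mechanism as the paper: apply large scale compactness to the open cover by complements of elements of $\mathcal{P}$ to contradict the non-bornological-intersection property. The only organizational difference is that the paper handles both issues in a single cover (throwing in the singletons $X\setminus\{x\}$ for $x\in\bigcup\mathcal{B}_X$, which lie in $\mathcal{P}$ by \ref{SetMinusBorno}) where you first get $\bigcap\mathcal{P}\ne\emptyset$ and then exclude bornological points via maximality; your explicit treatment of the ``in particular'' clause, which the paper leaves implicit, is sound.
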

\begin{proof}
Suppose for each $x\in X\setminus \bigcup \mathcal{B}_X$ there is
$C_x\in \mathcal{P}$ not containing $x$. For $x\in \bigcup \mathcal{B}_X$ put $C_x:=X\setminus \{x\}$.
Since $X=\bigcup\limits_{x\in X}(X\setminus C_x)$,
there exists a finite subset $F$ of $X$ such that $\bigcap\limits_{x\in F}C_x\in \mathcal{B}_X$. However, $ \bigcap\limits_{x\in F}C_x\in \mathcal{P}$, a contradiction.
\end{proof}

\section{Boundaries of formed sets}
In this section we generalize boundaries of large scale topological spaces to boundaries of formed sets.
\begin{Definition}
Given a set $X$ equipped with a basic multilinear form $\omega$,
the \textbf{boundary} $\partial(\omega)$ is defined as the set of maximal families $\mathcal{P}$ of subsets of $X$ so that
$$\omega(C_1,\ldots,C_k)=\infty$$
for all sets $C_1,\ldots,C_k\in  \mathcal{P}$. Such $\mathcal{P}$ may be referred to as a \textbf{point at infinity} of $X$ despite the possibility of $\mathcal{P}$ being identified with some point of $X$.
\end{Definition}

\begin{Definition}
If $\{x\}$ is $\omega$-unbounded, then we can identify $x$ with the principal ultrafilter consisting of all subsets of $X$ containing $x$. This way we can talk about
$X\cup \partial (\omega)$. To distinguish between $X$ and its boundary, for each $C\subset X$ we define its \textbf{non-boundary points} $C^o$ as
$\{x\in C | \omega(x)=0\}=C\setminus \partial(\omega)$.
\end{Definition}

\begin{Remark}
There may be points $\mathcal{P}$ of the boundary containing $x\in X$ but not equal to $x$. That is the case if $\omega(x)=\omega(X\setminus\{x\})=\omega(x,X\setminus\{x\})=\infty$, for example.
\end{Remark}

\begin{Observation}\label{ObservationOnPointsAtBdOfForm}
If $C\cup D\in \mathcal{Q}\in \partial(\omega)$, then $C\in \mathcal{Q}$ or $D\in \mathcal{Q}$.
\end{Observation}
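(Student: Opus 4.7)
The plan is to mimic the strategy used in Observation \ref{ObservationOnPointsAtBdOfBornology}, where the bornology was replaced by ``$\omega$-value zero''. I would argue by contradiction: assume $C \cup D \in \mathcal{Q}$ but neither $C$ nor $D$ belongs to $\mathcal{Q}$. Then by maximality of $\mathcal{Q}$, adding $C$ to $\mathcal{Q}$ would violate the defining property, so there exist finitely many $C_1,\ldots,C_k \in \mathcal{Q}$ such that $\omega(C,C_1,\ldots,C_k)=0$; similarly, there exist $D_1,\ldots,D_m\in \mathcal{Q}$ with $\omega(D,D_1,\ldots,D_m)=0$.

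Next I would concatenate all the witnesses and compute $\omega((C\cup D)\ast C_1 \ast \cdots \ast C_k \ast D_1 \ast \cdots \ast D_m)$. Axiom 1 (distributivity over unions in a coordinate, combined with the symmetry of $\omega$) splits this into the sum of
\[
\omega(C\ast C_1\ast\cdots\ast C_k\ast D_1\ast\cdots\ast D_m) \quad\text{and}\quad \omega(D\ast C_1\ast\cdots\ast C_k\ast D_1\ast\cdots\ast D_m).
\]
By Proposition \ref{OmegaInequalityThree}, the first summand is bounded by $\omega(C\ast C_1\ast\cdots\ast C_k)=0$, and the second is bounded by $\omega(D\ast D_1\ast\cdots\ast D_m)=0$. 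Therefore the whole expression equals $0$.

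But this contradicts the fact that every finite subfamily of $\mathcal{Q}$ has $\omega$-value $\infty$: the coordinates $C\cup D, C_1, \ldots, C_k, D_1, \ldots, D_m$ are all elements of $\mathcal{Q}$. Hence either $C\in\mathcal{Q}$ or $D\in\mathcal{Q}$.

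The main subtlety is the very first step: extracting the finite witness families $\{C_i\}$ and $\{D_j\}$ from maximality. One has to justify that maximality of $\mathcal{Q}$ (in the poset of families whose finite subfamilies all satisfy $\omega=\infty$) implies the existence of such finite obstructions whenever a set fails to lie in $\mathcal{Q}$; this is the analogue of the ultrafilter-style argument used implicitly in \ref{ObservationOnPointsAtBdOfBornology}. After that, the computation is a routine application of Axioms 1 and 3 together with \ref{OmegaInequalityThree}.
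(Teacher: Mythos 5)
Your proof is correct and is essentially the paper's own argument: the paper likewise extracts finite witness vectors $V_1,V_2\in Vect(\mathcal{Q})$ with $\omega(C\ast V_1)=\omega(D\ast V_2)=0$ from maximality and concludes $\omega((C\cup D)\ast V_1\ast V_2)=0$ via Axiom 1 and Proposition \ref{OmegaInequalityThree}. You have merely spelled out the maximality step and the distributivity computation in more detail than the paper does.
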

\begin{proof}
Suppose $C\cup D\in \mathcal{Q}$ but $C\notin \mathcal{Q}$ and $D\notin \mathcal{Q}$. In that case there exist $V_1, V_2\in Vect(\mathcal{Q})$
such that $\omega(C\ast V_1)=\omega(D\ast V_2)=0$. Consequently,
$\omega((C\cup D)\ast V_1\ast V_2)=0$ by \ref{OmegaInequalityThree}, a contradiction.
\end{proof}

In order to extend the topology on $(X,\omega)$ induced by $\omega$ over $X\cup \partial (\omega)$ we need the following concept:
\begin{Definition}
Suppose $(X,\omega)$ is a formed set and $C\subset X$.
$o(C)$ is defined as
$$o(C):=C^o\cup \{\mathcal{Q}\in \partial (\omega)\mid X\setminus C\notin \mathcal{Q}\},$$
\end{Definition}

\begin{Observation}
Notice $C=o(C)\cap X$. Indeed, if $x\in C\setminus C^o$, then
$X\setminus C$ does not belong to the principal ultrafilter generated by $x$.
Conversely, if $x\in X\cap o(C)$, then either $x\in C^0\subset C$
or $x$ represents the principal ultrafilter not containing $X\setminus C$
which implies $x\in C$.
\end{Observation}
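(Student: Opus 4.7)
The plan is to prove $C = o(C) \cap X$ by verifying both inclusions directly from the definitions of $o(C)$ and $C^o$, using the identification of $\omega$-unbounded points of $X$ with their principal ultrafilters on $X$.

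For the inclusion $C \subseteq o(C) \cap X$, I would take $x \in C$ and split into two cases according to whether $\omega(x) = 0$ or $\omega(x) = \infty$. In the first case $x \in C^o \subseteq o(C)$ immediately from the definition of $C^o$. In the second case, $\{x\}$ is $\omega$-unbounded, so $x$ is identified with the principal ultrafilter $\mathcal{P}_x = \{A \subseteq X : x \in A\} \in \partial(\omega)$; since $x \in C$ we have $X \setminus C \notin \mathcal{P}_x$, which places $\mathcal{P}_x$ in the second term of the union defining $o(C)$. Either way $x \in o(C)$, and $x \in X$ is automatic since $x \in C \subseteq X$.

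For the reverse inclusion $o(C) \cap X \subseteq C$, take $x \in o(C) \cap X$ and again split by the definition of $o(C)$. If $x \in C^o$ then $x \in C$ directly. Otherwise $x$ corresponds (via the identification) to some $\mathcal{Q} \in \partial(\omega)$ with $X \setminus C \notin \mathcal{Q}$; since the identification sends a point of $X$ to its principal ultrafilter, $\mathcal{Q}$ is exactly the family of all subsets of $X$ containing $x$. If we had $x \notin C$ then $X \setminus C$ would contain $x$ and so belong to $\mathcal{Q}$, contradicting the defining condition $X \setminus C \notin \mathcal{Q}$. Hence $x \in C$.

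The main conceptual obstacle is not a deep calculation but correctly parsing what ``$\cap X$'' means inside $X \cup \partial(\omega)$: one must keep track of the fact that $\omega$-unbounded points of $X$ are simultaneously points of $\partial(\omega)$ via the principal ultrafilter identification, so that ``$x \in o(C) \cap X$ but $x \notin C^o$'' really does force $x$ to be represented by its principal ultrafilter, after which the containment $X \setminus C \notin \mathcal{P}_x \iff x \in C$ does all the work. Once this bookkeeping is fixed, the argument is purely a matter of unwinding the defining formulas.
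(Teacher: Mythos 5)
Your argument is correct and is essentially identical to the paper's own justification: both directions reduce to the case split between $x\in C^o$ and $x$ being identified with its principal ultrafilter, where the equivalence $X\setminus C\notin\mathcal{P}_x\iff x\in C$ does the work. You merely spell out the bookkeeping more explicitly than the paper does.
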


\begin{Lemma}\label{IntersectionOfos}
$o(C)\cap o(D)=o(C\cap D)$.
\end{Lemma}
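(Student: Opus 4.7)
The plan is to prove the set equality by checking membership element-by-element, splitting the ambient set $X \cup \partial(\omega)$ into its two natural pieces: ordinary points (the bounded points of $X$, forming the $(\,\cdot\,)^o$ part) and boundary points (elements of $\partial(\omega)$, recognized by the condition ``$X\setminus C\notin \mathcal{Q}$''). Since the defining formula for $o(C)$ is a disjoint union of these two types of contributions, the intersection distributes over the cases and I only need to verify the identity on each piece separately.

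For the first piece, I would observe that a bounded point $x\in X$ lies in $o(C)\cap o(D)$ iff $x\in C^o\cap D^o$. Unfolding, this means $x\in C$, $x\in D$, and $\omega(x)=0$, which is visibly the same as $x\in(C\cap D)^o$. No appeal to any earlier result is needed here beyond the definition of $C^o$.

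For the boundary piece, a point $\mathcal{Q}\in\partial(\omega)$ belongs to $o(C)\cap o(D)$ iff both $X\setminus C\notin\mathcal{Q}$ and $X\setminus D\notin\mathcal{Q}$. Here I would invoke Observation \ref{ObservationOnPointsAtBdOfForm}, which states that for a maximal family $\mathcal{Q}\in\partial(\omega)$ one has $A\cup B\in\mathcal{Q}\iff A\in\mathcal{Q}\text{ or }B\in\mathcal{Q}$; its contrapositive yields $A\cup B\notin\mathcal{Q}\iff A\notin\mathcal{Q}\text{ and }B\notin\mathcal{Q}$. Applying this with $A=X\setminus C$ and $B=X\setminus D$, and using the elementary identity $(X\setminus C)\cup(X\setminus D)=X\setminus(C\cap D)$, the two conditions above collapse to $X\setminus(C\cap D)\notin\mathcal{Q}$, which is precisely $\mathcal{Q}\in o(C\cap D)$.

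Combining both cases, we get $o(C)\cap o(D)=o(C\cap D)$. The only delicate point is conceptual rather than technical, namely making sure the bookkeeping between ``genuine'' boundary elements and bounded points of $X$ respects the identification described in the preceding definitions; but since the definition of $o(C)$ already records each contribution separately, this amounts only to checking that the case split is exhaustive and disjoint, which it is by the very definition of $C^o=C\setminus\partial(\omega)$.
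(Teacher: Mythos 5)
Your proof is correct and follows essentially the same route as the paper: the identity $(C\cap D)^o=C^o\cap D^o$ on the $X$-part and Observation \ref{ObservationOnPointsAtBdOfForm} (in its ``if and only if'' contrapositive form, as the paper itself uses it in \ref{NUIsaBasis}) on the boundary part. Your version just spells out the case split that the paper's one-line proof leaves implicit.
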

\begin{proof}
The equality follows from \ref{ObservationOnPointsAtBdOfForm}
and $(C\cap D)^o=C^o\cap D^o$.
\end{proof}

\begin{Definition}
Suppose $(X,\omega)$ is a set $X$ equipped with a basic multilinear form $\omega$ and $\mathcal{B}(\omega)$ is the bornology induced by $\omega$.
$X\cup \partial (\omega)$ has a \textbf{natural topology} defined as follows:
$U\subset X\cup \partial (\omega)$ is declared open if $X\cap U$ is open in $\mathcal{T}(\omega)$ and for each $Q\in U\cap \partial (\omega)$ there is an open (in $\mathcal{T}(\omega)$)
$C\subset X$ such that $Q\in o(C)\subset U$.
\end{Definition}

\begin{Remark}
In view of \ref{IntersectionOfos}, the above topology is well-defined and sets $o(U)$, $U$ open in $X$, form its basis.
\end{Remark}

\begin{Lemma}\label{ClosureCLemma}
1. $\mathcal{Q}\in \partial(\omega)$ belongs to $cl(C)$, $C\subset X$, if
$C\in \mathcal{Q}$.\\
2. If $C$ is closed in $X$ and $\mathcal{Q}\in cl(C)$, then
$C\in \mathcal{Q}$.
\end{Lemma}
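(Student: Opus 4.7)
The plan for both parts is to test $\mathcal{Q}\in cl(C)$ against the basic open neighborhoods of $\mathcal{Q}$, which by the remark after the definition of the natural topology are exactly the sets $o(U)$ with $U$ open in $X$. Before doing either direction I will record a preliminary observation that will make all the bookkeeping almost mechanical: every $\mathcal{Q}\in\partial(\omega)$ is upward closed, i.e.\ if $A\in\mathcal{Q}$ and $A\subset A'\subset X$ then $A'\in\mathcal{Q}$. This follows from \ref{OmegaInequalityOne} together with maximality of $\mathcal{Q}$: substituting $A$ in place of $A'$ in any finite subfamily of $\mathcal{Q}\cup\{A'\}$ produces a family drawn from $\mathcal{Q}$, on which $\omega$ is $\infty$; monotonicity transfers this back to the original family, and maximality then forces $A'\in\mathcal{Q}$.

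For part 1, assume $C\in\mathcal{Q}$ and take an arbitrary basic open set $o(U)$ containing $\mathcal{Q}$, so by definition $X\setminus U\notin\mathcal{Q}$. If $C\cap U=\emptyset$ then $C\subset X\setminus U$, and upward closure would force $X\setminus U\in\mathcal{Q}$, a contradiction. Hence there exists $x\in C\cap U$, and this point witnesses $o(U)\cap C\ne\emptyset$: if $\omega(\{x\})=0$ then directly $x\in U^o\subset o(U)$, while if $\omega(\{x\})=\infty$ then $x$ is identified with its principal ultrafilter $\mathcal{P}_x$, which lies in the boundary part of $o(U)$ because $x\in U$ gives $X\setminus U\notin\mathcal{P}_x$.

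For part 2, assume $C$ is closed in $X$ and $C\notin\mathcal{Q}$; I will show $\mathcal{Q}\notin cl(C)$. The set $U:=X\setminus C$ is open in $X$, so $o(U)$ is a basic open set. It contains $\mathcal{Q}$ since $X\setminus U=C\notin\mathcal{Q}$. It is also disjoint from $C$: the $X$-part of $o(U)$ is $U^o\subset X\setminus C$, and any boundary point of $o(U)$ is a family $\mathcal{R}$ with $C\notin\mathcal{R}$, which therefore cannot equal the principal ultrafilter $\mathcal{P}_x$ of any $x\in C$ (because $C\in\mathcal{P}_x$).

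The only subtlety worth flagging is the identification of points $x\in X$ with $\omega(\{x\})=\infty$ and their principal ultrafilters inside $X\cup\partial(\omega)$: in both parts one must match membership of $x$ in $C\subset X$ against the corresponding point's membership in $o(U)$. Luckily this always reduces to the equivalence $x\in U \Longleftrightarrow X\setminus U\notin\mathcal{P}_x$, which is immediate from the definition of a principal ultrafilter, so no real obstacle arises beyond keeping the bookkeeping consistent.
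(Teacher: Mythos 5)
Your proof is correct and follows essentially the same route as the paper's: both directions reduce to testing $\mathcal{Q}$ against basic neighborhoods $o(U)$, using the equivalence $o(U)\cap C=\emptyset\iff C\subset X\setminus U$ together with the upward-closedness of $\mathcal{Q}$ (which the paper uses implicitly and you rightly make explicit via \ref{OmegaInequalityOne} and maximality). The extra bookkeeping about principal ultrafilters is careful but does not change the argument.
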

\begin{proof} 
1. Suppose $C\in \mathcal{Q}$ and $\mathcal{Q}\notin cl(C)$. There is $o(D)$ containing $\mathcal{Q}$
and disjoint with $C$. Hence $D\subset X\setminus C$ resulting in $C\subset X\setminus D$. Consequently, $X\setminus D\in Q$ contradicting $\mathcal{Q}\in o(D)$.

2. If $C\notin \mathcal{Q}$, then $\mathcal{Q}\in o(X\setminus C)$. However,
$C\cap o(X\setminus C)=\emptyset$ contradicting $\mathcal{Q}\in cl(C)$.
\end{proof}

\begin{Corollary}
$C\subset o(C)\subset cl(C)$ for all $C\subset X$.
In particular, $cl(C)=cl(o(C))$.
\end{Corollary}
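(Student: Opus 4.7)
The plan is to establish the two inclusions $C \subset o(C)$ and $o(C) \subset cl(C)$ separately, and then obtain the equality $cl(C) = cl(o(C))$ as a one-line consequence of monotonicity and idempotence of closure.

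For $C \subset o(C)$, I would split an arbitrary $x \in C$ according to whether $\{x\}$ is $\omega$-bounded. If $\omega(x)=0$, then $x \in C^o$ by the very definition of $C^o$, and thus $x \in o(C)$. If instead $\omega(x)=\infty$, the convention preceding \ref{ObservationOnPointsAtBdOfForm} identifies $x$ with the principal ultrafilter consisting of all subsets of $X$ that contain $x$; since $x\in C$ means $x\notin X\setminus C$, the set $X\setminus C$ does not belong to this principal ultrafilter, so $x$ lies in the second piece of the union defining $o(C)$.

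For $o(C)\subset cl(C)$, the part $C^o\subset C\subset cl(C)$ is immediate. So take $\mathcal{Q}\in \partial(\omega)$ with $X\setminus C\notin\mathcal{Q}$. The key preliminary step is to observe that $X\in \mathcal{Q}$: by Axiom 3, $\omega(X\ast V)=\omega(V)$ for any vector $V$, so adjoining $X$ to $\mathcal{Q}$ preserves the defining condition that every finite subfamily yields form-value $\infty$, and maximality forces $X\in \mathcal{Q}$. Now write $X=C\cup (X\setminus C)\in \mathcal{Q}$; \ref{ObservationOnPointsAtBdOfForm} gives $C\in \mathcal{Q}$ or $X\setminus C\in \mathcal{Q}$, and the latter is excluded, so $C\in \mathcal{Q}$. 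Finally Lemma \ref{ClosureCLemma}(1) yields $\mathcal{Q}\in cl(C)$.

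The ``in particular'' clause is then immediate: from $C\subset o(C)\subset cl(C)$, monotonicity of the closure operator gives $cl(C)\subset cl(o(C))\subset cl(cl(C))=cl(C)$. The only mildly delicate point in the whole argument is the verification that $X$ lies in every maximal family $\mathcal{Q}$, which is a small maximality-plus-Axiom-3 observation; the rest is a direct unpacking of the definitions and a single appeal to each of \ref{ObservationOnPointsAtBdOfForm} and \ref{ClosureCLemma}.
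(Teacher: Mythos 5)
Your proof is correct and follows essentially the same route as the paper: the inclusion $C\subset o(C)$ is the content of the Observation preceding \ref{IntersectionOfos} (that $C=o(C)\cap X$), and the key step $o(C)\subset cl(C)$ is exactly the paper's argument --- from $X\setminus C\notin\mathcal{Q}$ deduce $C\in\mathcal{Q}$ via \ref{ObservationOnPointsAtBdOfForm} and conclude with \ref{ClosureCLemma}(1). You merely make explicit the small points the paper leaves tacit (that $X\in\mathcal{Q}$ by maximality, and the closure-monotonicity argument for the ``in particular'' clause), which is fine.
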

\begin{proof}
If $Q\in o(C)$, then $X\setminus C\notin Q$, hence $C\in Q$.
By \ref{ClosureCLemma}, $Q\in cl(C)$.
\end{proof}

\begin{Lemma}\label{DisjointClosuresLemma}
1. If $\bigcap\limits_{i=1}^k cl(C_i)\subset X\setminus \partial (\omega)$, then $\omega(C_1,\ldots,C_k)=0$.\\
2. If $C_i$, $i\leq k$, are closed in $X$, $\bigcap\limits_{i=1}^k C_i=\emptyset$, and $\omega(C_1,\ldots,C_k)=0$,
then $\bigcap\limits_{i=1}^k cl(o(C_i))\subset X\setminus \partial (\omega)$.
\end{Lemma}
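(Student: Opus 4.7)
The plan is to prove both parts by contradiction, using Lemma \ref{ClosureCLemma} in each, with a Zorn's Lemma step powering the first part.

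For part 1, I take the contrapositive: assuming $\omega(C_1,\ldots,C_k)=\infty$, I aim to exhibit some $\mathcal{P}\in\partial(\omega)\cap\bigcap_{i=1}^k cl(C_i)$, which contradicts $\bigcap_{i=1}^k cl(C_i)\subset X\setminus\partial(\omega)$. The natural candidate is a maximal family extending $\mathcal{F}_0=\{C_1,\ldots,C_k\}$. I apply Zorn's Lemma to the collection of families $\mathcal{F}\supset\mathcal{F}_0$ whose every finite tuple has $\omega$-value $\infty$, which forces me first to check that $\mathcal{F}_0$ itself satisfies this condition. Any genuine sub-tuple of $(C_1,\ldots,C_k)$ has $\omega$-value at least $\omega(C_1,\ldots,C_k)=\infty$ by Proposition \ref{OmegaInequalityThree}; repetitions are harmless since Axiom~3 gives $\omega(C,C,V)=\omega(C,V)$ (apply Axiom~3 to $C\ast(C\ast V)$, noting that the first coordinate of $C\ast V$ is contained in $C$). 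Zorn's Lemma then produces a maximal family $\mathcal{P}\in\partial(\omega)$ containing each $C_i$, and Lemma \ref{ClosureCLemma}.1 gives $\mathcal{P}\in cl(C_i)$ for every $i$, the desired contradiction.

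For part 2, suppose for contradiction that some $\mathcal{Q}\in\partial(\omega)$ lies in $\bigcap_{i=1}^k cl(o(C_i))$. By the corollary preceding the lemma, $cl(o(C_i))=cl(C_i)$, so $\mathcal{Q}\in cl(C_i)$ for each $i$. If $\mathcal{Q}$ is identified with the principal ultrafilter at some $x\in X$ (necessarily with $\omega(x)=\infty$), then using that $C_i$ is closed in $X$ we obtain $x\in cl(C_i)\cap X=C_i$ for every $i$, so $x\in\bigcap C_i$, contradicting the hypothesis $\bigcap C_i=\emptyset$. Otherwise $\mathcal{Q}$ is a non-principal element of $\partial(\omega)$, and Lemma \ref{ClosureCLemma}.2 yields $C_i\in\mathcal{Q}$ for every $i$; by the defining property of $\partial(\omega)$ this forces $\omega(C_1,\ldots,C_k)=\infty$, contradicting the hypothesis $\omega(C_1,\ldots,C_k)=0$.

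The main subtlety is the Zorn step in part 1: verifying that $\mathcal{F}_0$ satisfies the $\omega$-condition on every finite tuple. The monotonicity from Proposition \ref{OmegaInequalityThree} handles genuine sub-tuples immediately, but repeated entries have to be collapsed via Axiom~3 before monotonicity can be applied; once that small move is in hand, the rest is a standard maximal-extension argument followed by the two halves of Lemma \ref{ClosureCLemma}.
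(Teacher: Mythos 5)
Your proof is correct and follows essentially the same route as the paper: part~1 is the paper's argument with the existence of a boundary point containing all the $C_i$ spelled out in detail (the Zorn step, the use of \ref{OmegaInequalityThree} for sub-tuples and of Axiom~3 for repetitions, all of which the paper leaves implicit), and part~2 reaches the same contradiction by citing Lemma \ref{ClosureCLemma}.2, whose own proof is precisely the separating-set argument $o(C_1)\cap o(X\setminus C_1)=\emptyset$ that the paper runs inline. The only cosmetic difference is your explicit case split on whether $\mathcal{Q}$ is a principal point of $X$, which the paper's $o(\cdot)$ formalism handles uniformly.
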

\begin{proof}
1. Suppose $\bigcap\limits_{i=1}^k cl(C_i)\subset X\setminus \partial (\omega)$ but $\omega(C_1,\ldots,C_k)=\infty$.
Hence all $C_i$ are $\omega$-unbounded and
there is $\mathcal{Q}\in \partial (\omega)$ containing all $C_i$, $i\leq k$.
By Lemma \ref{ClosureCLemma}, $\mathcal{Q}\in \bigcap\limits_{i=1}^k cl(C_i)$, a contradiction.

2. Suppose $\mathcal{Q}\in \bigcap\limits_{i=1}^k cl(o(C_i))$, $\bigcap\limits_{i=1}^k C_i=\emptyset$, and $\omega(C_1,\ldots,C_k)=0$. Therefore $\mathcal{Q}$ cannot contain all $C_i$'s, so assume
$C_1 \notin \mathcal{Q}$. Hence $\mathcal{Q}\in o(X\setminus C_1)$. However,
$o(C_1)\cap o(X\setminus C_1)=\emptyset$, contradicting $\mathcal{Q}\in cl(o(C_1))$.
\end{proof}

\begin{Proposition}\label{FiniteCoveringProposition}
Supose $D\subset X$ is closed. If $\{C_i\}_{i=1}^k$ is a finite family of subsets of $X$, then the following conditions are equivalent:\\
1. $\bigcup\limits_{i=1}^k o(C_i)$ contains $cl(D)\cap\partial(\omega)$.\\
2. $\omega(D,X\setminus C_1,\ldots,X\setminus C_k)=0$ and 
$D\setminus \bigcup\limits_{i=1}^k C_i\in B(\omega)$.\\
3. $\omega(D,X\setminus C_1,\ldots,X\setminus C_k)=0$.
\end{Proposition}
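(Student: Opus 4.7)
The plan is to dispatch the easier equivalence $2 \Leftrightarrow 3$ first, and then to prove $1 \Leftrightarrow 3$ by a Zorn-type argument that either detects or manufactures a suitable boundary point.

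The implication $2 \Rightarrow 3$ is immediate since 2 simply conjoins a bornology clause onto 3. For $3 \Rightarrow 2$, set $E := D \setminus \bigcup_{i=1}^k C_i = D \cap \bigcap_{i=1}^k (X \setminus C_i)$. Then $E$ is contained in every coordinate of the $(k+1)$-vector $(D, X\setminus C_1, \ldots, X\setminus C_k)$, so Lemma \ref{OmegaInequalityOne} applied coordinatewise gives
\[
\omega(\underbrace{E, \ldots, E}_{k+1}) \leq \omega(D, X\setminus C_1, \ldots, X\setminus C_k) = 0.
\]
Repeated use of Axiom 3 collapses the constant tuple of $E$'s to $\omega(E)$, so $\omega(E) = 0$ and $E \in \mathcal{B}(\omega)$.

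For $3 \Rightarrow 1$, let $\mathcal{Q} \in cl(D) \cap \partial(\omega)$. Since $D$ is closed, Lemma \ref{ClosureCLemma}(2) gives $D \in \mathcal{Q}$. If $\mathcal{Q}$ were outside every $o(C_i)$, then $X \setminus C_i \in \mathcal{Q}$ for all $i$, which by the defining property of boundary points forces $\omega(D, X \setminus C_1, \ldots, X \setminus C_k) = \infty$, contradicting 3. For $1 \Rightarrow 3$ I argue the contrapositive: assume $\omega(D, X \setminus C_1, \ldots, X \setminus C_k) = \infty$. By Proposition \ref{OmegaInequalityThree} combined with the symmetry of $\omega$, this value of $\infty$ is inherited by every finite sub-tuple formed from the set $\{D, X \setminus C_1, \ldots, X \setminus C_k\}$, so this family is an admissible seed and extends by Zorn's lemma to some $\mathcal{Q} \in \partial(\omega)$. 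Then $D \in \mathcal{Q}$ yields $\mathcal{Q} \in cl(D)$ via Lemma \ref{ClosureCLemma}(1), while $X \setminus C_i \in \mathcal{Q}$ for each $i$ prevents $\mathcal{Q}$ from entering any $o(C_i)$, contradicting 1.

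The main obstacle is the extension step in $1 \Rightarrow 3$: one must verify that the seed family is genuinely admissible, i.e.\ that every finite sub-tuple (including singletons like $(D)$ and $(X \setminus C_i)$, and tuples with repetition) also evaluates to $\infty$. Proposition \ref{OmegaInequalityThree} handles the drop from the full tuple to an initial segment, since adjoining further coordinates only decreases $\omega$; symmetry reduces an arbitrary sub-tuple to an initial one, and Axiom 3 disposes of repeated entries. Once this bookkeeping is in place, the proof is essentially a translation between ``closed-set membership in a maximal $\omega$-unbounded family'' and ``$\omega$-value equal to infinity''.
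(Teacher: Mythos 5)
Your proof is correct and follows essentially the same route as the paper: the equivalence $1\Leftrightarrow 3$ rests on the observation that a boundary point lies in $cl(D)\cap\partial(\omega)\setminus\bigcup_{i=1}^k o(C_i)$ exactly when it contains $D$ and all the $X\setminus C_i$, and $2\Leftrightarrow 3$ follows from \ref{OmegaInequalityOne}. You merely spell out the Zorn-type extension and the Axiom 3 collapse that the paper leaves implicit.
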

\begin{proof}
Notice $Q\in cl(D)\cap \partial(\omega)\setminus \bigcup\limits_{i=1}^k o(C_i)$ if and only if $D, X\setminus C_1,\ldots,X\setminus C_k\in Q$
(see \ref{ClosureCLemma}),
so $cl(D)\cap \partial(\omega)\setminus \bigcup\limits_{i=1}^k o(C_i)=\emptyset$ if and only if $\omega(D,X\setminus C_1,\ldots,X\setminus C_k)=0$. Therefore $1)\iff 3)$.

$2)\iff 3)$ follows from the fact that $\omega(D,X\setminus C_1,\ldots,X\setminus C_k)=0$ implies $\omega(D\setminus \bigcup\limits_{i=1}^k C_i)=0$ by \ref{OmegaInequalityOne}.
\end{proof}

\begin{Corollary}\label{FiniteSubfamilyCorollary}
Given a covering $\{o(C)\}_{C\in \mathcal{S}}$ of $cl(D)$, $D\subset X$ being closed, there is a finite subfamily $\mathcal{F}$ of $\mathcal{S}$
covering $cl(D)\cap \partial(\omega)$
such that $D\setminus \bigcup\limits_{C\in \mathcal{F}}C$ belongs to $\mathcal{B}(\omega)$.
\end{Corollary}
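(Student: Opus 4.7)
The plan is to reduce this corollary directly to Proposition \ref{FiniteCoveringProposition}. Observe that condition (1) of that proposition, applied to a finite subfamily $\mathcal{F}=\{C_1,\ldots,C_k\}$ of $\mathcal{S}$, is exactly the statement that $\{o(C)\}_{C\in\mathcal{F}}$ covers $cl(D)\cap\partial(\omega)$. Once such an $\mathcal{F}$ is produced, condition (2) supplies the remaining conclusion $D\setminus\bigcup_{C\in\mathcal{F}}C\in \mathcal{B}(\omega)$. Thus the real task is to extract a finite subcover of $cl(D)\cap \partial(\omega)$ from the given cover of $cl(D)$.

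I would argue by contradiction, in close analogy with the compactness proof in Theorem \ref{XCupBoundaryIsCompactHausdorff}. Suppose no finite subfamily of $\mathcal{S}$ covers $cl(D)\cap\partial(\omega)$. By Proposition \ref{FiniteCoveringProposition} this means
\[
\omega(D,X\setminus C_1,\ldots,X\setminus C_k)=\infty
\]
for every finite $\{C_1,\ldots,C_k\}\subset \mathcal{S}$. Consider the family
\[
\mathcal{G}:=\{D\}\cup\{X\setminus C\mid C\in \mathcal{S}\}.
\]
I would check, using Axiom 3 (to collapse repetitions of a set in a vector) together with the symmetry of $\omega$ and Proposition \ref{OmegaInequalityThree} (to augment any vector from $\mathcal{G}$ with an extra copy of $D$ without increasing $\omega$), that $\omega(V)=\infty$ for every finite vector $V$ with coordinates in $\mathcal{G}$. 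A routine Zorn's-lemma argument then extends $\mathcal{G}$ to a maximal family $\mathcal{P}\in\partial(\omega)$.

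Now $D\in\mathcal{P}$ and $D$ is closed in $X$, so Lemma \ref{ClosureCLemma}(1) gives $\mathcal{P}\in cl(D)$; since $\mathcal{P}\in\partial(\omega)$, also $\mathcal{P}\in cl(D)\cap\partial(\omega)$. The hypothesis that $\{o(C)\}_{C\in\mathcal{S}}$ covers $cl(D)$ therefore yields some $C\in\mathcal{S}$ with $\mathcal{P}\in o(C)$, i.e.\ $X\setminus C\notin \mathcal{P}$, directly contradicting $X\setminus C\in\mathcal{G}\subset\mathcal{P}$. This forces the existence of a finite $\mathcal{F}\subset \mathcal{S}$ covering $cl(D)\cap \partial(\omega)$, and then Proposition \ref{FiniteCoveringProposition} (1 $\Rightarrow$ 2) supplies $D\setminus\bigcup_{C\in\mathcal{F}}C\in \mathcal{B}(\omega)$.

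The only delicate point I anticipate is the verification that $\mathcal{G}$ has the required property that every finite vector from it has $\omega$-value $\infty$; one needs the right combination of symmetry, Axiom 3, and Proposition \ref{OmegaInequalityThree} to reduce an arbitrary finite vector of elements of $\mathcal{G}$ to one of the form $(D,X\setminus C_1,\ldots,X\setminus C_k)$ with distinct coordinates, which is infinite by hypothesis. Everything else is a direct translation of the standard compactness argument into the language of forms.
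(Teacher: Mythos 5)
Your proposal is correct and follows essentially the same route as the paper: negate the conclusion, use Proposition \ref{FiniteCoveringProposition} to translate the failure of finite subcovers into $\omega(D,X\setminus C_1,\ldots,X\setminus C_k)=\infty$ for all finite subfamilies, extend $\{D\}\cup\{X\setminus C\mid C\in\mathcal{S}\}$ to a point $\mathcal{Q}\in\partial(\omega)$, and derive a contradiction via Lemma \ref{ClosureCLemma}. The only difference is that you spell out the verification (via Axiom 3, symmetry, and Proposition \ref{OmegaInequalityThree}) that the family extends to an element of $\partial(\omega)$, which the paper leaves implicit.
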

\begin{proof}
Suppose $\{o(C)\}_{C\in \mathcal{S}}$ is a cover of $cl(D)$
that has no finite subcover of $cl(D)\cap\partial(\omega)$.
By \ref{FiniteCoveringProposition} it means
$\omega(D,X\setminus C_1,\ldots,X\setminus C_k)=\infty$
for all finite subfamilies $\{C_i\}_{i=1}^k$ of $\mathcal{S}$.
Therefore there is $Q\in\partial(\omega)$ containing $D$ and all sets $X\setminus C$, $C\in \mathcal{S}$. By \ref{ClosureCLemma},
$Q\in cl(D)$. Since $Q\notin \bigcup \{o(C) | {C\in \mathcal{S}}\}$, we arrive at a contradiction.

\end{proof}

\begin{Corollary}\label{LSCompactCorollary}
$X\cup \partial (\omega)$ is large scale compact with respect to
$\mathcal{B}(\omega)$.
\end{Corollary}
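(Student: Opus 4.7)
The plan is to deduce large scale compactness essentially as a direct consequence of Corollary \ref{FiniteSubfamilyCorollary} with $D := X$. The only preliminary step is to reduce an arbitrary open cover of $X\cup \partial(\omega)$ to one by basic open sets of the form $o(V)$ with $V$ open in $X$, after which everything already proven kicks in.

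Concretely, I would start with a family $\{U_s\}_{s\in S}$ of open subsets of $X\cup \partial(\omega)$ whose union is the whole space. By the definition of the natural topology and the Remark following \ref{IntersectionOfos}, I can refine this to a cover $\{o(V_t)\}_{t\in T}$ with each $V_t$ open in $X$ and $o(V_t)\subset U_{s(t)}$ for some choice function $t\mapsto s(t)$. Next I would note that $X$ is trivially closed in itself and is dense in $X\cup \partial(\omega)$: indeed, any nonempty basic open $o(V)$ satisfies $o(V)\cap X = V\neq \emptyset$ by the Observation immediately after the definition of $o(C)$. Hence $cl(X) = X\cup \partial(\omega)$. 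Applying Corollary \ref{FiniteSubfamilyCorollary} with $D := X$ and the cover $\{o(V_t)\}_{t\in T}$ of $cl(X)$ then produces a finite subfamily $\mathcal{F}\subset T$ such that $\{o(V_t)\}_{t\in \mathcal{F}}$ covers $\partial(\omega)$ and
\[
X\setminus \bigcup_{t\in \mathcal{F}} V_t \in \mathcal{B}(\omega).
\]

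To conclude, I would set $F := \{s(t) : t\in \mathcal{F}\}$ and estimate
\[
(X\cup \partial(\omega))\setminus \bigcup_{s\in F} U_s \;\subset\; (X\cup \partial(\omega))\setminus \bigcup_{t\in \mathcal{F}} o(V_t).
\]
The boundary part of the right-hand side is empty because $\mathcal{F}$ covers $\partial(\omega)$, and the $X$-part equals $X\setminus \bigcup_{t\in \mathcal{F}} V_t$ using the identity $o(V)\cap X = V$. Since bornologies are closed under taking subsets, the displayed complement lies in $\mathcal{B}(\omega)$, which is exactly the condition of large scale compactness.

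I do not expect a genuine obstacle here: the real content — that a covering of $cl(D)$ by sets $o(C)$ admits a finite subfamily whose complement in $D$ is bornological — was already extracted in \ref{FiniteSubfamilyCorollary} using the maximality of elements of $\partial(\omega)$ together with \ref{ClosureCLemma}. The present corollary is a repackaging where the density of $X$ lets one promote that statement, originally formulated relative to a closed $D\subset X$, to the ambient space $X\cup \partial(\omega)$.
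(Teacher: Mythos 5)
Your proof is correct and follows exactly the route the paper intends: the corollary is stated with no separate proof precisely because it is the specialization $D:=X$ of \ref{FiniteSubfamilyCorollary}, combined with the routine reduction of an arbitrary open cover to basic sets $o(V)$ and the identity $o(V)\cap X=V$. The only cosmetic remark is that your refined family $\{o(V_t)\}$ is guaranteed to cover $\partial(\omega)$ rather than all of $cl(X)$ (a basic neighborhood of a point of $X$ inside $U_s$ need not have its boundary part inside $U_s$), but this is harmless since \ref{FiniteSubfamilyCorollary} and \ref{FiniteCoveringProposition} only use coverage of $cl(D)\cap\partial(\omega)$, and your final estimate handles the $X$-part separately.
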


\begin{Proposition}\label{ConditionForFormCompactificationToBeT2}
Suppose $\omega$ is a form on $X$. The following conditions are equivalent:\\
a. $X\cup \partial (\omega)$ is large scale compact with respect to
$\mathcal{B}(\omega)$ and Hausdorff.\\
b. For every two different points
$\mathcal{Q}$ and $\mathcal{R}$ of $\partial(\omega)$ there exist $\omega$-closed subsets
$C$ and $D$ of $X$ such that $C\cup D=X$, $C\notin \mathcal{Q}$,
and $D\notin \mathcal{R}$. 
\end{Proposition}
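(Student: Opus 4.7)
The plan is to first observe that by Corollary \ref{LSCompactCorollary}, $X\cup\partial(\omega)$ is always large scale compact with respect to $\mathcal{B}(\omega)$, so the content of the equivalence is really that Hausdorffness of $X\cup\partial(\omega)$ is equivalent to the separation condition (b) on the boundary points. Throughout I will use the basis $\{o(U) : U\text{ open in }\mathcal{T}(\omega)\}$ and Lemma \ref{IntersectionOfos}.

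For $(a)\Rightarrow(b)$: take distinct $\mathcal{Q},\mathcal{R}\in\partial(\omega)$ and apply Hausdorffness to produce disjoint basic open neighborhoods $o(U)\ni\mathcal{Q}$ and $o(V)\ni\mathcal{R}$ with $U,V$ open in $\mathcal{T}(\omega)$. By \ref{IntersectionOfos}, $o(U\cap V)=o(U)\cap o(V)=\emptyset$. I would then argue $U\cap V=\emptyset$: any point $x\in U\cap V$ with $\omega(\{x\})=0$ lies in $(U\cap V)^o\subset o(U\cap V)$, while any $x\in U\cap V$ with $\omega(\{x\})=\infty$ gives rise to the principal ultrafilter $\mathcal{Q}_x\in\partial(\omega)$ with $X\setminus (U\cap V)\notin\mathcal{Q}_x$, so $\mathcal{Q}_x\in o(U\cap V)$. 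Either way this contradicts $o(U\cap V)=\emptyset$. Setting $C:=X\setminus U$ and $D:=X\setminus V$ produces the closed sets required by (b): $C\cup D=X$, and $C\notin\mathcal{Q}$, $D\notin\mathcal{R}$ follow from $\mathcal{Q}\in o(U)$, $\mathcal{R}\in o(V)$.

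For $(b)\Rightarrow(a)$: since large scale compactness is free, I only need to verify Hausdorffness. Given distinct $p,q\in X\cup\partial(\omega)$, split into cases. If $p,q\in\partial(\omega)$ (after identifying any $\omega$-unbounded singleton $\{x\}\subset X$ with its principal ultrafilter $\mathcal{Q}_x$), condition (b) yields closed $C,D$ with $C\cup D=X$; then $U:=X\setminus C$, $V:=X\setminus D$ are open in $X$, $U\cap V=\emptyset$, and $o(U),o(V)$ are the desired disjoint neighborhoods as in the previous paragraph. If instead $p=x\in X$ with $\omega(\{x\})=0$ and $q\in\partial(\omega)$, I use Proposition \ref{OmegaInequalityThree}: $\omega(x,X\setminus\{x\})\le\omega(\{x\})=0$ and symmetrically $\omega(y,\{x\})=0$ for all $y\neq x$, so $\{x\}$ is both open and closed in $\mathcal{T}(\omega)$. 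Then $o(\{x\})=\{x\}$, while $o(X\setminus\{x\})$ contains every point of $\partial(\omega)$ because $\{x\}\in\mathcal{B}(\omega)$ forces $\{x\}\notin\mathcal{Q}$ for all $\mathcal{Q}\in\partial(\omega)$; these are disjoint. The case of two $\omega$-bounded points in $X$ is handled by their singletons $\{p\},\{q\}$.

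The main obstacle is the bookkeeping around points of $X$ that are identified with boundary points: one must keep straight when $x\in X$ contributes to $o(C)\cap X = C$ via the non-boundary part $C^o$ versus via the principal ultrafilter $\mathcal{Q}_x$ it generates, and use this uniformly to pass between set-theoretic statements about $U\cap V\subset X$ and topological statements about $o(U)\cap o(V)\subset X\cup\partial(\omega)$. Once this identification is handled carefully, every step reduces to Proposition \ref{OmegaInequalityThree}, Lemma \ref{IntersectionOfos}, and the definition of $o(C)$.
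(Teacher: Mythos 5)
Your proof is correct and follows essentially the same route as the paper: for (a)$\Rightarrow$(b) pass from disjoint basic neighborhoods $o(U)$, $o(V)$ to $U\cap V=\emptyset$ and take complements, and for (b)$\Rightarrow$(a) note that large scale compactness is automatic by \ref{LSCompactCorollary} and separate boundary points by $o(X\setminus C)$ and $o(X\setminus D)$, the disjointness coming from \ref{IntersectionOfos} (equivalently, from $X=C\cup D$ and \ref{ObservationOnPointsAtBdOfForm} as in the paper). Your extra bookkeeping about points of $X$ identified with principal ultrafilters only fills in details the paper leaves implicit.
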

\begin{proof}
a)$\implies$b). Choose disjoint sets $o(U)$ and $o(W)$, where
$Q\in o(U)$, $R\in o(W)$, and $U,W$ are $\omega$-open. 
Therefore $U\cap W=\emptyset$ resulting in $C:=X\setminus U$,
$D:=X\setminus W$ satisfying the required conditions.
\\
b)$\implies$a). 
Since every point of $X\setminus\partial(\omega)$ is open-closed, it suffices to show that every two distinct points $\mathcal{Q},\mathcal{R}\in\partial(\omega)$
have disjoint neighborhoods.
Choose $\omega$-closed subsets
$C$ and $D$ of $X$ such that $C\cup D=X$, $C\notin \mathcal{Q}$,
and $D\notin \mathcal{R}$. Notice that $\mathcal{Q}\in o(X\setminus C)$,
$\mathcal{R}\in o(X\setminus D)$. Suppose $\mathcal{S}\in o(X\setminus C)\cap o(X\setminus D)$. Therefore $C\notin \mathcal{S}$ and $D\notin \mathcal{S}$ contradicting $X=C\cup D\in \mathcal{S}$.
\end{proof}

\begin{Proposition}
If $(X,\omega)$ is a formed set, then $\omega(C_1,\ldots,C_k)=0$
iff $\bigcap\limits_{i=1}^k cl(C_i)\subset X\setminus\partial(\omega)$.
\end{Proposition}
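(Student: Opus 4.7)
The plan is as follows. The ($\Leftarrow$) direction falls out immediately from Lemma \ref{DisjointClosuresLemma}(1) and needs no additional argument, since its hypothesis is identical to the right-hand side of the biconditional.

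For the ($\Rightarrow$) direction, I would reduce to the closed-set case treated by Lemma \ref{DisjointClosuresLemma}(2). Writing $\bar C_i$ for the closure of $C_i$ in $X$ with respect to the topology induced by $\omega$, each $\bar C_i$ is closed in $X$; since the topology on $X\cup\partial(\omega)$ restricts to that induced by $\omega$ on $X$, the closures $cl(C_i)$ and $cl(\bar C_i)$ in $X\cup\partial(\omega)$ coincide. Given $\omega(C_1,\ldots,C_k)=0$, I would aim to establish $\omega(\bar C_1,\ldots,\bar C_k)=0$; Lemma \ref{DisjointClosuresLemma}(2) applied to the closed family $\{\bar C_i\}$ then yields $\bigcap_i cl(\bar C_i)\subset X\setminus\partial(\omega)$, which is the desired $\bigcap_i cl(C_i)\subset X\setminus\partial(\omega)$.

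The central step to verify is therefore $\omega(\bar C_1,\ldots,\bar C_k)=\omega(C_1,\ldots,C_k)$. Axiom 1 supplies the decomposition
$$\omega(\bar C_1,C_2,\ldots,C_k)=\omega(C_1,C_2,\ldots,C_k)+\omega(\bar C_1\setminus C_1,C_2,\ldots,C_k),$$
so it suffices to argue that each correction term $\omega(\bar C_1\setminus C_1,C_2,\ldots,C_k)$ vanishes whenever the hypothesis holds, then to iterate across all $k$ coordinates. A point $y\in\bar C_1\setminus C_1$ is an $\omega$-limit point of $C_1$, and through the coupling between $\omega$ and the topology it induces on $X$, encoded by Propositions \ref{ConstructionOfOpenSetsViaForms} and \ref{FiniteCoveringProposition}, this topological fact is what transfers $\omega$-vanishing from $C_1$ to $\bar C_1$: every $\omega$-open neighbourhood of $y$ meets $C_1$, and the covering criterion of \ref{FiniteCoveringProposition} applied to $D=\{y\}$ lets one absorb $y$ into tuples whose $\omega$-value is already $0$.

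The hard part will be precisely this $\omega$-preservation under $X$-closure; every other ingredient is already in place from earlier in the section. Once that transfer step is established, the remainder is a clean application of Lemma \ref{DisjointClosuresLemma}(2) together with the subspace identity $cl(C_i)=cl(\bar C_i)$ in $X\cup\partial(\omega)$.
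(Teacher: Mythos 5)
Your treatment of the ($\Leftarrow$) half is fine --- it is literally Lemma \ref{DisjointClosuresLemma}(1). The ($\Rightarrow$) half is where the proposal breaks down. You have correctly located the crux: your reduction shows the whole statement is equivalent to the implication $\omega(C_1,\ldots,C_k)=0\Rightarrow\omega(\bar C_1,\ldots,\bar C_k)=0$, where $\bar C_i$ denotes the $\omega$-closure in $X$. But you do not prove this, and the mechanism you sketch cannot produce it: for $y\in\bar C_1\setminus C_1$, the fact that every $\omega$-open neighbourhood of $y$ meets $C_1$ only yields $\omega(\{y\},C_1)=\infty$, which gives no control over the correction term $\omega(\bar C_1\setminus C_1,C_2,\ldots,C_k)$ in your Axiom 1 decomposition, and Proposition \ref{FiniteCoveringProposition} with $D=\{y\}$ does not help. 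The implication you need is precisely Corollary \ref{OmegaOnClosuresGeneralCase}, which the paper proves only for \emph{normal} forms, and for a general form it is false. Concretely: fix $y\ne z$, disjoint infinite sets $A,B$ with $y,z\notin A\cup B$, nonprincipal ultrafilters $r_A\ni A$ and $r_B\ni B$, and declare $\omega(V)=\infty$ iff all coordinates of $V$ lie in one of the three families $p_y\cup r_A$, $p_z\cup r_B$, $p_y\cup p_z$ (with $p_x$ the principal ultrafilter at $x$); one checks Axioms 1--3 hold. Then $\omega(A,B)=0$ while $\omega(\{y\},A)=\omega(\{z\},B)=\omega(\{y\},\{z\})=\infty$, so $y\in\bar A$, $z\in\bar B$, hence $\omega(\bar A,\bar B)=\infty$ by \ref{OmegaInequalityOne}, and any maximal family extending $p_y\cup p_z$ is a point of $cl(A)\cap cl(B)\cap\partial(\omega)$. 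So the gap is not merely unfilled: for arbitrary formed sets it cannot be filled, and the ($\Rightarrow$) direction needs an extra hypothesis such as normality.

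For comparison, the paper's own proof is two sentences: $\omega(C_1,\ldots,C_k)=\infty$ iff some $\mathcal{Q}\in\partial(\omega)$ contains every $C_i$ (Zorn together with \ref{OmegaInequalityThree}), and ``$C_i\in\mathcal{Q}$'' is then matched with ``$\mathcal{Q}\in cl(C_i)$'' via Lemma \ref{ClosureCLemma}. Note, however, that part 2 of \ref{ClosureCLemma} is available only for closed sets, so in the direction you are struggling with the paper quietly passes through the same closure-commutation fact; your diagnosis of where the real difficulty sits is therefore accurate, even though your route does not resolve it. A smaller point: Lemma \ref{DisjointClosuresLemma}(2) carries the hypothesis $\bigcap_{i}C_i=\emptyset$, which your $\bar C_i$ need not satisfy (their intersection could be a nonempty $\omega$-bounded set), so even the final citation in your reduction is not clean as stated.
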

\begin{proof}
$\omega(C_1,\ldots,C_k)=\infty$ if and only if there is $\mathcal{Q}\in \partial(\omega)$ containing all of $C_i$. That is equivalent to 
$\mathcal{Q}\in \bigcap\limits_{i=1}^k cl(C_i)$.
\end{proof}

\section{Normal forms}
In this section we seek conditions for $X\cup\partial(\omega)$ to be Hausdorff.

\begin{Definition}
A formed set $(X,\omega)$ is \textbf{normal} if
$\omega(C_1,\ldots,C_k)=0$ implies existence of subsets $D_i$, $1\leq i\leq k$, of $X$ such that $\bigcup\limits_{i=1}^k D_i=X$ and $\omega(C_i,D_i)=0$ for each $1\leq i\leq k$.
\end{Definition}

\begin{Lemma}\label{ZeroSetsLemma}
If $\{C_i\}_{i=1}^k$ is a family of zero-sets in a topological space $(X,\mathcal{T})$ whose intersection is empty, then there exists a family of zero-sets $\{D_i\}_{i=1}^k$
satisfying the following conditions:\\
a. $C_i\cap D_i=\emptyset$ for each $i\leq k$,\\
b. $\bigcup\limits_{i=1}^k D_i=X$.
\end{Lemma}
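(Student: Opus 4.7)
The plan is to construct the $D_i$ via a partition-of-unity style argument. For each $i$, fix a continuous $f_i:X\to[0,1]$ with $C_i=f_i^{-1}(0)$, using the definition of zero-set. The hypothesis $\bigcap_{i=1}^k C_i=\emptyset$ says that at every $x\in X$ at least one $f_i(x)$ is strictly positive, so $g(x):=\sum_{i=1}^k f_i(x)$ is a continuous, strictly positive function on $X$. Then $h_i:=f_i/g$ are continuous functions $X\to[0,1]$ with $\sum_{i=1}^k h_i\equiv 1$.

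Next I would set
$$D_i:=\{x\in X\mid h_i(x)\ge 1/k\}.$$
This is a zero-set: it equals $\varphi_i^{-1}(0)$ for the continuous function $\varphi_i(x):=\max\bigl(0,\,1/k-h_i(x)\bigr)$, which takes values in $[0,1/k]\subset[0,1]$. Condition (a) is immediate, since $x\in C_i$ forces $f_i(x)=0$, hence $h_i(x)=0<1/k$, so $x\notin D_i$. For (b), fix any $x\in X$; because $h_1(x)+\cdots+h_k(x)=1$, some $h_i(x)$ must be at least $1/k$, placing $x$ in the corresponding $D_i$.

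There is essentially no obstacle here: the only subtlety is ensuring that the cover-producing sets $D_i$ remain zero-sets rather than merely closed sets, which is handled by exhibiting the explicit continuous function $\varphi_i=\max(0,1/k-h_i)$ whose zero-set is $D_i$. This is a standard trick and finishes the proof.
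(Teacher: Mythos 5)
Your proof is correct and follows exactly the same route as the paper: fix $f_i$ with $C_i=f_i^{-1}(0)$, normalize by $g=\sum f_i$, and take $D_i=\{h_i\ge 1/k\}$. The only difference is that you spell out why $D_i$ is a zero-set (via $\varphi_i=\max(0,1/k-h_i)$), which the paper leaves to the reader.
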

\begin{proof}
Choose continuous functions $f_i:X\to [0,1]$ such that $C_i=f_i^{-1}(0)$ for each $i\leq k$. Notice that $g:=\sum\limits_{i=1}^k f_i$ is positive
and let $g_i:=f_i/g$. Define $D_i$ as $g_i^{-1}[1/k,1]$ and one can easily see that $\{D_i\}_{i=1}^k$ satisfies the required conditions.
\end{proof}

\begin{Corollary}\label{FunctionalFormIsNormal}
The basic functional form $\omega_f(X,\mathcal{T})$ of a topological space $(X,\mathcal{T})$ is normal.
\end{Corollary}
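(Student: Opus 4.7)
The plan is to reduce the normality condition for $\omega_f$ directly to Lemma \ref{ZeroSetsLemma}. Suppose $\omega_f(X,\mathcal{T})(C_1,\ldots,C_k)=0$. By definition of the basic functional form, there exist zero-sets $Z_i\supset C_i$ with $\bigcap_{i=1}^k Z_i=\emptyset$. So the hypotheses of Lemma \ref{ZeroSetsLemma} are already in place, applied to the family $\{Z_i\}_{i=1}^k$ (not to the original $C_i$'s, which need not themselves be zero-sets).

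Next, I would apply Lemma \ref{ZeroSetsLemma} to $\{Z_i\}_{i=1}^k$ to obtain a family of zero-sets $\{D_i\}_{i=1}^k$ such that $Z_i\cap D_i=\emptyset$ for every $i\leq k$ and $\bigcup_{i=1}^k D_i=X$. I claim this family $\{D_i\}$ witnesses normality for the original vector $(C_1,\ldots,C_k)$. The covering condition $\bigcup_{i=1}^k D_i=X$ is immediate. For the pairwise condition, I must verify $\omega_f(C_i,D_i)=0$ for each $i$: taking $Z_i$ as the zero-set containing $C_i$ and $D_i$ itself as the zero-set containing $D_i$, the equality $Z_i\cap D_i=\emptyset$ is exactly what Lemma \ref{ZeroSetsLemma} supplies, so the defining condition of $\omega_f$ is satisfied.

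There is essentially no obstacle: the only move requiring thought is recognizing that the zero-sets $Z_i$ produced by the hypothesis are precisely the input Lemma \ref{ZeroSetsLemma} needs, and that the $D_i$ produced by that lemma can play simultaneously the role of the cover and the role of the ``witness zero-set'' on the $D_i$-side of the 2-vector $(C_i,D_i)$. The argument is a two-line invocation of the preceding lemma.
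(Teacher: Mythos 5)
Your proposal is correct and is exactly the argument the paper intends: its proof consists of the single line ``Apply Lemma \ref{ZeroSetsLemma},'' and your write-up simply fills in the routine details (passing from the $C_i$ to witnessing zero-sets $Z_i$, then using the $D_i$ from the lemma both as the cover and as their own witnesses). No discrepancy.
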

\begin{proof}
Apply \ref{ZeroSetsLemma}.
\end{proof}

\begin{Corollary}\label{BasicFormIsNormalForNormal}
The basic topological form of a $T_1$ space $(X,\mathcal{T})$ is normal if and only if $(X,\mathcal{T})$ is normal.
\end{Corollary}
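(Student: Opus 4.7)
The plan is to establish both implications directly, with the classical shrinking lemma for normal spaces doing the real work in the harder direction.

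For the forward implication, I would argue as follows. Assume $\omega:=\omega(X,\mathcal{T})$ is normal as a form and take two disjoint closed sets $A,B\subset X$. Since $A$ and $B$ are closed and disjoint, $cl(A)\cap cl(B)=A\cap B=\emptyset$, hence $\omega(A,B)=0$. Normality of $\omega$ yields subsets $D_1,D_2$ with $D_1\cup D_2=X$ and $cl(A)\cap cl(D_1)=\emptyset=cl(B)\cap cl(D_2)$. Set $U:=X\setminus cl(D_1)$ and $V:=X\setminus cl(D_2)$. Then $U,V$ are open, $A\subset U$, $B\subset V$, and
\[
U\cap V = X\setminus(cl(D_1)\cup cl(D_2)) \subset X\setminus(D_1\cup D_2)=\emptyset.
\]
So $X$ is normal in the topological sense.

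For the reverse implication, assume $(X,\mathcal{T})$ is normal (and $T_1$) and suppose $\omega(C_1,\ldots,C_k)=0$, meaning $\bigcap_{i=1}^k cl(C_i)=\emptyset$. Then the sets $U_i:=X\setminus cl(C_i)$ form a finite open cover of $X$. I would then invoke the standard shrinking lemma for normal spaces: every finite open cover of a normal space admits an open refinement $\{V_i\}_{i=1}^k$ with $\bigcup V_i=X$ and $cl(V_i)\subset U_i$ for each $i$. Setting $D_i:=V_i$, we obtain $\bigcup_{i=1}^k D_i=X$ and
\[
cl(C_i)\cap cl(D_i)\subset cl(C_i)\cap U_i=\emptyset,
\]
so $\omega(C_i,D_i)=0$ for each $i$. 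This is exactly the condition for $\omega$ to be normal.

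The only non-elementary ingredient is the shrinking lemma, and that is where I expect the main (minor) obstacle to lie — one has to be sure the finite-cover version is available, which follows by induction from the standard fact that in a normal space any two disjoint closed sets can be separated by open sets whose closures are disjoint. Everything else is direct manipulation of closures and the defining condition of the basic topological form.
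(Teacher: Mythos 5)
Your proof is correct, and the direction ``form normal $\Rightarrow$ topologically normal'' is essentially identical to the paper's (take disjoint closed $A,B$, apply the decomposition, pass to complements of closures). Where you genuinely diverge is the converse. The paper handles ``topologically normal $\Rightarrow$ form normal'' by observing that on a normal space the basic topological form coincides with the basic functional form, and the latter is always normal by Corollary \ref{FunctionalFormIsNormal}, whose proof (Lemma \ref{ZeroSetsLemma}) builds the cover $\{D_i\}$ from a partition-of-unity-type construction $g_i=f_i/\sum f_j$ on zero-sets. You instead apply the shrinking lemma for finite open covers of normal spaces to the cover $U_i=X\setminus cl(C_i)$ and take $D_i:=V_i$ with $cl(V_i)\subset U_i$. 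Both arguments are sound and rest on the same underlying separation property of normal spaces; yours is more self-contained at this point in the paper (it needs no Urysohn functions and no identification of the two forms, which the paper leaves implicit and which itself requires a small argument producing zero-sets around the $cl(C_i)$), while the paper's route reuses machinery it has already established and keeps the proof to two lines. Your only external ingredient, the finite shrinking lemma, is standard and, as you note, follows by induction from separating two disjoint closed sets, so there is no gap.
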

\begin{proof}
If $(X,\mathcal{T})$ is normal, then its functional form equals the basic topological form, hence is normal by \ref{FunctionalFormIsNormal}.

Suppose $\omega(\mathcal{T})$ is normal and $C, D$ are two disjoint closed subsets of $X$. Thus $\omega(\mathcal{T})(C,D)=0$ and there exist
subsets $C', D'$ of $X$ such that $C'\cup D'=X$, $\omega(\mathcal{T})(C,C')=0$,
and $\omega(\mathcal{T})(D,D')=0$. That implies $C\subset U:=X\setminus cl(C')$, $D\subset W:=X\setminus cl(D')$ and $U\cap W=\emptyset$.
Since $(X,\mathcal{T})$ is $T_1$, it is normal.
\end{proof}

\begin{Proposition}\label{LSFormOfMetrizableIsNormal}
If a large scale space $(X,\mathcal{L})$ is metrizable, then the form $\omega(\mathcal{L})$ induced by $\mathcal{L}$ is normal.
\end{Proposition}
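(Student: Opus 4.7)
The plan is to mimic the proof of Lemma \ref{ZeroSetsLemma}, using distance-to-a-set functions in place of zero-set functions. Since $(X,\mathcal{L})$ is metrizable, choose a metric $d$ generating $\mathcal{L}$; then $\omega(\mathcal{L})$ agrees with the basic large scale form $\omega_l(X,d)$ from Example \ref{BasicLargeScaleFormOnMetric}. So the task reduces to: if $\bigcap_{i=1}^k B(C_i,r)$ is bounded for every $r>0$, produce sets $D_1,\ldots,D_k$ with $\bigcup_i D_i = X$ and with each $B(C_i,r)\cap B(D_i,r)$ bounded for every $r>0$.

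For each $i\leq k$ set $f_i(x):=d(x,C_i)$, which is $1$-Lipschitz. Define
$$D_i:=\{x\in X \mid f_i(x)\geq f_j(x) \text{ for all } j\leq k\}.$$
At every point of $X$ some $f_i$ attains the maximum, so $\bigcup_{i=1}^k D_i = X$. This is the large-scale analogue of the partition $g_i^{-1}[1/k,1]$ used in \ref{ZeroSetsLemma}: the roles of ``close to $C_i$'' and ``far from $C_i$'' are swapped, but the combinatorial purpose (covering $X$ by a sharp comparison of the $f_i$'s) is the same.

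The main verification is that $B(C_i,r)\cap B(D_i,r)$ is bounded. Let $x$ lie in this intersection and choose $y\in D_i$ with $d(x,y)\leq r$. From $x\in B(C_i,r)$ we get $f_i(x)\leq r$, and $1$-Lipschitzness yields $f_i(y)\leq f_i(x)+d(x,y)\leq 2r$. Because $y\in D_i$, the maximality condition gives $f_j(y)\leq f_i(y)\leq 2r$ for every $j$, so $y\in \bigcap_{j=1}^k B(C_j,2r)$. By the standing hypothesis this intersection is bounded, hence so is its $r$-neighborhood, which contains $x$. Thus $B(C_i,r)\cap B(D_i,r)$ is bounded for every $r>0$, i.e.\ $\omega_l(C_i,D_i)=0$, as required.

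I expect no serious obstacle; the only point worth flagging is a brief justification that for a metrizable large scale space the structural form $\omega(\mathcal{L})$ coincides with $\omega_l(X,d)$ for any generating metric, so that the definition of normality applied to $\omega(\mathcal{L})$ may be verified through $\omega_l(X,d)$. Everything else is the straightforward 1-Lipschitz estimate above, which is the direct large-scale analogue of normalizing by $g=\sum f_i$ in the proof of \ref{ZeroSetsLemma}.
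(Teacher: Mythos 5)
Your proof is correct, and it takes a genuinely different route from the paper's. The paper fixes a base point $x_0$ and an increasing function $f:\NN\to\NN$ with $\bigcap_{i=1}^k B(C_i,n)\subset B(x_0,f(n))$, and then defines $D_i$ as the set of points $x$ such that $d(x,x_0)>f(n)$ forces $d(x,C_i)\ge n$ for every $n$; the covering property $\bigcup_i D_i=X$ is then checked by taking the maximum of finitely many indices and deriving a contradiction with the choice of $f$. You instead use a Voronoi-type decomposition, putting $x$ into $D_i$ when $d(\cdot,C_i)$ is the largest of the $k$ distance functions, so that $\bigcup_i D_i=X$ is immediate and the verification of $\omega(C_i,D_i)=0$ is a two-line $1$-Lipschitz estimate landing back in $\bigcap_j B(C_j,2r)$, whose $r$-neighborhood is bounded. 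Both arguments rely on the same (correctly flagged) reduction of $\omega(\mathcal{L})$ to the metric form $\omega_l(X,d)$ of Example \ref{BasicLargeScaleFormOnMetric}. Your version dispenses with the base point and the auxiliary growth function entirely and is the more transparent of the two; it also makes the analogy with the small-scale Lemma \ref{ZeroSetsLemma} exact, whereas the paper's construction is a bespoke large-scale device. (The only cosmetic caveat is the usual open-versus-closed ball slack of replacing $2r$ by $2r+1$ where needed, which affects nothing.)
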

\begin{proof}
Suppose $\omega(C_1,\ldots,C_k)=0$ for some subsets of $X$ and $d$ is a metric on $X$ inducing $\mathcal{L}$. Choose $x_0\in X$ and an increasing function
$f:\mathbb{N}\to \mathbb{N}$ such that for each $n\ge 1$, $\bigcap\limits_{i=1}^k B(C_i,n)\subset B(x_0,f(n))$.

For each $i\leq k$, define $D_i$ as
$$\{x\in X | \mbox{ for all }n\ge 1, \ d(x,x_0) > f(n)\implies d(x,C_i) \ge n\}.$$
If $\omega(C_i,D_i)=\infty$ for some $i\leq k$, then there is $m\ge 1$ such that $B(C_i,m)\cap B(D_i,m)$ is unbounded. In particular, there is $y\in B(C_i,m)\cap B(D_i,m)$ satisfying $d(y,x_0) > f(m)$. Thus, $d(y,C_i) \ge m$
contradicting $y\in B(C_i,m)$.

Suppose $z\in X\setminus \bigcup\limits_{i=1}^k D_i$. Therefore, for each $i\leq k$, there is $m_i$ such that $d(z,x_0) > f(m_i)$ but $d(z,C_i) < m_i$.
Put $M=\max\{m_i | i\leq k\}$. Notice $z\in B(x_0,f(M))$
as $z\in \bigcap\limits_{i=1}^k B(C_i,M)\subset B(x_0,f(M))$ which contradicts
$d(z,x_0) > f(M)$.
\end{proof}

\subsection{Special normal forms}

Suppose a set $X$ has a function $N:2^X\times S\to 2^X$, $S$ being a directed set such that the following conditions are satisfied:\\
1. $N(C,s) \subset N(C,t)$ if $t \leq s$,\\
2. For all $C,D \subset X$ and all $t\in S$, there is $s\ge t$ such that
$N(C\cup D,s)\subset N(C,t)\cup N(D,t)$.

Notice the following defines a form:
$$\omega(N)(C_1,\ldots,C_k)=0\iff \bigcap\limits_{i=1}^k N(C_i,s)=\emptyset \mbox{ for some }s\in S.$$

\begin{Proposition}\label{SpecialNormalForms}
If for each $D\subset X\setminus N(C,s)$ there is $t\in S$, $t \ge s$, such that
$N(D,t)\subset X\setminus N(C,t)$, then $\omega(N)$ is normal.
\end{Proposition}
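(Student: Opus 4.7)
The plan is to unpack the definitions and produce the witnesses $D_i$ directly from the hypothesis on $N$.

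First, I would assume $\omega(N)(C_1,\ldots,C_k)=0$. By definition of $\omega(N)$, there exists $s\in S$ such that $\bigcap_{i=1}^k N(C_i,s)=\emptyset$. The natural candidate for the required decomposition is then $D_i:=X\setminus N(C_i,s)$ for each $1\leq i\leq k$, because by De Morgan's law
\[
\bigcup_{i=1}^k D_i \;=\; X\setminus \bigcap_{i=1}^k N(C_i,s) \;=\; X,
\]
which handles the covering requirement of normality immediately.

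Next, I would verify $\omega(N)(C_i,D_i)=0$ for each $i$. By construction $D_i\subset X\setminus N(C_i,s)$, so the hypothesis of the proposition applies (with the roles $C:=C_i$ and $D:=D_i$): there exists $t_i\in S$ with $t_i\ge s$ such that
\[
N(D_i,t_i)\;\subset\; X\setminus N(C_i,t_i),
\]
i.e.\ $N(C_i,t_i)\cap N(D_i,t_i)=\emptyset$. This is exactly the condition $\omega(N)(C_i,D_i)=0$ from the definition of $\omega(N)$, and the proof is complete.

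There is no real obstacle here: the entire content is to recognize that the subsets $D_i:=X\setminus N(C_i,s)$ are forced by the requirement that $\bigcup D_i=X$ combined with the starting emptiness of $\bigcap N(C_i,s)$, and that the assumption on $N$ was tailored precisely to convert the containment $D_i\subset X\setminus N(C_i,s)$ into the disjointness $N(D_i,t_i)\cap N(C_i,t_i)=\emptyset$. The only mild subtlety worth flagging is that a single common $s$ suffices at the start (because $\omega(N)(C_1,\ldots,C_k)=0$ is an existential statement in one parameter), while the indices $t_i$ may differ across $i$; but since normality only requires $\omega(N)(C_i,D_i)=0$ separately for each $i$, no compatibility across the $t_i$ is needed.
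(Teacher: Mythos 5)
Your proof is correct and follows exactly the paper's own argument: take $s$ with $\bigcap_{i=1}^k N(C_i,s)=\emptyset$, set $D_i:=X\setminus N(C_i,s)$, and apply the hypothesis to each pair $(C_i,D_i)$ separately to get $t_i\ge s$ witnessing $\omega(N)(C_i,D_i)=0$. The observation that the $t_i$ need not be compatible across indices is a fair point and consistent with how the paper uses the definition of normality.
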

\begin{proof}
Suppose $\bigcap\limits_{i=1}^k N(C_i,s)=\emptyset$.
Define $D_i:=X\setminus N(C_i,s)$ for $i\leq k$. Obviously,
$\bigcup\limits_{i=1}^k D_i=X$.
Also, for each $i\leq k$ there is $t_i > s$ such that 
$N(D_i,t_i)\subset X\setminus N(C_i,t_i)$ which means $\omega(D_i,C_i)=0$.
\end{proof}

\begin{Corollary}\label{UniformFormIsNormal}
If $(X,\mathcal{U})$ is a uniform space, then the form $\omega(\mathcal{U})$ induced by $\mathcal{U}$ is normal.
\end{Corollary}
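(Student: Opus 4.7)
The plan is to cast $\omega(\mathcal{U})$ in the framework of Proposition \ref{SpecialNormalForms} and then reduce everything to the standard star-refinement property of uniform covers. Let $S$ be the collection of uniform covers of $X$ directed by refinement: declare $\mathcal{W}\ge \mathcal{V}$ iff $\mathcal{W}$ refines $\mathcal{V}$. Define $N:2^X\times S\to 2^X$ by $N(C,\mathcal{V}):=\st(C,\mathcal{V})$. Then by Example \ref{FormOnUniformSpaces}, $\omega(N)$ is exactly $\omega(\mathcal{U})$, so it suffices to verify the three hypotheses needed.

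First I would check the two structural conditions preceding Proposition \ref{SpecialNormalForms}. Condition (1) follows because if $\mathcal{W}$ refines $\mathcal{V}$ and $W\in\mathcal{W}$ meets $C$, then $W$ is contained in some $V\in\mathcal{V}$ that also meets $C$, so $\st(C,\mathcal{W})\subset \st(C,\mathcal{V})$. Condition (2) is immediate from the obvious identity $\st(C\cup D,\mathcal{V})=\st(C,\mathcal{V})\cup \st(D,\mathcal{V})$, so one may take $\mathcal{W}=\mathcal{V}$.

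The main point, and the only non-formal step, is verifying the hypothesis of Proposition \ref{SpecialNormalForms}: given $C\subset X$, $\mathcal{V}\in S$, and $D\subset X\setminus \st(C,\mathcal{V})$, produce $\mathcal{W}\ge \mathcal{V}$ with $\st(D,\mathcal{W})\subset X\setminus \st(C,\mathcal{W})$, i.e.\ $\st(D,\mathcal{W})\cap \st(C,\mathcal{W})=\emptyset$. Here I would invoke the defining property of a uniform structure: every uniform cover $\mathcal{V}$ has a star-refinement $\mathcal{W}\in S$, meaning that $\{\st(W,\mathcal{W})\mid W\in\mathcal{W}\}$ refines $\mathcal{V}$. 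Suppose, toward contradiction, that some $x\in \st(D,\mathcal{W})\cap \st(C,\mathcal{W})$, witnessed by $W_1,W_2\in\mathcal{W}$ with $x\in W_1\cap W_2$, $W_1\cap D\ne\emptyset$, and $W_2\cap C\ne\emptyset$. Then $W_1\subset \st(W_2,\mathcal{W})\subset V$ for some $V\in\mathcal{V}$, and $V$ meets $C$, so $V\subset \st(C,\mathcal{V})$. But $V$ also meets $D$, contradicting $D\cap \st(C,\mathcal{V})=\emptyset$.

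Having verified the hypothesis, Proposition \ref{SpecialNormalForms} gives normality of $\omega(N)=\omega(\mathcal{U})$, completing the proof. The only real content is the star-refinement calculation in the third paragraph; the rest is bookkeeping matching the definitions to the abstract framework.
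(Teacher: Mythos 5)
Your proposal is correct and follows essentially the same route as the paper: both cast $\omega(\mathcal{U})$ as $\omega(N)$ with $N(C,\mathcal{V})=\st(C,\mathcal{V})$ over the directed set of uniform covers and verify the hypothesis of Proposition \ref{SpecialNormalForms} via the same star-refinement computation showing $\st(C,\mathcal{W})\cap\st(D,\mathcal{W})=\emptyset$. Your explicit check of the two structural conditions on $N$ is a minor addition the paper leaves implicit.
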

\begin{proof}
Let $S$ be the family of uniform covers of $X$.
Define $\mathcal{W} \ge \mathcal{V}$ as
$\mathcal{W}$ refines $\mathcal{V}$.
The function $N:2^X\times S\to 2^X$ is defined as $N(C,\mathcal{V})=st(C,\mathcal{V})$.
Notice $\omega(\mathcal{U})=\omega(N)$ (see \ref{FormOnUniformSpaces}).

Let $\mathcal{W}$ be a uniform cover of $X$ such that $st(\mathcal{W},\mathcal{W})$ refines $\mathcal{V}$.
If $D\subset X\setminus st(C,\mathcal{V})$, then $N(D,\mathcal{W})\cap N(C,\mathcal{W})=\emptyset$. Indeed, suppose $x\in st(C,\mathcal{W})\cap st(D,\mathcal{W})$. Pick $y\in C\cap st(x,\mathcal{W})$ and pick $z\in D\cap st(x,\mathcal{W})$. There is $W_1\in \mathcal{W}$ containing $x, y$ and there
is $W_2\in \mathcal{W}$ containing $x,z$. Pick $V\in \mathcal{V}$ containing
$st(W_1,\mathcal{W})$. Notice $W_1\cup W_2\subset V$ and $y\in V\cap C$. Since
$D\subset X\setminus V$ we arrive at a contradiction: $z\in D\cap V$.
\end{proof}

\begin{Definition}
Suppose $X$ is a locally compact Hausdorff space that is locally compact.
Given $C\subset X$ and a compact subset $K$ of $X$ define $N(C,K)$ as the union of components of $X\setminus K$ that intersect $C$.

We define the order on compact subsets of $X$ as follows: $K \leq K'$ if $K\subset K'$. The form $\omega(N)$ is called the \textbf{Freundenthal form} of $X$.
\end{Definition}

\begin{Proposition}
The Freundenthal form is normal.
\end{Proposition}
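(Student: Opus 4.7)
The plan is to invoke Proposition \ref{SpecialNormalForms} with $S$ the collection of compact subsets of $X$ directed by inclusion (so $K \leq K'$ means $K \subset K'$; this is directed since finite unions of compact sets are compact). Under this identification the abstract form $\omega(N)$ coincides with the Freudenthal form by construction. It remains to verify the two structural hypotheses on $N$ preceding \ref{SpecialNormalForms}, together with the normality criterion stated inside \ref{SpecialNormalForms} itself.

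For the monotonicity hypothesis: if $K \subset K'$, any component of $X \setminus K'$ is contained in a unique component of $X \setminus K$, so every component of $X \setminus K'$ meeting $C$ lies inside a component of $X \setminus K$ meeting $C$, giving $N(C, K') \subset N(C, K)$. For the subadditivity hypothesis: any component of $X \setminus K$ meeting $C \cup D$ must meet $C$ or $D$, so $N(C \cup D, K) \subset N(C, K) \cup N(D, K)$ and one may take $s = t = K$. The heart of the proof is the normality criterion: given $D \subset X \setminus N(C, K)$, I claim the trivial choice $K' := K$ already satisfies $N(D, K) \subset X \setminus N(C, K)$. Indeed, suppose a component $V$ of $X \setminus K$ met both $C$ and $D$; pick $d \in V \cap D$. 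Since $V$ is a component meeting $C$, the definition of $N(C,K)$ forces $V \subset N(C, K)$, so $d \in N(C, K)$, contradicting $d \in D \subset X \setminus N(C, K)$. Hence no component of $X \setminus K$ meets both $C$ and $D$, and $N(D, K) \cap N(C, K) = \emptyset$.

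With all three hypotheses verified, \ref{SpecialNormalForms} immediately delivers normality of the Freudenthal form. There is no serious technical obstacle; the conceptual point that drives the argument is that $N(C, K)$ is a union of \emph{entire} components of $X \setminus K$, so membership in $N(C, K)$ is determined component-by-component, and this is exactly what makes the trivial choice $K' = K$ work at the third hypothesis.
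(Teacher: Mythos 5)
Your proof is correct and follows essentially the same route as the paper: both verify the additivity $N(C\cup D,K)=N(C,K)\cup N(D,K)$ and then observe that, because $N(C,K)$ is a union of entire components of $X\setminus K$, any $D\subset X\setminus N(C,K)$ already satisfies $N(D,K)\cap N(C,K)=\emptyset$ with the trivial choice $t=K$, after which \ref{SpecialNormalForms} applies. Your write-up is merely more explicit about the monotonicity axiom and about quantifying over all $D$ rather than just $D=X\setminus N(C,K)$, which the paper leaves implicit.
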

\begin{proof}
Observe $N(C\cup D,K)= N(C,K)\cup N(D,K)$ for all $C,D\subset X$ and all $K\subset X$ being compact.
Given $C\subset X$ and $K\subset X$ consider $D:=X\setminus N(C,K)$.
Observe that $N(D,K)\cap N(C,K)=\emptyset$. Apply \ref{SpecialNormalForms}.
\end{proof}

\section{Boundaries of normal forms}

In this section we show $X\cup\partial(\omega)$ is Hausdorff if and only if $\omega$ is normal and $T_1$.

\begin{Lemma}\label{BasicNbhdInNormalFormsLemma}
Suppose $\omega$ is a normal form on $X$. If $D\cap C=\emptyset$ and $\omega(D,C)=0$, then there is an $\omega$-open set $U$ containing $D$,  disjoint with $C$, and satisfying $\omega(C,U)=0$.
\end{Lemma}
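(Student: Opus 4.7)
The plan is a Urysohn-style iteration. Step one: use normality once to cut $X$ into an $E$-side containing $C$ and an $F$-side containing $D$, with $E\cap F=\emptyset$. Step two: iterate normality inside $F$ to produce an increasing chain $D=D_0\subset D_1\subset\cdots\subset F$ satisfying $\omega(D_k,X\setminus D_{k+1})=0$. Step three: take $U=\bigcup_k D_k$, which is $\omega$-open by \ref{ConstructionOfOpenSetsViaForms} and lies inside $F$, whence it automatically has all the required properties.

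For the setup, I would apply normality to $\omega(D,C)=0$ to obtain $E_0\cup F_0=X$ with $\omega(D,E_0)=\omega(C,F_0)=0$, and then trim them to $F:=(F_0\cup D)\setminus C$ and $E:=X\setminus F$. Since $D\cap C=\emptyset$, one directly checks $D\subset F$, $C\subset E$, and $E\cap F=\emptyset$. Combining the inclusions $F\subset F_0\cup D$ and $E\subset E_0\cup C$ with Axiom 1 of a basic multilinear form and \ref{OmegaInequalityOne} yields $\omega(D,E)=0$ and $\omega(C,F)=0$, with $\omega(D,C)=0$ absorbing the extra terms that appear.

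For the iteration, given $D_k\subset F$ with $\omega(D_k,E)=0$, I would apply normality to get $G_k\cup H_k=X$ with $\omega(D_k,G_k)=\omega(E,H_k)=0$, and then set $D_{k+1}:=(H_k\cup D_k)\setminus E$. The disjointness $D_k\cap E=\emptyset$ (from $D_k\subset F$ and the Step-one arrangement) gives $D_k\subset D_{k+1}\subset F$. Using $X\setminus D_{k+1}\subset G_k\cup E$, Axiom 1 yields $\omega(D_k,X\setminus D_{k+1})\le \omega(D_k,G_k)+\omega(D_k,E)=0$, and similarly $\omega(D_{k+1},E)\le \omega(H_k,E)+\omega(D_k,E)=0$ by symmetry and $D_{k+1}\subset H_k\cup D_k$, closing the induction. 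At the end, $U:=\bigcup_{k\ge 0}D_k$ satisfies $D\subset U\subset F$, hence $U\cap C=\emptyset$ and $\omega(C,U)\le \omega(C,F)=0$, while openness is handed to us by \ref{ConstructionOfOpenSetsViaForms}.

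The main obstacle I anticipate is the bookkeeping of the inductive step: the sets $H_k$ produced by normality need not lie in $F$, so without the trimming $(H_k\cup D_k)\setminus E$ one loses the containment $U\subset F$ that drives both $U\cap C=\emptyset$ and $\omega(C,U)=0$ at the end. The Step-one arrangement $E\cap F=\emptyset$ is precisely what makes this trimming preserve $\omega(D_{k+1},E)=0$ via Axiom 1 and keeps the iteration running.
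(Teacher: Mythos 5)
Your proposal is correct and follows essentially the same route as the paper: one application of normality to $\omega(D,C)=0$ produces a two-piece decomposition of $X$ separating $C$ from $D$, then normality is iterated to build an increasing chain $D=D_0\subset D_1\subset\cdots$ with $\omega(D_k,X\setminus D_{k+1})=0$, and \ref{ConstructionOfOpenSetsViaForms} gives the open set $U$. Your version merely spells out the trimming and the inductive invariant $\omega(D_k,E)=0$ more explicitly than the paper does.
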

\begin{proof}
 \textbf{Claim:} There exist disjoint sets $C'$ and $D'$ such that \\
1. $C'\cup D'=X$,\\
2. $C\subset C'$,\\
3. $D\subset D'$,\\
4. $\omega(C,D')=0$ and $\omega(D,C')=0$.\\ \textbf{Proof of Claim:}\\
Choose sets $E$ and $F$ such that $\omega(C,E)=0=\omega(D,F)=0$
and $E\cup F=X$. Notice $C\cap E, D\cap F\in \mathcal{B}(\omega)$.
Put $C':=C\cup (F\setminus F\cap D)$ and $D':= D\cup (E\setminus E\cap C)$.
$\blacksquare$\\
By induction construct an increasing sequence of sets $D_n$ containing $D$ and disjoint with $C'$  so that $\omega(D_n,X\setminus D_{n+1})=0$.
By \ref{ConstructionOfOpenSetsViaForms}, $U:=\bigcup\limits_{i=1}^\infty D_n$ is $\omega$-open, $D\subset U$, and $U\cap C'=\emptyset$.
Therefore $U\subset D'$ and $\omega(C,U)=0$.
\end{proof}

\begin{Corollary}\label{BasicClosureInNormalFormsLemma}
If $\omega$ is a normal form on $X$,
then $x$ belongs to $\omega$-closure of $C\subset X$ if and only if
$x\in C$ or $\omega(x,C)=\infty$.
\end{Corollary}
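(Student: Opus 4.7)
The plan is to unwind the definition of the $\omega$-topology and separately prove the two directions of the equivalence. Recall that $x\in \overline{C}$ (closure in the topology induced by $\omega$) means every $\omega$-open set $U$ containing $x$ meets $C$; equivalently, $x\notin\overline{C}$ iff there is an $\omega$-open $U\ni x$ with $U\cap C=\emptyset$, which (by openness) forces $\omega(x,X\setminus U)=0$.

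For the direction ``$x\in C$ or $\omega(x,C)=\infty$ implies $x\in\overline{C}$'', the case $x\in C$ is immediate. For the other case I will argue by contraposition: if $x\notin\overline{C}$, pick an $\omega$-open $U$ with $x\in U$ and $C\subset X\setminus U$; then $\omega(x,X\setminus U)=0$ by definition of openness, and since $C\subset X\setminus U$, Lemma \ref{OmegaInequalityOne} yields $\omega(x,C)\leq \omega(x,X\setminus U)=0$, contradicting $\omega(x,C)=\infty$.

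For the converse ``$x\notin C$ and $\omega(x,C)=0$ imply $x\notin\overline{C}$'', this is where normality is used. I apply Lemma \ref{BasicNbhdInNormalFormsLemma} with $D:=\{x\}$: the hypotheses $D\cap C=\emptyset$ (since $x\notin C$) and $\omega(D,C)=\omega(x,C)=0$ are both satisfied, so the lemma produces an $\omega$-open $U\supset\{x\}$ disjoint from $C$. Hence $x\notin\overline{C}$, as required.

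The only real substance in the argument is the appeal to Lemma \ref{BasicNbhdInNormalFormsLemma}; everything else is a routine unpacking of definitions together with the monotonicity statement \ref{OmegaInequalityOne}. In particular, no obstacle is anticipated beyond making sure the roles of $C$ and $D$ in \ref{BasicNbhdInNormalFormsLemma} are matched correctly.
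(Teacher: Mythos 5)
Your proof is correct and follows essentially the same route as the paper: the hard direction invokes Lemma \ref{BasicNbhdInNormalFormsLemma} with $D=\{x\}$, and the easy direction unwinds the definition of $\omega$-openness (the paper uses the specific open set $U=X\setminus cl(C)$) together with the monotonicity Lemma \ref{OmegaInequalityOne}. No gaps.
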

\begin{proof}
Suppose $x\notin C$ and $\omega(x,C)=0$. By \ref{BasicNbhdInNormalFormsLemma}, $x\notin cl(C)$.

If $x\notin cl(C)$, then $x\in U:=X\setminus cl(C)$
and $\omega(x,X\setminus U)=0$ as $U$ is $\omega$-open.
Since $C\subset X\setminus U$, $\omega(x,C)=0$.
\end{proof}

\begin{Corollary}\label{OmegaOnClosuresGeneralCase}
If $\omega$ is a normal form on $X$ and $\omega(C_1,\ldots,C_k)=0$,
then 
$$\omega(cl(C_1),\ldots,cl(C_k))=0.$$
\end{Corollary}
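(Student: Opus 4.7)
The plan is to reduce the statement to the case of a single closure by proving the following auxiliary lemma, and then iterating it:

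\textbf{Auxiliary Lemma.} \emph{If $\omega$ is normal and $\omega(C,D_1,\ldots,D_m)=0$, then $\omega(cl(C),D_1,\ldots,D_m)=0$.}

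Given this lemma, the conclusion follows by a $k$-fold iteration using the symmetry of $\omega$: start from $\omega(C_1,\ldots,C_k)=0$, apply the lemma to replace $C_1$ by $cl(C_1)$, then reorder and apply it again to replace $C_2$ by $cl(C_2)$, and so on until every $C_i$ has been replaced by $cl(C_i)$.

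To prove the auxiliary lemma, I would apply normality to the $(m+1)$-vector $(C,D_1,\ldots,D_m)$ to obtain sets $E_0,E_1,\ldots,E_m$ with $\bigcup_{i=0}^{m}E_i=X$, $\omega(C,E_0)=0$, and $\omega(D_i,E_i)=0$ for each $i\geq 1$. The key observation is that $cl(C)\cap E_0\subset C$: indeed, if $x\in E_0$ and $x\notin C$, then by \ref{OmegaInequalityOne} we get $\omega(x,C)\leq \omega(E_0,C)=0$, and \ref{BasicClosureInNormalFormsLemma} then forces $x\notin cl(C)$. Now I would use the decomposition $cl(C)=\bigcup_{i=0}^{m}(cl(C)\cap E_i)$ together with Axiom~1 of a basic multilinear form to write
$$\omega(cl(C),D_1,\ldots,D_m)=\sum_{i=0}^{m}\omega(cl(C)\cap E_i, D_1,\ldots,D_m).$$
The term $i=0$ is bounded by $\omega(C,D_1,\ldots,D_m)=0$ via $cl(C)\cap E_0\subset C$ and \ref{OmegaInequalityOne}; for $i\geq 1$, the term is bounded by $\omega(E_i,D_i)=\omega(D_i,E_i)=0$, using $cl(C)\cap E_i\subset E_i$, \ref{OmegaInequalityOne}, and \ref{OmegaInequalityThree} together with symmetry to discard the remaining coordinates. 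All summands vanish, proving the lemma.

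The main obstacle is recognizing how to decouple the closures one coordinate at a time: the multi-variable statement seems to need a single clever trick, but once one realizes normality pairs $C$ with a set $E_0$ that cannot meet $cl(C)\setminus C$, the rest is a mechanical bookkeeping exercise using multilinearity. The only minor subtlety is making sure that \ref{OmegaInequalityThree} is used correctly to isolate the useful coordinate $(E_i,D_i)$ among the other $D_j$'s, which is where the symmetry of $\omega$ does essential work.
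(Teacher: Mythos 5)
Your proof is correct, but it combines the ingredients differently from the paper. Both arguments turn on the same key fact: normality pairs $C$ with a set $E_0$ covering part of $X$ such that $cl(C)\cap E_0\subset C$, which follows from \ref{OmegaInequalityOne} and \ref{BasicClosureInNormalFormsLemma} exactly as you say. The paper, however, only proves the two-variable Claim ($\omega(C,D)=0$ implies $\omega(cl(C),D)=0$) this way, and then disposes of the general $k$-vector case by contradiction through the boundary: if $\omega(cl(C_1),\ldots,cl(C_k))=\infty$ there is $\mathcal{Q}\in\partial(\omega)$ containing every $cl(C_i)$, normality yields a cover $\{D_i\}$ with $\omega(cl(C_i),D_i)=0$ by the Claim, so no $D_i$ lies in $\mathcal{Q}$, contradicting \ref{ObservationOnPointsAtBdOfForm} applied to $X=\bigcup D_i$. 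You instead prove the one-closure-at-a-time statement for arbitrary arity directly, decomposing $cl(C)=\bigcup_{i=0}^{m}(cl(C)\cap E_i)$ and killing each summand by \ref{OmegaInequalityOne} and \ref{OmegaInequalityThree}, then iterate over the coordinates using symmetry. Your route is more elementary and self-contained -- it never leaves the multilinear-form axioms and avoids invoking $\partial(\omega)$ and maximal families altogether -- while the paper's is shorter given that the boundary machinery has already been built; both are valid, and all the auxiliary results you cite precede \ref{OmegaOnClosuresGeneralCase} in the paper, so there is no circularity.
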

\begin{proof}
\textbf{Claim:} If $\omega(C,D)=0$,
then $\omega(cl(C),cl(D))=0$.\\
\textbf{Proof of Claim:}
It is sufficient to show $\omega(cl(C),D)=0$.
Choose sets $C'$ and $D'$ such that $C'\cup D'=X$,
$\omega(C,C')=0$, and $\omega(D,D')=0$.
If $x\in D'\setminus C$, then $x\notin cl(C)$ by \ref{BasicClosureInNormalFormsLemma}. Thus,
$cl(C)\subset C\cup D'$.
Since $\omega(C\cup D',D)=0$, the proof of Claim is completed. $\blacksquare$\\
Suppose $\omega(cl(C_1),\ldots,cl(C_k))=\infty$
and choose $D_i\subset X$, $i\leq k$, such that $\omega(C_i,D_i)=0$ for all $i\leq k$, and $\bigcup\limits_{i=1}^k D_i=X$.
There is $Q\in\partial(\omega)$ containing each $cl(C_i)$, $i\leq k$.
As $\omega(cl(C_i),D_i)=0$ for each $i$, none of $D_i$'s belong to $Q$,
a contradiction as $\bigcup\limits_{i=1}^k D_i=X$.
\end{proof}

\begin{Theorem}\label{MainTheoremOnNormalityAndCompactness}
$X\cup \partial (\omega)$ is large scale compact with respect to $\mathcal{B}(\omega)$ and Hausdorff if and only if $\omega$ is normal and $T_1$.
\end{Theorem}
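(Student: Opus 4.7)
The plan is to handle both directions with the tools already in place. For the $(\Leftarrow)$ direction, large scale compactness of $X \cup \partial(\omega)$ is free from Corollary \ref{LSCompactCorollary}, so everything reduces to verifying the Hausdorff criterion of Proposition \ref{ConditionForFormCompactificationToBeT2}(b). Given distinct $\mathcal{Q}, \mathcal{R} \in \partial(\omega)$, pick $A \in \mathcal{Q} \setminus \mathcal{R}$; maximality of $\mathcal{R}$ forces the existence of a witness vector of the form $(A,\ldots,A,B_1,\ldots,B_k)$ with $A$ appearing $m$ times, each $B_j \in \mathcal{R}$, and $\omega$-value zero. Applying normality to this vector yields sets $E_1,\ldots,E_m,F_1,\ldots,F_k$ covering $X$ with $\omega(A,E_i) = 0$ and $\omega(B_j,F_j) = 0$. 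Setting $E := \bigcup_i E_i$ and $F := \bigcup_j F_j$, Observation \ref{ObservationOnPointsAtBdOfForm} (applied inductively) forces $E \notin \mathcal{Q}$ and $F \notin \mathcal{R}$, since any $E_i \in \mathcal{Q}$ would contradict $\omega(A,E_i) = 0$ with $A \in \mathcal{Q}$, and similarly for $F$.

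To promote $E$ and $F$ to $\omega$-closed sets, I take $\omega$-closures and use that $cl(E) = \bigcup_i cl(E_i)$ for finite unions. Corollary \ref{OmegaOnClosuresGeneralCase} combined with Lemma \ref{OmegaInequalityOne} gives $\omega(A,cl(E_i)) \le \omega(cl(A),cl(E_i)) = 0$, so $\omega(A,cl(E)) = 0$ by Axiom 1, whence $cl(E) \notin \mathcal{Q}$, and similarly $cl(F) \notin \mathcal{R}$; since $cl(E) \cup cl(F) \supseteq X$ these are the required complementary $\omega$-closed sets. For the $(\Rightarrow)$ direction, $X \cup \partial(\omega)$ is topologically normal by \ref{NormalityOfLargeScaleCompactSpaces}. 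I verify $T_1$ first: for distinct $x,y \in X$, if either singleton is $\omega$-bounded then $\omega(x,y) \le \omega(x) = 0$ by Corollary \ref{OmegaInequalityTwo}; otherwise both represent points of $\partial(\omega)$, and Hausdorff provides disjoint basic opens $o(U), o(V)$ with $U,V$ $\omega$-open in $X$ and $x \in U$, $y \in V$. Then $y \notin U$ (else $y \in o(U) \cap o(V) = \emptyset$ by Lemma \ref{IntersectionOfos}), so $\omega(x,y) \le \omega(x, X \setminus U) = 0$ using openness of $U$ and $x \in U$. For normality, given $\omega(C_1,\ldots,C_k) = 0$, the characterization at the close of the preceding section gives $\bigcap_i cl(C_i) \cap \partial(\omega) = \emptyset$; letting $K_i := cl(C_i) \cap \partial(\omega)$, the shrinking lemma in the normal space $X \cup \partial(\omega)$ applied to the finite open cover $\{(X \cup \partial(\omega)) \setminus K_i\}$ yields closed sets $L_i$ with $L_i \cap K_i = \emptyset$ and $\bigcup_i L_i = X \cup \partial(\omega)$. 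Setting $D_i := L_i \cap X$ gives $\bigcup_i D_i = X$ automatically, while $cl(D_i) \subseteq L_i$ forces $cl(C_i) \cap cl(D_i) \cap \partial(\omega) \subseteq K_i \cap L_i = \emptyset$, hence $\omega(C_i,D_i) = 0$ by the same characterization.

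The main obstacle lies in the $(\Leftarrow)$ direction, specifically in the closure step: normality produces sets $E$ and $F$ that witness the separation of $\mathcal{Q}$ and $\mathcal{R}$, but the criterion in \ref{ConditionForFormCompactificationToBeT2}(b) insists on $\omega$-closed sets. The subtlety is that replacing $E, F$ by their closures must preserve the non-membership in $\mathcal{Q}$ and $\mathcal{R}$, and this hinges crucially on Corollary \ref{OmegaOnClosuresGeneralCase} — a result whose own proof uses the normality hypothesis. By contrast, the $(\Rightarrow)$ direction is essentially mechanical once one has topological normality of $X \cup \partial(\omega)$ and the closure-intersection characterization of $\omega = 0$, since the shrinking lemma in $X \cup \partial(\omega)$ does all the heavy lifting and restriction to $X$ automatically preserves the covering property.
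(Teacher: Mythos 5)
Your proof is correct, and while your $(\Leftarrow)$ direction is essentially the paper's argument, your $(\Rightarrow)$ direction takes a genuinely different route. For $(\Leftarrow)$ both you and the paper extract a zero-valued witness vector mixing a set of $\mathcal{Q}$ with sets of $\mathcal{R}$, apply normality, pass to $\omega$-closures via \ref{OmegaOnClosuresGeneralCase}, and feed the result into \ref{ConditionForFormCompactificationToBeT2}(b) together with \ref{LSCompactCorollary}; your only cosmetic deviation is carrying $m$ copies of $A$ in the witness vector, which the paper implicitly collapses to one copy via Axiom 3. For $(\Rightarrow)$, however, the paper argues pointwise: at each $\mathcal{Q}\in\partial(\omega)$ it uses regularity of the compactification to choose $o(D(\mathcal{Q}))$ whose closure misses $cl(C(i(\mathcal{Q})))$, extracts a finite subfamily by large scale compactness, and assembles the normality decomposition as $B'\cup B\cup E_s$ from the leftover bounded pieces. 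You instead invoke topological normality of $X\cup\partial(\omega)$ (from \ref{NormalityOfLargeScaleCompactSpaces}) once, apply the shrinking lemma to the finite open cover by complements of the boundary traces $K_i=cl(C_i)\cap\partial(\omega)$, and restrict the closed shrinking to $X$; this is cleaner and avoids the bookkeeping of bounded remainders, at the cost of quoting the shrinking lemma. You also explicitly verify the $T_1$ condition in the $(\Rightarrow)$ direction, a step the paper's proof omits entirely, and your verification (splitting on whether the singletons are $\omega$-bounded and using disjoint basic opens $o(U)$, $o(V)$ otherwise) is sound under the paper's identification of unbounded points with principal ultrafilters. Both your argument and the paper's lean on the closure-intersection characterization of $\omega(C_1,\ldots,C_k)=0$ for possibly non-closed sets, so you inherit whatever care that proposition requires, but you introduce no gap the paper does not already have.
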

\begin{proof}
Suppose $\omega$ is normal. Given $\mathcal{Q}\ne \mathcal{R}$ there is $D\in \mathcal{Q}$
and $C_1,\ldots, C_k\in \mathcal{R}$ such that $\omega(D,C_1,\ldots,C_k)=0$.
Choose $D',D_1,\ldots,D_k\subset X$ such that
$\omega(D,D')=\omega(C_i,D_i)=0$ for $i\leq k$ and $D'\cup\bigcup\limits_{i=1}^k D_i=X$. By \ref{OmegaOnClosuresGeneralCase}
we may assume $D'$ is $\omega$-closed and all $D_i$'s are $\omega$-closed.
Notice $D'\notin \mathcal{Q}$. Also, $\bigcup\limits_{i=1}^k D_i\notin \mathcal{R}$
(as otherwise $D_j\in \mathcal{R}$ for some $j$ contradicting $\omega(C_j,D_j)=0$).
By \ref{LSCompactCorollary} and \ref{ConditionForFormCompactificationToBeT2}, $X\cup \partial (\omega)$ is Hausdorff and large scale compact with respect to $\mathcal{B}(\omega)$.

Suppose $X\cup \partial (\omega)$ is Hausdorff, large scale compact with respect to $\mathcal{B}(\omega)$,
and $\omega(C(1),\ldots,C(k))=0$ for some $C(i)\subset X$.
That means the intersection of closures of $C(i)$ (in $X\cup \partial (\omega)$) is an element $B$ of $\mathcal{B}(\omega)$.
Choose, for each $\mathcal{Q}$, an index $i(\mathcal{Q})$ such that $\mathcal{Q}\notin cl(C(i(\mathcal{Q})))$. Then choose an $\omega$-open $D(\mathcal{Q})$ such that $cl(o(D(\mathcal{Q})))\cap cl(C(i(\mathcal{Q})))=\emptyset$ and $\mathcal{Q}\in o(D(\mathcal{Q}))$.
Notice $\omega(D(\mathcal{Q}),cl(C(i(\mathcal{Q})))=0$ as otherwise there is $\mathcal{R}\in cl(o(D(\mathcal{Q})))\cap cl(C(i(\mathcal{Q})))$.

There are finitely many points $\mathcal{Q}(j)$ so that
$B':=(X\cup \partial (\omega))\setminus \bigcup\limits_{j=1}^n o(D(\mathcal{Q}(j)))\in B(\omega)$.
For each index $s\leq k$ define $E_s$ as the union of
all $D(\mathcal{Q}(j))$ so that $i(\mathcal{Q}(j))=s$. Notice $\omega(B'\cup B\cup E_s,C_s)=0$
for each $s\leq k$ and $\bigcup\limits_{s=1}^k (B'\cup B\cup E_s)=X$.
\end{proof}

\begin{Lemma}\label{NormalFormsOn2Vectors}
Suppose $\omega_1$, $\omega_2$ are normal forms on $X$. If $\omega_1(C_1,C_2)=\omega_2(C_1,C_2)$ for all $2$-vectors $(C_1,C_2)$ in $X$, then $\omega_2=\omega_1$.
\end{Lemma}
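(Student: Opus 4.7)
The plan is to deduce $\omega_1=\omega_2$ from the hypothesis by showing, using normality, that whether $\omega_i(C_1,\ldots,C_k)=0$ is determined by the values of $\omega_i$ on certain $2$-vectors built from a ``normal decomposition'' of $X$. Since both forms take values in $\{0,\infty\}$, it suffices to establish $\omega_1(V)=0\iff \omega_2(V)=0$ for every vector $V$.

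First I would handle the trivial $k=1$ case: by Axiom 3 applied to $V=(C)$, one has $\omega_i(C)=\omega_i(C,C)$, so the hypothesis on $2$-vectors immediately gives $\omega_1(C)=\omega_2(C)$.

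The heart of the proof is the implication, for $k\ge 2$, that $\omega_1(C_1,\ldots,C_k)=0$ forces $\omega_2(C_1,\ldots,C_k)=0$; the reverse direction is obtained by swapping roles, which is precisely where normality of $\omega_2$ is used. Normality of $\omega_1$ produces sets $D_1,\ldots,D_k\subset X$ with $\bigcup_{i=1}^k D_i=X$ and $\omega_1(C_i,D_i)=0$, hence by hypothesis $\omega_2(C_i,D_i)=0$ for each $i$. Using symmetry of $\omega_2$ to bring the pair $(C_i,D_i)$ to the front of the vector, Proposition \ref{OmegaInequalityThree} yields
\[
\omega_2(C_1,\ldots,C_k,D_i)\le \omega_2(C_i,D_i)=0.
\]
Expanding the final coordinate along $X=\bigcup_i D_i$ by iterating Axiom 1, and using Axiom 3 to append $X$ (any $C_j\subset X$), one concludes
\[
\omega_2(C_1,\ldots,C_k)=\omega_2(C_1,\ldots,C_k,X)=\sum_{i=1}^k \omega_2(C_1,\ldots,C_k,D_i)=0.
\]

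I do not expect any substantive obstacle. The only point requiring a bit of care is the correct invocation of symmetry when applying Proposition \ref{OmegaInequalityThree}, since that inequality bounds $\omega(V\ast V')$ by $\omega(V)$ rather than by $\omega(V')$, so the distinguished pair $(C_i,D_i)$ must sit at the front. The argument uses normality of each $\omega_i$ exactly once, in the respective direction of the biconditional, explaining why the hypothesis that both forms are normal is essential.
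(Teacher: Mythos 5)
Your proof is correct and follows essentially the same route as the paper's: reduce $k=1$ via $\omega_i(C)=\omega_i(C,C)$, then for larger $k$ use normality of $\omega_1$ to produce the cover $\{D_i\}$, transfer $\omega_2(C_i,D_i)=0$ via the hypothesis, and conclude by expanding $\omega_2(V\ast X)$ as the sum of the $\omega_2(V\ast D_i)$. Your explicit remark about symmetry and the direction of Proposition \ref{OmegaInequalityThree} merely spells out a step the paper leaves implicit.
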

\begin{proof}
Notice $\omega_1(C)= \omega_1(C,C)=\omega_2(C,C)=\omega_2(C)$
for all $C\subset X$.
Now, it suffices to show $\omega_1(C_1,\ldots,C_k)=0$ implies $\omega_2(C_1,\ldots,C_k)=0$ for all $k\ge 3$ and all $k$-vectors $V=(C_1,\ldots,C_k)$ of $X$.
Choose subsets $D_i$, $1\leq i\leq k$, of $X$ such that $\bigcup\limits_{i=1}^k D_i=X$ and $\omega_1(C_i,D_i)=0$ for each $1\leq i\leq k$
which implies $\omega_2(C_i,D_i)=0$ for each $1\leq i\leq k$.
Therefore $\omega_2(V\ast D_i)=0$ for each $1\leq i\leq k$.
Finally, $\omega_2(V)=\omega_2(V\ast X)=\sum\limits_{i=1}^k\omega_2(V\ast D_i)=0$.
\end{proof}

\begin{Theorem}\label{StabilityOfFormsThm}
Suppose $\omega_1$, $\omega_2$ are normal forms on $X$ and
$(X,\mathcal{T}(\omega_1),\mathcal{B}(\omega_1))$ is large scale compact and Hausdorff. If $\mathcal{T}(\omega_2)=\mathcal{T}(\omega_1)$
and $\mathcal{B}(\omega_2)=\mathcal{B}(\omega_1)$, then $\omega_2=\omega_1$.
\end{Theorem}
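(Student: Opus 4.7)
By Lemma \ref{NormalFormsOn2Vectors}, it is enough to prove that $\omega_1$ and $\omega_2$ agree on every $2$-vector $(C_1,C_2)$; since the values live in $\{0,\infty\}$, it suffices to check that their vanishing sets coincide. The plan is to establish, for any normal form $\omega$ on $X$ such that $(X,\mathcal{T}(\omega),\mathcal{B}(\omega))$ is large scale compact and Hausdorff, the topological-bornological characterisation
\[
\omega(C_1,C_2)=0 \iff cl(C_1)\cap cl(C_2)\in \mathcal{B}(\omega),
\]
with closures taken in $\mathcal{T}(\omega)$. Once this is proven for both $\omega_1$ and $\omega_2$, the assumed equalities $\mathcal{T}(\omega_1)=\mathcal{T}(\omega_2)$ and $\mathcal{B}(\omega_1)=\mathcal{B}(\omega_2)$ yield the same right-hand side for $i=1,2$, whence $\omega_1=\omega_2$.

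The forward implication is formal: \ref{OmegaOnClosuresGeneralCase} gives $\omega(cl(C_1),cl(C_2))=0$, and since $cl(C_1)\cap cl(C_2)\subset cl(C_i)$ for each $i$, \ref{OmegaInequalityOne} combined with Axiom 3 yields $\omega(cl(C_1)\cap cl(C_2))\le\omega(cl(C_1),cl(C_2))=0$, i.e.\ $cl(C_1)\cap cl(C_2)\in\mathcal{B}(\omega)$.

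For the reverse implication, set $E:=cl(C_1)\cap cl(C_2)\in\mathcal{B}(\omega)$ and $F_i:=cl(C_i)\setminus E$. Because elements of $\mathcal{B}$ in a large scale topological space are open-closed, the $F_i$ are disjoint closed subsets of $X$; by distributing $cl(C_i)=F_i\cup E$ with Axiom 1 and absorbing every cross-term involving $E$ using $\omega(E)=0$, the problem reduces via \ref{OmegaInequalityOne} to showing $\omega(F_1,F_2)=0$. For each $x\in F_1$, since $x\notin F_2=cl(F_2)$, Corollary \ref{BasicClosureInNormalFormsLemma} supplies $\omega(x,F_2)=0$; Lemma \ref{BasicNbhdInNormalFormsLemma} then produces an $\omega$-open neighbourhood $U_x\ni x$ disjoint from $F_2$ and satisfying $\omega(F_2,U_x)=0$.

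The decisive step is to apply large scale compactness of $(X,\mathcal{T}(\omega),\mathcal{B}(\omega))$ to the open cover $\{U_x\}_{x\in F_1}\cup\{X\setminus F_1\}$ of $X$: a finite subfamily leaves a complement in $\mathcal{B}(\omega)$, so $F_1\subset U\cup B$ for $U:=\bigcup_{i=1}^n U_{x_i}$ and some $B\in\mathcal{B}(\omega)$. Bilinearity gives $\omega(U,F_2)=\sum_i\omega(U_{x_i},F_2)=0$, while \ref{OmegaInequalityTwo} gives $\omega(B,F_2)\le \omega(B)=0$; together with \ref{OmegaInequalityOne} this yields $\omega(F_1,F_2)\le \omega(U\cup B,F_2)=\omega(U,F_2)+\omega(B,F_2)=0$. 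I expect this large scale compactness extraction to be the main obstacle; everything else is bookkeeping with the bilinearity axioms and the normality toolbox already developed in this section.
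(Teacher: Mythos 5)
Your proposal is correct, and it reaches the conclusion by a genuinely different organization of the same toolbox. The paper's proof is asymmetric: after reducing to disjoint $2$-vectors via \ref{NormalFormsOn2Vectors}, it uses normality of $\omega_1$ (through \ref{BasicNbhdInNormalFormsLemma} and \ref{OmegaOnClosuresGeneralCase}) to enlarge $C_2$ to an open $U$ with $\omega_1(cl(C_1),U)=0$, then uses normality of $\omega_2$ to surround each point of $cl(C_1)$ by a neighborhood $W(x)$ with $\omega_2(W(x),U)=0$, and finally invokes large scale compactness to cover $cl(C_1)$ by finitely many such $W(x)$ plus a bounded remainder, thereby transferring the vanishing from $\omega_1$ to $\omega_2$. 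You instead prove a symmetric, intrinsic characterization $\omega(C_1,C_2)=0\iff cl(C_1)\cap cl(C_2)\in\mathcal{B}(\omega)$, valid for any normal form whose associated triple is large scale compact, and apply it to each form separately (noting that the triple for $\omega_2$ is literally the same triple as for $\omega_1$, hence also large scale compact). This is in effect the $2$-vector case of the formula in Theorem \ref{UniquenessOfNormalFormsOnCompactSpaces}, which the paper deduces \emph{from} \ref{StabilityOfFormsThm}; your route reverses that logical order. The compactness extraction and the appeals to \ref{BasicNbhdInNormalFormsLemma}, \ref{BasicClosureInNormalFormsLemma}, \ref{OmegaOnClosuresGeneralCase}, and \ref{NormalFormsOn2Vectors} are structurally the same in both arguments; what your version buys is a cleaner invariant statement that makes the uniqueness transparent, at the price of leaning explicitly on the convention (built into the definition of a large scale topological space) that members of $\mathcal{B}(\omega_1)$ are open-closed, which you need so that $F_i=cl(C_i)\setminus E$ is closed and $X\setminus F_1$ is open for the covering step.
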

\begin{proof}
Notice $\mathcal{B}(\omega_2)=\mathcal{B}(\omega_1)$ means exactly
that $\omega_1(C)=\omega_2(C)$ for all $C\subset X$, so our next goal is to prove $\omega_1(C_1,C_2)=\omega_2(C_1,C_2)$ for all $2$-vectors $(C_1,C_2)$ in $X$. It suffices to show $\omega_1(C_1,C_2)=0$ implies $\omega_2(C_1,C_2)=0$ for disjoint subsets $C_1$, $C_2$ of $X$ (remove their intersection if it is not empty). Enlarge $C_2$ to an open subset $U$ of $X$
so that $\omega_1(C_1,U)=0$ (see \ref{BasicNbhdInNormalFormsLemma})
and put $D:=cl(C_1)$. Now, $\omega_1(D,U)=0$ by \ref{OmegaOnClosuresGeneralCase}. Given $x\in D\setminus U$, there is a neighborhood $W(x)$ of $x$ satisfying $\omega_2(W(x),U)=0$
as $x\notin U$ and $U\in \mathcal{T}(\omega_2)$ (use \ref{BasicNbhdInNormalFormsLemma}). There is a finite subset $F$ of $D\setminus U$ for which the complement $E$ of $\bigcup\limits_{x\in F} W(x)\cup (X\setminus D)\cup U$ belongs to $\mathcal{B}(\omega_2)$.
Put $W:=\bigcup\limits_{x\in F} W(x)$ and notice 
Notice $D\subset E\cup W\cup (D\setminus U)$. Since $\omega_2(Y,U)=0$ for each term $Y$ of $ E\cup W\cup (D\setminus U)$, $\omega_2(D,U)=0$
which implies $\omega_2(C_1,C_2)=0$.

To complete the proof of \ref{StabilityOfFormsThm} invoke \ref{NormalFormsOn2Vectors}.
\end{proof}

\begin{Theorem}\label{UniquenessOfNormalFormsOnCompactSpaces}
If $(X,\mathcal{T},\mathcal{B})$ is a large scale compact Hausdorff topological space, then there is a unique normal form $\omega$ inducing $\mathcal{T}$ and having $\omega$-bounded sets identical with the bornology $\mathcal{B}$.
That form is given by the formula
$$\omega(C_1,\ldots,C_k)=0 \iff X_0\cap cl(C_1)\cap\ldots\cap cl(C_k)=\emptyset$$
where $X_0:=X\setminus \bigcup \mathcal{B}$.
\end{Theorem}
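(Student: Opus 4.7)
The plan is to verify that the formula in the statement defines a basic multilinear form with $\mathcal{B}(\omega)=\mathcal{B}$ and $\mathcal{T}(\omega)=\mathcal{T}$, to prove directly that it is normal, and then to obtain uniqueness as an immediate consequence of the stability theorem \ref{StabilityOfFormsThm}.

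The form axioms are essentially formal: symmetry and axiom 2 are trivial, axiom 3 follows from the fact that $cl(V_j)\subset cl(C)$ whenever $V_j\subset C$, and axiom 1 follows from $cl(C_1\cup C_2)=cl(C_1)\cup cl(C_2)$ combined with distributivity of intersection over union. To show $\mathcal{B}(\omega)=\mathcal{B}$, observe that if $C\in\mathcal{B}$, then $C$ is open-closed and contained in $\bigcup\mathcal{B}$, so $cl(C)=C\subset\bigcup\mathcal{B}$ and $\omega(C)=0$. Conversely, if $\omega(C)=0$, then $cl(C)\subset\bigcup\mathcal{B}$ and $\{X\setminus cl(C)\}\cup\{B:B\in\mathcal{B}\}$ is an open cover of $X$; large scale compactness provides $B_1,\ldots,B_n\in\mathcal{B}$ such that $cl(C)\setminus (B_1\cup\ldots\cup B_n)\in\mathcal{B}$, and therefore $cl(C)\in\mathcal{B}$, forcing $C\in\mathcal{B}$. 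For $\mathcal{T}(\omega)=\mathcal{T}$, the crucial point is that every $x\in\bigcup\mathcal{B}$ is $\mathcal{T}$-isolated, since $\mathcal{B}$ is hereditary (so $\{x\}\in\mathcal{B}$) and elements of $\mathcal{B}$ are open-closed. Because $X$ is $T_1$, $cl(\{x\})=\{x\}$, so $\omega(x,X\setminus U)=0$ is equivalent to $x\in\bigcup\mathcal{B}$ or $x\notin cl(X\setminus U)$; in either case, $x$ has a $\mathcal{T}$-open neighborhood contained in $U$, so the $\omega$-topology coincides with $\mathcal{T}$.

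The main step is normality of $\omega$. Suppose $\omega(C_1,\ldots,C_k)=0$, i.e.\ the closed sets $F_i:=X_0\cap cl(C_i)$ have empty intersection (note that $X_0$ is closed because $\bigcup\mathcal{B}$ is open). By \ref{NormalityOfLargeScaleCompactSpaces}, $X$ is topologically normal, so the classical shrinking lemma applied to the finite open cover $\{X\setminus F_i\}_{i=1}^k$ produces open sets $D_i$ with $cl(D_i)\subset X\setminus F_i$ and $\bigcup_{i=1}^k D_i=X$. Then $X_0\cap cl(C_i)\cap cl(D_i)=F_i\cap cl(D_i)=\emptyset$, so $\omega(C_i,D_i)=0$ for every $i$, which is exactly the normality condition. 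Uniqueness is then immediate from \ref{StabilityOfFormsThm}: if $\omega'$ is any other normal form on $X$ with $\mathcal{T}(\omega')=\mathcal{T}$ and $\mathcal{B}(\omega')=\mathcal{B}$, then since $(X,\mathcal{T},\mathcal{B})$ is large scale compact and Hausdorff by hypothesis, \ref{StabilityOfFormsThm} yields $\omega'=\omega$. The main obstacle in the argument is arranging normality of $\omega$; the difficulty is resolved by passing from large scale compactness to topological normality via \ref{NormalityOfLargeScaleCompactSpaces}, which is precisely the input the shrinking lemma requires.
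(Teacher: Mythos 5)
Your proof is correct and follows essentially the same route as the paper: check $\mathcal{B}(\omega)=\mathcal{B}$ via large scale compactness, check $\mathcal{T}(\omega)=\mathcal{T}$ using that points of $\bigcup\mathcal{B}$ are isolated, establish normality of $\omega$ from topological normality of $X$ (which the paper obtains via \ref{NormalityOfLargeScaleCompactSpaces} and the argument of \ref{BasicFormIsNormalForNormal}), and deduce uniqueness from \ref{StabilityOfFormsThm}. Your only deviations are cosmetic: you use the classical shrinking lemma where the paper invokes the zero-set decomposition of \ref{ZeroSetsLemma}, and you cover $cl(C)$ by all of $\mathcal{B}$ rather than by singletons.
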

\begin{proof}
$\omega$ is normal by an argument similar to the proof of \ref{BasicFormIsNormalForNormal}. 
First, let's show $\mathcal{B}=\mathcal{B}(\omega)$.
If $B\in \mathcal{B}$, then $B$ is closed and $\omega(B)=0$ as $X_0\cap B=\emptyset$. Conversely, suppose $X_0\cap cl(C)=\emptyset$.
For each $x\in cl(C)$ consider its neighborhood $W(x)=\{x\}$. Pick a finite subset $F$ of $cl(C)$ so that the complement
$B$ of $\bigcup\limits_{x\in F}\{x\}\cup (X\setminus cl(C))$ belongs to $\mathcal{B}$. Put $W:=\bigcup\limits_{x\in F}\{x\}\in \mathcal{B}$
and notice $cl(C)\subset W\cup B\in \mathcal{B}$.

Now, let's show $\mathcal{T}=\mathcal{T}(\omega)$.
If $x\in U$ and $U\in \mathcal{T}$, then $\omega(x,X\setminus U)=0$
as $X_0\cap \{x\}\cap (X\setminus U)=\emptyset$.
Thus $U\in \mathcal{T}(\omega)$.
Conversely, if $W\in \mathcal{T}(\omega)$, then for each $x\in W$,
$\omega(x,X\setminus W)=0$ which means $\emptyset=X_0\cap \{x\}\cap cl(X\setminus W)$. Now, $x\in cl(X\setminus W)$ is possible only if $x\notin X_0$, in which case $\{x\}$ is open-closed, so $cl(X\setminus W)\subset (X\setminus \{x\})$, a contradiction. Thus, $W\cap cl(X\setminus W)=\emptyset$
and $W\in \mathcal{T}$.

To conclude the proof of \ref{UniquenessOfNormalFormsOnCompactSpaces}  apply \ref{StabilityOfFormsThm}.
\end{proof}

\begin{Proposition}\label{RelativeTopologyAndForms}
If $(X,\omega)$ is a formed set and $A\subset X$ is closed in the topology $\mathcal{T}$ induced by $\omega$, then the topology on $A$ induced by $\mathcal{T}$ coincides with the topology induced by $\omega_A:=\omega |A$.
Moreover, if $\omega$ is normal and $T_1$, then so is $\omega_A$ and the natural function $i:A\cup \partial(\omega_A)\to X\cup \partial(\omega)$ is a topological embedding.
\end{Proposition}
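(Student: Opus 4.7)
My plan is to handle the three assertions in order, with the embedding being the substantive piece.

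For the coincidence of topologies on $A$, I check both inclusions directly. If $U\in\mathcal{T}(\omega)$ and $x\in U\cap A$, then $\omega(x,X\setminus U)=0$; since $A\setminus(U\cap A)\subset X\setminus U$, Lemma \ref{OmegaInequalityOne} gives $\omega_A(x,A\setminus(U\cap A))=0$, so $U\cap A$ is $\omega_A$-open. Conversely, given an $\omega_A$-open $V$, I set $U:=V\cup(X\setminus A)$; since $A$ is $\omega$-closed, $X\setminus A$ is $\omega$-open, and the identity $X\setminus U=A\setminus V$ then lets me verify $\omega(x,X\setminus U)=0$ by splitting $x\in U$ into the cases $x\in V$ (use the $\omega_A$-openness of $V$) and $x\in X\setminus A$ (use $\omega(x,A)=0$ together with \ref{OmegaInequalityOne}). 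Hence $U\in\mathcal{T}(\omega)$ with $U\cap A=V$.

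The second claim is quick. $T_1$ is immediate since $\omega_A(x,y)=\omega(x,y)=0$ for distinct $x,y\in A$. For normality, given $\omega_A(C_1,\ldots,C_k)=0$, i.e. $\omega(C_1,\ldots,C_k)=0$, normality of $\omega$ supplies $D_1,\ldots,D_k\subset X$ covering $X$ with $\omega(C_i,D_i)=0$; the traces $D_i\cap A$ then cover $A$ and satisfy $\omega_A(C_i,D_i\cap A)\leq \omega(C_i,D_i)=0$ by \ref{OmegaInequalityOne}.

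For the embedding, the plan is to identify $A\cup\partial(\omega_A)$ with the closure $K:=cl_{X\cup\partial(\omega)}(A)$. By \ref{MainTheoremOnNormalityAndCompactness} both $X\cup\partial(\omega)$ and $A\cup\partial(\omega_A)$ are large scale compact Hausdorff, so $K$ inherits this as a closed subspace of $X\cup\partial(\omega)$ with bornology $\mathcal{B}(\omega)\cap 2^K$. Since $A$ is $\omega$-closed, $K\cap X=A$, and the subspace structure $K$ induces on $A$ coincides with $(\mathcal{T}(\omega_A),\mathcal{B}(\omega_A))$ by the first part together with the fact that the subspace topology $X$ inherits from $X\cup\partial(\omega)$ is $\mathcal{T}(\omega)$. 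Both $K$ and $A\cup\partial(\omega_A)$ are therefore large scale compact Hausdorff compactifications of $A$ with matching induced topology and bornology; applying \ref{FundThmOfLSCompactifications} to $\mathrm{id}_A$ in both directions and composing (the compositions are the respective identities by Hausdorffness and density of $A$) produces a canonical homeomorphism $h:A\cup\partial(\omega_A)\to K$ extending $\mathrm{id}_A$. The natural function $i$ is then $h$ followed by the closed inclusion $K\hookrightarrow X\cup\partial(\omega)$, manifestly a topological embedding.

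The main technical obstacle is making this uniqueness step rigorous: \ref{FundThmOfLSCompactifications} is phrased using boundaries of large scale topological spaces (built from maximal families of zero-sets), whereas $\partial(\omega_A)$ and $\partial(\omega)$ are form-boundaries. Reconciling the two requires showing that, for a normal $T_1$ form, the form-boundary compactification agrees with the zero-set compactification; this is expected because both are large scale compact Hausdorff spaces inducing $(\mathcal{T}(\omega),\mathcal{B}(\omega))$ on $X$, so the uniqueness of normal forms on large scale compact Hausdorff spaces (\ref{UniquenessOfNormalFormsOnCompactSpaces} combined with \ref{StabilityOfFormsThm}) pins down an identifying homeomorphism fixing $X$.
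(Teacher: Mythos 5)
Your first two parts are fine: the topology comparison is essentially the paper's own argument (restrict open sets one way, extend $V$ to $V\cup(X\setminus A)$ the other way, using closedness of $A$), and your verification that $\omega_A$ inherits normality and $T_1$ --- which the paper does not even spell out --- is correct, since $\omega_A(V)=\omega(V)$ for vectors in $A$ and \ref{OmegaInequalityOne} lets you pass to the traces $D_i\cap A$.

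The embedding is where there is a genuine gap. Your plan rests on the claim that $K:=cl_{X\cup\partial(\omega)}(A)$ and $A\cup\partial(\omega_A)$ are ``both large scale compact Hausdorff compactifications of $A$ with matching induced topology and bornology'' and that this, via \ref{FundThmOfLSCompactifications} applied ``in both directions,'' forces a homeomorphism rel $A$. That inference does not hold: two compactifications of $A$ can induce identical topology and bornology on $A$ without being homeomorphic (in the pure small-scale case, $\beta\mathbb{N}$ and the one-point compactification of $\mathbb{N}$ both induce the discrete topology and the trivial bornology on $\mathbb{N}$). The uniqueness results you cite, \ref{UniquenessOfNormalFormsOnCompactSpaces} and \ref{StabilityOfFormsThm}, assert uniqueness of a normal form on a \emph{fixed} large scale compact Hausdorff space; they say nothing about a compactification being determined by its trace on a dense subspace. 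Moreover, \ref{FundThmOfLSCompactifications} only produces maps \emph{out of} the zero-set compactification $A\cup\partial(A,\mathcal{T}_A,\mathcal{B}_A)$, and neither $K$ nor $A\cup\partial(\omega_A)$ is that space in general (already $X\cup\partial(\omega)$ is a Higson-type compactification, not the \v Cech--Stone one), so you never obtain the required map $K\to A\cup\partial(\omega_A)$ extending $\mathrm{id}_A$; indeed $\mathrm{id}_A$ need not extend continuously over an arbitrary compactification of $A$. Your closing paragraph acknowledges the obstacle but the proposed reconciliation is exactly the unproved step.

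The paper avoids all of this by working with the explicit formula $i(\mathcal{P})=\{C\subset X\mid C\cap A\in\mathcal{P}\}$ for $\mathcal{P}\in\partial(\omega_A)$, identifying the complement of the image as $o(X\setminus A)$, and checking directly that $i^{-1}(o(U))=o(U\cap A)$ for $U$ open in $X$ and that $i(o(V))=\mathrm{im}(i)\cap o(V\cup(X\setminus A))$ for $V$ open in $A$; these two identities give continuity and openness onto the image at the level of basic open sets, with no appeal to any uniqueness of compactifications. If you want to salvage your approach you would need to first prove that $i$ (so defined) is a continuous bijection onto the closed, hence compact, set $K$ and then invoke that a continuous bijection from a compact space to a Hausdorff space is a homeomorphism --- but establishing continuity and bijectivity already amounts to the paper's direct computation.
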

\begin{proof}
If $U\subset X$ is open, then for each $x\in A\cap U$ one has
$\omega_A(x,A\setminus U)=0$, so $U\cap A$ is open in the topology induced by $\omega_A$. Conversely, if $U\subset A$ has the property
that $\omega_A(x, A\setminus U)=0$ for all $x\in U$,
then (as $A$ is closed in $X$) $\omega(x,X\setminus (U\cup (X\setminus A))=\omega(x,A\setminus U)=0$ for all $x\in U\cup (X\setminus A)$, so $U\cup (X\setminus A)$ is open in $X$ and $U$ is open in the relative topology on $A$.

$i$ is the identity on $A$ and $i(\mathcal{P})$, $\mathcal{P}\in \partial(A,\omega_A)$, consists of all subsets $C$ of $X$ such that $C\cap A\in \mathcal{P}$.
The complement of the image of $i$ is exactly $o(X\setminus A)$.
Given an open subset $U$ of $X$, $i^{-1}(o(U))=o(U\cap A)$, so $i$ is continuous. Also, given an open subset $U$ of $A$,
$i(o(U))=im(i)\cap o(U\cup (X\setminus A))$, so $i$ is an embedding.
\end{proof}

\section{Orthogonality relations}

In this section we describe a geometrical relation between subsets of a set $X$ that can be used to generate a form on $X$.

\begin{Definition}
An \textbf{orthogonality relation} on subsets of a set $X$ is a symmetric relation $\perp$ satisfying the following properties:\\
1. $\emptyset \perp X$,\\
2. $A\perp (C\cup C')\iff A\perp C$ and $A\perp C'$.
\end{Definition}


\begin{Observation}
One can reduce the number of axioms by dropping symmetry and replacing Axiom 2 by\\
2'. $A\perp (C\cup C')\iff C\perp A$ and $C'\perp A$.
\end{Observation}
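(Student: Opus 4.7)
The plan is to verify the equivalence of the two axiom systems by showing that Axiom~1 together with Axiom~2' already forces the relation $\perp$ to be symmetric, after which Axiom~2' collapses to Axiom~2.

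The one direction is immediate: if $\perp$ is symmetric and satisfies Axiom~2, then
$$A\perp (C\cup C')\iff A\perp C \text{ and } A\perp C' \iff C\perp A \text{ and } C'\perp A,$$
which is Axiom~2'.

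For the substantive direction, assume only Axiom~1 and Axiom~2'. The key observation is to apply Axiom~2' with $C=C'=B$. Since $B\cup B=B$, this yields
$$A\perp B \iff B\perp A \text{ and } B\perp A \iff B\perp A,$$
establishing symmetry of $\perp$ with no further work. Given symmetry, Axiom~2' rewrites as
$$A\perp (C\cup C')\iff A\perp C \text{ and } A\perp C',$$
which is exactly Axiom~2. Axiom~1 is present in both formulations, so we are done.

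There is essentially no obstacle here: the entire content of the observation is the trick of specializing $C=C'=B$ in the asymmetric version of the distributivity axiom. Once that is noticed, both directions are one-line verifications. I would simply present these two short computations in the order above.
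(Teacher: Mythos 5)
Your proof is correct: specializing Axiom~2' to $C=C'=B$ immediately yields symmetry, after which 2' reduces to 2, and the converse direction is a one-line rewrite. The paper states this Observation without proof, and your argument is exactly the intended (and essentially only) verification.
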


\begin{Example}\label{OrthInducedByBornology}
For every bornology $\mathcal{B}$ on a set $X$ the relation $A\perp C$ defined as $A\cap C\in \mathcal{B}$ is an orthogonality relation.
\end{Example}

\subsection{Bounded sets}
\begin{Definition}
Given an orthogonality relation $\perp$ on subsets of $X$, a \textbf{bounded subset} $B$ of $X$ is one that is orthogonal to the whole set: 
$$B\perp X.$$
\end{Definition}


 \begin{Definition}
An orthogonality relation $\perp$ on subsets of $X$ is \textbf{small scale} if the empty set is the only  subset of $X$ that is orthogonal to itself. In particular, the only bounded subset of $X$ is the empty set.
\end{Definition}

\begin{Definition}
An orthogonality relation $\perp$ on subsets of $X$ is \textbf{large scale} if each point is a bounded subset of $X$.
\end{Definition}


\subsection{Examples of small scale orthogonality}
\begin{Example}
1. \textbf{Set-theoretic orthogonality}: Disjointness,\\
2. \textbf{Topological orthogonality}: Disjointness of closures,\\
3. \textbf{Metric orthogonality}: Disjointness of $r$-balls for some $r > 0$,\\
4. \textbf{Uniform orthogonality}: Disjointness of $\mathcal{U}$-neighborhoods for some uniform cover $\mathcal{U}$.
\end{Example}


\subsection{Examples of large scale orthogonality}
\begin{Example}
1. \textbf{Set-theoretic large scale orthogonality}: Finiteness of intersection,\\
2. \textbf{Metric large scale orthogonality}: Boundedness of intersection of $r$-balls for all $r > 0$,\\
3. \textbf{Group large scale orthogonality}: Finiteness of $(A\cdot F)\cap (C\cdot F)$ for all finite subsets $F$ of a group $G$.

Same as metric ls-orthogonality for word metrics if $G$ is finitely generated.\\
4. \textbf{Topological ls-orthogonality}: Disjointness of coronas of closures in a fixed compactification $\bar X$ of $X$.
\end{Example}


\subsection{Hyperbolic orthogonality}
Given a metric space $(X,d)$, the \textbf{Gromov product} of $x$ and $y$ with respect to $a\in X$ is defined by
$$
\left< x,y\right>_a=\frac{1}{2}\big(d(x,a)+d(y,a)-d(x,y)\big).
$$

Recall that metric space $(X,d)$ is (Gromov) $ \delta-$\textbf{hyperbolic} if it satisfies the $\delta/4$-inequality:
$$
\left< x,y\right>_{a} \geq \min \{\left< x,z\right>_{a},\left< z,y \right> _{a}\}-\delta/4, \quad \forall x,y,z,a\in X.$$

$(X,d)$ is \textbf{Gromov hyperbolic} if it is $ \delta-$hyperbolic for some $\delta > 0$.

\begin{Definition}
Two subsets $A$ and $C$ of a hyperbolic space $X$ are \textbf{hyperbolically orthogonal} if there is $r > 0$ such that
$$\left< a,c\right>_{p} < r$$
for some fixed $p$ and all $(a,c)\in A\times C$.
\end{Definition}


\subsection{Freundenthal orthogonality}
\begin{Definition}
Suppose $X$ is a locally compact and locally connect topological space.
Two subsets $A$ and $C$ of $X$ are \textbf{Freundenthal orthogonal}
if there is a compact subset $K$ of $X$ such that the union of all components of $X\setminus K$ intersecting $A$ is disjoint from the union of all components of $X\setminus K$ intersecting $C$.
\end{Definition}


\subsection{Normal orthogonality relations}

\begin{Definition}\label{SpanDef}
Given an orthogonality relation $\perp$ on subsets of $X$, two subsets $C$ and $D$ \textbf{$\perp$-span} $X$ if the following conditions are satisfied:\\
1. $C\perp D$,\\
2. $X$ can be decomposed as $X=C'\cup D'$, where $C'\perp D$ and $D'\perp C$.
\end{Definition}

\begin{Remark}
Obviously, we may interpret the word "decompose" in the definition above as $C'\cap D'=\emptyset$ since $D'$ can be replaced by $X\setminus C'$.
The other extreme is when $C\subset C'$ and $D\subset D'$ which can be accomplished by replacing $C'$ by $C\cup C'$ and replacing $D'$ by $D\cup D'$. In that case we may think of $C$ being parallel to $C'$, $D$ being parallel to $D'$ and interpret Definition \ref{SpanDef} as an analog of \textbf{parallel-perpendicular decomposition} in Linear Algebra.
\end{Remark}

\begin{Proposition}
If $C$ and $D$ $\perp$-span $X$, then $C\cap D$ is $\perp$-bounded.
\end{Proposition}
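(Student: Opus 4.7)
The plan is to show $C\cap D\perp X$ directly by decomposing $X$ as $C'\cup D'$ from the spanning definition and reducing, via Axiom 2, to the two statements $C\cap D\perp C'$ and $C\cap D\perp D'$.

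First I would establish a monotonicity lemma from the axioms: if $A'\subset B$, then writing $B=A'\cup(B\setminus A')$ and applying Axiom 2 to $A\perp B$ yields $A\perp A'$. Combined with symmetry of $\perp$, one also gets monotonicity in the first coordinate. This is an immediate consequence of the axioms and it is the only tool I really need beyond the definition of spanning.

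With monotonicity in hand, the argument is direct. Since $C'\perp D$ and $C\cap D\subset D$, monotonicity gives $C'\perp C\cap D$. Symmetrically, from $D'\perp C$ and $C\cap D\subset C$ one obtains $D'\perp C\cap D$. Now Axiom 2 (applied after a symmetry step, using $(C'\cup D')\perp(C\cap D)\iff C'\perp(C\cap D)\text{ and }D'\perp(C\cap D)$) yields $X=C'\cup D'\perp C\cap D$, which by definition means $C\cap D$ is $\perp$-bounded.

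There is no real obstacle: the whole proof is an unpacking of the definition of $\perp$-span together with the monotonicity consequence of Axiom 2. The only subtle point worth stating explicitly is that monotonicity is not an axiom but follows from Axiom 2 by decomposing the larger set, and this step should be recorded so the reader does not wonder why $A'\subset B$ and $A\perp B$ imply $A\perp A'$.
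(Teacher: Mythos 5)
Your proof is correct and follows essentially the same route as the paper: decompose $X=C'\cup D'$, use Axiom 2 to pass from $C'\perp D$ and $D'\perp C$ to $C'\perp(C\cap D)$ and $D'\perp(C\cap D)$, then reassemble via Axiom 2 to get $(C\cap D)\perp X$. Your explicit remark that monotonicity is a consequence of Axiom 2 rather than an axiom is a helpful clarification of a step the paper leaves implicit.
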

\begin{proof}
$X=C'\cup D'$, where $C'\perp D$ and $D'\perp C$.
Therefore $C'\perp (C\cap D )$ and $D'\perp (C\cap D )$
resulting in $(C\cap D)\perp X$.
\end{proof}

\begin{Definition}
An orthogonality relation $\perp$ on subsets of $X$ is \textbf{normal} if
 $C$ and $D$ $\perp$-span $X$ whenever $C\perp D$.
\end{Definition}

\begin{Example}
The topological orthogonality relation on a topological space is normal if $X$ is topologically normal.
\end{Example}


\begin{Definition}
The \textbf{functional orthogonality relation} $\perp$ on a topological space $X$ is defined as follows: $C\perp D$ if there is a continuous function $f:X\to [0,1]$ such that
$f(C)\subset \{0\}$ and $f(D)\subset \{1\}$.
\end{Definition}

\begin{Proposition}
The functional orthogonality relation $\perp$ on a topological space $X$ is always normal.
\end{Proposition}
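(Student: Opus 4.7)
The plan is to produce the required decomposition and witnessing functions directly from the continuous function that certifies $C\perp D$. Suppose $C\perp D$, and let $f:X\to[0,1]$ be continuous with $f(C)\subset\{0\}$ and $f(D)\subset\{1\}$. Split the interval $[0,1]$ at its midpoint and pull back: set
\[
C':=f^{-1}\bigl([0,\tfrac12]\bigr),\qquad D':=f^{-1}\bigl([\tfrac12,1]\bigr).
\]
Then clearly $X=C'\cup D'$, and moreover $C\subset C'$, $D\subset D'$, so this gives the ``parallel-perpendicular'' form of the decomposition mentioned in the remark following \ref{SpanDef}.

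Next I would exhibit the two continuous functions witnessing $C'\perp D$ and $D'\perp C$. For the first, define $g:X\to[0,1]$ by $g(x):=\min\bigl(1,\max(0,2f(x)-1)\bigr)$, i.e.\ compose $f$ with the piecewise-linear retraction of $[0,1]$ onto $[\tfrac12,1]$ followed by the affine rescaling $[\tfrac12,1]\to[0,1]$. Then $g$ is continuous as a composition of continuous maps, $g(C')\subset\{0\}$ because $f(C')\subset[0,\tfrac12]$ forces $2f(x)-1\leq 0$, and $g(D)=\{1\}$ because $f(D)=\{1\}$. Hence $C'\perp D$. Symmetrically, $h(x):=\min\bigl(1,\max(0,1-2f(x))\bigr)$ satisfies $h(D')\subset\{0\}$ and $h(C)\subset\{1\}$, giving $D'\perp C$. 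Finally $C\perp D$ was assumed, so by Definition \ref{SpanDef} the pair $(C',D')$ $\perp$-spans $X$, which is precisely the normality condition.

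There is no real obstacle here; the construction is the standard ``cut the interval in two, retract, rescale'' trick. The only point that requires a second look is making sure the pair $(C',D')$ genuinely satisfies Definition \ref{SpanDef}: one must confirm all three conditions $C\perp D$, $C'\perp D$, and $D'\perp C$ simultaneously, which the functions $f$, $g$, $h$ above do. The overlap $C'\cap D'=f^{-1}(\tfrac12)$ is harmless, as the remark after Definition \ref{SpanDef} explicitly allows either a disjoint decomposition or one where $C\subset C'$ and $D\subset D'$; our decomposition satisfies the latter. Thus the functional orthogonality relation is normal on every topological space, with no separation or regularity hypothesis on $X$ required.
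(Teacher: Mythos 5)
Your proof is correct and follows essentially the same route as the paper: certify $C\perp D$ by a function $f$, cut $[0,1]$ at the midpoint, and take the two preimages as the decomposition (the paper merely swaps the labels $C'$, $D'$ and leaves the witnessing functions implicit, whereas you write them out). Nothing further is needed.
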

\begin{proof}
For any continuous $f:X\to [0,1]$ satisfying
$f(C)\subset \{0\}$, $f(D)\subset \{1\}$, one puts $C'=f^{-1}[0.5,1]$, $D'=f^{-1}[0,0.5]$
and observe $X=C'\cup D'$, $C'\perp C$, and $D'\perp D$.
\end{proof}

\begin{Proposition}
The hyperbolic orthogonality relation $\perp$ on a Gromov hyperbolic space is normal.
\end{Proposition}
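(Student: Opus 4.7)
The plan is to exhibit the required decomposition as sublevel sets of the Gromov product, shifted by a buffer of $\delta/4$ to absorb the hyperbolic defect. Fix a basepoint $p$ and assume $X$ is $\delta$-hyperbolic and $A \perp C$, so there is $r > 0$ with $\langle a, c\rangle_p < r$ for all $(a,c) \in A\times C$. Set
\[
A' := \{x \in X : \langle x, c\rangle_p < r + \delta/4 \text{ for every } c \in C\}, \qquad C' := X \setminus A'.
\]
Then $X = A'\cup C'$ tautologically, the inclusion $A\subset A'$ is immediate from the choice of $r$, and the relation $A' \perp C$ holds by construction with uniform bound $r + \delta/4$.

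The nontrivial half is $C' \perp A$. Take any $x \in C'$ and $a \in A$; by definition of $C'$ there exists some $c_0 \in C$ with $\langle x, c_0\rangle_p \geq r + \delta/4$. I would then apply the $\delta/4$-inequality with $c_0$ in the middle:
\[
\langle a, c_0\rangle_p \;\geq\; \min\!\bigl(\langle a, x\rangle_p,\; \langle x, c_0\rangle_p\bigr) - \delta/4.
\]
Since $\langle a, c_0\rangle_p < r$, the right-hand minimum must be strictly less than $r + \delta/4$; since $\langle x, c_0\rangle_p \geq r + \delta/4$, the minimum cannot be attained by $\langle x, c_0\rangle_p$, so it is forced to be $\langle a, x\rangle_p$. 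This yields the uniform upper bound $\langle a, x\rangle_p < r + \delta/4$ holding for \emph{all} $x \in C'$ and $a \in A$, which certifies $C' \perp A$ with the same constant $r + \delta/4$.

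The main obstacle is only choosing the correct instance of the $\delta/4$-inequality. The more obvious attempt, placing $a$ as the middle vertex, gives only a lower bound on $\langle x, a\rangle_p$ and is useless for hyperbolic orthogonality. Putting $c_0$ in the middle is exactly what converts the assumed largeness of $\langle x, c_0\rangle_p$ together with the smallness of $\langle a, c_0\rangle_p$ into an upper bound on $\langle a, x\rangle_p$. After that, the normality condition of Definition \ref{SpanDef} is read off directly from the construction, with $A'$ and $C'$ playing the roles of $C'$ and $D'$ there.
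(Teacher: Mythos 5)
Your proof is correct and follows essentially the same route as the paper: both arguments use sublevel sets of the Gromov product based at $p$ and invoke the $\delta/4$-inequality exactly once to rule out a point whose Gromov product with both $A$ and $C$ is large. The only difference is packaging --- the paper defines \emph{both} halves of the decomposition as sublevel sets (so each orthogonality is automatic and the covering $A'\cup C'=X$ is what needs the hyperbolic inequality), whereas you take one sublevel set and its complement (so the covering is automatic and $C'\perp A$ is what needs the inequality); the underlying computation is identical, and your displayed instance of the inequality is the right one even though the linking vertex there is $x$ rather than $c_0$ as your prose suggests.
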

\begin{proof}
Suppose $A$ and $C$ are subsets of a pointed Gromov hyperbolic space $(X,p)$ such that $\left< a,c\right>_{p} < r$
 for all $(a,c)\in A\times C$. Define $A'$ as $\{x\in X | \left< a,x\right>_{p} < r+\delta \ \forall a\in A\}$.
Define $C'$ as $\{x\in X | \left< c,x\right>_{p} < r+\delta \ \forall c\in C'\}$.
Clearly, $A'\perp A$ and $C'\perp C$, so it remains to show $A'\cup C'=X$.
Suppose there is $x\in X\setminus (A'\cup C')$. There is $a\in A$ and $c\in C$
such that $ \left< a,x\right>_{p} \ge r+\delta$ and $ \left< c,x\right>_{p} \ge r+\delta$.
Therefore
$$
\left< a,c\right>_{p} \geq \min \{\left< x,a\right>_{p},\left< c,x \right> _{p}\}-\delta/4 > r$$
a contradiction.
zzz
\end{proof}

\subsection{Proximity spaces}
There is a more general structure than uniform spaces, namely a proximity (see \cite{NaWa}). In this section we show that those structures correspond to normal small scale orthogonal relations.
\begin{Definition}
A \textbf{proximity space} $(X, \delta)$ is a set $X$ with a relation $\delta$ between subsets of $X$ satisfying the following properties:\\
For all subsets $A, B$ and $C$ of $X$\\
1. $A \delta B \implies B \delta A$\\
2. $A \delta B \implies A \ne\emptyset$\\
3. $A\cap B\ne\emptyset \implies A \delta B$\\
4. $A \delta (B\cup C) \iff (A \delta B \mbox{ or } A \delta C)$\\
5. $\forall E, A \delta E \mbox{ or }B \delta (X\setminus E) \implies A \delta B$.
\end{Definition}

\begin{Proposition}
Normal small scale orthogonality relations are in one-to-one correspondence with proximity relations.
\end{Proposition}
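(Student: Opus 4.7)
The plan is to establish the bijection by associating to each proximity $\delta$ the orthogonality relation $A \perp B \iff \neg(A\,\delta\, B)$, and in the reverse direction to define $A\,\delta\, B$ as the negation of $A\perp B$. Conceptually, orthogonality means ``far apart'' while $\delta$ means ``near'', so the two axiom systems should match under pointwise negation.

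First I would verify that if $\delta$ is a proximity and $\perp$ is defined as above, then $\perp$ is an orthogonality relation: symmetry comes from proximity axiom (1); $\emptyset \perp X$ comes from axiom (2), since $\emptyset\,\delta\, X$ would force $\emptyset\ne\emptyset$; and $A\perp(C\cup C')\iff A\perp C$ and $A\perp C'$ is obtained by De Morgan from axiom (4). The small scale condition is the negation of axiom (3): if $A\perp A$ and $A\ne\emptyset$, then $A\cap A=A\ne\emptyset$ would force $A\,\delta\, A$. Normality is precisely the contrapositive of axiom (5): given $A\perp B$, axiom (5) supplies $E\subset X$ with $\neg(A\,\delta\, E)$ and $\neg(B\,\delta\,(X\setminus E))$; setting $D':= E$ and $C':=X\setminus E$ gives the decomposition $X=C'\cup D'$ with $D'\perp A$ and $C'\perp B$ required by Definition \ref{SpanDef}.

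For the reverse direction, given a normal small scale orthogonality $\perp$, define $A\,\delta\,B \iff \neg(A\perp B)$. Axiom (1) is the symmetry of $\perp$. For axiom (2), one first notes $\emptyset \perp B$ for every $B\subset X$: this follows from $\emptyset\perp X$ by writing $X=B\cup(X\setminus B)$ and applying the distributivity axiom of $\perp$; hence $A\,\delta\,B$ forces $A\ne\emptyset$. Axiom (3) uses small scale plus the monotonicity of $\perp$ (which is again a consequence of distributivity applied to $B=B'\cup(B\setminus B')$): if $x\in A\cap B$ and $A\perp B$ were to hold, then $\{x\}\perp\{x\}$ by monotonicity, forcing $\{x\}=\emptyset$ by small scale, a contradiction. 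Axiom (4) is distributivity. Finally, axiom (5) is precisely the contrapositive of normality: $\neg(A\,\delta\,B)$ means $A\perp B$, so normality produces $X=C'\cup D'$ with $C'\perp B$ and $D'\perp A$; taking $E:=D'$ yields $\neg(A\,\delta\,E)$ directly, and $\neg(B\,\delta\,(X\setminus E))$ because $X\setminus E\subset C'$ and monotonicity passes $C'\perp B$ down to $(X\setminus E)\perp B$.

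The two constructions are mutual inverses by definition, which yields the claimed one-to-one correspondence. The main obstacle, such as it is, is the small amount of bookkeeping needed to extract monotonicity of $\perp$ and the fact $\emptyset\perp B$ for all $B$ from the two stated axioms of orthogonality; once these auxiliary observations are in hand, the entire correspondence reduces to a direct pointwise negation of the two axiom lists, with normality of $\perp$ and proximity axiom (5) being the only nontrivial matched pair.
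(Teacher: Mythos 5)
Your proposal is correct and follows the same route as the paper: both directions are defined by pointwise negation, the axioms are matched one-for-one (normality with proximity axiom (5), small scale with axiom (3)), and the only step needing an argument is axiom (3), which you and the paper both handle via monotonicity of $\perp$ forcing a nonempty self-orthogonal set. Your write-up merely spells out the auxiliary facts (monotonicity, $\emptyset\perp B$) that the paper leaves implicit.
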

\begin{proof}
Given a small scale orthogonal relation $\perp$ we define $A\delta C$ as $\lnot (A\perp C)$.

Conversely, given a proximity relation $\delta$ we define $A\perp C$ as $\lnot (A\delta C)$.

The proof amounts to negating implications, so let's show only the implication $A\cap B\ne\emptyset \implies A \delta B$. If it fails, then we have two orthogonal sets $A$ and $B$ with non-empty intersection $A\cap B$. However, in this case $A\cap B$ is self-orthogonal, a contradiction.
\end{proof}

\section{$\perp$-continuous functions}


\begin{Definition}
Given two sets $X$ and $Y$ equipped with orthogonality relations $\perp_X$ and $\perp_Y$, a function $f:X\to Y$ is \textbf{$\perp$-continuous}
if 
$$f(A)\perp_Y f(C)\implies A\perp_X C$$
for all subsets $A, C$ of $X$.
\end{Definition}


\subsection{Small Scale Examples}
In the small scale $\perp$-continuous functions are exactly neighborhood-continuous functions with respect to the induced neighborhood operator. Therefore both examples below follow from \cite{DW} in view of \ref{PerpContinuousVsNbhdCont}.
\begin{Example}\label{TopContinuousVsPerpContinuous}
If both $X$ and $Y$ are normal spaces equipped with topological orthogonality relations, then $\perp$-continuity is ordinary \textbf{topological continuity}.
\end{Example}

\begin{Example}\label{UniformContinuityVsPerpContinuity}
If both $X$ and $Y$ are uniform spaces equipped with uniform orthogonality relations, then $\perp$-continuity is ordinary \textbf{uniform continuity}.
\end{Example}

\subsection{Large Scale Examples}
\begin{Example}
If both $X$ and $Y$ are metric spaces equipped with metric $ls$-orthogonality relations
and $f:X\to Y$ preserves bounded sets, then $\perp$-continuity is the same as $f$ being \textbf{coarse and bornologous}.
\end{Example}
\begin{proof}
Recall that $f:X\to Y$ is bornologous if, for each $r > 0$, there is $s > 0$ such that
$\diam(f(A)) < s$ if $\diam(A) < r$. 

Notice that every $\perp$-continuous function co-preserves bounded sets, i.e. it is coarse.
Suppose $f$ is $\perp$-continuous but not bornologous. Hence, there is a sequence
$B_n$ of uniformly bounded subsets of $X$ whose images $f(B_n)$ have diameters diverging to infinity. We may reduce it to the case of each $B_n$ consisting of exactly two points $x_n$ and $y_n$ so that both $f(x_n)$ and $f(y_n)$ diverge to infinity.
Notice $A:=\{f(x_n)\}_{n\ge 1}$ and $C:=\{f(y_n)\}_{n\ge 1}$ are orthogonal in $Y$
but their point-inverses are not orthogonal in $X$, a contradiction.

Suppose $f$ is coarse and bornologous but not $\perp$-continuous. Choose two orthogonal subsets
$A$ and $C$ of $Y$ whose point-inverses are not orthogonal.
Therefore the intersection of $B(f^{-1}(A),r)$ and $B(f^{-1}(C),r)$ is unbounded
for some $r > 0$ and the image of that intersection is unbounded. There is $s > 0$ satisfying
$f(B(Z,r))\subset B(f(Z),s)$ for all subsets $Z$ of $X$.
Therefore, the intersection of $B(A,s)$ and $B(C,s)$ is unbounded, a contradiction.
\end{proof}

\begin{Example}\label{SlowlyOscillatingExampleOfPerpContinuous}
If $X$ is a metric space equipped with metric ls-orthogonality relation and $Y$ is a compact metric space equipped with small scale metric orthogonality, then $\perp$-continuity is the same as $f$ being \textbf{slowly oscillating}.
\end{Example}
\begin{proof}
Recall that $f:X\to Y$ is slowly oscillating if, for every pair of sequences $\{x_n\}_{n\ge 1}$, $\{y_n\}_{n\ge 1}$ in $X$, $\lim\limits_{n\to\infty}d_Y(f(x_n),f(y_n))=0$
if $\{d_X(x_n,y_n)\}_{n\ge 1}$ is uniformly bounded and $d(x_n,x_1)\to\infty$ as $n\to\infty$.

Suppose $f$ is $\perp$-continuous but not slowly oscillating. Hence, there is 
pair of sequences $\{x_n\}_{n\ge 1}$, $\{y_n\}_{n\ge 1}$ in $X$, 
and $\epsilon > 0$ such that $d_Y(f(x_n),f(y_n)) > \epsilon$ for each $n\ge 1$
and $\{d_X(x_n,y_n)\}_{n\ge 1}$ is uniformly bounded.
We may assume that the limit of $f(x_n)$ is $z_1$, the limit of $f(y_n)$ is $z_2$.
In particular $d_Y(z_1,z_2)\ge \epsilon$.
The sets $B(z_1,\epsilon/3)$ and $B(z_2,\epsilon/3)$ are orthogonal in $Y$
but their point-inverses in $X$ are not, a contradiction.

Suppose $f$ is slowly oscillating but not $\perp$-continuous. Choose two orthogonal subsets
$A$ and $C$ of $Y$ whose point-inverses are not orthogonal.
Therefore the intersection of $B(f^{-1}(A),r)$ and $B(f^{-1}(C),r)$ is unbounded
for some $r > 0$. 
Hence there are two sequences diverging to infinity in $X$: $\{x_n\}_{n\ge 1}$
in $f^{-1}(A)$ and $\{y_n\}_{n\ge 1}$
in $f^{-1}(C)$ such that $d_X(x_n,y_n) < 2r$ for each $n$.
Consequently, $\lim\limits_{n\to\infty}d_Y(f(x_n),f(y_n))=0$
contradicting orthogonality of $A$ and $C$.
\end{proof}

\subsection{Quotient structures}

It is well-known that defining quotient maps in both the uniform category and in the coarse category is tricky. In contrast, in sets equipped with orthogonality relations it is quite easy.

\begin{Definition}
Suppose $\perp_X$ is an orthogonality relation on a set $X$. Given a surjective function $f:X\to Y$ define $C\perp_Y D$ to mean $f^{-1}(C)\perp_X f^{-1}(D)$.
\end{Definition}


It is easy to check that $\perp_Y$ is an orthogonality relation on $Y$, called the \textbf{quotient orthogonality relation}. Also, it is clear that the following holds:

\begin{Proposition}
Suppose $\perp_X$ is an orthogonality relation on a set $X$, $f:X\to Y$ is a surjective function, and $Y$ is equipped with the quotient orthogonality relation $\perp_Y$.
Given any $\perp$-continuous $h:X\to Z$ that is constant on fibers of $f$,
there is unique $\perp$-continuous $g:Y\to Z$ such that $h=g\circ f$.
\end{Proposition}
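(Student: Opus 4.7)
The plan is to define $g$ pointwise in the only way compatible with $h = g\circ f$, and then check $\perp$-continuity directly from the definition of the quotient relation, exploiting surjectivity of $f$.

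First I would define $g:Y\to Z$ by choosing, for each $y\in Y$, some $x\in f^{-1}(y)$ and setting $g(y):=h(x)$. Since $h$ is constant on fibers of $f$, this is independent of the choice of $x$, so $g$ is well-defined; by surjectivity of $f$, every $y$ is hit, and $g\circ f=h$ holds by construction. Uniqueness of $g$ among all functions $Y\to Z$ satisfying $g\circ f=h$ is immediate from the surjectivity of $f$: any two such $g$ agree on $f(X)=Y$.

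Next I would verify that $g$ is $\perp$-continuous. Suppose $g(A)\perp_Z g(C)$ for subsets $A,C\subset Y$. Since $f$ is surjective, $f(f^{-1}(A))=A$ and $f(f^{-1}(C))=C$, so
\[
h(f^{-1}(A))=g(f(f^{-1}(A)))=g(A),\qquad h(f^{-1}(C))=g(C).
\]
Hence $h(f^{-1}(A))\perp_Z h(f^{-1}(C))$, and $\perp$-continuity of $h$ yields $f^{-1}(A)\perp_X f^{-1}(C)$. By the definition of the quotient orthogonality relation $\perp_Y$, this says precisely $A\perp_Y C$, which is what was needed.

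There is no real obstacle here: the only thing one has to be careful about is the use of surjectivity, which enters twice, namely (i) to ensure $g$ is defined on all of $Y$ and unique, and (ii) to ensure $f(f^{-1}(A))=A$ so that the images under $h$ and $g$ coincide on the relevant sets. If $f$ were not assumed surjective, step (ii) would fail (we would only have $f(f^{-1}(A))\subset A$, which goes the wrong way for pushing orthogonality back through $h$), so the proposition genuinely depends on that hypothesis.
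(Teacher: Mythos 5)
Your proof is correct and is exactly the direct argument the paper has in mind (the paper states this proposition without proof, merely remarking that it is clear). Defining $g$ via a choice of fiber representative, and then pushing orthogonality of $g(A)$ and $g(C)$ back through $h=g\circ f$ using $f(f^{-1}(A))=A$ and the definition of the quotient relation, is the intended verification, and your observation that surjectivity is needed precisely for $f(f^{-1}(A))=A$ is apt.
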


\section{Neighborhood operators}

This section is devoted to exploring the relation between orthogonality relations and neighborhood operators.

\begin{Definition} \cite{DW}
A \textbf{neighborhood operator} $\prec$ on a set $X$ is a relation between its subsets satisfying the following conditions:

\begin{itemize}
\item[$\mathsf{(N0)}$] $A \prec X$ for all $A \subseteq X$.
\item[$\mathsf{(N1)}$] if $A \prec B$ then $X \setminus B \prec X \setminus A$.
\item[$\mathsf{(N2)}$] if $A \prec B \subseteq C$, then $A \prec C$.
\item[$\mathsf{(N3)}$] if $A \prec N$ and $A' \prec N'$ then $A \cup A' \prec N \cup N'$.
\end{itemize}
\end{Definition}

\begin{Observation}
Note that $\mathsf{(N0)}$ is implied by $\mathsf{(N1)}$ and the condition $X \prec X$. Also, it is easy to see that, together, axioms $\mathsf{(N0)}-\mathsf{(N3)}$ imply:

\begin{itemize}
\item[$\mathsf{(N0')}$] $\varnothing \prec A$ for all $A \subseteq X$.
\item[$\mathsf{(N2')}$] if $A \subseteq B \prec C$ then $A \prec C$.
\item[$\mathsf{(N3')}$] if $A \prec N$ and $A' \prec N'$ then $A \cap A' \prec N \cap N'$.
\end{itemize}
\end{Observation}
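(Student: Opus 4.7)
The plan is to read axiom $\mathsf{(N1)}$ as a contravariant involution on the relation $\prec$ induced by set-complementation, and derive all four claims by ``conjugating'' the listed axioms through this involution. In each case, the recipe is the same: start from the hypothesis, apply $\mathsf{(N1)}$ to move to complements, use $\mathsf{(N2)}$ or $\mathsf{(N3)}$ in the complementary regime, and then apply $\mathsf{(N1)}$ once more to return, relying on the double-complement identity $X\setminus(X\setminus A)=A$.

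For the first claim that $\mathsf{(N0)}$ follows from $\mathsf{(N1)}$ together with $X\prec X$ (in the presence of $\mathsf{(N2)}$), I would argue: $X\prec X$ yields $\emptyset \prec \emptyset$ by $\mathsf{(N1)}$; since $\emptyset\subseteq X\setminus A$ for any $A\subseteq X$, axiom $\mathsf{(N2)}$ gives $\emptyset\prec X\setminus A$; one further application of $\mathsf{(N1)}$ produces $A\prec X$. The derivation of $\mathsf{(N0')}$ is then immediate: apply $\mathsf{(N1)}$ to $A\prec X$ from $\mathsf{(N0)}$ to get $\emptyset\prec X\setminus A$, and let $A$ range.

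For $\mathsf{(N2')}$, assume $A\subseteq B\prec C$. Then $\mathsf{(N1)}$ transforms $B\prec C$ into $X\setminus C\prec X\setminus B$; since $X\setminus B\subseteq X\setminus A$, axiom $\mathsf{(N2)}$ upgrades this to $X\setminus C\prec X\setminus A$; a last application of $\mathsf{(N1)}$ gives $A\prec C$. For $\mathsf{(N3')}$, assume $A\prec N$ and $A'\prec N'$. Apply $\mathsf{(N1)}$ to both to obtain $X\setminus N\prec X\setminus A$ and $X\setminus N'\prec X\setminus A'$; then $\mathsf{(N3)}$ unites them as
\[
(X\setminus N)\cup(X\setminus N')\;\prec\;(X\setminus A)\cup(X\setminus A'),
\]
which by De Morgan reads $X\setminus(N\cap N')\prec X\setminus(A\cap A')$. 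A final use of $\mathsf{(N1)}$ yields $A\cap A'\prec N\cap N'$.

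All four arguments are purely formal, requiring nothing beyond the axioms and the bookkeeping of set complements; accordingly I do not anticipate any substantive obstacle. The only point to be careful about is consistently tracking complementation when invoking $\mathsf{(N1)}$ twice, so that the conclusion is stated in the original (un-complemented) variables.
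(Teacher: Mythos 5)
Your proof is correct: each of the four derivations (including the observation that deriving $\mathsf{(N0)}$ from $X\prec X$ really also needs $\mathsf{(N2)}$, which the paper's phrasing glosses over) checks out, and the ``conjugate through complementation via $\mathsf{(N1)}$'' strategy is exactly the intended one. The paper itself gives no proof --- it states this as an easy Observation imported from the cited reference --- so there is nothing further to compare against.
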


\begin{Definition}
A \textbf{normal neighborhood operator} $\prec$ satisfies the following condition:

\begin{itemize}
\item[$\mathsf{(N4)}$] for every pair of subsets $A \prec C$, there is a subset $B$ with $A \prec B \prec C$.
\end{itemize}
\end{Definition}

\begin{Proposition}
Each orthogonality relation $\perp$ on $X$ induces a neighborhood operator $\prec$ defined as follows: $A\prec U$ if $A\perp X\setminus U$ and $A\subset U$.\\
It is normal if and only if $\perp$ is normal.
\end{Proposition}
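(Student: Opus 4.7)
The plan is to verify that $\prec$ satisfies the four axioms $\mathsf{(N0)}$--$\mathsf{(N3)}$ of a neighborhood operator, and then prove the normality equivalence in both directions. All four axiom checks rest on the \emph{monotonicity} consequence $A\perp C\Rightarrow A\perp D$ for $D\subset C$, obtained by decomposing $C=D\cup(C\setminus D)$ and applying axiom~2 of orthogonality. With monotonicity in hand, $\mathsf{(N2)}$ is direct; $\mathsf{(N1)}$ uses symmetry of $\perp$; $\mathsf{(N3)}$ rewrites $X\setminus(N\cup N')=(X\setminus N)\cap(X\setminus N')$ and combines monotonicity with axiom~2 applied on the $A\cup A'$ side; and $\mathsf{(N0)}$ reduces to $A\perp\emptyset$ for every $A$, which follows from $\emptyset\perp X$ by decomposing $X=A\cup(X\setminus A)$.

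For the forward direction, suppose $\perp$ is normal and $A\prec C$, i.e., $A\subset C$ and $A\perp X\setminus C$. Apply normality of $\perp$ to the orthogonal pair $(A,X\setminus C)$ to obtain $X=E\cup F$ with $E\perp(X\setminus C)$ and $F\perp A$; by the remark following Definition \ref{SpanDef} we may assume $A\subset E$ and $X\setminus C\subset F$. Set $B:=E\cap C$. The inclusions $A\subset B\subset C$ are clear; monotonicity from $B\subset E$ gives $B\perp(X\setminus C)$, so $B\prec C$; and using $X\setminus B=(X\setminus E)\cup(X\setminus C)$, the hypothesis $A\perp(X\setminus C)$ together with $A\perp(X\setminus E)$ (monotonicity from $F\perp A$ and $X\setminus E\subset F$) yields $A\perp(X\setminus B)$ via axiom~2. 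Hence $A\prec B\prec C$.

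For the reverse direction, suppose $\prec$ is normal and $A\perp C$. Monotonicity gives $(A\setminus C)\perp C$ with $(A\setminus C)\cap C=\emptyset$, so $A\setminus C\prec X\setminus C$. Applying $\prec$-normality produces $B$ with $A\setminus C\subset B\subset X\setminus C$, $(A\setminus C)\perp(X\setminus B)$, and $B\perp C$. Setting $A':=B$ and $C':=X\setminus B$ yields $X=A'\cup C'$ together with $A'\perp C$ and $C'\perp(A\setminus C)$; this is exactly the required $\perp$-span decomposition whenever $A\cap C=\emptyset$. The main obstacle is promoting $C'\perp(A\setminus C)$ to $C'\perp A$, which reduces to showing $C'\perp(A\cap C)$. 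Two applications of monotonicity make $A\cap C$ self-orthogonal, and I expect the remaining step to go through an auxiliary lemma (obtained from $\prec$-normality by interpolating $\emptyset\prec X\setminus D$ for a self-orthogonal $D$ together with a symmetric application of $\prec$-normality on the pair $C\setminus A\prec X\setminus A$) asserting that every self-orthogonal subset is $\perp$-bounded. Once that is in hand, $A\cap C$ is orthogonal to every subset of $X$, in particular to $C'$, closing the argument.
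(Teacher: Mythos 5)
Your verification of (N0)--(N3) and your proof of the forward implication are correct; the paper offers no proof to compare against (it is left to the reader), but the decomposition $B:=E\cap C$ together with $X\setminus B=(X\setminus E)\cup(X\setminus C)$ is exactly what is needed for that direction.

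The reverse implication has a genuine gap at the step you flag, and the auxiliary lemma you propose --- that $\prec$-normality forces every self-orthogonal set to be $\perp$-bounded --- is false, so the argument cannot be closed along these lines. Your own sketch already shows the problem: interpolating $\emptyset\prec X\setminus D$ yields nothing, because $B=\emptyset$ is always an admissible interpolant there; and the interpolant $B'$ obtained from $C\setminus A\prec X\setminus A$ satisfies $B'\subset X\setminus A$, so $B\cup B'\subset X\setminus(A\cap C)$ and the two interpolants never cover $X$. A concrete counterexample: let $X=\{p,q\}$ and declare $A\perp C$ iff $A=\emptyset$, or $C=\emptyset$, or $p\notin A\cup C$. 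One checks this is an orthogonality relation whose only bounded set is $\emptyset$, yet $\{q\}\perp\{q\}$; since a spanning pair must have bounded intersection, $\perp$ is not normal. On the other hand $A\prec U$ holds only when $A=\emptyset$ or $U=X$, and such pairs interpolate trivially ($\emptyset\prec\emptyset\prec U$ and $A\prec X\prec X$), so $\prec$ is normal. The underlying reason is that $\prec$ records $\perp$ only on disjoint pairs, while normality of $\perp$ also constrains overlapping pairs. Your argument does prove the reverse implication whenever every orthogonal pair is disjoint (in particular for small scale relations), or more generally whenever $A\cap C$ is bounded for every orthogonal pair $(A,C)$: then $A\cap C$ is orthogonal to everything and your $A'=B$, $C'=X\setminus B$ finish the proof. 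Alternatively, if the clause $A\subset U$ is dropped from the definition of $\prec$ (so that $A\prec U$ simply means $A\perp X\setminus U$), axioms (N0)--(N3) still hold and both implications follow directly, the interpolant $B$ for $A\prec X\setminus C$ giving the spanning pair $A'=B$, $C'=X\setminus B$ with no disjointness issue. As stated, the ``only if'' half of the proposition needs one of these amendments.
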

\begin{proof}
Left to the reader.
\end{proof}


\begin{Proposition}
Each neighborhood operator $\prec$ on $X$ induces a small scale orthogonality relation $\perp$ 
defined as follows: $A\perp U$ if $A\prec X\setminus U$.\\
$\perp$ is normal if and only if $\prec$ is normal.
\end{Proposition}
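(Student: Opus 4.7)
The plan is to verify that $\perp$ defined by $A\perp U \iff A\prec X\setminus U$ satisfies the axioms of a small scale orthogonality relation, and then to establish the normality equivalence by explicit constructions in each direction.

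For the orthogonality axioms: symmetry is immediate, since $A\prec X\setminus U$ implies $U = X\setminus(X\setminus U)\prec X\setminus A$ by $\mathsf{(N1)}$, giving $U\perp A$. The condition $\emptyset\perp X$ reduces to $\emptyset\prec\emptyset$, which is a consequence of $\mathsf{(N0')}$. Distributivity on the right translates, via the De Morgan identity $X\setminus(C\cup C') = (X\setminus C)\cap(X\setminus C')$, into the equivalence of $A\prec(X\setminus C)\cap(X\setminus C')$ with the conjunction $A\prec X\setminus C$ and $A\prec X\setminus C'$; the forward implication is $\mathsf{(N2')}$ applied to the inclusions $(X\setminus C)\cap(X\setminus C')\subseteq X\setminus C$ and $\subseteq X\setminus C'$, while the converse is $\mathsf{(N3')}$ applied with $A\cap A = A$. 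The small-scale condition $A\perp A\Rightarrow A=\emptyset$ becomes $A\prec X\setminus A\Rightarrow A=\emptyset$, which relies on the convention (standard in \cite{DW}) that $A\prec B$ entails $A\subseteq B$: from $A\subseteq X\setminus A$ one then concludes $A=\emptyset$.

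For the forward direction of the normality equivalence, assume $\prec$ is normal. Given $C\perp D$, i.e., $C\prec X\setminus D$, choose an intermediate $B$ with $C\prec B\prec X\setminus D$. Setting $C' := B$ and $D' := X\setminus B$, one has $C'\cup D' = X$ trivially; $C\perp D'$ is exactly the relation $C\prec B = X\setminus D'$; and $D\perp C'$ follows by applying $\mathsf{(N1)}$ to $B\prec X\setminus D$, which yields $D\prec X\setminus B = X\setminus C'$.

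For the reverse direction, assume $\perp$ is normal. Given $A\prec C$, reinterpret this as $A\perp X\setminus C$ and invoke normality of $\perp$ to obtain a decomposition $X = A'\cup D'$ with $A\perp D'$ and $(X\setminus C)\perp A'$. Translating back, $A\prec X\setminus D'$; since $X\setminus D'\subseteq A'$, axiom $\mathsf{(N2)}$ gives $A\prec A'$. Also, $(X\setminus C)\prec X\setminus A'$ together with $\mathsf{(N1)}$ gives $A'\prec C$. Hence $B := A'$ is the desired intermediate, establishing normality of $\prec$. The most delicate point of the proof is the small-scale property, since it depends on the implicit convention $A\prec B\Rightarrow A\subseteq B$ which is not formally among the axioms $\mathsf{(N0)}$--$\mathsf{(N3)}$ as stated; the normality equivalence itself, by contrast, is purely formal manipulation of $\mathsf{(N1)}$ and $\mathsf{(N2)}$ against the definition of $\perp$.
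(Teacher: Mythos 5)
The paper offers no proof of this proposition (it is ``left to the reader''), so there is nothing to compare against; your verification is correct and complete as far as the axioms allow. The checks of symmetry via $\mathsf{(N1)}$, of $\emptyset\perp X$ via $\mathsf{(N0')}$, and of the union axiom via $\mathsf{(N2')}$/$\mathsf{(N3')}$ are all right, and both directions of the normality equivalence ($B:=C'$ in one direction, $B:=A'$ with $X\setminus D'\subseteq A'$ and $\mathsf{(N2)}$, $\mathsf{(N1)}$ in the other) are exactly the formal manipulations needed. Your flagged caveat is a genuine one, not a defect of your argument: with $\mathsf{(N0)}$--$\mathsf{(N3)}$ as literally stated, the total relation $A\prec B$ for all $A,B$ is a neighborhood operator, and it induces $X\perp X$, so the induced relation need not be small scale. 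The small-scale claim does require the containment convention $A\prec B\Rightarrow A\subseteq B$, which the paper builds into the \emph{other} direction of the correspondence (there $A\prec U$ is defined as $A\perp X\setminus U$ \emph{and} $A\subseteq U$) but omits from the axioms here; you are right to isolate this as the only non-formal point.
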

\begin{proof}
Left to the reader.
\end{proof}

\begin{Definition}\cite{DW} 
Let $X$ be a set and $\prec$ a neighborhood operator. If $A$ is a subset of $X$, then the \textbf{induced neighbourhood operator} $\prec_A$ on subsets of $A$ is defined as follows: $S \prec_A T$ precisely when there exists a subset $T'$ of $X$ such that $S \prec T'$ as subsets of $X$ and $T = T' \cap A$. 
\end{Definition}

\begin{Proposition}\label{PerpContinuousVsNbhdCont}
Suppose $X$ is a set equipped with an orthogonality relation $\perp_X$ and $Y$ is a set equipped with
a small scale orthogonality relation $\perp_Y$.
A function $f:A\subset X\to Y$ is neighborhood continuous (with respect to the induced neighborhood operators) if and only if it is $\perp$-continuous.
\end{Proposition}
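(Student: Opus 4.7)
The plan is to unfold the definitions of the induced neighborhood operators $\prec_A$ and $\prec_Y$ and shuttle statements back and forth between $\prec$ and $\perp$ via preimages. I will use throughout the \emph{monotonicity} consequence of Axiom 2 for any orthogonality relation: if $C_1 \subseteq C_2$, then $C_2 = C_1 \cup C_2$, so that $A \perp C_2$ implies $A \perp C_1$.

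For the direction ``$\perp$-continuous $\Longrightarrow$ neighborhood continuous,'' I would start with $S \prec_Y T$, i.e.\ $S \subseteq T$ and $S \perp_Y (Y \setminus T)$. Then I set $T' := f^{-1}(T) \cup (X \setminus A)$, so that $T' \cap A = f^{-1}(T)$ and $X \setminus T' = A \setminus f^{-1}(T) = f^{-1}(Y \setminus T)$. Since $f(f^{-1}(S)) \subseteq S$ and $f(f^{-1}(Y \setminus T)) \subseteq Y \setminus T$, monotonicity will give $f(f^{-1}(S)) \perp_Y f(f^{-1}(Y \setminus T))$, and $\perp$-continuity will then yield $f^{-1}(S) \perp_X (X \setminus T')$. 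Combined with $f^{-1}(S) \subseteq f^{-1}(T) \subseteq T'$, this says $f^{-1}(S) \prec_X T'$, whence $f^{-1}(S) \prec_A f^{-1}(T)$ by definition of the subspace operator.

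For the converse, suppose $f$ is neighborhood continuous and $A', C' \subseteq A$ with $f(A') \perp_Y f(C')$. Here the small-scale hypothesis on $\perp_Y$ is essential: monotonicity shows that $f(A') \cap f(C')$ is self-orthogonal, so small-scaleness forces $f(A') \cap f(C') = \emptyset$, i.e.\ $f(A') \subseteq Y \setminus f(C')$. Together with $f(A') \perp_Y f(C')$ this is precisely $f(A') \prec_Y (Y \setminus f(C'))$. Neighborhood continuity will then produce $T' \subseteq X$ with $f^{-1}(f(A')) \prec_X T'$ and $T' \cap A = f^{-1}(Y \setminus f(C')) = A \setminus f^{-1}(f(C'))$. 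Thus $C' \subseteq A \setminus T' \subseteq X \setminus T'$ and $A' \subseteq f^{-1}(f(A'))$, and two applications of monotonicity to $f^{-1}(f(A')) \perp_X (X \setminus T')$ will deliver $A' \perp_X C'$, as required.

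The only genuinely subtle step is the converse's appeal to small-scaleness of $\perp_Y$ to secure the disjointness $f(A') \cap f(C') = \emptyset$ that is needed by the $\prec_Y$ definition; the forward direction is routine bookkeeping, whose one trick is the choice $T' = f^{-1}(T) \cup (X \setminus A)$ so that $T'$ restricts to $f^{-1}(T)$ on $A$ while its complement in $X$ lands inside $A$.
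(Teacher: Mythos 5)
Your proof is correct and follows essentially the same route as the paper's: both directions unfold the definitions of $\prec_Y$ and $\prec_A$, use the set $T'=f^{-1}(T)\cup(X\setminus A)$ (the paper writes it as $X\setminus f^{-1}(Y\setminus D)$, which is the same set), and transfer orthogonality through preimages via monotonicity. Your version is slightly more careful than the paper's in two places it leaves implicit --- invoking small-scaleness of $\perp_Y$ to get $f(A')\cap f(C')=\emptyset$ before asserting $f(A')\prec_Y(Y\setminus f(C'))$, and reducing the statement $f(A')\perp_Y f(C')\implies A'\perp_X C'$ to the preimage formulation --- but this is a matter of explicitness, not a different argument.
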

\begin{proof}
Suppose $f:A\subset X\to Y$ is neighborhood continuous and $C\perp_Y D$.
Therefore $C\prec_Y Y\setminus D$ and $f^{-1}(C)\prec_A f^{-1}(Y\setminus D)$.
That means existence of $S\subset X$ such that $S\cap A= f^{-1}(Y\setminus D)$ and $ f^{-1}(C)\prec_X S$.
Consequently,  $f^{-1}(C)\perp_X (X\setminus S)$. Since  $f^{-1}(D)\subset X\setminus S$,
$ f^{-1}(D)\perp_X  f^{-1}(C)$.

Suppose $f:A\subset X\to Y$ is $\perp$-continuous and $C\prec_Y D$.
Hence $C\perp_Y (Y\setminus D)$ and $f^{-1}(C)\perp_X f^{-1}(Y\setminus D)$.
That implies $f^{-1}(C)\prec_X S$, where $S:=X\setminus f^{-1}(Y\setminus D)$.
Since $S\cap A=f^{-1}(D)$, $f$ is neighborhood continuous.
\end{proof}

\begin{Corollary}\label{RealExtensionLemma}
Suppose $X$ is a set equipped with a normal orthogonality relation $\perp_X$
and $[a,b]\subset \mathbb{R}$ is equipped with the topological orthogonality relation $\perp$.
If $f:A\subset X\to [a,b]$ is $\perp$-continuous, then it extends to a $\perp$-continuous
$\bar f:X\to [a,b]$. 
\end{Corollary}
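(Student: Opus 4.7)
The plan is to carry out a classical Urysohn--Tietze argument adapted to this setting. By Proposition \ref{PerpContinuousVsNbhdCont}, $\perp$-continuity of maps into $[a,b]$ coincides with neighborhood continuity with respect to the induced neighborhood operators, and the operator induced by a normal orthogonality relation is itself normal. First I would establish Urysohn's lemma: given $C\perp_X D$, there exists a $\perp$-continuous $g\colon X\to[0,1]$ with $g(C)\subset\{0\}$ and $g(D)\subset\{1\}$. Normality of $\perp_X$ provides a decomposition $X=C'\cup D'$ with $C\perp D'$ and $D\perp C'$, producing an intermediate set $U:=X\setminus C'$ satisfying $C\subset U\subset X\setminus D$, $C\perp(X\setminus U)$ and $U\perp D$. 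Iterating this interpolation along dyadic rationals in $[0,1]$ yields a family $\{U_r\}$ with $U_r\perp(X\setminus U_s)$ whenever $r<s$, and $g(x):=\inf\{r\mid x\in U_r\}$ is the required function (its $\perp$-continuity follows from the nested orthogonality conditions via \ref{ConstructionOfOpenSetsViaForms}-type reasoning).

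Next I would run the Tietze iteration. Setting $M=b-a$, at stage $n$ I maintain a $\perp$-continuous $s_n\colon X\to\RR$ with $\sup_{x\in A}|f(x)-s_n(x)|\le M(2/3)^n$. The sets $C_n,D_n\subset A$ obtained by cutting the residual $f-s_n|_A$ into its bottom and top thirds are $\perp_X$-orthogonal, because $\perp$-continuity of $f-s_n|_A$ transfers the topological orthogonality of the corresponding disjoint-closure intervals in $\RR$ back to $X$. Urysohn then yields a $\perp$-continuous $g_{n+1}\colon X\to\RR$ of amplitude $M(2/3)^n/3$ separating them, and $s_{n+1}:=s_n+g_{n+1}$ advances the induction with ratio $2/3$. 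The uniform limit $\bar f:=\lim_n s_n$ exists on all of $X$, agrees with $f$ on $A$, and can be truncated to $[a,b]$ (or, by a minor adjustment of constants, stays in $[a,b]$ throughout the construction).

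The main obstacle is verifying that the uniform limit of $\perp$-continuous functions into $[a,b]$ is itself $\perp$-continuous. A standard $\varepsilon/3$ argument suffices: given $C\perp_{[a,b]}D$, the disjoint closures can be fattened to $\varepsilon$-neighborhoods $C_\varepsilon,D_\varepsilon$ whose closures remain disjoint, hence orthogonal in $[a,b]$; choosing $n$ large with $\sup_X|s_n-\bar f|<\varepsilon$ gives $\bar f^{-1}(C)\subset s_n^{-1}(C_\varepsilon)$ and $\bar f^{-1}(D)\subset s_n^{-1}(D_\varepsilon)$, and $\perp$-continuity of $s_n$ together with the monotonicity of $\perp$ under subsets completes the transfer of orthogonality. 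Equivalently, one can cite the Tietze extension theorem for normal neighborhood operators from \cite{DW} and translate using \ref{PerpContinuousVsNbhdCont}.
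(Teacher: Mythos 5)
Your argument is correct in outline, but it is a genuinely different (and much longer) route than the paper's. The paper's entire proof is your final sentence: translate $\perp$-continuity into neighborhood continuity via \ref{PerpContinuousVsNbhdCont} and invoke Theorem 8.5 of \cite{DW}, which is precisely the Tietze-type extension theorem for normal neighborhood operators. What you do instead is redevelop that theorem from scratch inside the orthogonality language: a Urysohn lemma from the $\perp$-span decomposition, dyadic interpolation, and the classical $2/3$-iteration with a uniform-limit argument. That buys self-containedness and makes visible exactly where normality of $\perp_X$ enters (the interpolating set $U$ with $C\perp(X\setminus U)$ and $U\perp D$ --- note your labels are swapped: with $C\subset C'$, $D\subset D'$, $C'\perp D$, $D'\perp C$, the correct choice is $U:=C'$, not $X\setminus C'$), at the cost of several details you only gesture at: the $\perp$-continuity of the Urysohn function $g$ really needs the finite-subdivision argument (cover $[0,1]$ by short intervals and use $U_r\perp(X\setminus U_s)$ together with additivity of $\perp$ under finite unions), not \ref{ConstructionOfOpenSetsViaForms}, which is about forms and open sets; and the $\perp$-continuity of $f-s_n|_A$ requires \ref{ProductOfPerpFunctions} followed by composition with the uniformly continuous subtraction map on a compact square. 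Your $\varepsilon/3$ uniform-limit step is correct and uses only the downward monotonicity of $\perp$ coming from Axiom 2. In short: the proposal works, but the paper deliberately outsources all of this machinery to \cite{DW}, and if you wanted the short proof you should lead with your last sentence rather than end with it.
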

\begin{proof}
In view of \ref{PerpContinuousVsNbhdCont}, it suffices to switch to neighborhood continuity
and that case is done in \cite{DW} (Theorem 8.5).
\end{proof}

\begin{Corollary}\label{ComplexExtensionLemma}
Suppose $X$ is a set equipped with a normal orthogonality relation $\perp_X$
and the set of complex numbers $\mathbb{C}$ is equipped with the topological orthogonality relation $\perp$.
If $f:A\subset X\to \mathbb{C}$ is $\perp$-continuous with metrically bounded image, then it extends to a $\perp$-continuous
$\bar f:X\to \mathbb{C}$ with metrically bounded image. 
\end{Corollary}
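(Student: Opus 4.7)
My plan is to decompose $f$ into real and imaginary parts, extend each separately via Corollary \ref{RealExtensionLemma}, and then reassemble. Set $g := \mathrm{Re}\circ f : A \to \mathbb{R}$ and $h := \mathrm{Im}\circ f : A \to \mathbb{R}$. Since $\mathrm{Re}, \mathrm{Im} : \mathbb{C} \to \mathbb{R}$ are topologically continuous maps between normal topological spaces, Example \ref{TopContinuousVsPerpContinuous} makes them $\perp$-continuous; composition of $\perp$-continuous maps is $\perp$-continuous directly from the definition, so $g$ and $h$ are $\perp$-continuous. Metric boundedness of $f(A)$ gives a closed rectangle $R = [a,b]\times [c,d] \subset \mathbb{C}$ with $f(A) \subset R$, so $g(A) \subset [a,b]$ and $h(A) \subset [c,d]$. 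Corollary \ref{RealExtensionLemma} then produces $\perp$-continuous extensions $\bar g : X \to [a,b]$ and $\bar h : X \to [c,d]$, and I define $\bar f(x) := \bar g(x) + i\,\bar h(x)$, whose image lies in the bounded rectangle $R$.

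The heart of the argument is verifying that $\bar f$ is $\perp$-continuous. Given $C, D \subset \mathbb{C}$ with $cl(C)\cap cl(D) = \emptyset$, I may intersect both with $R$ without changing preimages under $\bar f$, so $K := cl(C\cap R)$ and $K' := cl(D\cap R)$ are disjoint compact subsets of $R$. Using the Euclidean metric, separate them by open sets $U \supset K$ and $V \supset K'$ with $cl(U)\cap cl(V) = \emptyset$ (e.g.\ take open $\delta/3$-neighborhoods with $\delta = \mathrm{dist}(K,K')$). By compactness of $K$ and local compactness of $\mathbb{C}$, cover $K$ by finitely many open rectangles $U_i = I_i\times J_i$ whose closures lie in $U$; analogously cover $K'$ by finitely many $V_j = I'_j\times J'_j$ with closures in $V$.

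For each $(i,j)$, $cl(U_i)\cap cl(V_j) \subset cl(U)\cap cl(V) = \emptyset$, which forces $cl(I_i)\cap cl(I'_j) = \emptyset$ or $cl(J_i)\cap cl(J'_j) = \emptyset$. In the first case, $\perp$-continuity of $\bar g$ gives $\bar g^{-1}(I_i) \perp_X \bar g^{-1}(I'_j)$; since $\bar f^{-1}(U_i) \subset \bar g^{-1}(I_i)$ and $\bar f^{-1}(V_j) \subset \bar g^{-1}(I'_j)$, monotonicity of $\perp$ (an immediate consequence of the orthogonality axiom $A\perp (C\cup C') \iff A\perp C$ and $A\perp C'$) yields $\bar f^{-1}(U_i) \perp_X \bar f^{-1}(V_j)$, and the second case is analogous using $\bar h$. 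Assembling over finite $i, j$ via the union axiom and monotonicity then gives $\bar f^{-1}(C) \perp_X \bar f^{-1}(D)$, completing the proof.

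The main obstacle is precisely this last $\perp$-continuity check: extending each coordinate does not formally imply $\perp$-continuity of the pair, because disjoint closures in $\mathbb{C}$ need not split as disjoint closures in one coordinate. Exploiting the metric (and product) structure of $\mathbb{C}$ to cover disjoint compact sets by rectangles whose closures remain separated is the technical step that reduces the two-dimensional statement to the one-dimensional $\perp$-continuity of $\bar g$ and $\bar h$ already in hand.
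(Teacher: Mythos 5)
Your proposal is correct and follows essentially the same route as the paper: extend the real and imaginary parts separately via Corollary \ref{RealExtensionLemma}, then establish $\perp$-continuity of the reassembled map by covering the (compact, hence metrically separated) images with finitely many rectangles, noting that any two separated rectangles must already be separated in one coordinate, and assembling via the union axiom. The only cosmetic difference is that the paper covers the whole square $[a,b]\times[a,b]$ by boxes of side $\epsilon/4$ while you cover the two compact sets separately by rectangles inside disjoint open neighborhoods; the key reduction to one-dimensional $\perp$-continuity is identical.
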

\begin{proof}
To apply \ref{RealExtensionLemma} it suffices to show that $g,h:A\to [a,b]$
are $\perp$-continuous if and only $g\Delta h:A\to [a,b]\times [a,b]$, $(g\Delta h)(x):=(g(x),h(x))$,
is $\perp$-continuous.

In one direction it is obvious, so assume $C, D\subset [a,b]\times [a,b]$ are metrically separated.
That means there is $\epsilon > 0$ such that $|z_1-z_2|\ge \epsilon$ if $z_1\in C$ and $z_2\in D$.
Cover $[a,b]\times [a,b]$
by finitely many sets of the form $B_1\times B_2$, where $B_1$ and $B_2$ are intervals of length $\epsilon/4$. Notice $(g\Delta h)^{-1}(C\cap (B_1\times B_2))\perp (g\Delta h)^{-1}(D\cap (B'_1\times B'_2))$ for any choice of $B_1, B_2, B_1', B_2'$. Therefore, $(g\Delta h)^{-1}(C\cap (B_1\times B_2))\perp (g\Delta h)^{-1}(D)$ for any choice of $B_1, B_2$.
Finally, $(g\Delta h)^{-1}(C)\perp (g\Delta h)^{-1}(D)$.
\end{proof}

\begin{Observation}\label{ProductOfPerpFunctions}
Observe that the proof of \ref{ComplexExtensionLemma} can be used to prove that, given two functions $f,g:X\to [0,1]$ from a set equipped with an orthogonality relation $\perp$, the function $h:X\to [0,1]\times [0,1]$ is $\perp$-continuous if and only if both $f$ and $g$ are $\perp$-continuous.

\end{Observation}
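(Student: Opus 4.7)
My plan is to prove both directions separately, noting that the forward implication is formal while the converse is precisely the argument already executed inside the proof of \ref{ComplexExtensionLemma}.

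For the easy direction, assume $h=f\Delta g$ is $\perp$-continuous. I want to deduce $\perp$-continuity of $f$; the argument for $g$ is symmetric. Suppose $A,B\subset [0,1]$ are topologically orthogonal. Then $A\times [0,1]$ and $B\times [0,1]$ are topologically orthogonal in $[0,1]\times [0,1]$ (their closures remain disjoint on the first coordinate). Since $f^{-1}(A)=h^{-1}(A\times [0,1])$ and $f^{-1}(B)=h^{-1}(B\times [0,1])$, $\perp$-continuity of $h$ gives $f^{-1}(A)\perp f^{-1}(B)$, as required.

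For the hard direction, assume $f$ and $g$ are $\perp$-continuous and take $C,D\subset [0,1]\times [0,1]$ with $C\perp D$, i.e.\ metrically separated by some $\epsilon>0$. Following the proof of \ref{ComplexExtensionLemma}, I cover $[0,1]\times [0,1]$ by finitely many rectangles $B_1\times B_2$ with $B_i$ intervals of length $\epsilon/4$. For any two such rectangles $B_1\times B_2$ meeting $C$ and $B_1'\times B_2'$ meeting $D$, the distance bound forces either $B_1$ to be topologically orthogonal to $B_1'$ (disjoint closures as intervals) or $B_2$ orthogonal to $B_2'$. Applying $\perp$-continuity of $f$ (resp.\ $g$) to this separated pair of intervals yields $f^{-1}(B_1)\perp f^{-1}(B_1')$ (resp.\ $g^{-1}(B_2)\perp g^{-1}(B_2')$). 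Because $h^{-1}(C\cap(B_1\times B_2))\subset f^{-1}(B_1)\cap g^{-1}(B_2)$ and $h^{-1}(D\cap(B_1'\times B_2'))\subset f^{-1}(B_1')\cap g^{-1}(B_2')$, hereditary-ness of $\perp$ under passage to subsets (a consequence of Axiom~2 of orthogonality relations) gives
$$h^{-1}(C\cap(B_1\times B_2))\perp h^{-1}(D\cap(B_1'\times B_2')).$$

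To finish, I assemble the pieces using Axiom~2 of orthogonality twice. Fixing $(B_1,B_2)$ and letting $(B_1',B_2')$ range over the finite cover, the previous step together with Axiom~2 yields $h^{-1}(C\cap(B_1\times B_2))\perp h^{-1}(D)$. Now letting $(B_1,B_2)$ range over the cover and applying Axiom~2 once more gives $h^{-1}(C)\perp h^{-1}(D)$, establishing $\perp$-continuity of $h$. The only subtlety I anticipate is the elementary geometric claim that if two $(\epsilon/4)$-rectangles are at distance at least $\epsilon$ apart then one of the two pairs of coordinate projections must be disjoint intervals; this is straightforward since otherwise both projections would overlap and the diameter estimate of $\epsilon/4\cdot\sqrt{2}$ per rectangle contradicts the separation.
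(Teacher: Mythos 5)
Your proposal is correct and follows essentially the same route as the paper: the paper's Observation simply defers to the proof of \ref{ComplexExtensionLemma}, which is exactly the $\epsilon/4$-rectangle cover plus assembly via Axiom 2 that you carry out, with the forward direction dismissed there as ``obvious.'' You merely fill in the details the paper leaves implicit (the heredity of $\perp$ under subsets and the geometric separation of the rectangles), and those details are all sound.
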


\section{Orthogonality relations vs forms}

The purpose of this section is to show that normal orthogonality relations on a set are in one-to-one correspondence with normal forms on $X$.

\begin{Proposition}
Every form $\omega$ on $X$ induces a natural orthogonality relation
$\perp_\omega$ on $X$ defined by $C\perp_\omega D$ if and only if
$\omega(C,D)=0$. If $\omega$ is normal, then $\perp_\omega$
is normal.
\end{Proposition}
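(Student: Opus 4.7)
The plan is to verify in turn that $\perp_\omega$ satisfies the three axioms of an orthogonality relation, and then to read off normality of $\perp_\omega$ directly from the normality of $\omega$ by specializing the defining property of a normal form to $k=2$.

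First I would check that $\perp_\omega$ is an orthogonality relation. Symmetry is immediate from the symmetry built into a basic multilinear form. For $\emptyset\perp_\omega X$, I would invoke Corollary \ref{OmegaInequalityTwo} on the $2$-vector $V=(\emptyset,X)$: since one of its coordinates (namely $\emptyset$) is contained in $\emptyset$, one has $\omega(\emptyset,X)\leq \omega(\emptyset)=0$ by Axiom 2. Finally, the additivity equivalence
\[
A\perp_\omega (C\cup C')\iff A\perp_\omega C\text{ and }A\perp_\omega C'
\]
is a direct reformulation of Axiom 1 of basic multilinear forms: $\omega(A,C\cup C')=\omega(A,C)+\omega(A,C')$, and a sum in $\{0,\infty\}$ is $0$ iff both summands are $0$.

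For the second claim, assume $\omega$ is normal and $C\perp_\omega D$, i.e.\ $\omega(C,D)=0$. Applying the definition of a normal form to the $2$-vector $(C,D)$, I obtain subsets $D_1,D_2\subset X$ with $D_1\cup D_2=X$, $\omega(C,D_1)=0$, and $\omega(D,D_2)=0$. Setting $C':=D_2$ and $D':=D_1$ gives a decomposition $X=C'\cup D'$ together with $\omega(C',D)=\omega(D_2,D)=0$ and $\omega(D',C)=\omega(D_1,C)=0$; that is, $C'\perp_\omega D$ and $D'\perp_\omega C$. This is exactly the condition that $C$ and $D$ $\perp_\omega$-span $X$, so $\perp_\omega$ is normal.

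There is no real obstacle; every step is a direct translation through the definitions. The only point to be a little careful about is the swap in indices (taking $C':=D_2$ rather than $D_1$), which matches the normality of $\perp$ requiring the \emph{complementary} orthogonalities $C'\perp D$ and $D'\perp C$ rather than $C'\perp C$ and $D'\perp D$.
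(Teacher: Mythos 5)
Your proof is correct; the paper leaves this verification to the reader, and your argument is precisely the intended direct check: symmetry and Axioms 1--2 of basic multilinear forms give the orthogonality axioms (with Corollary \ref{OmegaInequalityTwo} handling $\omega(\emptyset,X)=0$), and specializing the normality of $\omega$ to the $2$-vector $(C,D)$ yields exactly the $\perp_\omega$-spanning decomposition, with the index swap $C':=D_2$, $D':=D_1$ handled correctly.
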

\begin{proof}
Left to the reader.
\end{proof}

\begin{Theorem}
Every normal orthogonality relation $\perp$ on a set $X$ extends uniquely to a normal multilinear form $\omega(\perp)$.
\end{Theorem}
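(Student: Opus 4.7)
Uniqueness is immediate from Lemma \ref{NormalFormsOn2Vectors}: once I show that any extension whose values on $2$-vectors agree with $\perp$ is normal, Lemma \ref{NormalFormsOn2Vectors} forces it to be the only such form. So the work is existence.

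I would define the extension by the explicit rule
$$\omega(\perp)(C_1,\ldots,C_k)=0 \iff \exists\, D_1,\ldots,D_k\subset X\ \text{with}\ X=\bigcup_{i=1}^k D_i\ \text{and}\ C_i\perp D_i\ \text{for each } i.$$
Symmetry is evident, the $1$-vector case reduces to $\omega(C)=0\iff C\perp X$ (i.e. $C$ is $\perp$-bounded), and $\omega(\emptyset)=0$ follows from $\emptyset\perp X$. I would then verify the two remaining axioms one at a time. For Axiom~1, the forward implication is just the orthogonality axiom $(C_1\cup C_2)\perp D\iff C_1\perp D\ \text{and}\ C_2\perp D$. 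For the reverse, given decompositions $(F_j)$ and $(G_j)$ witnessing $\omega(C_1\ast V)=0$ and $\omega(C_2\ast V)=0$, set $D_0:=F_0\cap G_0$ and $D_i:=F_i\cup G_i$ for $i\ge 1$; heredity of $\perp$ gives $C_1\cup C_2\perp D_0$, axiom (2) of $\perp$ gives $E_i\perp D_i$, and a short case analysis on whether $x\in F_0\cap G_0$, $x\notin F_0$, or $x\notin G_0$ shows $X=\bigcup D_j$. For Axiom~3, when $V=(E_1,\ldots,E_k)$ has $E_j\subset C$: the forward direction takes $G_0=\emptyset$ (using $C\perp\emptyset$, which follows from $\emptyset\perp X$ and heredity); the reverse absorbs $G_0$ into $G_j$ by setting $F_j:=G_0\cup G_j$, justified because $E_j\subset C$ with $C\perp G_0$ yields $E_j\perp G_0$ by heredity.

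Next I would show that $\omega(\perp)$ restricts to $\perp$ on $2$-vectors. The forward direction $C\perp D\Rightarrow\omega(C,D)=0$ is exactly normality of $\perp$: decompose $X=C'\cup D'$ with $C'\perp D$ and $D'\perp C$, and take $D_1:=D'$, $D_2:=C'$. The converse is the delicate step. Given $X=F\cup G$ with $C\perp F$ and $D\perp G$, split $D=(D\cap F)\cup(D\setminus F)$; heredity gives $C\perp(D\cap F)$, while $D\setminus F\subset G$ together with $D\perp G$ yields $(D\setminus F)\perp(D\setminus F)$, i.e.\ $D\setminus F$ is \emph{self-orthogonal}. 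The crux is the observation that for a normal $\perp$, any self-orthogonal set $A$ is $\perp$-bounded: applying normality to the pair $(A,A)$ produces $X=C''\cup D''$ with $C''\perp A$ and $D''\perp A$, and Axiom~2 of orthogonality then gives $X\perp A$. Applied with $A=D\setminus F$, this yields $C\perp(D\setminus F)$, and Axiom~2 of $\perp$ combines the two pieces into $C\perp D$. Finally, normality of $\omega(\perp)$ is now tautological: the decomposition $(D_1,\ldots,D_k)$ witnessing $\omega(\perp)(V)=0$ satisfies $C_i\perp D_i$, which by the $2$-vector agreement just proved is the same as $\omega(\perp)(C_i,D_i)=0$.

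The main obstacle I expect is precisely the reverse direction $\omega(C,D)=0\Rightarrow C\perp D$; without the "self-orthogonal implies bounded" lemma one would only recover a relation weaker than $\perp$, and the whole construction would fail to restrict correctly on $2$-vectors. The lemma itself is a one-line consequence of normality applied to $(A,A)$, but identifying that as the right input is the key conceptual step.
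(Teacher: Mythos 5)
Your construction is the same as the paper's: you define $\omega(\perp)(C_1,\ldots,C_k)=0$ by the existence of a cover $X=\bigcup D_i$ with $C_i\perp D_i$, verify additivity by intersecting the $D_0$'s and unioning the remaining $D_i$'s, and obtain uniqueness from Lemma \ref{NormalFormsOn2Vectors}. The difference is one of completeness rather than of route. The paper's proof checks only the additivity axiom and then cites \ref{NormalFormsOn2Vectors}; it never explicitly verifies Axiom 3, the normality of $\omega(\perp)$, or --- most importantly --- that $\omega(\perp)$ restricted to $2$-vectors actually coincides with $\perp$, which is what the word ``extends'' asserts and what the appeal to \ref{NormalFormsOn2Vectors} silently presupposes. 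You supply all of these, and you correctly identify the one nontrivial ingredient: the implication $\omega(\perp)(C,D)=0\Rightarrow C\perp D$ requires knowing that a self-orthogonal set is $\perp$-bounded, which follows by applying normality to the pair $(A,A)$. This fact is essentially the paper's earlier Proposition that $C\cap D$ is $\perp$-bounded when $C$ and $D$ $\perp$-span $X$, but the paper does not invoke it here. Your splitting $D=(D\cap F)\cup(D\setminus F)$ and the subsequent use of heredity and Axiom 2 are correct, so your argument is a valid and more self-contained version of the paper's proof.
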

\begin{proof}
Define $\omega(\perp)$ as follows: $\omega(\perp)(C_1,\ldots,C_k)=0$ if and only if there exist sets $D_i$, $i\leq k$ such that $D_i\perp C_i$ for each $i\leq k$ and $\bigcup\limits_{i=1}^k D_i=X$.

Suppose $\omega(\perp)(E_0,C_1,\ldots,C_k)=0$ and
$\omega(\perp)(E_0',C_1',\ldots,C_k')=0$. Choose sets
$D_i$, $0\leq k$ such that $D_i\perp C_i$ for each $1\leq i\leq k$,
$D_0\perp E_0$, and $\bigcup\limits_{i=0}^k D_i=X$.
Choose sets
$D_i'$, $0\leq k$ such that $D_i'\perp C_i$ for each $1\leq i\leq k$,
$D_0'\perp E_0'$, and $\bigcup\limits_{i=0}^k D_i'=X$.
Observe $(D_i\cup D_i')\perp C_i$ for each $1\leq i\leq k$,
$(D_0\cap D_0')\perp (E_0\cup E_0')$,
and $(D_0\cap D_0')\cup \bigcup\limits_{i=1}^k (D_i\cup D_i')=X$.
Thus, $\omega(\perp)(E_0\cup E_0',C_1,\ldots,C_k)=0$.

Suppose a normal form $\omega$ induces $\perp$. That means
$\omega(C_1,C_2)=\omega(\perp)(C_1,C_2)$ for all
subsets $C_1, C_2$ of $X$.
By \ref{NormalFormsOn2Vectors}, $\omega=\omega(\perp)$.
\end{proof}

\subsection{Extending hyperbolic orthogonality relations}

\begin{Definition}
Given a Gromov hyperbolic space and $C\subset X$, for every $r > 0$ we define
$$N(C,r):=\{x\in X | \left<x,c\right>_p > r \mbox{ for some } c\in C\}.$$

\end{Definition}

\begin{Lemma}\label{BasicHypLemma}
If $(X,d)$ is a Gromov hyperbolic space, then\\
1. $C_1\cap N(C_2,r)\subset N(C_1,r)\cap N(C_2,r)$ for all $r > 0$.\\
2. If $C_1\cap N(C_2,r)\subset B(p,s)$, then
$N(C_1,r+s+\delta)\cap N(C_2,r+s+\delta)=\emptyset$.
\end{Lemma}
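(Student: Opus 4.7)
The plan is to use two elementary facts about the Gromov product with basepoint $p$: (a) $\langle x,y\rangle_p\leq \min\{d(x,p),d(y,p)\}$, and (b) $\langle x,x\rangle_p=d(x,p)$. These together with the $\delta/4$-inequality do all the work.

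\emph{Part 1.} Let $x\in C_1\cap N(C_2,r)$. Because $x\in N(C_2,r)$, pick $c_2\in C_2$ with $\langle x,c_2\rangle_p>r$. By (a), $r<\langle x,c_2\rangle_p\leq d(x,p)$. By (b), $\langle x,x\rangle_p=d(x,p)>r$, and since $x\in C_1$ we may use $c_1:=x$ as a witness to get $x\in N(C_1,r)$. Combined with $x\in N(C_2,r)$, this proves $x\in N(C_1,r)\cap N(C_2,r)$.

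\emph{Part 2.} Argue by contradiction: assume there is $x\in N(C_1,r+s+\delta)\cap N(C_2,r+s+\delta)$, and pick witnesses $c_1\in C_1$, $c_2\in C_2$ with
$$\langle x,c_1\rangle_p>r+s+\delta\quad\text{and}\quad \langle x,c_2\rangle_p>r+s+\delta.$$
Applying the $\delta/4$-inequality,
$$\langle c_1,c_2\rangle_p\;\geq\;\min\{\langle c_1,x\rangle_p,\langle x,c_2\rangle_p\}-\delta/4\;>\;r+s+\delta-\delta/4\;>\;r.$$
Thus $c_1\in N(C_2,r)$ is witnessed by $c_2$, so $c_1\in C_1\cap N(C_2,r)\subset B(p,s)$ by hypothesis, giving $d(c_1,p)\leq s$. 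But then by (a), $\langle x,c_1\rangle_p\leq d(c_1,p)\leq s<r+s+\delta$, contradicting the choice of $c_1$.

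This is not a hard lemma — the only design choice is the constant $r+s+\delta$ in part 2, which is calibrated precisely so that the hyperbolicity defect $\delta/4$ leaves enough slack ($>r$) to invoke the hypothesis, while the bound $d(c_1,p)\leq s$ still strictly undercuts $r+s+\delta$. I do not foresee any real obstacle; writing (a) and (b) out from the definition of the Gromov product is the only routine calculation required.
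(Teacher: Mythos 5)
Your proof is correct and follows essentially the same route as the paper: part 1 via $\langle x,c_2\rangle_p\leq d(x,p)=\langle x,x\rangle_p$, and part 2 by picking witnesses, applying the $\delta/4$-inequality to get $\langle c_1,c_2\rangle_p>r$, and contradicting the hypothesis $C_1\cap N(C_2,r)\subset B(p,s)$. The only cosmetic difference is in extracting the final contradiction (the paper notes $d(c_1,p)\geq\langle c_1,c_2\rangle_p>r+s>s$ so $c_1\notin B(p,s)$, while you use $\langle x,c_1\rangle_p\leq d(c_1,p)\leq s$), which is the same underlying inequality.
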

\begin{proof}
1. If $x\in C_1\cap N(C_2,r)$,
then  $\left<x,c_2\right>_p > r$
for some $c_2\in C_2$. Hence $d(x,p) > r$, so $x\in N(C_1,r)\cap N(C_2,r)$.\\
2. If $x\in N(C_1,r+s+\delta)\cap N(C_2,r+s+\delta)$,
then $\left<x,c_1\right>_p > r+s+\delta$ and $ \left<x,c_2\right>_p > r+s+\delta$
for some $(c_1,c_2)\in C_1\times C_2$.
Therefore
$ \left< c_1,c_2\right>_{p} \geq \min \{\left< x,c_1\right>_{p},\left< c_2,x \right> _{p}\}-\delta/4 > r+s$ resulting in $c_1\in C_1\cap N(C_2,r)\setminus B(p,s)$, a contradiction.
\end{proof}

\begin{Corollary}\label{CharHypPerpendicularity}
If $(X,d)$ is a Gromov hyperbolic space with the hyperbolic orthogonality relation $\perp$, then the following conditions are equivalent for $C_1,C_2\subset X$:\\
1. $C_1\perp C_2$,\\
2. There is $r > 0$ such that $C_1\cap N(C_2,r)=\emptyset$,\\
3. There is $r > 0$ such that $C_1\cap N(C_2,r)$ is $d$-bounded,\\
4. There is $r > 0$ such that $N(C_1,r)\cap N(C_2,r)=\emptyset$.
\end{Corollary}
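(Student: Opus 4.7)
The plan is to establish the cycle $(1) \Leftrightarrow (2) \Rightarrow (3) \Rightarrow (4) \Rightarrow (2)$, which suffices for the stated equivalence. First I would unpack the definitions to dispose of $(1) \Leftrightarrow (2)$. Condition $(1)$ asserts that $\langle c_1, c_2 \rangle_p < r$ for some $r > 0$ and every $(c_1, c_2) \in C_1 \times C_2$. Meanwhile, $C_1 \cap N(C_2, r) = \emptyset$ translates directly to: for every $x \in C_1$, no $c \in C_2$ satisfies $\langle x, c \rangle_p > r$, i.e.\ $\langle x, c \rangle_p \leq r$ for all such pairs. A strict bound by $r$ yields a non-strict bound by $r$, and a non-strict bound by $r$ yields a strict bound by $r+1$, so the two conditions are equivalent after adjusting the parameter.

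The implication $(2) \Rightarrow (3)$ is immediate, as the empty set is $d$-bounded. For $(3) \Rightarrow (4)$, I would invoke part 2 of Lemma \ref{BasicHypLemma} verbatim: if $C_1 \cap N(C_2, r) \subset B(p, s)$, then $N(C_1, r+s+\delta) \cap N(C_2, r+s+\delta) = \emptyset$, which is $(4)$ with parameter $r+s+\delta$. Finally, $(4) \Rightarrow (2)$ follows from part 1 of the same lemma: since $C_1 \cap N(C_2, r) \subset N(C_1, r) \cap N(C_2, r)$, emptiness of the right-hand side forces emptiness of the left-hand side with the same $r$. Together these close the loop.

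There is essentially no obstacle once Lemma \ref{BasicHypLemma} is available; the work has already been done there, and the present corollary is bookkeeping on how the parameter $r$ must be enlarged as one passes among the four equivalent formulations. The only subtle point is remembering that $\langle x, c \rangle_p \leq d(x, p)$, which lies behind the proof of Lemma \ref{BasicHypLemma}(1) and is what makes $N(C_1, r)$ automatically contain the ``deep'' part of $C_1$, allowing $(4)$ to absorb $(2)$ with no change of parameter.
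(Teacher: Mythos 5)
Your proposal is correct, and it closes the cycle differently from the paper. The paper proves $1\Rightarrow 2\Rightarrow 3\Rightarrow 4\Rightarrow 1$, and its final step $4\Rightarrow 1$ is a separate argument: assuming $\left<c_1,c_2\right>_p > s+\delta$ for some pair, it places a point $x$ on a geodesic from $p$ to $c_1$ at distance $\left<c_1,c_2\right>_p$ from $p$ and derives a contradiction with $N(C_1,s)\cap N(C_2,s)=\emptyset$ via the $\delta/4$-inequality. You instead observe that $1\Leftrightarrow 2$ is pure definitional bookkeeping (the strict versus non-strict bound on the Gromov product being absorbed by enlarging $r$ to $r+1$), and you return from $4$ to $2$ using only the inclusion $C_1\cap N(C_2,r)\subset N(C_1,r)\cap N(C_2,r)$ from part 1 of Lemma \ref{BasicHypLemma}. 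Both routes use the lemma for $2\Rightarrow 3\Rightarrow 4$ identically. What your route buys is worth noting: the paper's $4\Rightarrow 1$ step requires the existence of a geodesic from $p$ to $c_1$, whereas the corollary's hypothesis is only that $(X,d)$ is Gromov hyperbolic in the four-point sense, with no geodesic assumption; your argument sidesteps this entirely and is both more elementary and strictly more general. The paper's route, on the other hand, gives a direct quantitative link between the parameter $s$ in condition 4 and the bound $s+\delta$ on the Gromov product in condition 1, which your detour through condition 2 obtains only implicitly.
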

\begin{proof}
1$\implies$2. If $C_1\perp C_2$, then there is $s > 0$ such that
$ \left<c_1,c_2\right>_p  < r$ for all $(c_1,c_2)\in C_1\times C_2$.
Therefore $C_1\cap N(C_2,r)=\emptyset$.\\
2$\implies$3 and 3$\implies$4 follow from \ref{BasicHypLemma}. \\
4$\implies$1. 
Suppose $N(C_1,s)\cap N(C_2,s)=\emptyset$ for some $s > 0$.
If $ \left<c_1,c_2\right>_p > s+\delta$ for some $(c_1,c_2)\in C_1\times C_2$,
then consider the point $x$ on a geodesic from $p$ to $c_1$
at the distance $ \left<c_1,c_2\right>_p$ from $p$.
Notice $\left<c_1,x\right>_p= \left<c_1,c_2\right>_p > s+\delta$
and $ \left<c_2,x\right>_p \geq \min \{\left< x,c_1\right>_{p},\left< c_2,c_1 \right> _{p}\}-\delta/4 > s$, a contradiction.
\end{proof}

\begin{Corollary}\label{CharHypPerp2}
If $(X,d)$ is a Gromov hyperbolic space with the hyperbolic orthogonality relation $\perp$, then the following conditions are equivalent for $C_1,C_2\subset X$:\\
1. $C_1\perp C_2$,\\
2. $C_1\perp N(C_2,r)$ for some $r > 0$. 
\end{Corollary}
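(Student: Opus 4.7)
The plan is to handle the two directions separately: the forward direction $1 \Rightarrow 2$ is the substantive one and uses the $\delta/4$-inequality directly, while $2 \Rightarrow 1$ is essentially formal, following from the orthogonality axioms once one notices that $N(C_2,r)$ recovers all but a bounded piece of $C_2$.

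For $1 \Rightarrow 2$, I would first unpack $C_1 \perp C_2$ as the existence of some $r_0 > 0$ such that $\langle c_1, c_2\rangle_p < r_0$ for all $(c_1,c_2)\in C_1\times C_2$, and then choose $r$ strictly larger than $r_0 + \delta/4$. Given $y \in N(C_2,r)$, pick a witness $c_2\in C_2$ with $\langle y, c_2\rangle_p > r$ and apply the $\delta/4$-inequality to the triple $c_1,y,c_2$ with basepoint $p$:
$$r_0 > \langle c_1,c_2\rangle_p \geq \min\{\langle c_1,y\rangle_p,\langle y,c_2\rangle_p\} - \delta/4.$$
Since $\langle y,c_2\rangle_p > r > r_0 + \delta/4$, the minimum must be realized by $\langle c_1,y\rangle_p$, so $\langle c_1,y\rangle_p < r_0 + \delta/4$. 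This bound is uniform in both arguments, so by definition $C_1\perp N(C_2,r)$.

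For $2 \Rightarrow 1$, the key observation is that if $c\in C_2$ and $d(c,p) > r$, then $\langle c,c\rangle_p = d(c,p) > r$, so $c \in N(C_2,r)$; consequently $C_2 \subset N(C_2,r) \cup B(p,r)$. The hyperbolic orthogonality relation is large scale, because any $d$-bounded set $B\subset B(p,R)$ satisfies $\langle a,b\rangle_p \leq d(b,p) \leq R$ for every $a\in X$, so $B$ is $\perp$-bounded. In particular $C_1 \perp B(p,r)$. Combining with the hypothesis $C_1\perp N(C_2,r)$ via Axiom 2 of an orthogonality relation yields $C_1 \perp \bigl(N(C_2,r)\cup B(p,r)\bigr)$, and the monotonicity consequence of Axiom 2 passes this orthogonality to the subset $C_2$.

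The only quantitatively delicate step is the choice $r > r_0 + \delta/4$ in $1 \Rightarrow 2$, which must absorb the $\delta/4$ defect of the hyperbolicity inequality; I expect this to be the main (though still modest) obstacle. Everything else reduces to the axioms of orthogonality relations and the defining property of $N(C,r)$, in the same spirit as the proof of \ref{CharHypPerpendicularity}.
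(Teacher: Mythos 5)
Your proof is correct, but it takes a more self-contained route than the paper, which disposes of both directions in two lines by citing the already-established equivalences of \ref{CharHypPerpendicularity}: for $1\Rightarrow 2$ it passes through condition 4 there ($N(C_1,r)\cap N(C_2,r)=\emptyset$), and for $2\Rightarrow 1$ it observes that $C_1\perp N(C_2,r)$ forces $C_1\cap N(C_2,r)$ to be $d$-bounded (take $c_1=y=x$ to get $d(x,p)$ bounded) and then invokes condition 3. Your forward direction redoes, directly from the $\delta/4$-inequality, essentially the computation that underlies \ref{BasicHypLemma} and \ref{CharHypPerpendicularity}; the choice $r>r_0+\delta/4$ and the case analysis on which term realizes the minimum are exactly right, and the resulting uniform bound $\left<c_1,y\right>_p<r_0+\delta/4$ gives $C_1\perp N(C_2,r)$ by definition. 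Your backward direction is genuinely different in flavor: instead of the boundedness criterion, you decompose $C_2\subset N(C_2,r)\cup B(p,r)$ (using $\left<c,c\right>_p=d(c,p)$), note that bounded sets are orthogonal to everything since $\left<a,b\right>_p\leq d(b,p)$, and finish with Axiom 2 of orthogonality relations and its monotonicity consequence. What the paper's approach buys is brevity by reuse; what yours buys is independence from \ref{CharHypPerpendicularity} and a cleaner illustration that the relation is large scale and closed under the union axiom. The only cosmetic point is whether $B(p,r)$ is open or closed, which affects the containment $C_2\subset N(C_2,r)\cup B(p,r)$ only at the radius-$r$ boundary and is fixed by using $B(p,r+1)$.
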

\begin{proof}
1$\implies$2. If $C_1\perp C_2$, then 
there is $r > 0$ such that $N(C_1,r)\cap N(C_2,r)=\emptyset$.
By \ref{CharHypPerpendicularity}, $C_1\perp N(C_2,r)$.\\
2$\implies$1. If $C_1\perp N(C_2,r)$ for some $r > 0$,
then $C_1\cap N(C_2,r)$ is $d$-bounded, so $C_1\perp C_2$ by \ref{CharHypPerpendicularity}.
\end{proof}

\begin{Corollary}\label{NormalExtOfHypPerp}
If $(X,d)$ is a Gromov hyperbolic space then, the normal extension $\omega$ of the hyperbolic orthogonality relation $\perp$ is described as follows:\\
$\omega(C_1,\ldots,C_k)=0$ if and only if $\bigcap\limits_{i=1}^k N(C_i,r)=\emptyset$ for some $r > 0$. 
\end{Corollary}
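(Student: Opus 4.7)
The strategy is to verify that the formula
\[
\omega'(C_1,\ldots,C_k) = 0 \iff \bigcap\limits_{i=1}^k N(C_i,r) = \emptyset \text{ for some } r > 0
\]
defines a normal basic multilinear form on $X$ whose restriction to $2$-vectors coincides with hyperbolic orthogonality $\perp$. Since $\perp$ is normal and normal extensions are unique (via \ref{NormalFormsOn2Vectors}), this would force $\omega' = \omega(\perp)$, which is what we want.

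First I would check that $\omega'$ is a basic multilinear form. Symmetry is immediate. The remaining axioms follow from the identity $N(C_1 \cup C_2, r) = N(C_1, r) \cup N(C_2, r)$ (direct from the definition of $N$), the equality $N(\emptyset, r) = \emptyset$, and the inclusion $N(V_i, r) \subset N(C, r)$ whenever $V_i \subset C$ (which takes care of Axiom 3). The only minor subtlety is that Axiom 1 requires a single $r$ witnessing both summands, but since $r \mapsto N(C, r)$ is anti-monotone, taking the maximum of two witnesses works. The agreement of $\omega'$ with $\perp$ on $2$-vectors is precisely the equivalence (1)$\iff$(4) of Corollary \ref{CharHypPerpendicularity}.

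The main step is verifying normality of $\omega'$. Assume $\bigcap\limits_i N(C_i, r) = \emptyset$ and set $D_i := X \setminus N(C_i, r)$, so that $\bigcup\limits_i D_i = X$. I claim $C_i \perp D_i$ for every $i$, witnessed by the parameter $s := r + \delta/4$, i.e., $N(C_i, s) \cap N(D_i, s) = \emptyset$. This is essentially the estimate used in \ref{BasicHypLemma}(2): if some $y$ lies in that intersection, choose $c \in C_i$ and $d \in D_i$ with $\langle y, c \rangle_p > s$ and $\langle y, d \rangle_p > s$; the $\delta/4$-inequality then gives $\langle c, d \rangle_p > s - \delta/4 = r$, so $d \in N(\{c\}, r) \subset N(C_i, r)$, contradicting $d \in D_i$. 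Hence $C_i \perp D_i$ by \ref{CharHypPerpendicularity}, establishing normality of $\omega'$.

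The main obstacle is the normality step, but the $\delta/4$-inequality applied as above handles it cleanly. Once normality and the $2$-vector agreement are in place, uniqueness of the normal extension of a normal orthogonality relation yields $\omega(\perp) = \omega'$, which is the formula in the statement.
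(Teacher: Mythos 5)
Your proposal is correct and follows essentially the same route as the paper: the paper verifies the form axioms via $N(C_1\cup C_2,r)=N(C_1,r)\cup N(C_2,r)$, gets agreement with $\perp$ on $2$-vectors from \ref{CharHypPerpendicularity}, and obtains normality by citing \ref{SpecialNormalForms} together with \ref{BasicHypLemma}, whose content (the decomposition $D_i:=X\setminus N(C_i,r)$ and the $\delta/4$-inequality estimate) you have simply unpacked inline. The only cosmetic difference is your sharper constant $r+\delta/4$ in place of the paper's $r+s+\delta$, which is immaterial.
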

\begin{proof} Since $N(C_1\cup C_2,r)=N(C_1,r)\cup N(C_2,r)$,
$\omega$ is indeed a form. By \ref{CharHypPerpendicularity}, the orthogonality relation induced by $\omega$ agrees with the hyperbolic orthogonality relation.

It remains to show $\omega$ is normal. Use \ref{SpecialNormalForms}
and \ref{BasicHypLemma}.
\end{proof}

\begin{Observation}
Similarly to forms, every orthogonality relation $\perp$ on $X$ induces a topology on $X$ as follows: $U$ is open if and only if $x\perp (X\setminus U)$ for all $x\in U$.
\end{Observation}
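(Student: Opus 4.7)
The plan is to verify directly that the three topology axioms hold, using only the two defining properties of an orthogonality relation together with a derived monotonicity principle. The first step I would take is to extract monotonicity: if $A \perp C$ and $D \subset C$, then $A \perp D$. This follows from Axiom 2 applied to the decomposition $C = D \cup C$, which gives $A \perp C \iff (A \perp D \text{ and } A \perp C)$, forcing $A \perp D$.

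With monotonicity in hand, openness of $\emptyset$ is vacuous. For openness of $X$, each $x \in X$ requires $x \perp (X \setminus X)$, i.e.\ $\{x\} \perp \emptyset$; starting from Axiom 1 ($\emptyset \perp X$), monotonicity applied to $\{x\} \subset X$ yields $\emptyset \perp \{x\}$, and symmetry then gives $\{x\} \perp \emptyset$.

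For finite intersections, I would handle two open sets $U$ and $W$ containing $x$ by applying Axiom 2 directly: the conditions $x \perp (X \setminus U)$ and $x \perp (X \setminus W)$ combine to $x \perp ((X\setminus U)\cup (X\setminus W)) = X \setminus (U \cap W)$, so $U\cap W$ is open. For arbitrary unions $V = \bigcup_{s \in S} U_s$, given $x \in V$, I would pick $t \in S$ with $x \in U_t$; then $x \perp (X \setminus U_t)$, and since $X \setminus V \subset X \setminus U_t$, monotonicity yields $x \perp (X \setminus V)$.

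No step presents a real obstacle; the argument is parallel to the proof given earlier for the topology induced by a form, with the derived monotonicity lemma playing the role that Lemma \ref{OmegaInequalityOne} plays there.
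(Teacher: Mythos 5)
Your proof is correct and follows essentially the same route the paper takes for the analogous statement about forms (the paper leaves this Observation itself unproved): the derived monotonicity of $\perp$ plays exactly the role of Lemma \ref{OmegaInequalityOne}, and the union/intersection arguments mirror the paper's proof of the topology-induced-by-a-form proposition. The only addition is your explicit check that $X$ is open via $\emptyset\perp X$ and symmetry, which is a worthwhile detail the paper glosses over.
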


\section{Form-continuous functions}

\begin{Definition}
A function $f:(X,\omega_X)\to (Y,\omega_Y)$ of formed sets is \textbf{form-continuous}
if $\omega_Y(f(C_1),\ldots,f(C_k))=0$ implies
$ \omega_X(C_1,\ldots,C_k)=0$ for all subsets $C_i$, $i\leq k$, of $X$.
\end{Definition}

\begin{Proposition}\label{FormContinuousVsPerpContinuous}
If $f:(X,\omega_X)\to (Y,\omega_Y)$ is a function of
formed sets such that $\omega_Y$ is normal, then $f$
is form-continuous if and only if it is $\perp$-continuous with respect to
orthogonality relations induced by $\omega_X$ and $\omega_Y$, respectively.
\end{Proposition}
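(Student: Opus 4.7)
The plan is to prove both implications separately, with only the backward direction requiring normality of $\omega_Y$.

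The easy direction, form-continuity implies $\perp$-continuity, follows by specializing the form-continuity condition to $2$-vectors: by definition $A\perp_{\omega_X}C$ means $\omega_X(A,C)=0$ and similarly for $Y$, so form-continuity applied to the $2$-vector $(f(A),f(C))$ is precisely the defining implication of $\perp$-continuity. Normality plays no role here.

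For the converse, assume $f$ is $\perp$-continuous and $\omega_Y(f(C_1),\ldots,f(C_k))=0$. My strategy is to use normality of $\omega_Y$ to split the $k$-ary hypothesis into $k$ pairwise vanishing conditions on $Y$, transport each to $X$ via $\perp$-continuity, and then reassemble a $k$-ary vanishing condition on $X$ using the multilinear form axioms. Concretely, normality yields subsets $D_1,\ldots,D_k\subseteq Y$ with $\bigcup_{i=1}^k D_i=Y$ and $\omega_Y(f(C_i),D_i)=0$ for each $i$. Put $E_i:=f^{-1}(D_i)\subseteq X$; since $f(E_i)\subseteq D_i$, Lemma~\ref{OmegaInequalityOne} gives $\omega_Y(f(C_i),f(E_i))=0$, and $\perp$-continuity then delivers $\omega_X(C_i,E_i)=0$ for each $i$. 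Note also $\bigcup_{i=1}^k E_i=f^{-1}(Y)=X$.

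The final step is the combinatorial reassembly on $X$. Using Axiom~3 to adjoin $X$ as an extra coordinate and Axiom~1 to split $X=\bigcup_i E_i$ across that slot gives $\omega_X(C_1,\ldots,C_k)=\sum_{i=1}^k\omega_X(E_i,C_1,\ldots,C_k)$; symmetry permutes $E_i$ adjacent to $C_i$, after which Proposition~\ref{OmegaInequalityThree} bounds each summand by $\omega_X(E_i,C_i)=0$. The only real obstacle, such as it is, is bookkeeping: verifying that these axiom manipulations are legitimate and that the coordinate permutations are tracked correctly. I expect no substantive difficulty beyond that, and notably no hypothesis on $\omega_X$ (e.g. normality) is needed in the argument.
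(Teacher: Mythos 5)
Your proof is correct. The easy direction matches the paper's (which simply calls it obvious), and in the converse you use normality of $\omega_Y$ exactly as the paper does, to produce the cover $\{D_i\}$ of $Y$ with $\omega_Y(f(C_i),D_i)=0$ and pull it back to the cover $\{E_i=f^{-1}(D_i)\}$ of $X$ with $\omega_X(C_i,E_i)=0$. Where you diverge is the reassembly step: you argue directly with the form axioms, writing $\omega_X(C_1,\ldots,C_k)=\omega_X(X\ast V)=\sum_i\omega_X(E_i\ast V)$ via Axioms~3 and~1 and killing each summand with Proposition~\ref{OmegaInequalityThree}, whereas the paper argues by contradiction: it assumes $\omega_X(C_1,\ldots,C_k)=\infty$, picks a boundary point $\mathcal{Q}\in\partial(\omega_X)$ containing all the $C_i$, and uses the prime property of $\mathcal{Q}$ (Observation~\ref{ObservationOnPointsAtBdOfForm}) to find some $E_j\in\mathcal{Q}$ with $\omega_X(E_j,C_j)=0$, a contradiction. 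The two arguments encode the same additivity, but yours is more elementary --- it avoids invoking the existence of a maximal family (and hence Zorn's lemma) and is a direct verification rather than a proof by contradiction; the paper's version buys nothing extra here beyond fitting the boundary-point viewpoint used throughout that section. Your observation that no hypothesis on $\omega_X$ is needed is also consistent with the paper.
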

\begin{proof}
We need to show that $f$ is form-continuous
if and only if for all subsets $C,D$ of $X$ the equality $\omega_Y(f(C),f(D))=0$ implies $\omega_X(C,D)=0$.

The implication in one direction is obvious, so assume
$\omega_Y(f(C_1),\ldots,f(C_k))=0$ but $\omega_X(C_1,\ldots,C_k)=\infty$
for some subsets $C_i$ of $X$. Choose $\mathcal{Q}\in\partial(\omega_X)$ containing all $C_i$'s and choose subsets $D_i$ of $Y$ such that
$\omega_Y(D_i,f(C_i))=0$ for each $i\leq k$ and $\bigcup\limits_{i=1}^k D_i=Y$. There is $j$ so that $\mathcal{Q}$ contains $E:=f^{-1}(D_j)$. Now,
$\omega_Y(f(E),f(C_j)=0$ as $f(E)\subset D_j$ which implies
$\omega_X(E,C_j)=0$, a contradiction to $E,C_j\in \mathcal{Q}$.
\end{proof}

\begin{Theorem}\label{ContExtOfFormContThm}
Suppose If $f:(X,\omega_X)\to (Y,\omega_Y)$ is a form-continuous function of
formed sets.
If $\omega_Y$ is normal and $T_1$, then
the unique continuous extension
$$\tilde f:X\cup \partial (\omega_X)\to Y\cup \partial(\omega_Y)$$
of $f$ is given by the formula
$$\tilde f(\mathcal{Q}):=\{C\subset Y\mid \omega_Y(C,f(D))=\infty\mbox{ for all }D\in \mathcal{Q}\}.$$
In particular, the following statements hold:\\
1. $\tilde f(\mathcal{Q})$ is the unique element of $\partial(\omega_Y)$ containing all $f(D)$, $D\in \mathcal{Q}$,\\
2. $\tilde f(\mathcal{Q})$ is the unique element of the intersection of
closures of $f(D)$ in $Y\cup \partial(\omega_Y)$, where $D\in \mathcal{Q}$.
\end{Theorem}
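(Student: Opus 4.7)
The plan is to check that the formula $\mathcal{R} := \{C \subset Y \mid \omega_Y(C, f(D)) = \infty \text{ for all } D \in \mathcal{Q}\}$ defines an element of $\partial(\omega_Y)$ containing each $f(D)$, that this element is unique with this property, and that the resulting map $\tilde f$ is continuous. Assertion $1$ is then just the construction, while assertion $2$ reduces to it via Lemma \ref{ClosureCLemma}: in the Hausdorff space $Y \cup \partial(\omega_Y)$ (guaranteed by Theorem \ref{MainTheoremOnNormalityAndCompactness}) the closure of $f(D)$ intersects $\partial(\omega_Y)$ in exactly those maximal families that contain $f(D)$. Uniqueness of the continuous extension itself follows from Hausdorffness of the range and density of $X$ in $X \cup \partial(\omega_X)$.

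The easy step is that $f(D) \in \mathcal{R}$ for every $D \in \mathcal{Q}$: for any second $D' \in \mathcal{Q}$ one has $\omega_X(D, D') = \infty$ by maximality of $\mathcal{Q}$, so the contrapositive of form-continuity forces $\omega_Y(f(D), f(D')) = \infty$. The algebraic heart of the argument is the claim that $\omega_Y(C_1, \ldots, C_n) = \infty$ for every finite subvector of $\mathcal{R}$. Suppose for contradiction $\omega_Y(C_1, \ldots, C_n) = 0$. Normality of $\omega_Y$ supplies $F_1, \ldots, F_n \subset Y$ with $\bigcup_i F_i = Y$ and $\omega_Y(C_i, F_i) = 0$ for each $i$. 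Then $X = \bigcup_i f^{-1}(F_i)$; since $X \in \mathcal{Q}$ by Axiom $3$, the inductive extension of Observation \ref{ObservationOnPointsAtBdOfForm} produces some $j$ with $f^{-1}(F_j) \in \mathcal{Q}$. The defining condition of $\mathcal{R}$ then forces $\omega_Y(C_j, f(f^{-1}(F_j))) = \infty$, while $f(f^{-1}(F_j)) \subset F_j$ combined with Lemma \ref{OmegaInequalityOne} gives $\omega_Y(C_j, F_j) = \infty$, contradicting $\omega_Y(C_j, F_j) = 0$. Maximality of $\mathcal{R}$ then drops out: any $E \notin \mathcal{R}$ is blocked by the witness $D \in \mathcal{Q}$ with $\omega_Y(E, f(D)) = 0$, since $f(D) \in \mathcal{R}$.

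Uniqueness of $\mathcal{R}$ among maximal families containing $\{f(D) : D \in \mathcal{Q}\}$ is immediate: any such $\mathcal{R}'$ satisfies $\omega_Y(C, f(D)) = \infty$ for all $C \in \mathcal{R}'$ and $D \in \mathcal{Q}$, so $\mathcal{R}' \subseteq \mathcal{R}$, and equality follows from maximality of both. For continuity of $\tilde f$ at $\mathcal{Q}$: if $\tilde f(\mathcal{Q}) \in o(U)$ then $Y \setminus U \notin \mathcal{R}$, yielding some $D \in \mathcal{Q}$ with $\omega_Y(Y \setminus U, f(D)) = 0$; Lemma \ref{BasicNbhdInNormalFormsLemma} applied in $\omega_Y$ then provides an $\omega_Y$-open separator $V$ of $f(D)$ from $Y \setminus U$, and form-continuity transports this back to an $\omega_X$-open neighborhood $W$ of $\mathcal{Q}$ with $\tilde f(o(W)) \subset o(U)$. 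The step I expect to be the main obstacle is this last continuity bookkeeping: checking that \emph{every} $\mathcal{Q}' \in o(W)$ produces a $\tilde f(\mathcal{Q}')$ lying in $o(U)$ rather than just in some larger open set avoiding a fragment of $Y \setminus U$. Making this honest requires rerunning the maximality argument of the middle paragraph uniformly in $\mathcal{Q}'$, which is essentially where the normality hypothesis on $\omega_Y$ earns its keep.
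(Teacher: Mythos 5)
Your construction of $\tilde f(\mathcal{Q})$ and the proofs of statements 1 and 2 track the paper's argument almost verbatim: same use of the contrapositive of form-continuity to get $f(\mathcal{Q})\subset\tilde f(\mathcal{Q})$, same normality-plus-\ref{ObservationOnPointsAtBdOfForm} argument for linkedness via $X=\bigcup_i f^{-1}(F_i)$, same maximality and uniqueness observations. (One small caution on statement 2: Lemma \ref{ClosureCLemma}(2) characterizes $\mathcal{P}\in cl(C)$ only for $C$ closed in $Y$, so for a general $f(D)$ you still need the upgrade $\omega_Y(C,f(D))=0\Rightarrow\omega_Y(C,cl_Y(f(D)))=0$ from \ref{OmegaOnClosuresGeneralCase}, which is exactly how the paper closes that step.)

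The genuine gap is in the continuity argument, and it is not where you think it is. Given $\tilde f(\mathcal{Q})\in o(U)$ you produce $D\in\mathcal{Q}$ with $\omega_Y(Y\setminus U,f(D))=0$ and then an open $V$ from Lemma \ref{BasicNbhdInNormalFormsLemma} with $f(D)\cap U\subset V\subset U$ and $\omega_Y(Y\setminus U,V)=0$. The step you flag as the obstacle --- that every $\mathcal{Q}'\in o(f^{-1}(V))$ satisfies $\tilde f(\mathcal{Q}')\in o(U)$ --- is actually fine: $f^{-1}(V)\in\mathcal{Q}'$ forces $V\in\tilde f(\mathcal{Q}')$, and $\omega_Y(V,Y\setminus U)=0$ then excludes $Y\setminus U$ from $\tilde f(\mathcal{Q}')$. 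What is \emph{not} established is that $f^{-1}(V)$ is a neighborhood of $\mathcal{Q}$ at all, i.e.\ that $X\setminus f^{-1}(V)=f^{-1}(Y\setminus V)\notin\mathcal{Q}$. To rule out $f^{-1}(Y\setminus V)\in\mathcal{Q}$ you would need $\omega_Y(f(D),Y\setminus V)=0$ (so that form-continuity kills $\omega_X(D,f^{-1}(Y\setminus V))$), but Lemma \ref{BasicNbhdInNormalFormsLemma} only gives the one-sided estimate $\omega_Y(Y\setminus U,V)=0$; it says nothing about $\omega_Y(V,Y\setminus V)$ or $\omega_Y(f(D),Y\setminus V)$, and in a maximal linked family both a set and its complement can coexist. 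The paper avoids this by first invoking \ref{MainTheoremOnNormalityAndCompactness} to know $Y\cup\partial(\omega_Y)$ is compact Hausdorff, hence regular, and then interposing an open $V$ with $\tilde f(\mathcal{Q})\in o(V)\subset cl(o(V))\subset o(U)$; the condition $\tilde f(\mathcal{Q})\in o(V)$, i.e.\ $Y\setminus V\notin\tilde f(\mathcal{Q})$, is precisely what makes $\mathcal{Q}\in o(f^{-1}(V))$ automatic, while $cl(o(V))\subset o(U)$ supplies $\omega_Y(V,Y\setminus U)=0$ for the forward direction. You need that regularity step (or an equivalent two-sided separation) to make the continuity argument close.
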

\begin{proof} By \ref{MainTheoremOnNormalityAndCompactness},
$Y\cup \partial(\omega_Y)$ is normal, so all we have to show is $\tilde f$ is well-defined and continuous.

First of all, we have to make sure that $\tilde f(\mathcal{Q})$ exists. Observe that $\omega_Y(f(E),f(D))=\infty$ for all $E,D\in \mathcal{Q}$, so
$\tilde f(\mathcal{Q})$ contains $f(\mathcal{Q})$.
In particular, it shows $\tilde f(\partial (\omega_X))\subset \partial (\omega_Y)$.

Suppose $\omega(C_1,\ldots,C_k)=0$ for some $C_i\in \tilde f(\mathcal{Q})$. Choose $D_i\subset Y$ such that $\omega(C_i,D_i)=0$ for each $i$ and $\bigcup\limits_{i=1}^k D_i=Y$.
Therefore $E=f^{-1}(D_j)\in Q$ for some $j$ contradicting
$\omega_Y(C_j,f(E))=\infty$.

Suppose $\omega_Y(C,C_1,\ldots,C_k)=\infty$ for all $C_i\in \tilde f(\mathcal{Q})$
but $\omega(C,f(E))=0$ for some $E\in\mathcal{Q}$. Notice $f(E)\in \tilde f(Q)$, a contradiction (put $k=1$ and $C_1=f(E)$). Thus, $\tilde f(Q)\in \partial(\omega_Y)$.

Suppose $\tilde f(\mathcal{Q})\in o(C)$, $C$ being $\omega_Y$-open. Choose $\omega_Y$-open $D\subset Y$ such that
$\tilde f(\mathcal{Q})\in o(D)\subset cl(o(D))\subset o(C)$. In particular, $\omega_Y(D,Y\setminus C)=0$, as otherwise there is $\mathcal{P}\in \partial(\omega_Y)$ containing both $D$ and $Y\setminus C$ which belongs
to $cl(D)$ by \ref{ClosureCLemma} but not to $o(C)$.

Hence $\mathcal{Q}\in o(f^{-1}(D))$ (otherwise $f^{-1}(D)\in \mathcal{Q}$
and $D\in \tilde f(\mathcal{Q})$ which contradicts $\tilde f(\mathcal{Q})\in o(D)$)
and for any $\mathcal{R}\in o(f^{-1}(D))$ we have $\tilde f(\mathcal{R})\in o(C)$.
Indeed, $D\in \tilde f(\mathcal{R})$ (as $f^{-1}(D)\in \mathcal{R}$), so $ Y\setminus C\notin \tilde f(\mathcal{R})$
which means $\tilde f(\mathcal{R})\in o(C)$.

Since $f(D)\in \mathcal{P}$ implies $\mathcal{P}\in cl(f(D))$ (see \ref{ClosureCLemma}), it suffices to show the validity of Statement 2.
Suppose $\mathcal{P}\ne \tilde f(\mathcal{Q})$ and pick $C\in \mathcal{P}$ that does not belong to $\tilde f(\mathcal{Q})$.
Hence, $\omega_Y(C,f(D))=0$ for some $D\in \mathcal{Q}$.
By \ref{OmegaOnClosuresGeneralCase},
$\omega_Y(C,cl_Y(f(D)))=0$. Therefore $\mathcal{P}$ does not belong to the closure
of $f(D)$ in $Y\cup \partial(\omega_Y)$.
\end{proof}

\begin{Proposition}\label{IffExtendsItIsFormContinuous}
Suppose If $f:(X,\omega_X)\to (Y,\omega_Y)$ is a function of
formed sets such that $\omega_Y(f(C))=0$ implies
$ \omega_X(C)=0$ for all subsets $C$ of $X$.
If $f$ has a continuous extension
$$\tilde f:X\cup \partial (\omega_X)\to Y\cup \partial(\omega_Y)$$
then it is form-continuous.
\end{Proposition}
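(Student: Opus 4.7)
The plan is to prove the contrapositive: assuming $\omega_X(C_1,\ldots,C_k) = \infty$, show $\omega_Y(f(C_1),\ldots,f(C_k)) = \infty$. I would lift a witnessing point at infinity from $\partial(\omega_X)$ across $\tilde f$ to obtain a point of $\partial(\omega_Y)$ lying in $\bigcap_{i=1}^k cl(f(C_i))$, and then apply the last (unlabeled) proposition of the section on boundaries of formed sets, which characterizes $\omega = \infty$ as the existence of a boundary point inside the intersection of the closures.

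The setup is as follows. From $\omega_X(C_1,\ldots,C_k) = \infty$ and that characterization, pick $\mathcal{Q} \in \partial(\omega_X)$ with $C_i \in \mathcal{Q}$ for every $i$. Lemma \ref{ClosureCLemma}.1 places $\mathcal{Q}$ in $cl(C_i)$ for each $i$, and continuity of $\tilde f$ (together with $\tilde f|_X = f$) then gives $\tilde f(\mathcal{Q}) \in \bigcap_{i=1}^{k} cl(f(C_i))$ inside $Y \cup \partial(\omega_Y)$. It therefore suffices to show $\tilde f(\mathcal{Q}) \in \partial(\omega_Y)$.

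This boundary-landing step is the main obstacle, since a priori $\tilde f(\mathcal{Q})$ could equal a non-boundary point $y \in Y$, i.e., one with $\omega_Y(\{y\}) = 0$. Arguing by contradiction, I would first verify that such a singleton $\{y\}$ is open in $Y \cup \partial(\omega_Y)$: Corollary \ref{OmegaInequalityTwo} gives $\omega_Y(\{y\}, Y \setminus \{y\}) \leq \omega_Y(\{y\}) = 0$, so $\{y\}$ is $\omega_Y$-open in $Y$, and since a bounded singleton is not identified with any element of $\partial(\omega_Y)$, $\{y\}$ remains open in $Y \cup \partial(\omega_Y)$. Hence $\tilde f^{-1}(\{y\})$ is an open neighborhood of $\mathcal{Q}$, and one finds a basic open $o(U) \ni \mathcal{Q}$ with $o(U) \subset \tilde f^{-1}(\{y\})$ for some $\omega_X$-open $U \subset X$; the identity $o(U) \cap X = U$ then yields $U \subset f^{-1}(\{y\})$.

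To close, note that $X \in \mathcal{Q}$ by maximality (since $\omega_X(V \ast X) = \omega_X(V) = \infty$ for every $V \in Vect(\mathcal{Q})$), while $X \setminus U \notin \mathcal{Q}$ by $\mathcal{Q} \in o(U)$; Observation \ref{ObservationOnPointsAtBdOfForm} then forces $U \in \mathcal{Q}$, so $\omega_X(U) = \infty$. The hypothesis, in its contrapositive form, yields $\omega_Y(f(U)) = \infty$, contradicting $f(U) \subset \{y\}$ and $\omega_Y(\{y\}) = 0$ via Lemma \ref{OmegaInequalityOne}. Therefore $\tilde f(\mathcal{Q}) \in \partial(\omega_Y)$, and the proof is finished by the characterization recalled in the first paragraph.
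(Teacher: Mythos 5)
Your proof is correct and follows essentially the same route as the paper's: choose $\mathcal{Q}\in\partial(\omega_X)$ containing all the $C_i$ and use continuity of $\tilde f$ to place $\tilde f(\mathcal{Q})$ in $\bigcap_{i=1}^k cl(f(C_i))$, which is incompatible with $\omega_Y(f(C_1),\ldots,f(C_k))=0$. The only difference is that you spell out the step the paper leaves implicit --- that $\tilde f(\mathcal{Q})$ cannot be a non-boundary point of $Y$, which is precisely where the hypothesis $\omega_Y(f(C))=0\implies\omega_X(C)=0$ enters --- so your write-up is, if anything, more complete than the paper's.
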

\begin{proof}
Suppose $\omega_Y(f(C_1),\ldots,f(C_k))=0$ but
$ \omega_X(C_1,\ldots,C_k)=\infty$ for some subsets $C_i$, $i\leq k$, of $X$. Choose $\mathcal{Q}\in \partial(\omega_X)$ containing
all $C_i$, $i\leq k$. There is $j$ such that $f(C_j)\notin \tilde f(\mathcal{Q})$.
Hence $\tilde f(\mathcal{Q})\notin cl(f(C_j))$ contradicting $\mathcal{Q}\in cl(C_j)$.
\end{proof}

\begin{Corollary}\label{CechStoneCompactificationCorrollary}
If $(X,\mathcal{T})$ is a completely regular topological space and $\omega_f(\mathcal{T})$ is the basic functional form of $\mathcal{T}$, then $X\cup \partial(\omega_f(\mathcal{T}))$ is the \v Cech-Stone compactification of $(X,\mathcal{T})$.
\end{Corollary}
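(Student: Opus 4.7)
The plan is to verify that $X\cup\partial(\omega_f(\mathcal{T}))$ satisfies the three defining properties of $\beta X$: it is compact Hausdorff, contains $X$ as a dense subspace with the original topology, and every continuous $f:X\to [0,1]$ extends continuously to it.

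First I would check that the topology induced on $X$ by $\omega_f(\mathcal{T})$ agrees with $\mathcal{T}$, which is exactly where complete regularity is used. Given $U\in\mathcal{T}$ and $x\in U$, complete regularity produces a continuous $g:X\to[0,1]$ with $g(x)=0$ and $g(X\setminus U)\subset\{1\}$; the zero-sets $g^{-1}[0,1/3]$ and $g^{-1}[2/3,1]$ witness $\omega_f(x,X\setminus U)=0$. Conversely, if $\omega_f(x,X\setminus U)=0$ via disjoint zero-sets $D\ni x$ and $D'\supset X\setminus U$, then $X\setminus D'$ is a $\mathcal{T}$-open neighborhood of $x$ inside $U$. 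Note also that $\mathcal{B}(\omega_f)=\{\emptyset\}$, since $\omega_f(C)=0$ forces $C$ to sit inside the empty zero-set.

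Next I would assemble compactness and Hausdorffness from earlier results. By \ref{FunctionalFormIsNormal} the form $\omega_f(\mathcal{T})$ is normal, and because $X$ is completely regular it is $T_1$, so $\omega_f(\mathcal{T})$ is $T_1$. Theorem \ref{MainTheoremOnNormalityAndCompactness} then gives that $X\cup\partial(\omega_f(\mathcal{T}))$ is Hausdorff and large scale compact with respect to $\mathcal{B}(\omega_f)=\{\emptyset\}$, which in this trivial-bornology case reduces to ordinary compactness. Density of $X$ is immediate from the basis $\{o(U)\}$: if $\mathcal{Q}\in o(U)$ for a non-empty $\omega_f$-open $U\subset X$, then $X\cap o(U)=U\ne\emptyset$, and by construction every boundary point has a basic neighborhood of this form.

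For the universal property, let $f:X\to [0,1]$ be continuous. I would verify that $f$ is form-continuous with respect to $\omega_f(\mathcal{T})$ on $X$ and the basic topological form $\omega_t$ on $[0,1]$. Suppose $\omega_t(f(C_1),\ldots,f(C_k))=0$, i.e.\ $\bigcap_{i=1}^k \mathrm{cl}_{[0,1]}(f(C_i))=\emptyset$. Each closure $E_i:=\mathrm{cl}_{[0,1]}(f(C_i))$ is closed in $[0,1]$, hence a zero-set (as $[0,1]$ is perfectly normal), so $f^{-1}(E_i)$ is a zero-set in $X$ containing $C_i$, and these have empty intersection; therefore $\omega_f(\mathcal{T})(C_1,\ldots,C_k)=0$. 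Since $[0,1]$ is compact Hausdorff with trivial bornology, \ref{BoundaryOfLSTopInCompactCase} identifies $[0,1]\cup\partial(\omega_t)$ with $[0,1]$. Applying \ref{ContExtOfFormContThm} to the form-continuous $f$ produces a unique continuous extension $\tilde f:X\cup\partial(\omega_f(\mathcal{T}))\to [0,1]$. This is precisely the \v Cech--Stone universal property, so the compact Hausdorff extension $X\cup\partial(\omega_f(\mathcal{T}))$ must be $\beta X$.

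The main obstacle I expect is the first step: pinning down that the induced topology on $X$ genuinely equals $\mathcal{T}$, which fails without complete regularity. Everything afterwards is an application of the machinery already developed; the form-continuity of bounded continuous maps is the cleanest place to exploit the fact that closed subsets of $[0,1]$ are zero-sets, which is what makes the functional form (rather than the topological form) the right object for reconstructing $\beta X$.
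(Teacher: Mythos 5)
Your proof is correct and follows the same overall skeleton as the paper's: normality of $\omega_f$ (via \ref{FunctionalFormIsNormal}) plus $T_1$ gives a compact Hausdorff space through \ref{MainTheoremOnNormalityAndCompactness}, and the universal property comes from the extension theorem \ref{ContExtOfFormContThm}. Where you genuinely diverge is in the treatment of the universal property and of the "compactification" claim itself. The paper takes an arbitrary compact Hausdorff $Y$ and deduces form-continuity of continuous maps $X\to Y$ by chaining \ref{TopContinuousVsPerpContinuous} with \ref{FormContinuousVsPerpContinuity}; that citation is slightly loose, since \ref{TopContinuousVsPerpContinuous} is stated for normal spaces with topological orthogonality, while here $X$ is only completely regular and carries the functional form. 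You instead restrict to $Y=[0,1]$ and verify form-continuity directly, exploiting that closed subsets of $[0,1]$ are zero-sets so their $f$-preimages witness $\omega_f(C_1,\ldots,C_k)=0$; this is cleaner and self-contained, and for the $[0,1]$-characterization of $\beta X$ you rightly do not need the converse direction \ref{IffExtendsItIsFormContinuous} that the paper invokes. You also supply two points the paper silently assumes: that the topology induced by $\omega_f$ on $X$ coincides with $\mathcal{T}$ (this is exactly where complete regularity enters) and that $X$ is dense in $X\cup\partial(\omega_f(\mathcal{T}))$. Two small caveats: \ref{BoundaryOfLSTopInCompactCase} is stated for large scale topological spaces rather than for formed sets, so the identification of $[0,1]\cup\partial(\omega_t)$ with $[0,1]$ needs the (easy) formed-set analogue; and "completely regular" must be read as including the $T_1$/functionally Hausdorff hypothesis for the $T_1$-ness of $\omega_f$, a convention the paper itself relies on. Neither affects the correctness of your argument.
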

\begin{proof}
Since $\omega_f$ is small scale, i.e. the only bounded set is $\emptyset$, $X\cup \partial(\omega_f(\mathcal{T}))$ is compact. It is also Hausdorff as $\omega_f$ is normal (see \ref{FunctionalFormIsNormal}) and $T_1$.

Let $Y$ be any compact Hausdorff space equipped with the basic topological form $\omega_Y$.
By \ref{TopContinuousVsPerpContinuous} and \ref{FormContinuousVsPerpContinuous} continuous maps $f:X\to Y$ are form-continuous. By \ref{ContExtOfFormContThm} every continuous function $f:X\to Y$
extends over $X\cup \partial(\omega_f(\mathcal{T}))$
and by \ref{IffExtendsItIsFormContinuous} only continuous functions $f:X\to Y$ extend over $X\cup \partial(\omega_f(\mathcal{T}))$. That's the characteristic property of \v Cech-Stone compactification of $(X,\mathcal{T})$.
\end{proof}

\begin{Corollary}\label{Samuel-SmirnovCompactificationCorrollary}
If $(X,\mathcal{U})$ is a uniform space and $\omega(\mathcal{U})$ is the  induced form on $X$, then $X\cup \partial(\omega(\mathcal{U}))$ is the Samuel-Smirnov compactification of $(X,\mathcal{U})$.
\end{Corollary}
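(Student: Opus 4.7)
The plan is to mimic the proof of \ref{CechStoneCompactificationCorrollary}, replacing ``continuous'' by ``uniformly continuous'' throughout and exploiting the fact that a compact Hausdorff space carries a unique compatible uniformity, with respect to which uniform continuity coincides with topological continuity. The characterizing property I must verify is: every uniformly continuous map from $(X,\mathcal{U})$ to a compact Hausdorff space extends uniquely and continuously to $X\cup\partial(\omega(\mathcal{U}))$.

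First I would check that $X\cup \partial(\omega(\mathcal{U}))$ is compact Hausdorff. The form $\omega(\mathcal{U})$ is small scale: if $\omega(\mathcal{U})(C)=0$, then $st(C,\mathcal{V})=\emptyset$ for some uniform cover $\mathcal{V}$, but $C\subset st(C,\mathcal{V})$, forcing $C=\emptyset$. Hence $\mathcal{B}(\omega(\mathcal{U}))=\{\emptyset\}$ and large scale compactness collapses to ordinary compactness. Combined with normality (\ref{UniformFormIsNormal}) and the $T_1$ property, \ref{MainTheoremOnNormalityAndCompactness} yields a compact Hausdorff space.

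Next I would establish the universal property. Let $Y$ be compact Hausdorff, carrying its unique compatible uniformity $\mathcal{V}_Y$; by \ref{BoundaryOfLSTopInCompactCase} one has $Y\cup\partial(\omega(\mathcal{V}_Y))=Y$. Given a uniformly continuous $f:X\to Y$, \ref{UniformContinuityVsPerpContinuity} says $f$ is $\perp$-continuous, and \ref{FormContinuousVsPerpContinuous} promotes this to form-continuity (since $\omega(\mathcal{V}_Y)$ is normal by \ref{UniformFormIsNormal}); then \ref{ContExtOfFormContThm} produces the unique continuous extension $\tilde f:X\cup\partial(\omega(\mathcal{U}))\to Y$. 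Conversely, any continuous extension $g$ of a map $f:X\to Y$ forces $f$ to be form-continuous by \ref{IffExtendsItIsFormContinuous}, hence $\perp$-continuous, hence uniformly continuous by \ref{UniformContinuityVsPerpContinuity}. This is precisely the defining universal property of the Samuel-Smirnov compactification.

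The main obstacle, as in \ref{CechStoneCompactificationCorrollary}, will be the careful identification of the orthogonality/form structure on the target $Y$. One must check that the unique compatible uniformity $\mathcal{V}_Y$ on a compact Hausdorff $Y$ induces, via \ref{FormOnUniformSpaces}, a form that agrees with the basic topological form on $Y$ (so that ``uniformly continuous into $Y$'' and ``continuous into $Y$'' both translate to form-continuity with respect to a single, well-defined form on $Y$). This is a standard consequence of compactness: every open cover of $Y$ is a uniform cover, and disjointness of stars with respect to some uniform cover matches disjointness of closures. Once that identification is in place, the rest is a direct transcription of the \v Cech-Stone argument.
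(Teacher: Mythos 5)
Your proposal is correct and follows essentially the same route as the paper's proof: compactness from the form being small scale, Hausdorffness from normality (\ref{UniformFormIsNormal}) plus $T_1$, and the universal property via the chain \ref{UniformContinuityVsPerpContinuity}, \ref{FormContinuousVsPerpContinuous}, \ref{ContExtOfFormContThm}, and \ref{IffExtendsItIsFormContinuous}. The only divergence is cosmetic: the paper tests against compact \emph{metric} targets $Y$ while you use general compact Hausdorff $Y$ with its unique compatible uniformity, and you make explicit the identification of the uniform form on such $Y$ with its basic topological form, a point the paper leaves tacit.
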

\begin{proof}
Since $\omega(\mathcal{U})$ is small scale (see \ref{FormOnUniformSpaces}), i.e. the only bounded set is $\emptyset$, $X\cup \partial(\omega(\mathcal{U}))$ is compact. It is also Hausdorff as 
$\omega(\mathcal{U})$ is normal (see \ref{UniformFormIsNormal}) and $T_1$.

Let $Y$ be any compact metric space equipped with the basic topological form $\omega_Y$.
By \ref{UniformContinuityVsPerpContinuity} and \ref{FormContinuousVsPerpContinuous} uniformly continuous maps $f:X\to Y$ are form-continuous. By \ref{ContExtOfFormContThm} every uniformly continuous function $f:X\to Y$
extends over $X\cup \partial(\omega(\mathcal{U}))$
and by \ref{IffExtendsItIsFormContinuous} only uniformly continuous functions $f:X\to Y$ extend over $X\cup \partial(X,\omega(\mathcal{U}))$. That's the characteristic property of Samuel-Smirnov compactification of $(X,\mathcal{U})$.
\end{proof}

\section{Forms vs large scale structures}\label{Forms vs large scale structures}
This section is devoted to interaction between forms and large scale structures. Namely, every form $\omega$ on $X$ induces a large scale structure $\mathcal{LS}(\omega)$ and every bornological large scale structure $\mathcal{LS}$ on $X$ induces a form $\omega(\mathcal{LS})$ on $X$. In case of a metrizable large structure $L$, its form carries all the information about $L$ (see \ref{FormsForMetrizableStructures} and \label{LSOfOmegaIsLSForMetric}).

For basic facts related to the coarse category see \cite{Roe lectures}.

Recall that a \textbf{coarse structure} $\mathcal{C}$ on $X$ is a family of subsets $E$
(called \textbf{controlled sets})
of $X\times X$ satisfying the following properties:
\begin{enumerate}
\item The diagonal $\Delta=\{(x,x)\}_{x\in X}$ belongs to $\mathcal{C}$.
\item $E_1\in\mathcal{C}$ implies $E_2\in\mathcal{C}$ for every $E_2\subset E_1$.
\item $E\in\mathcal{C}$ implies $E^{-1}\in\mathcal{C}$, where $E^{-1}=\{(y,x)\}_{(x,y)\in E}$.
\item $E_1,E_2\in\mathcal{C}$ implies $E_1\cup E_2\in\mathcal{C}$.
\item $E, F\in\mathcal{C}$ implies $E\circ F\in\mathcal{C}$, where $E\circ F$
consists of $(x,y)$ such that there is $z\in X$ so that $(x,z)\in E$
and $(z,y)\in F$.
\end{enumerate}

\begin{Definition}\label{LSStructureDef} \cite{DH} 
A \textbf{large scale structure} $\mathcal{LSS}_X$ on a set $X$ is a non-empty set of families $\mathcal{B}$
of subsets of $X$ (called {\it uniformly $\mathcal{LSS}_X$-bounded} or \textbf{uniformly bounded} once $\mathcal{LSS}_X$ is fixed)
satisfying the following conditions:
\begin{enumerate}
\item $\mathcal{B}_1\in\mathcal{LSS}_X$ implies $\mathcal{B}_2\in\mathcal{LSS}_X$ if each element of $\mathcal{B}_2$
consisting of more than one point
is contained in some element of $\mathcal{B}_1$.
\item $\mathcal{B}_1,\mathcal{B}_2\in\mathcal{LSS}_X$ implies $st(\mathcal{B}_1,\mathcal{B}_2)\in\mathcal{LSS}_X$.
\end{enumerate}

A subset $B$ of $X$ is \textbf{bounded with respect to} $\mathcal{LSS}_X$
if the family $\{B\}$ belongs to $\mathcal{LSS}_X$. Thus, each large scale structure on $X$ induces a bornology on $X$ if every finite subset of $X$ is bounded.
\end{Definition}

Roe uses the term coarsely connected if if every finite subset of $X$ is bounded. We find it more appropriate to use the following terminology:

\begin{Definition}
A \textbf{bornological large scale space} is a large scale space $X$ in which if every finite subset of $X$ is bounded. Thus, bounded subsets of $X$ do form a bornology.
\end{Definition}

As described in \cite{DH}, the transition between the two structures is as follows:\\
1. Given a uniformly bounded family $\mathcal{U}$ in $X$, the set $\bigcup\limits_{B\in \mathcal{U}}B\times B$ is a controlled set,\\
2. Given a controlled set $E$, the family $\{E[x]\}_{x\in X}$ is uniformly bounded,
where $E[x]:=\{y\in X\mid (x,y)\in E\}$.

\begin{Definition}\label{OmegaOnLSDefinition}
Given a bornological large scale space $(X,\mathcal{L})$, the form $\omega(\mathcal{L})$ is defined as follows:\\
$$\omega(\mathcal{L})(C_1,\ldots,C_k)=0 \iff
\bigcap\limits_{i=1}^k st(C_i,\mathcal{V})\mbox{ is bounded}$$
for each uniformly bounded cover $\mathcal{V}$ of $X$.
\end{Definition}

\begin{Proposition}\label{FormsForMetrizableStructures}
If $\omega(\mathcal{L}_1)=\omega(\mathcal{L}_2)$ for two metrizable large scale structures on $X$, then $\mathcal{L}_1=\mathcal{L}_2$.
\end{Proposition}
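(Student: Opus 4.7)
\noindent The plan is to fix metrics $d_1$ and $d_2$ inducing $\mathcal{L}_1$ and $\mathcal{L}_2$ respectively and to show the identity map $\mathrm{id}:(X,d_1)\to(X,d_2)$ is bornologous in both directions, which forces $\mathcal{L}_1=\mathcal{L}_2$. The first observation is that the bornologies coincide: on a metric space the $\omega(\mathcal{L}_i)$-bounded sets are exactly the $d_i$-bounded sets (visible already from the restriction of the form to $1$-vectors), so $\omega(\mathcal{L}_1)=\omega(\mathcal{L}_2)$ implies that $d_1$-bounded and $d_2$-bounded sets agree. It will also be convenient to note that, unwinding Definition \ref{OmegaOnLSDefinition} in the metric case, $\omega(\mathcal{L}_i)(C_1,\ldots,C_k)=0$ iff $\bigcap B_{d_i}(C_j,r)$ is $d_i$-bounded for every $r>0$, in agreement with Example \ref{BasicLargeScaleFormOnMetric}; this is the formula I will actually use.

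For the main argument I would argue by contradiction, supposing that $\mathrm{id}:(X,d_1)\to(X,d_2)$ fails to be bornologous. Then there exist $R>0$ and sequences $\{x_n\},\{y_n\}\subset X$ with $d_1(x_n,y_n)\leq R$ and $d_2(x_n,y_n)\to\infty$. Fix a basepoint $p\in X$. Combining $d_1(x_n,y_n)\leq R$ with the coincidence of bornologies, one verifies that for every $M>0$ only finitely many indices $n$ satisfy $x_n\in B_{d_2}(p,M)$: otherwise $\{x_n\}$ on those indices would be $d_2$- and hence $d_1$-bounded, which would force $\{y_n\}$ on those indices to be $d_1$- and hence $d_2$-bounded, contradicting $d_2(x_n,y_n)\to\infty$. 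Symmetrically, this yields $d_2(x_n,p)\to\infty$ and $d_2(y_n,p)\to\infty$.

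Next I would perform a diagonal extraction, selecting $n_k$ so that both $d_2(x_{n_k},p)$ and $d_2(y_{n_k},p)$ exceed $k+\max_{j<k}\bigl(d_2(x_{n_j},p)+d_2(y_{n_j},p)\bigr)$. The triangle inequality then forces all of $d_2(x_{n_k},x_{n_j})$, $d_2(x_{n_k},y_{n_j})$, $d_2(y_{n_k},y_{n_j})$ to exceed $k$ whenever $j<k$, while $d_2(x_{n_k},y_{n_k})\to\infty$ by construction. Setting $C:=\{x_{n_k}\}$ and $D:=\{y_{n_k}\}$, any point $z\in B_{d_2}(C,r)\cap B_{d_2}(D,r)$ yields indices $(k,l)$ with $d_2(x_{n_k},y_{n_l})<2r$, and the spacing leaves only finitely many such pairs, so the intersection is contained in a finite union of $d_2$-balls and is $d_2$-bounded; hence $\omega(\mathcal{L}_2)(C,D)=0$. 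On the other hand $d_1(x_{n_k},y_{n_k})\leq R$ gives $D\subset B_{d_1}(C,R)\cap B_{d_1}(D,R)$, and $D$ is $d_1$-unbounded by the coincidence of bornologies, so $\omega(\mathcal{L}_1)(C,D)=\infty$, contradicting $\omega(\mathcal{L}_1)=\omega(\mathcal{L}_2)$.

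The step that will need the most care is the diagonal spreading-out, together with the preparatory argument pushing both $\{x_n\}$ and $\{y_n\}$ to infinity in $d_2$; both rely crucially on the coincidence of bornologies, which is precisely what $\omega(\mathcal{L}_1)=\omega(\mathcal{L}_2)$ supplies at the outset.
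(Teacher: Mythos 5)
Your proof is correct and follows essentially the same route as the paper's: both extract sequences $x_n,y_n$ with $d_1(x_n,y_n)$ bounded and $d_2(x_n,y_n)\to\infty$, set $C:=\{x_n\}$, $D:=\{y_n\}$, and derive the contradiction $\omega(\mathcal{L}_1)(C,D)=\infty\ne 0=\omega(\mathcal{L}_2)(C,D)$. The one substantive difference is your diagonal spreading-out of a subsequence, which is genuinely needed to justify $\omega(\mathcal{L}_2)(C,D)=0$ (the paper asserts this directly, but for arbitrary such sequences $B_{d_2}(C,r)\cap B_{d_2}(D,r)$ need not be bounded, e.g.\ when the $x_n$ interleave with the $y_m$), so your version is the more complete one.
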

\begin{proof}
Suppose $\mathcal{U}$ is uniformly bounded with respect to $\mathcal{L}_1$ induced by a metric $d_1$ but is not is uniformly bounded with respect to $\mathcal{L}_2$ induced by a metric $d_2$.
That means existence of two sequences $C:=\{x_n\}_{n=1}^\infty$, $D:=\{y_n\}_{n=1}^\infty$ and a number $r > 0$ such that $d_1(x_n,y_n) < r$ for each $n\ge 1$ but $d_2(x_n,y_n)\to\infty$. Notice
$\omega(\mathcal{L}_1)(C,D)=\infty$ but $\omega(\mathcal{L}_2)(C,D)=0$, a contradiction.
\end{proof}

\begin{Proposition}\label{LSOfOmegaPropDef}
Given a form $\omega$ on $X$, the family of all covers $\mathcal{U}$ of $X$ with the property
$$\omega(C_1,\ldots,C_k)=\omega(st(C_1,\mathcal{U}),\ldots,st(C_k,\mathcal{U}))$$
for all vectors $(C_1,\ldots,C_k)$ in $X$ induces a large scale structure $LS(\omega)$ whose family of bounded subsets contains all $\omega$-bounded sets. If $B$ is bounded with respect to $LS(\omega)$ and $\omega(B)=\infty$,
then $\omega(C)=\infty$ for all $C$ intersecting $B$. In particular, if every point in $X$ is $\omega$-bounded, then $LS(\omega)$ is a bornological large scale structure with the same bornology as $\omega$.
\end{Proposition}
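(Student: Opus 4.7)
The plan is to verify the four assertions of the proposition in sequence. I would define $LS(\omega)$ to consist of all families $\mathcal{B}$ such that each multi-point element of $\mathcal{B}$ is contained in some element of an \emph{invariant} cover $\mathcal{U}$ of $X$, where invariant means $\omega(C_1,\ldots,C_k) = \omega(st(C_1,\mathcal{U}),\ldots,st(C_k,\mathcal{U}))$ for all vectors $(C_1,\ldots,C_k)$. Axiom (1) of Definition \ref{LSStructureDef} then holds by transitivity. For axiom (2), the heart of the matter is to prove that $st(\mathcal{U}_1,\mathcal{U}_2)$ is invariant whenever $\mathcal{U}_1$ and $\mathcal{U}_2$ are. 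The key containment is
\[
st(C, st(\mathcal{U}_1,\mathcal{U}_2)) \subset st\bigl(st(st(C,\mathcal{U}_2),\mathcal{U}_1),\mathcal{U}_2\bigr),
\]
proved by definition chasing: $x$ in the left side lies in some $st(B,\mathcal{U}_2)$ with $B\in\mathcal{U}_1$ and $st(B,\mathcal{U}_2)\cap C\ne\emptyset$, so there is $V\in\mathcal{U}_2$ with $V\cap B\ne\emptyset$ and $V\subset st(C,\mathcal{U}_2)$; hence $B\subset st(st(C,\mathcal{U}_2),\mathcal{U}_1)$, placing $x\in st(B,\mathcal{U}_2)$ inside the right side. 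Three successive applications of the invariance of $\mathcal{U}_2$, then of $\mathcal{U}_1$, then of $\mathcal{U}_2$, collapse $\omega$ of the right side back to $\omega(C_1,\ldots,C_k)$, and together with the trivial inequality in the other direction this yields invariance of $st(\mathcal{U}_1,\mathcal{U}_2)$.

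For the second claim, given an $\omega$-bounded $B$, consider the cover $\mathcal{U}_B := \{B\}\cup\{\{x\} : x\in X\setminus B\}$. Since singletons in $\mathcal{U}_B$ meeting $C$ are contained in $C$, one has $st(C,\mathcal{U}_B)\subset C\cup B$. Axiom 1 of forms gives $\omega((C_1\cup B)\ast V) = \omega(C_1\ast V) + \omega(B\ast V)$, and Corollary \ref{OmegaInequalityTwo} yields $\omega(B\ast V)\leq \omega(B) = 0$, so iterating coordinate by coordinate produces $\omega(C_1\cup B,\ldots,C_k\cup B) = \omega(C_1,\ldots,C_k)$. This confirms $\mathcal{U}_B$ is invariant, so $\{B\}\in LS(\omega)$.

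For the third claim, if $B = \{x\}$ is a singleton, then $\{x\}\subset C$ and Lemma \ref{OmegaInequalityOne} give $\omega(C)\geq \omega(\{x\}) = \infty$. If $B$ has more than one point, $B\subset V$ for some $V$ in an invariant cover $\mathcal{U}$, so $\omega(V)=\infty$ by monotonicity, and whenever $C\cap B\neq\emptyset$ we have $V\subset st(C,\mathcal{U})$, giving $\omega(C)=\omega(st(C,\mathcal{U}))=\infty$ by invariance. Finally, if every singleton is $\omega$-bounded, axiom 1 of forms makes every finite set $\omega$-bounded, hence $LS(\omega)$-bounded by the second stage, so $LS(\omega)$ is bornological. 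The bornology match follows directly: the second stage gives the forward inclusion, and the third stage forces the converse, since an $LS(\omega)$-bounded $B$ with $\omega(B)=\infty$ would force $\omega(\{x\})=\infty$ for each $x\in B$, contradicting the hypothesis.

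The principal obstacle is the star-within-star inclusion used in the first stage; the nested structure of $st(\mathcal{U}_1,\mathcal{U}_2)$ requires patient unwinding before the invariance of the factors can be seen to transfer to the star.
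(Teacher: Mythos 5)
Your proposal is correct, and its overall skeleton (verify the two axioms of Definition \ref{LSStructureDef}, then use the cover $\{B\}\cup\{\{x\}\}_{x\in X}$ for the boundedness claims) matches the paper's. The one genuinely different step is the verification of the star axiom. The paper uses the containment
$st(C,st(\mathcal{U}_1,\mathcal{U}_2))\subset st(st(C,\mathcal{U}_1),\mathcal{U}_2)\cup st(st(C,\mathcal{U}_2),\mathcal{U}_1)$
and then exploits additivity of $\omega$ in a single coordinate, so that $\omega$ of the union splits as a sum of two terms each collapsing to $\omega(C_1,\ldots,C_k)$ by two applications of invariance. You instead prove the nested containment
$st(C,st(\mathcal{U}_1,\mathcal{U}_2))\subset st\bigl(st(st(C,\mathcal{U}_2),\mathcal{U}_1),\mathcal{U}_2\bigr)$
and collapse it with three successive applications of invariance (for $\mathcal{U}_2$, $\mathcal{U}_1$, $\mathcal{U}_2$) plus monotonicity (Lemma \ref{OmegaInequalityOne}); your definition-chase for this inclusion is sound, and it avoids any appeal to additivity of the form at this step. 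Your treatment of the third claim also differs slightly: rather than the paper's route through the cover $\{B\}\cup\{\{x\}\}_{x\in X}$ and the refinement axiom, you pick an element $V$ of an invariant cover containing the (multi-point) bounded set $B$ and use $V\subset st(C,\mathcal{U})$ directly, handling singletons separately by monotonicity; both arguments are valid and of comparable length. The remaining claims (that $\omega$-bounded sets are $LS(\omega)$-bounded via $st(C,\mathcal{U}_B)\subset C\cup B$ and $\omega(C\cup B)=\omega(C)$, and the identification of bornologies) coincide with the paper's reasoning.
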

\begin{proof}
Notice one can define $LS(\omega)$ as consisting of all covers $\mathcal{U}$
satisfying
$$\omega(C_1,\ldots,C_k)\ge \omega(st(C_1,\mathcal{U}),C_2,\ldots,C_k)$$
for all vectors $(C_1,\ldots,C_k)$ in $X$.
Suppose $\mathcal{B}_1\in LS(\omega)$ and each element of a cover $\mathcal{B}_2$ of $X$
consisting of more than one point
is contained in some element of $\mathcal{B}_1$. For all vectors $(C_1,\ldots,C_k)$ in $X$,
$\omega(C_1,\ldots,C_k)\leq\omega(st(C_1,\mathcal{B}_2),\ldots,st(C_k,\mathcal{B}_2))\leq \omega(st(C_1,\mathcal{B}_1),\ldots,st(C_k,\mathcal{B}_1))=\omega(C_1,\ldots,C_k)$.
$\mathcal{B}_1,\mathcal{B}_2\in LS(\omega)$ implies $st(\mathcal{B}_1,\mathcal{B}_2)\in LS(\omega)$ follows from the fact
$st(C,st(\mathcal{B}_1,\mathcal{B}_2))\subset st(st(C,\mathcal{B}_1),\mathcal{B}_2))\cup 
st(st(C,\mathcal{B}_2),\mathcal{B}_1)$. 
Indeed, for all vectors $(C_1,\ldots,C_k)$ in $X$,
$$\omega(st(C_1,st(\mathcal{B}_1,\mathcal{B}_2)),C_2,\ldots,C_k)\leq$$
$$\omega(st(st(C_1,\mathcal{B}_1),\mathcal{B}_2)),C_2,\ldots,C_k)+
\omega(st(st(C_1,\mathcal{B}_2),\mathcal{B}_1)),C_2,\ldots,C_k)=$$
$$\omega(C_1,C_2,\ldots,C_k)+\omega(C_1,C_2,\ldots,C_k)=\omega(C_1,C_2,\ldots,C_k).$$
Thus, $LS(\omega)$ is indeed a large scale structure.

If $\omega(B)=0$ and $\mathcal{U}=\{B\}\cup\bigcup\limits_{x\in X}\{x\}$,
then for all vectors $(C_1,\ldots,C_k)$ in $X$,
$\omega(C_1,\ldots,C_k)\leq\omega(st(C_1,\mathcal{U}),\ldots,st(C_k,\mathcal{U}))\leq \omega(C_1\cup B,\ldots,C_k\cup B)=\omega(C_1,\ldots,C_k)$. That means $B$ is a bounded subset of $X$ with respect to $LS(\omega)$. Suppose $B\ne\emptyset$ is a bounded subset of $X$ with respect to $LS(\omega)$ and $\omega(B)=\infty$.
Let
$\mathcal{U}:=\{B\}\cup\bigcup\limits_{x\in X}\{x\}$
and suppose $C$ intersects $B$. Now, $\omega(C)=\omega(st(C,\mathcal{U}))=\omega(C\cup B)=\infty$.
\end{proof}

\begin{Proposition}\label{CharOfLSInducedByNormalForms}
If $\omega$ is a normal $T_1$ form, then the following conditions are equivalent:\\
a. $\mathcal{U}\in LS(\omega)$,\\
b. For each neighborhood $W$ of $\mathcal{P}\in\partial(\omega)$
in $X\cup\partial(\omega)$
there is a neighborhood $W'$ of $\mathcal{P}$ such that
$st(W',\mathcal{U})\subset W$,\\
c. The coronas of $C$ and $st(C,\mathcal{U})$ coincide for each $C\subset X$.
\end{Proposition}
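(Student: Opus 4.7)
The plan is to prove (a)$\Leftrightarrow$(c) and (b)$\Leftrightarrow$(c) separately. The key enabling fact, from Lemma \ref{ClosureCLemma} combined with Corollary \ref{OmegaOnClosuresGeneralCase}, is that for a normal $T_1$ form corona membership is ultrafilter-like: $\mathcal{P}\in cl(C)\cap\partial(\omega)$ precisely when $C\in\mathcal{P}$. Thus (c) translates to the statement that for every $\mathcal{P}\in\partial(\omega)$, $st(C,\mathcal{U})\in\mathcal{P}$ if and only if $C\in\mathcal{P}$; the forward direction is automatic from $C\subset st(C,\mathcal{U})$ and monotonicity.

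For (a)$\Rightarrow$(c): if $st(C,\mathcal{U})\in\mathcal{P}$ but $C\notin\mathcal{P}$, maximality supplies $D_1,\ldots,D_k\in\mathcal{P}$ with $\omega(C,D_1,\ldots,D_k)=0$, and (a) combined with Lemma \ref{OmegaInequalityOne} yields $\omega(st(C,\mathcal{U}),D_1,\ldots,D_k)\leq\omega(st(C,\mathcal{U}),st(D_1,\mathcal{U}),\ldots,st(D_k,\mathcal{U}))=\omega(C,D_1,\ldots,D_k)=0$, contradicting that all these sets lie in $\mathcal{P}$. For (c)$\Rightarrow$(a), Proposition \ref{LSOfOmegaPropDef} reduces (a) to $\omega(C_1,\ldots,C_k)\geq\omega(st(C_1,\mathcal{U}),C_2,\ldots,C_k)$; if the right side were $\infty$, a witnessing $\mathcal{P}\in\partial(\omega)$ contains $st(C_1,\mathcal{U})$, so by (c) also contains $C_1$, forcing the left side to be $\infty$.

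The main obstacle is (c)$\Rightarrow$(b). Given a basic neighborhood $W=o(V)$ of $\mathcal{P}$, let $C=X\setminus V$; since $C\notin\mathcal{P}$, (c) yields $st(C,\mathcal{U})\notin\mathcal{P}$. Choose $D_1,\ldots,D_k\in\mathcal{P}$ with $\omega(st(C,\mathcal{U}),D_1,\ldots,D_k)=0$ and apply normality to obtain $E_0,E_1,\ldots,E_k$ with $E_0\cup\bigcup_i E_i=X$, $\omega(st(C,\mathcal{U}),E_0)=0$, and $\omega(D_i,E_i)=0$; since $D_i\in\mathcal{P}$ forces $E_i\notin\mathcal{P}$, Observation \ref{ObservationOnPointsAtBdOfForm} (together with $X\in\mathcal{P}$) places $E_0$ in $\mathcal{P}$. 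Trimming $E_0$ via the same observation to be disjoint from $st(C,\mathcal{U})$, mimic the induction in Lemma \ref{BasicNbhdInNormalFormsLemma} to build an increasing chain $E_0=F_1\subset F_2\subset\ldots$ inside $X\setminus st(C,\mathcal{U})$ with $\omega(F_n,X\setminus F_{n+1})=0$. Set $V'=\bigcup_n F_n$: Proposition \ref{ConstructionOfOpenSetsViaForms} makes $V'$ $\omega$-open, the inclusion $V'\cap st(C,\mathcal{U})=\emptyset$ gives $st(V',\mathcal{U})\subset V$, and $X\setminus V'\in\mathcal{P}$ is impossible because its superset $X\setminus F_{n+1}$ would then lie in $\mathcal{P}$, contradicting $\omega(F_n,X\setminus F_{n+1})=0$ with $F_n\in\mathcal{P}$. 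Hence $W'=o(V')$ is a neighborhood of $\mathcal{P}$ with $st(W',\mathcal{U})\subset W$.

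For (b)$\Rightarrow$(c), suppose $st(C,\mathcal{U})\in\mathcal{P}$ but $C\notin\mathcal{P}$. The construction of the previous paragraph, run with $C$ in place of $st(C,\mathcal{U})$, produces an $\omega$-open $V$ with $\mathcal{P}\in o(V)$ and $V\cap C=\emptyset$; (b) then provides a basic neighborhood $o(V')$ of $\mathcal{P}$ with $st(V',\mathcal{U})\subset V$. Any $x\in V'\cap st(C,\mathcal{U})$ would lie in some $U\in\mathcal{U}$ meeting both $V'$ and $C$, forcing $U\subset st(V',\mathcal{U})\subset V$ while $U\cap C\neq\emptyset$ and $C\subset X\setminus V$, which is impossible. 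Thus $st(C,\mathcal{U})\subset X\setminus V'$, but $\mathcal{P}\in o(V')$ means $X\setminus V'\notin\mathcal{P}$, so no subset of it, in particular $st(C,\mathcal{U})$, belongs to $\mathcal{P}$, contradicting our assumption.
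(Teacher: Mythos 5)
Your proof is correct, but it is organized differently from the paper's. The paper proves the cycle a)$\Rightarrow$b)$\Rightarrow$c)$\Rightarrow$a): for a)$\Rightarrow$b) it uses compactness and Hausdorffness of $X\cup\partial(\omega)$ to find a neighborhood $U$ of $\mathcal{P}$ with $cl(U)\subset W$, applies star-invariance to $\omega(U\cap X,X\setminus W)=0$, and deletes the star of the resulting bounded intersection; for c)$\Rightarrow$a) it uses normality to produce a cover $\{D_i\}$ whose coronas are disjoint from those of the $C_i$. You instead route everything through (c). Two differences are worth flagging. First, your (c)$\Rightarrow$(a) sidesteps normality by combining the one-coordinate reduction in \ref{LSOfOmegaPropDef} with the extension of $\{st(C_1,\mathcal{U}),C_2,\ldots,C_k\}$ to a point of $\partial(\omega)$ — a bit more economical than the paper's cover argument. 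Second, and more substantively, the step the paper labels b)$\Rightarrow$c) actually invokes the implication $\omega(C,C_1,\ldots,C_k)=0\Rightarrow\omega(st(C,\mathcal{U}),st(C_1,\mathcal{U}),\ldots,st(C_k,\mathcal{U}))=0$, which is condition (a), not (b); as written the paper proves a)$\Rightarrow$c) there (which coincides with your (a)$\Rightarrow$(c)) and never uses (b) as a hypothesis, so its cycle is incomplete. Your (b)$\Rightarrow$(c) — converting star-absorption of a basic neighborhood $o(V')$ into $st(C,\mathcal{U})\subset X\setminus V'\notin\mathcal{P}$ — is an honest derivation from (b) and repairs this. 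Finally, your (c)$\Rightarrow$(b) replaces the paper's compactness argument with a direct construction of the $\omega$-open $V'$ via normality, \ref{BasicNbhdInNormalFormsLemma} and \ref{ConstructionOfOpenSetsViaForms}; both work, though the paper's is shorter once \ref{MainTheoremOnNormalityAndCompactness} is available.
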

\begin{proof}
a)$\implies$b). Since $X\cup\partial(\omega)$ is compact Hausdorff,
there is a neighborhood $U$ of $\mathcal{P}$ in $X\cup\partial(\omega)$
whose closure is contained in $W$. In particular, $\omega(U\cap X,X\setminus W)=0$. Now, $\omega(st(U\cap X,\mathcal{U}),st(X\setminus W,\mathcal{U}))=0$ and $B:=st(U\cap X,\mathcal{U})\cap st(X\setminus W,\mathcal{U})$ is $\omega$-bounded. Put $W':=U\setminus st(B,\mathcal{U})$.

b)$\implies$c). The corona of $D$ is defined as all $\mathcal{P}\in \partial(\omega)$ contained in $cl(D)$. Equivalently, all $\mathcal{P}$ containing $D$.

Suppose $\mathcal{P}$ contains $st(C,\mathcal{U})$ but not $C$.
There exists a vector $(C_1,\ldots,C_k)$ consisting of elements of $\mathcal{P}$ such that
$\omega(C,C_1,\ldots,C_k)=0$. Consequently,
$$\omega(st(C,\mathcal{U}),st(C_1,\mathcal{U}),\ldots,st(C_k,\mathcal{U}))=0,$$
 a contradiction as $\mathcal{P}$ contains all those sets.

c)$\implies$a). Suppose $U\notin LS(\omega)$.
There is a vector $(C_1,\ldots,C_k)$ satisfying
$\omega(C_1,\ldots,C_k)=0$ and $\omega(st(C_1,\mathcal{U}),\ldots,st(C_k,\mathcal{U}))=\infty$.
Choose sets $D_1,\ldots,  D_k$ such that $\omega(C_i,D_i)=0$ for each $i\leq k$ and $\bigcup\limits_{i=1}^k D_i=X$.
Thus, coronas of $D_i$ and $C_i$ are disjoint for each $i$ resulting
in $\omega(st(C_i,\mathcal{U}),st(D_i,\mathcal{U}))=0$ for each $i\leq k$.
That contradicts $\omega(st(C_1,\mathcal{U}),\ldots,st(C_k,\mathcal{U}))=\infty$.
\end{proof}

\begin{Proposition}\label{OmegaOfMaxBDDStructure}
Given an infinite set $X$ consider the maximal bounded geometry large scale structure $\mathcal{LS}$ on $X$. It consists of all covers $\mathcal{U}$ of $X$ satisfying the following properties:\\
a. There is a natural number $n\ge 1$ such that each element $U$ of $\mathcal{U}$ has at most $n$ points.\\
b. There is a natural number $m\ge 1$ such that each point $x$ of $X$ belongs to at most $m$ elements $U$ of $\mathcal{U}$.\\
The form $\omega(\mathcal{LS})$ induced by $\mathcal{LS}$ is characterized by the following property: $\omega(\mathcal{LS})(V)=0$ if and only if at least one coordinate of $V$ is finite.
\end{Proposition}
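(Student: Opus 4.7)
The plan is to verify that the covers satisfying (a) and (b) form a large scale structure, identify which sets are bounded in it, and then read off the form $\omega(\mathcal{LS})$ from Definition \ref{OmegaOnLSDefinition}.

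First I would check the two axioms of Definition \ref{LSStructureDef} directly. Suppose $\mathcal{B}_1$ satisfies (a) with bound $n_1$ and (b) with bound $m_1$, and every non-singleton element of $\mathcal{B}_2$ is contained in some element of $\mathcal{B}_1$. Then every element of $\mathcal{B}_2$ has at most $n_1$ points, giving (a). For (b), a fixed $x\in X$ lies in at most $m_1$ elements of $\mathcal{B}_1$, and each such element has at most $2^{n_1-1}$ subsets of size at least $2$ containing $x$, so the multiplicity of $x$ in the non-singletons of $\mathcal{B}_2$ is at most $m_1\cdot 2^{n_1-1}$; the singleton $\{x\}$ appears at most once. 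For the star axiom, if $\mathcal{B}_1,\mathcal{B}_2$ satisfy (a) and (b) with bounds $(n_i,m_i)$, then $|st(B,\mathcal{B}_2)|\leq n_1\cdot m_2\cdot n_2$, and the multiplicity of a point $y$ in $st(\mathcal{B}_1,\mathcal{B}_2)$ is at most $m_1+m_2\cdot n_2\cdot m_1$, since either $y\in B$ (at most $m_1$ choices) or $y$ lies in one of at most $m_2$ elements $B'\in\mathcal{B}_2$ each meeting at most $n_2\cdot m_1$ elements of $\mathcal{B}_1$.

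Next, since any bounded $B$ requires $\{B\}\in\mathcal{LS}$, condition (a) forces $|B|<\infty$; conversely every finite $B$ is trivially bounded with $n=|B|,m=1$. Thus the bounded subsets of $(X,\mathcal{LS})$ are exactly the finite subsets, and $\mathcal{LS}$ is bornological.

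Now I evaluate $\omega(\mathcal{LS})$. If some $C_i$ is finite, then for any $\mathcal{V}\in\mathcal{LS}$ with bounds $(n,m)$, each $st(x,\mathcal{V})$ has at most $nm$ points, so $|st(C_i,\mathcal{V})|\leq nm\,|C_i|$ is finite. Hence $\bigcap_j st(C_j,\mathcal{V})$ is finite, i.e.\ bounded, giving $\omega(\mathcal{LS})(C_1,\ldots,C_k)=0$. For the converse, assume all $C_i$ are infinite. I construct $\mathcal{V}\in\mathcal{LS}$ for which $\bigcap_i st(C_i,\mathcal{V})$ is infinite by induction on $j\geq 1$: at stage $j$ pick distinct points $x_j^{(1)}\in C_1,\ldots, x_j^{(k)}\in C_k$ and $y_j\in X$, all distinct from the finitely many points already chosen at earlier stages (possible since each $C_i$ and $X$ is infinite). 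Put $V_j:=\{x_j^{(1)},\ldots,x_j^{(k)},y_j\}$ and set $\mathcal{V}:=\{V_j\}_{j\geq 1}\cup\{\{x\}\mid x\in X\}$. The $V_j$ are pairwise disjoint, so every $z\in X$ lies in at most one $V_j$ together with its singleton, yielding multiplicity at most $2$; also $|V_j|=k+1$. Thus $\mathcal{V}\in\mathcal{LS}$. For each $j$, $y_j\in st(x_j^{(i)},\mathcal{V})\subseteq st(C_i,\mathcal{V})$ for every $i$, so $\{y_j\}_{j\geq 1}\subseteq\bigcap_{i=1}^k st(C_i,\mathcal{V})$ is an infinite, hence unbounded, set, forcing $\omega(\mathcal{LS})(C_1,\ldots,C_k)=\infty$.

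The main obstacle is the bookkeeping in checking the star axiom for $\mathcal{LS}$ cleanly; once bounded sets are identified as finite sets the two implications for the form are essentially immediate, the only trick being to arrange pairwise disjointness of the $V_j$'s in the converse construction so that (b) is preserved.
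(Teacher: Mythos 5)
Your proof is correct. For the substantive direction (all coordinates infinite forces $\omega(\mathcal{LS})(V)=\infty$) you and the paper both exhibit a multiplicity-two uniformly bounded cover built from pairwise disjoint finite transversals, each meeting every $C_i$, so that the intersection of the stars is infinite. The difference lies in how disjointness of the blocks is arranged: the paper first normalizes the configuration --- taking $k$ minimal, maximizing the number of disjoint pairs to argue the $C_i$ may be assumed pairwise disjoint, then passing to countable subsets $D_i$ and bijections $f_i:D_1\to D_i$ --- whereas you simply choose the points of each block greedily, avoiding the finitely many points already used. Your route is more elementary and bypasses the paper's reduction entirely, which is the only delicate part of its argument; it also makes explicit two items the paper leaves unsaid, namely that the covers described really do form a large scale structure whose bounded sets are exactly the finite sets, and the easy implication that a finite coordinate forces every star, hence the intersection of stars, to be finite. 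One small simplification: the auxiliary points $y_j$ are not needed, since $x_j^{(1)}$ already lies in $st(C_i,\mathcal{V})$ for every $i$, so the $x_j^{(1)}$ alone witness that the intersection is infinite.
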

\begin{proof}
Suppose all $C_i$ are infinite and $\omega(\mathcal{LS})(C_1,\ldots,C_k)=0$.
We may assume $k$ is the smallest number with that property. Then we can maximize the number of pairs $(C_i,C_j)$ such that $C_i\cap C_j=\emptyset$. We claim all such pairs are disjoint if $i\ne j$. Indeed, assume
$C_1\cap C_2\ne\emptyset$ by reordering, if necessary.
If $C_1\cap C_2$ is finite, we can replace each $C_i$ by $C_i\setminus C_1\cap C_2$ and increase the number of disjoint pairs. If $C_1\cap C_2$ is infinite, we can remove $C_1$ and replace $C_2$ by $C_1\cap C_2$.
Finally, we can pick an infinite countable subset $D_i$ of $C_i$ for each $i\leq k$. Pick bijections $f_i:D_1\to D_i$
and consider the trivial extension $\mathcal{U}$ of the family $U_x:=\{x,f_2(x),\ldots,f_k(x)\}$, $x\in D_1$. Notice that $D_1\subset st(D_i,\mathcal{U})$ for all $i\leq k$, a contradiction.
\end{proof}

\begin{Proposition}\label{LSOfMinOmega}
Suppose $X$ is an infinite set.
Let $\omega$ be defined as follows: $\omega(V)=0$ if and only if at least one coordinate of $V$ is finite.
The large scale structure $LS(\omega)$ induced by $\omega$ consists of all covers $\mathcal{U}$ of $X$ with the property $st(F,\mathcal{U})$ is finite for all finite subsets $F$ of $X$.
\end{Proposition}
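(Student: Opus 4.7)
The plan is to lean on the simplified test for membership in $LS(\omega)$ that is extracted in the proof of Proposition \ref{LSOfOmegaPropDef}: a cover $\mathcal{U}$ belongs to $LS(\omega)$ if and only if
$$\omega(st(C_1,\mathcal{U}),C_2,\ldots,C_k)\le \omega(C_1,\ldots,C_k)$$
for every vector $(C_1,\ldots,C_k)$ in $X$ (the opposite inequality is automatic from Lemma \ref{OmegaInequalityOne}). Since $\omega$ is symmetric, it is enough to check that inequality in the first coordinate.

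For the forward implication, assume $\mathcal{U}\in LS(\omega)$, and let $F\subset X$ be finite. Apply the inequality to the $1$-vector $(F)$: because $F$ is finite, $\omega(F)=0$ by the description of $\omega$, hence $\omega(st(F,\mathcal{U}))=0$, which by the same description forces $st(F,\mathcal{U})$ to be finite.

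For the backward implication, suppose $st(F,\mathcal{U})$ is finite for every finite $F\subset X$, and fix a vector $(C_1,\ldots,C_k)$. Because $\{0,\infty\}$ has only two values, the inequality is vacuous unless $\omega(C_1,\ldots,C_k)=0$, so assume it does vanish. Then by definition of $\omega$ some coordinate $C_i$ is finite; by symmetry we may reindex so that $i=1$. The hypothesis gives that $st(C_1,\mathcal{U})$ is finite, hence the tuple $(st(C_1,\mathcal{U}),C_2,\ldots,C_k)$ again has a finite coordinate, and $\omega(st(C_1,\mathcal{U}),C_2,\ldots,C_k)=0$ as needed.

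There is essentially no obstacle here: the only point to be careful about is that the symmetry step really does let us transfer the conclusion from an arbitrary finite coordinate to the first one, and that $\omega$ only sees finiteness of coordinates, so the argument collapses to the single $1$-vector test above.
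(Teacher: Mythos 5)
Your proof is correct and follows essentially the same route as the paper: the forward direction is verbatim the paper's argument (apply the defining property of $LS(\omega)$ to the $1$-vector $(F)$ to get $0=\omega(F)=\omega(st(F,\mathcal{U}))$), and your backward direction is the routine verification the paper dismisses as obvious, carried out correctly via the one-coordinate criterion from the proof of \ref{LSOfOmegaPropDef}.
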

\begin{proof}
Suppose $\mathcal{U}\in LS(\omega)$ and $F\subset X$ is finite.
Now, $0=\omega(F)=\omega(st(F,\mathcal{U}))$, so $st(F,\mathcal{U})$ is finite.

The remainder of the proof is obvious.
\end{proof}

\begin{Question}
Is $LS(\omega)$ normal if $\omega$ is the form defined in \ref{LSOfMinOmega}?
\end{Question}

\begin{Proposition}\label{FormInequality}
If $\lambda$ is a normal $T_1$ form on $X$, then $\omega(\mathcal{LS}(\lambda))\leq \lambda$.
\end{Proposition}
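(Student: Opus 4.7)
The inequality $\omega(\mathcal{LS}(\lambda))\leq \lambda$ is, for $\{0,\infty\}$-valued forms, equivalent to the implication $\lambda(V)=0 \Rightarrow \omega(\mathcal{LS}(\lambda))(V)=0$. The plan is to fix a $k$-vector $(C_1,\ldots,C_k)$ with $\lambda(C_1,\ldots,C_k)=0$ and unwind the two definitions in play: the large scale structure $\mathcal{LS}(\lambda)$ from Proposition \ref{LSOfOmegaPropDef} and the form $\omega(\mathcal{L})$ from Definition \ref{OmegaOnLSDefinition}. Concretely, I need to show that for every cover $\mathcal{V}\in\mathcal{LS}(\lambda)$ the intersection $B:=\bigcap_{i=1}^k st(C_i,\mathcal{V})$ is bounded in $\mathcal{LS}(\lambda)$.

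The decisive observation is that membership $\mathcal{V}\in\mathcal{LS}(\lambda)$ is set up precisely so that stars by $\mathcal{V}$ preserve $\lambda$ on every vector; in particular $\lambda(st(C_1,\mathcal{V}),\ldots,st(C_k,\mathcal{V}))=\lambda(C_1,\ldots,C_k)=0$. Since $B\subset st(C_i,\mathcal{V})$ for every $i$, Lemma \ref{OmegaInequalityOne} upgrades this to $\lambda(B,B,\ldots,B)\leq\lambda(st(C_1,\mathcal{V}),\ldots,st(C_k,\mathcal{V}))=0$. Axiom 3 applied $k-1$ times (each time peeling off one leading copy of $B$, using that the remaining constant vector still has a coordinate contained in $B$) then collapses the constant $k$-vector to $\lambda(B)$, so $\lambda(B)=0$, i.e.\ $B$ is $\lambda$-bounded.

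To close the argument I invoke the second clause of Proposition \ref{LSOfOmegaPropDef}, which states that every $\lambda$-bounded set is automatically bounded in $\mathcal{LS}(\lambda)$. Hence $B$ is bounded in $\mathcal{LS}(\lambda)$; since $\mathcal{V}\in\mathcal{LS}(\lambda)$ was arbitrary, Definition \ref{OmegaOnLSDefinition} yields $\omega(\mathcal{LS}(\lambda))(C_1,\ldots,C_k)=0$, as desired.

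I do not foresee a serious obstacle: the only mildly technical point is the collapse $\lambda(B,B,\ldots,B)=\lambda(B)$, which is immediate from Axiom 3 with $C=B$. It is worth noting that the plan does not seem to use either normality or the $T_1$ hypothesis on $\lambda$; presumably those assumptions are present in Proposition \ref{FormInequality} with an eye toward the companion equality $\omega(\mathcal{LS}(\lambda))=\lambda$ (as in the metrizable identification \ref{FormsForMetrizableStructures}) rather than this direction of the comparison.
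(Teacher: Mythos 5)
Your argument is correct and is essentially the paper's own proof: apply the defining property of $\mathcal{LS}(\lambda)$ to get $\lambda(st(C_1,\mathcal{U}),\ldots,st(C_k,\mathcal{U}))=0$, conclude that the intersection of the stars is $\mathcal{LS}(\lambda)$-bounded via \ref{LSOfOmegaPropDef}, and hence $\omega(\mathcal{LS}(\lambda))(C_1,\ldots,C_k)=0$. The only difference is that you spell out the step the paper compresses into ``in particular'' --- namely the monotonicity plus Axiom~3 collapse showing the intersection is $\lambda$-bounded --- which is a welcome clarification rather than a deviation.
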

\begin{proof}
See \ref{OmegaOnLSDefinition} for the definition of the form induced by a large scale structure. If $\lambda(C_1,\ldots,C_k)=0$, then for any
$\mathcal{U}\in \mathcal{LS}(\lambda)$ one has
$\lambda(st(C_1,\mathcal{U}),\ldots,st(C_k,\mathcal{U}))=0$.
In particular, $\bigcap\limits_{i=1}^k st(C_i,\mathcal{U})$ must be
$\mathcal{LS}(\lambda)$-bounded (see \ref{LSOfOmegaPropDef}), hence\\
$\omega(\mathcal{LS}(\lambda))(C_1,\ldots,C_k)=0$.
\end{proof}

\begin{Question}
Suppose $\lambda$ is a normal $T_1$ form on a set $X$. Does $\omega(\mathcal{LS}(\lambda))=\lambda$ hold?
\end{Question}

The answer to the above question is yes in case of forms induced by metrics:
\begin{Proposition}\label{LSOfOmegaIsLSForMetric}
Given a metric space $(X,d)$, the basic large scale form $\omega_l(X,d)$ satisfies $LS(\omega_l(X,d))=LS(X,d)$.
\end{Proposition}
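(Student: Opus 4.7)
The plan is to prove the two inclusions $LS(X,d)\subseteq LS(\omega_l(X,d))$ and $LS(\omega_l(X,d))\subseteq LS(X,d)$ separately, directly from the definitions in \ref{BasicLargeScaleFormOnMetric} and \ref{LSOfOmegaPropDef}. The easy inclusion is $LS(X,d)\subseteq LS(\omega_l(X,d))$. If a cover $\mathcal{U}$ is metrically uniformly bounded with mesh $R$, then for any $C\subset X$ one has $st(C,\mathcal{U})\subset B(C,R)$, and hence $B(st(C,\mathcal{U}),r)\subset B(C,R+r)$ for every $r>0$. Consequently, if $\omega_l(C_1,\ldots,C_k)=0$ then the intersections of $r$-neighborhoods of the stars lie inside intersections of $(R+r)$-neighborhoods of the $C_i$'s and are thus bounded; the reverse inequality $\omega_l(C_1,\ldots,C_k)\le\omega_l(st(C_1,\mathcal{U}),\ldots,st(C_k,\mathcal{U}))$ is automatic by \ref{OmegaInequalityOne}.

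For the harder inclusion I would argue by contradiction. Suppose $\mathcal{U}\in LS(\omega_l(X,d))$ but $\sup_{U\in\mathcal{U}}\diam(U)=\infty$; pick $U_n\in\mathcal{U}$ with points $x_n,y_n\in U_n$ satisfying $d(x_n,y_n)\ge n$. Since $d(x_n,y_n)\to\infty$, the sequences $\{x_n\}$ and $\{y_n\}$ cannot both be $d$-bounded, so relabel to assume $\{x_n\}$ is unbounded. In either subcase my candidate vector is $(C,D)$ with $C=\{x_{n_k}\}$ and $D=\{y_{n_k}\}$ for a suitable subsequence, and the second inequality to check is free: because $x_{n_k}\in C\cap U_{n_k}$ and $y_{n_k}\in D\cap U_{n_k}$, the intersection $st(C,\mathcal{U})\cap st(D,\mathcal{U})$ contains $\bigcup_k U_{n_k}$, whose diameter is $\ge\diam(U_{n_k})\to\infty$. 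Hence $\omega_l(st(C,\mathcal{U}),st(D,\mathcal{U}))=\infty$, so to contradict $\mathcal{U}\in LS(\omega_l)$ it suffices to establish $\omega_l(C,D)=0$.

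The real work is in showing $\omega_l(C,D)=0$, i.e.\ $B(C,r)\cap B(D,r)$ is bounded for every $r>0$. If $\{y_n\}$ happens to be bounded, one simply takes $C=\{x_n\}$ and $D=\{y_n\}$: then $B(D,r)$ is bounded, so the intersection is automatically bounded. Otherwise I would fix a basepoint $p$ and thin $\{n\}$ to a subsequence $\{n_k\}$ by choosing $n_k$ so that $\min\bigl(d(x_{n_k},p),d(y_{n_k},p)\bigr)$ exceeds, say, ten times the preceding maximum $\max_{j<k}\max\bigl(d(x_{n_j},p),d(y_{n_j},p)\bigr)$. Then any $z\in B(C,r)\cap B(D,r)$ yields indices $i,j$ with $d(x_{n_i},y_{n_j})<2r$; the case $i=j$ is ruled out for large $i$ by $d(x_{n_i},y_{n_i})\ge n_i$, and the case $i\ne j$ is ruled out once $\max(i,j)$ is large by the triangle inequality applied to the imposed gap at $p$. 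Only finitely many index pairs survive, so the intersection is bounded. The main obstacle is exactly this subsequence construction: one must control all cross-distances $d(x_{n_i},y_{n_j})$ even though one has no a priori control over where the sets $U_n$ sit, and the rapidly increasing distance-to-$p$ condition is what makes this feasible.
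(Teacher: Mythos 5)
Your strategy is the right one and matches the paper's: the inclusion $LS(X,d)\subseteq LS(\omega_l(X,d))$ via $st(C,\mathcal{U})\subset B(C,R)$ together with \ref{OmegaInequalityOne}, and, for the converse, the extraction from elements $U_n$ of unbounded diameter of a $2$-vector $(C,D)$ with $\omega_l(C,D)=0$ while $st(C,\mathcal{U})\cap st(D,\mathcal{U})\supset\bigcup_k U_{n_k}$ is unbounded. The one genuine gap is in your case analysis for the harder inclusion. In the subcase where both $\{x_n\}$ and $\{y_n\}$ are unbounded, your recursion requires at each stage an index $n$ with $\min\bigl(d(x_n,p),d(y_n,p)\bigr)$ exceeding a prescribed bound, i.e. it requires $\limsup_n\min\bigl(d(x_n,p),d(y_n,p)\bigr)=\infty$. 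Unboundedness of each sequence separately does not give this: take $y_n=p$ and $d(x_n,p)=n$ for $n$ odd, and $x_n=p$ and $d(y_n,p)=n$ for $n$ even; both sequences are unbounded, yet the minimum is identically $0$ and your thinning cannot even produce $n_2$. The repair is to replace the dichotomy ``$\{y_n\}$ bounded or not'' by ``$\limsup_n\min\bigl(d(x_n,p),d(y_n,p)\bigr)$ infinite or finite'': in the first case your construction goes through along a subsequence realizing the $\limsup$; in the second case some $M$ bounds the minimum for all $n$, so for each $n$ one of $x_n,y_n$ lies in $B(p,M)$, and by pigeonhole (using that the roles of $x_n$ and $y_n$ are symmetric) you may pass to a subsequence along which one of the two sequences is bounded, which is exactly your easy subcase. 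With that correction the argument is complete; your control of the cross-distances $d(x_{n_i},y_{n_j})$ via the rapidly increasing distance-to-$p$ condition is correct as written.

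It is worth noting that the paper's own proof is terser and in fact elides precisely the point you labored over: it asserts $\lambda(X_1,Y_1)=0$ for \emph{any} choice of $x_n,y_n\in U_n$ with $d(x_n,y_n)\to\infty$, which is false as stated (in $\mathbb{R}$ take $x_n=n$ and $y_n=2n$; then $X_1\cap Y_1$ is unbounded, so $\lambda(X_1,Y_1)=\infty$). Some subsequence extraction of the kind you perform is unavoidable, so your write-up, once the case analysis is fixed as above, is the more complete argument. The paper also omits the easy inclusion entirely; your treatment of it is fine.
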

\begin{proof}
Put $\lambda=\omega_l(X,d)$ (see \ref{BasicLargeScaleFormOnMetric}). Observe that $LS(\lambda)$ consists of all covers $\mathcal{U}$ of $X$ that have a finite upper bound on diameters.
Indeed, if $U_n\in \mathcal{U}$ and $diam(U_n)\to \infty$, then we can find points $x_n,y_n\in U_n$ such that $d(x_n,y_n)\to\infty$.
In that case $\lambda(X_1,Y_1)=0$, where $X_1=\{x_n\}_{n=1}^\infty$
and $Y_1=\{y_n\}_{n=1}^\infty$. However, $\lambda(st(X_1,\mathcal{U}),st(Y_1,\mathcal{U}))=\infty$.
\end{proof}

\begin{Observation}
The large scale structure $\mathcal{LS}$ described in \ref{OmegaOfMaxBDDStructure} has the property that 
$LS(\omega(\mathcal{LS}))\ne \mathcal{LS}$.
\end{Observation}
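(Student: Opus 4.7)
The plan is to verify strict inequality $LS(\omega(\mathcal{LS})) \supsetneq \mathcal{LS}$ by exhibiting a cover that lies in the former but not the latter. The comparison is set up by \ref{OmegaOfMaxBDDStructure} and \ref{LSOfMinOmega}: membership in $\mathcal{LS}$ requires the two uniform bounds (a) and (b), whereas membership in $LS(\omega(\mathcal{LS}))$ only requires $st(F,\mathcal{U})$ to be finite for every finite $F\subset X$. The mismatch in strength is obvious once stated, so the task reduces to producing a cover whose element sizes grow unboundedly yet whose stars on singletons stay finite.

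Without loss of generality I would identify $X$ with $\mathbb{N}$ and take
$$U_n:=\{n,n+1,\ldots,2n\}, \qquad \mathcal{U}:=\{U_n\}_{n\ge 1}.$$
First I would check that $\mathcal{U}$ covers $\mathbb{N}$ (note $n\in U_n$). Then I would compute $st(\{k\},\mathcal{U})$: the only indices $n$ with $k\in U_n$ are those satisfying $\lceil k/2\rceil\le n\le k$, so
$$st(\{k\},\mathcal{U})\subseteq \{\lceil k/2\rceil,\ldots,2k\},$$
which is finite. Consequently $st(F,\mathcal{U})$ is a finite union of finite sets for every finite $F\subset \mathbb{N}$, and \ref{LSOfMinOmega} places $\mathcal{U}$ in $LS(\omega(\mathcal{LS}))$.

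On the other hand $|U_n|=n+1$ is unbounded, so condition (a) of \ref{OmegaOfMaxBDDStructure} is violated, giving $\mathcal{U}\notin \mathcal{LS}$. The only subtle point in the whole argument is the design of $U_n$: one must arrange that a given $k$ participates in at most finitely many of the $U_n$, despite $|U_n|\to\infty$; the choice of consecutive-integer intervals accomplishes this because $k\in U_n$ constrains $n$ to a bounded window around $k$. No further obstacle is expected.
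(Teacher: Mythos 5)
Your proposal is correct and follows exactly the route the paper intends: the Observation is meant to be read off from juxtaposing \ref{OmegaOfMaxBDDStructure} and \ref{LSOfMinOmega}, and your cover by the intervals $\{n,\ldots,2n\}$ has finite stars on finite sets yet unbounded element size, so it lies in $LS(\omega(\mathcal{LS}))$ but not in $\mathcal{LS}$. The only caveat is that $X$ is an arbitrary infinite set, so ``identify $X$ with $\mathbb{N}$'' is not literally without loss of generality; fix this by choosing a countably infinite subset $\{y_1,y_2,\ldots\}\subset X$, setting $U_n:=\{y_n,\ldots,y_{2n}\}$, and taking the trivial extension by singletons (as the paper does in the proof of \ref{OmegaOfMaxBDDStructure}), which changes nothing in your star computation.
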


\begin{Problem}
Characterize normality of $LS(\omega)$ in terms of slowly oscillating functions.
\end{Problem}

\begin{Question}
Is $LS(\omega)$ normal if $\omega$ is normal?
\end{Question}

\begin{Question}
Is $\omega(\mathcal{LS})$ normal if $\mathcal{LS}$ is normal?
\end{Question}

\section{Higson corona of large scale forms}
Given a large scale normal formed set $(X,\omega)$ we show that its boundary $\partial(\omega)$ is an analog of Higson corona if it is Hausdorff. Notice that Roe \cite{Roe lectures} (p.29) Defines Higson coronas only for proper coarse spaces. We do not need that restriction.

Recall $\omega$ is a large scale form if $\omega(x,X)=0$ for each $x\in X$. Thus, the topology induced by $\omega$ on $X$ is discrete.

Consider the complex numbers $\mathbb{C}$ with the basic topological form.

\begin{Definition}\label{HigsonFunctionDef}
A \textbf{Higson function} on $(X,\omega)$ is a form-continuous function
$f:X\to \mathbb{C}$ with bounded image.
The $C^\ast$-algebra of Higson functions on $(X,\omega)$ is denoted by $C_h(\omega)$ (use \ref{ProductOfPerpFunctions} to see that Higson functions do form a $C^\ast$-algebra).
$C_0(\omega)$ is the subalgebra of $C_h(\omega)$ consisting of Higson functions vanishing at infinity, i.e. for each $\epsilon > 0$
the set $\{x\in X | |f(x)| > \epsilon\}$ is bounded.
\end{Definition}

\begin{Proposition}\label{HigsonCoronaProp}
Given a large scale normal formed set $(X,\omega)$, the $C^\ast$-algebra $C(\partial(\omega))$ of continuous functions
on $\partial(\omega)$
is isomorphic to $C_h(\omega)/C_0(\omega)$.
\end{Proposition}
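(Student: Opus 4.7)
The plan is to identify $C_h(\omega)$ with $C(X\cup\partial(\omega))$ via continuous extension, and then recognize $C_0(\omega)$ as the kernel of restriction to the closed subspace $\partial(\omega)$. First I would verify that the map $f\mapsto \tilde f$ is an isomorphism of $C^\ast$-algebras from $C_h(\omega)$ to $C(X\cup\partial(\omega))$. For well-definedness, given a Higson function $f$, one uses \ref{ContExtOfFormContThm} applied with target $Y$ a closed disk in $\mathbb{C}$ (so that by \ref{BoundaryOfLSTopInCompactCase} we have $Y\cup\partial(\omega_Y)=Y$), which yields a continuous extension $\tilde f:X\cup\partial(\omega)\to\mathbb{C}$. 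For surjectivity one shows that if $F:X\cup\partial(\omega)\to\mathbb{C}$ is continuous, then $f:=F|_X$ is form-continuous: if $\omega(C_1,C_2)=\infty$, any $\mathcal{Q}\in\partial(\omega)$ containing $C_1,C_2$ lies in $cl(C_1)\cap cl(C_2)$ (see \ref{ClosureCLemma}), so $F(\mathcal{Q})\in cl(f(C_1))\cap cl(f(C_2))$, forcing non-orthogonality of the images; injectivity follows from Hausdorffness of the target and density of $X$ in $X\cup\partial(\omega)$ (see \ref{MainTheoremOnNormalityAndCompactness}). That the bijection respects the $C^\ast$-structure is a consequence of \ref{ProductOfPerpFunctions} together with continuity of algebraic operations on $\mathbb{C}$.

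Next I would exhibit the restriction map $\rho:C(X\cup\partial(\omega))\to C(\partial(\omega))$. Since $\omega$ is large scale, the topology induced by $\omega$ on $X$ is discrete, so every point of $X$ is open in $X\cup\partial(\omega)$, making $\partial(\omega)$ closed. By \ref{MainTheoremOnNormalityAndCompactness} the space $X\cup\partial(\omega)$ is compact Hausdorff, hence normal, so Tietze's theorem gives surjectivity of $\rho$. Composing with the isomorphism of the first step, I obtain a surjective $\ast$-homomorphism $\Phi:C_h(\omega)\to C(\partial(\omega))$.

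The main task is then to identify $\ker\Phi=C_0(\omega)$. For the forward direction, suppose $f\in C_0(\omega)$ and $\mathcal{P}\in\partial(\omega)$. For every $\epsilon>0$ the set $B_\epsilon=\{|f|>\epsilon\}$ is $\omega$-bounded; since $X\in\mathcal{P}$ (by maximality, as $\omega(X)=\infty$ whenever $\mathcal{P}$ is nonempty) and $B_\epsilon\notin\mathcal{P}$, \ref{ObservationOnPointsAtBdOfForm} forces $X\setminus B_\epsilon\in\mathcal{P}$, whence by the formula of \ref{ContExtOfFormContThm} $\tilde f(\mathcal{P})\in cl(f(X\setminus B_\epsilon))\subset\{|z|\le\epsilon\}$; letting $\epsilon\to 0$ yields $\tilde f(\mathcal{P})=0$. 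Conversely, if $f\notin C_0(\omega)$ there is $\epsilon>0$ with $B_\epsilon$ $\omega$-unbounded, so the singleton family $\{B_\epsilon\}$ extends to some $\mathcal{P}\in\partial(\omega)$ containing $B_\epsilon$, and then $\tilde f(\mathcal{P})\in cl(f(B_\epsilon))\subset\{|z|\ge\epsilon\}\ne\{0\}$.

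The only step that needs genuine care is the argument that the restriction to $X$ of a continuous map on the compactification is form-continuous, because it is the point where the correspondence between orthogonality in $X$ and coincidence of boundary closures (via \ref{ClosureCLemma}) is fully used; the kernel computation is then essentially an ultrafilter-style exercise using normality of $\omega$ through the fact that bounded sets are never in boundary points. Once these pieces are assembled, the First Isomorphism Theorem for $C^\ast$-algebras yields the desired $C(\partial(\omega))\cong C_h(\omega)/C_0(\omega)$.
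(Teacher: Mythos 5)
Your proposal is correct and follows essentially the same route as the paper: identify $C_h(\omega)$ with $C(X\cup\partial(\omega))$ via the extension/restriction correspondence of \ref{ContExtOfFormContThm} and \ref{IffExtendsItIsFormContinuous}, use Tietze on the normal compact space $X\cup\partial(\omega)$ to get surjectivity onto $C(\partial(\omega))$, and identify the kernel with $C_0(\omega)$. The only cosmetic difference is that you compute the kernel from the explicit formula for $\tilde f(\mathcal{P})$, whereas the paper deduces it from large scale compactness (a function vanishing on $\partial(\omega)$ has $\{|f|>\epsilon\}$ bounded); both arguments are sound.
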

\begin{proof}
Given a continuous function $f:\partial(\omega)\to [-r,r]\times [-r,r]$ we can extend it over $X\cup\partial(\omega)$ due to topological normality of $X\cup\partial(\omega)$.
Any such extension is form-continuous by
\ref{IffExtendsItIsFormContinuous}. The difference $f_1-f_2$ of two extensions equals $0$ on $\partial(\omega)$, hence,
for each $\epsilon > 0$,
the set $\{x\in X | |f(x)| > \epsilon\}$ is bounded due to $X\cup\partial(\omega)$ being large scale compact.

Conversely, any form-continuous $f:X\to [-r,r]\times [-r,r]$ extends to a continuous function on $X\cup\partial(\omega)$ by
\ref{ContExtOfFormContThm}.
\end{proof}

\begin{Theorem}\label{AsdimVsDimOfHigsonCorona}
Suppose $(X,\mathcal{LS})$ is a large scale space of finite asymptotic dimension $asdim(X,\mathcal{LS})$. If $(X,\mathcal{LS})$
is metrizable and $\omega$ is the form induced by $\mathcal{LS}$, then $asdim(X,\mathcal{LS})=\dim(\partial(\omega))$.
\end{Theorem}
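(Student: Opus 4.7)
The plan is to establish the two inequalities $\dim(\partial(\omega))\leq asdim(X,\mathcal{LS})$ and $asdim(X,\mathcal{LS})\leq \dim(\partial(\omega))$ separately, adapting the classical Dranishnikov argument for proper metric spaces to the present formed-set framework. The bridge between the topology of $\partial(\omega)$ and the large scale structure on $X$ is provided by \ref{HigsonCoronaProp} (identifying $C(\partial(\omega))$ with $C_h(\omega)/C_0(\omega)$), \ref{CharOfLSInducedByNormalForms} (saying $\mathcal{U}\in LS(\omega)$ precisely when the coronas of $C$ and $st(C,\mathcal{U})$ coincide for every $C\subset X$), and \ref{ContExtOfFormContThm} (extending form-continuous maps to the compactification). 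Metrizability supplies, via \ref{LSOfOmegaIsLSForMetric}, that $\omega$ and $\mathcal{LS}$ determine each other, and, via \ref{LSFormOfMetrizableIsNormal}, that $\omega$ is normal, so the machinery of the previous sections applies.

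For $\dim(\partial(\omega))\leq n:=asdim(X,\mathcal{LS})$, I would take a finite open cover of $\partial(\omega)$ (finite by large scale compactness, see \ref{XCupBoundaryIsCompactHausdorff}), refine it inside $X\cup\partial(\omega)$ by basic opens $\{o(V_j)\}_{j\leq k}$ with $V_j$ open in $X$, and then apply $asdim(X,\mathcal{LS})\leq n$ at a sufficiently fine scale to produce a uniformly bounded $\mathcal{U}\in LS(\omega)$ of multiplicity at most $n+1$ whose stars sit inside the $V_j$'s. By \ref{CharOfLSInducedByNormalForms} the family $\{o(U):U\in\mathcal{U}\}$ then restricts to a cover of $\partial(\omega)$ refining the original one and having multiplicity $\leq n+1$, which is what is needed since $\partial(\omega)$ is compact Hausdorff.

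For $asdim(X,\mathcal{LS})\leq n:=\dim(\partial(\omega))$, I would show that every uniformly bounded $\mathcal{V}\in\mathcal{LS}$ admits a uniformly bounded refinement of multiplicity $\leq n+1$. The strategy is to manufacture from $\mathcal{V}$ a finite open cover of $\partial(\omega)$ using Higson partitions of unity (supplied by \ref{RealExtensionLemma}), refine it to a cover $\{W_i\}$ of multiplicity $\leq n+1$ using $\dim(\partial(\omega))\leq n$, extend each $W_i$ to a neighborhood in $X\cup\partial(\omega)$, intersect with $X$, and finally truncate the resulting subsets at the scale of $\mathcal{V}$ to obtain a uniformly bounded cover on $X$ of the required multiplicity.

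The main obstacle will be the second inequality. One must convert a topological multiplicity bound on the corona into a large scale multiplicity bound on $X$ without invoking properness, the standing hypothesis of the classical Dranishnikov proof. This forces a careful analysis of how $o(\cdot)$ interacts with closures (compare \ref{ClosureCLemma}) and stars (compare \ref{CharOfLSInducedByNormalForms}), so that the refinement produced topologically on $\partial(\omega)$ descends to $X$ without the extension-and-truncation step inflating the multiplicity. The normality of $\omega$ coming from \ref{LSFormOfMetrizableIsNormal}, together with the fact that metrizability pins down $\mathcal{LS}$ uniquely from $\omega$ (via \ref{FormsForMetrizableStructures} and \ref{LSOfOmegaIsLSForMetric}), is what makes this descent possible.
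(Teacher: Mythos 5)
Your first inequality ($\dim(\partial(\omega))\leq asdim(X,\mathcal{LS})$) is the easy direction and your cover-pushdown sketch via \ref{CharOfLSInducedByNormalForms} is plausible, but your plan for the hard direction $asdim(X,\mathcal{LS})\leq\dim(\partial(\omega))$ has a genuine gap, and it is exactly the step you flag as ``the main obstacle'' without resolving it. The traces $o(W_i)\cap X$ of a multiplicity-$(n+1)$ open cover of the corona are unbounded, cone-like sets; to produce a uniformly bounded coarsening of a given $\mathcal{V}$ (note: you need a \emph{coarsening} of multiplicity $\leq n+1$ that $\mathcal{V}$ refines, not a refinement --- every cover trivially admits a multiplicity-$1$ refinement by singletons) you must chop these traces into uniformly bounded pieces, and it is precisely this truncation that inflates multiplicity and has no known direct fix. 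A decisive symptom that the outline cannot work as written: nowhere does it use the hypothesis $asdim(X,\mathcal{LS})<\infty$. For proper metric spaces the inequality $asdim(X)\leq\dim(\nu X)$ without the finiteness hypothesis is a well-known open problem, so any argument that ignores that hypothesis is either incomplete or would settle that problem.

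The paper avoids cover manipulation entirely. It fixes a discrete metric inducing $\mathcal{LS}$ and invokes the characterization from \cite{DM} (valid only under finite asymptotic dimension, which is where that hypothesis enters): $asdim(X,d)$ is the smallest $k$ such that every slowly oscillating map $f:A\subset X\to S^k$ extends to a slowly oscillating map on $X$. It then translates both inequalities into extension problems on the compactification: a slowly oscillating $f:A\to S^n$ extends over $A\cup\partial(\omega_A)$ by \ref{ContExtOfFormContThm}, then over $X\cup\partial(\omega)$ using $\dim(\partial(\omega))=n$ and the Alexandroff-type characterization of covering dimension of compacta via maps to spheres, and discreteness of $X$ handles the leftover points; conversely, if $k<n$ a non-extendable map $C\to S^{k-1}$ on a closed subset of the corona restricts to a slowly oscillating map on $A\cap X$ that cannot extend slowly oscillatingly over $X$. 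If you want to salvage your approach, you should replace the cover-descent step by this sphere-extension machinery; as it stands, the second inequality is not proved.
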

\begin{proof}
Pick a discrete metric $d$ on $X$ inducing $\mathcal{LS}$.
Let $n=\dim(\partial(\omega))$. According to \cite{DM},
$asdim(X,d)$ is the smallest integer $k\ge -1$ such that
every slowly oscillating function $f:A\subset X\to S^k$ extends over $X$ to a slowly oscillating function. If $k=n$ it is so. Indeed,
$f$ extends over $A\cup \partial(\omega_A)$ by \ref{ContExtOfFormContThm}, then over $X\cup\partial(\omega_A)$
due to $\dim(\partial(\omega))=n$. Next, it extends over a closed naighborhood $D$ of $X\cup\partial(\omega_A)$. Since the complement of $int(D)$ is discrete, we can extend over the whole
$X\cup\partial(\omega)$. Notice the restriction of the last extension to $X$ is a slowly oscillating extension of $f$.

If $k < n$, then there is a continuous function $g:C\to S^{k-1}$, $C$ a closed subset of $\partial(\omega)$ that does not extend over
$\partial(\omega)$. We can extend $g$ over a closed neighborhood
$A$ of $C$ in $X\cup\partial(\omega)$. The restriction $h$ of that extension to $A\cap X$ is slowly oscillating but it does not extend over $X$. Inddeed, any such extension would generate an extension of $g$ over $\partial(\omega)$.
\end{proof}

\section{Compatible forms}
In practical applications, any large scale formed set $(X,\omega_l)$
comes with a topology $\mathcal{T}$ and we want to change the topology
on $X\cup\partial(\omega_l)$ so that the new topology is compact Hausdorff, is the same on $\partial(\omega_l)$, and equals $\mathcal{T}$ when restricted to $X$. One way to achieve it is to consider a small scale form $\omega_{ss}$ on $X$ inducing $\mathcal{T}$
and look at conditions when $X\cup\partial(\omega_l+\omega_{ss})$
is the space combining both topologies.

The following is obvious.
\begin{Proposition}
Given two forms $\omega_1$ and $\omega_2$ on a set $X$,
$\omega_1+\omega_{2}$ is a form.
\end{Proposition}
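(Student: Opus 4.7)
The plan is to simply verify the four defining properties of a basic multilinear form for the pointwise sum $(\omega_1+\omega_2)(V):=\omega_1(V)+\omega_2(V)$, where addition is taken in the semi-group $\{0,\infty\}$. Since $\{0,\infty\}$ is commutative and associative under this operation, all checks reduce to reshuffling summands. First I would note that the sum is well-defined as a symmetric function $Vect(X)\to\{0,\infty\}$: symmetry of $\omega_1+\omega_2$ in its coordinates follows coordinate-wise from the symmetry of $\omega_1$ and $\omega_2$.

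Next I would verify the three axioms. For the empty-set axiom, $(\omega_1+\omega_2)(\emptyset)=0+0=0$. For the distributive axiom, given a $k$-vector $V$ and $1$-vectors $C_1,C_2$:
\begin{align*}
(\omega_1+\omega_2)((C_1\cup C_2)\ast V)
&= \omega_1((C_1\cup C_2)\ast V)+\omega_2((C_1\cup C_2)\ast V)\\
&= \omega_1(C_1\ast V)+\omega_1(C_2\ast V)+\omega_2(C_1\ast V)+\omega_2(C_2\ast V)\\
&= (\omega_1+\omega_2)(C_1\ast V)+(\omega_1+\omega_2)(C_2\ast V),
\end{align*}
using only Axiom 1 for each $\omega_i$ and the commutativity/associativity of $+$ on $\{0,\infty\}$. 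For the absorption axiom, if some coordinate of $V$ is contained in $C$, then $\omega_i(C\ast V)=\omega_i(V)$ for $i=1,2$ by Axiom 3 applied to each form, so $(\omega_1+\omega_2)(C\ast V)=\omega_1(V)+\omega_2(V)=(\omega_1+\omega_2)(V)$.

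There is no real obstacle: the whole verification is mechanical because the operations on forms are pointwise and the target semi-group is commutative and idempotent-free enough for the needed manipulations. The only thing worth flagging is that one must use the fact that $0+0=0$ and $\infty+a=\infty$ for the empty-set axiom and for the distributive axiom respectively; both are immediate from the definition of $+$ on $\{0,\infty\}$. Hence $\omega_1+\omega_2$ satisfies Axioms 1--3, and is therefore a basic multilinear form on $X$.
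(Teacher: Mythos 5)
Your verification is correct and is exactly the intended argument: the paper states this proposition without proof (labelling it ``obvious''), and your mechanical check of symmetry and Axioms 1--3 for the pointwise sum, using only commutativity and associativity of $+$ on $\{0,\infty\}$, is precisely what that remark elides. The only nitpick is your aside that the semigroup is ``idempotent-free enough'' --- in fact every element of $\{0,\infty\}$ is idempotent --- but nothing in your argument depends on that phrase.
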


\begin{Proposition}
Given two forms $\omega_1$ and $\omega_2$ on a set $X$,
the topology $T(\omega)$ induced by
$\omega=\omega_1+\omega_{2}$ equals $T(\omega_2)$ if $\omega_1$ is large scale.
\end{Proposition}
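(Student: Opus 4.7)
The plan is to unwind the definition of the induced topology and show that, under the large-scale hypothesis on $\omega_1$, the $\omega_1$-summand in $\omega(x, X\setminus U)$ automatically vanishes, so only the $\omega_2$-summand matters.

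First I would recall that a set $U\subset X$ belongs to $\mathcal{T}(\omega)$ exactly when $\omega(x,X\setminus U)=0$ for every $x\in U$, and similarly for $\mathcal{T}(\omega_2)$. Since values of our forms lie in the semigroup $\{0,\infty\}$ where $0+0=0$ and $a+\infty=\infty$ for any $a$, we have the pointwise identity
\[
\omega(x,X\setminus U)=\omega_1(x,X\setminus U)+\omega_2(x,X\setminus U),
\]
and this sum equals $0$ if and only if both summands are $0$.

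Next, the large-scale hypothesis says $\omega_1(x,X)=0$ for every $x\in X$. Applying Lemma \ref{OmegaInequalityOne} to the $2$-vectors $(x,X\setminus U)$ and $(x,X)$, monotonicity gives $\omega_1(x,X\setminus U)\leq \omega_1(x,X)=0$, hence $\omega_1(x,X\setminus U)=0$ for every $x\in U$ (in fact for every $x\in X$). Consequently $\omega(x,X\setminus U)=\omega_2(x,X\setminus U)$ for all $x\in U$, which immediately yields $U\in\mathcal{T}(\omega)\iff U\in\mathcal{T}(\omega_2)$, i.e. $\mathcal{T}(\omega)=\mathcal{T}(\omega_2)$.

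There is no real obstacle here; the entire argument is a two-line computation in $\{0,\infty\}$ combined with the monotonicity lemma. The only subtlety worth flagging is that one uses the large-scale hypothesis in the strong form $\omega_1(x,X)=0$ (not merely $\omega_1(\{x\})=0$), which is precisely what is needed to kill $\omega_1(x,X\setminus U)$ uniformly in $U$.
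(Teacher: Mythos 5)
Your argument is correct and is essentially the paper's own proof: both reduce $\omega(x,X\setminus U)=0$ to $\omega_2(x,X\setminus U)=0$ by observing that the large-scale hypothesis $\omega_1(x,X)=0$ forces $\omega_1(x,X\setminus U)=0$ via monotonicity. You simply spell out the appeal to Lemma \ref{OmegaInequalityOne} that the paper leaves implicit.
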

\begin{proof}
Since $\omega_1(x,X)=0$ for all $x\in X$,
$\omega(x,X\setminus U)=0$ is equivalent to $\omega_2(x,X\setminus U)=0$. Thus, $T(\omega)=T(\omega_2)$.
\end{proof}

\begin{Definition}\label{CompatibilityDef}
Suppose $(X,\omega_l)$ is a large scale normal formed set
and $(X,\omega_{ss})$ is the small scale normal formed set
induced by a normal topology $\mathcal{T}$ on $X$.
$\omega_l$ and $\omega_{ss}$ are \textbf{compatible} if the following conditions are satisfied:\\
1. $\omega:=\omega_l+\omega_{ss}$ is normal and $T_1$.\\
2. The identity function $i:X\to X$ induces
a continuous extension $\tilde i:X\cup\partial(\omega_l)\to X\cup\partial(\omega_l+\omega_{ss})$ that is a homeomorphism from
$\partial(\omega_l)$ onto $\partial(\omega_l+\omega_{ss})\setminus X$.
\end{Definition}

\begin{Observation}
Compatibility of $\omega_l$ and $\omega_{ss}$ implies that
$(X,\mathcal{T})$ is locally compact as it is an open subset of 
a compact Hausdorff space $\partial(\omega_l+\omega_{ss})$.
\end{Observation}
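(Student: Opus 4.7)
The plan is to use the two compatibility conditions to verify two things about $Y := X \cup \partial(\omega_l + \omega_{ss})$: that it is compact Hausdorff, and that $X$ sits inside it as an open subset. Once these are in place, local compactness of $(X,\mathcal{T})$ is immediate from the classical fact that an open subset of a compact Hausdorff space is locally compact Hausdorff.

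First I would show $Y$ is compact Hausdorff. Set $\omega := \omega_l + \omega_{ss}$. Compatibility condition (1) gives that $\omega$ is normal and $T_1$, so Theorem \ref{MainTheoremOnNormalityAndCompactness} delivers $Y$ as a large scale compact Hausdorff space with respect to $\mathcal{B}(\omega)$. Because $\omega_{ss}$ is small scale, $\mathcal{B}(\omega_{ss}) = \{\emptyset\}$; a set $C$ with $\omega(C) = \omega_l(C) + \omega_{ss}(C) = 0$ must have $\omega_{ss}(C) = 0$, hence $\mathcal{B}(\omega) \subseteq \mathcal{B}(\omega_{ss}) = \{\emptyset\}$. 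Large scale compactness with trivial bornology is ordinary compactness, so $Y$ is compact Hausdorff.

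Next I would verify that $X$ is open in $Y$. Since $\omega_l$ is large scale, each singleton $\{x\}$ belongs to $\mathcal{B}(\omega_l)$, so by \ref{BisOpenClosed} each $\{x\}$ is open-closed in $X \cup \partial(\omega_l)$; hence $X$ is open and $\partial(\omega_l)$ is closed there. Compatibility condition (2) supplies the continuous extension $\tilde i: X \cup \partial(\omega_l) \to Y$ that restricts to a homeomorphism from $\partial(\omega_l)$ onto $Y \setminus X$. The aim is to transfer the closedness of $\partial(\omega_l)$ across $\tilde i$ to closedness of $Y \setminus X$ in $Y$; Hausdorffness of $Y$ then allows each $x \in X$ to be separated from the closed set $Y \setminus X$ by disjoint opens, producing an open neighborhood of $x$ contained in $X$.

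Finally, for local compactness, given $x \in X$ I would use regularity of the compact Hausdorff $Y$ (Proposition \ref{NormalityOfLargeScaleCompactSpaces}) to find an open $U$ in $Y$ with $x \in U \subset cl(U) \subset X$; then $cl(U)$ is closed in compact $Y$, hence compact, so $U$ is a relatively compact open neighborhood of $x$ in the subspace topology from $Y$, which coincides with $\mathcal{T} = \mathcal{T}(\omega_{ss}) = \mathcal{T}(\omega)$ by the earlier observation that a large scale summand does not affect the induced topology. The main obstacle is the middle step: condition (2) guarantees a homeomorphism $\partial(\omega_l) \cong Y \setminus X$ as topological spaces, but closedness of $Y \setminus X$ inside $Y$ is a statement about the embedding, not just the image. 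I expect the cleanest resolution is to argue directly that for each $x \in X$ one can exhibit a neighborhood $o(C)$, with $C$ an $\omega$-open set containing $x$, whose intersection with $Y \setminus X$ is empty, by exploiting both the large scale character of $\omega_l$ (making $\{x\}$ bounded and hence isolable) and the small scale character of $\omega_{ss}$ (supplying enough separating zero-sets inside the original topology $\mathcal{T}$).
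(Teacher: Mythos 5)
Your overall strategy is exactly the paper's (the Observation carries no separate proof beyond the parenthetical "it is an open subset of a compact Hausdorff space"), and your treatment of compactness and Hausdorffness of $Y:=X\cup\partial(\omega_l+\omega_{ss})$ is correct: condition (1) of \ref{CompatibilityDef} plus \ref{MainTheoremOnNormalityAndCompactness} gives large scale compactness and Hausdorffness, and since $\omega_{ss}$ is small scale the bornology $\mathcal{B}(\omega_l+\omega_{ss})$ is trivial, so this is genuine compactness. The identification of the subspace topology on $X$ with $\mathcal{T}$ via $o(U)\cap X=U$ and $\mathcal{T}(\omega_l+\omega_{ss})=\mathcal{T}(\omega_{ss})=\mathcal{T}$ is also fine.

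The gap is the step you yourself flag as "the main obstacle": you never actually prove that $X$ is open in $Y$, and the resolution you sketch (finding, for each $x$, a basic neighborhood $o(C)$ missing $Y\setminus X$) is not clearly available from Definition \ref{CompatibilityDef} alone --- producing such a $C$ essentially requires an $\omega_l$-bounded open neighborhood of $x$, which is a hypothesis of \ref{TopologicalCompatibilityCriterion} but not part of compatibility itself. The worry you raise, that condition (2) only gives a homeomorphism onto the image and "closedness is a statement about the embedding, not just the image," dissolves once you notice that $\partial(\omega_l)$ is compact: since $\omega_l$ is large scale, every singleton is $\omega_l$-bounded, so $o(\{x\})=\{x\}$ and $X$ is open and discrete in the large scale compact Hausdorff space $X\cup\partial(\omega_l)$; covering $\partial(\omega_l)$ by open sets, adjoining the singletons of $X$, and applying large scale compactness yields a finite subcover of $\partial(\omega_l)$. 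A continuous image of a compact space in a Hausdorff space is compact, hence closed, so $Y\setminus X=\tilde i(\partial(\omega_l))$ is closed and $X$ is open in $Y$. With that inserted, your final step (regularity of the compact Hausdorff $Y$ giving $x\in U\subset cl(U)\subset X$ with $cl(U)$ compact) completes the argument.
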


\begin{Theorem}\label{TopologicalNormalityCriterion}
Suppose $(X,\omega_l)$ is a large scale normal formed set
and $(X,\omega_{ss})$ is the small scale normal formed set
induced by a normal topology $\mathcal{T}$ on $X$.
$\omega:=\omega_l+\omega_{ss}$ is normal and $T_1$ if the following conditions are satisfied:\\
1. For every $\omega_l$-bounded
subset $B$ of $X$ there is an $\omega_{ss}$-open subset $U$ containing $B$ and being $\omega_l$-bounded.\\
2. $\omega_l(C_1,\ldots,C_k)=0$ 
implies $\omega_l(cl(C_1),\ldots,C_k)=0$, where the closure is taken with respect to $\mathcal{T}$.
\end{Theorem}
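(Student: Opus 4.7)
The plan is to dispatch $T_1$ immediately and then prove normality by simultaneously exploiting the normality of $\omega_l$, of $\omega_{ss}$ (i.e., of $\mathcal{T}$), and condition 1 to reconcile two a priori incompatible decompositions of $X$.

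For $T_1$: since $\omega_l$ is large scale, $\omega_l(x,y)\le\omega_l(x,X)=0$ for every $x,y\in X$, and $\omega_{ss}(x,y)=0$ for $x\ne y$ because $\omega_{ss}$ is the basic topological form of the $T_1$ topology $\mathcal{T}$. Hence $\omega(x,y)=0$.

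For normality, suppose $\omega(C_1,\ldots,C_k)=0$, equivalently $\omega_l(C_1,\ldots,C_k)=0$ and $\bigcap_i cl_\mathcal{T}(C_i)=\emptyset$. Condition 2, iterated through all coordinates by symmetry of $\omega_l$, together with \ref{OmegaOnClosuresGeneralCase} for $\omega_{ss}$, lets us replace each $C_i$ by its $\mathcal{T}$-closure; so WLOG each $C_i$ is $\mathcal{T}$-closed. Normality of $\omega_l$ plus one more use of condition 2 yields $\mathcal{T}$-closed $E_1,\ldots,E_k$ covering $X$ with $\omega_l(C_i,E_i)=0$; by \ref{OmegaInequalityOne}, each $C_i\cap E_i$ is $\omega_l$-bounded, and condition 1 then gives a $\mathcal{T}$-open, $\omega_l$-bounded $U_i\supseteq C_i\cap E_i$. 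The standard finite shrinking lemma in normal $\mathcal{T}$, applied to the open cover $\{X\setminus C_i\}$, produces $\mathcal{T}$-closed $F_1,\ldots,F_k$ covering $X$ with $F_i\cap C_i=\emptyset$, so $\omega_{ss}(C_i,F_i)=0$.

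Now partition $X=\bigcup_{i,j}S_{ij}$ with $S_{ij}:=E_i\cap F_j$, and for $i\ne j$ split $S_{ij}=S_{ij}^{n}\cup S_{ij}^{f}$ via $S_{ij}^{n}:=S_{ij}\cap U_i$ and $S_{ij}^{f}:=S_{ij}\setminus U_i$. Define
$$D_\ell := S_{\ell\ell}\,\cup\,\bigcup_{j\ne\ell} S_{\ell j}^{f}\,\cup\,\bigcup_{i\ne\ell} S_{i\ell}^{n}.$$
Then $\bigcup_\ell D_\ell=X$: each $x$ lies in some $S_{ij}$; if $i=j$ then $x\in D_i$, otherwise $x\in D_j$ when $x\in U_i$ and $x\in D_i$ when $x\notin U_i$. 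The verification of $\omega(C_\ell,D_\ell)=0$ is piecewise: the $\omega_l$-values on $S_{\ell\ell}$ and on each $S_{\ell j}^{f}$ are majorized by $\omega_l(C_\ell,E_\ell)=0$, and each $S_{i\ell}^{n}\subseteq U_i$ is $\omega_l$-bounded; the $\omega_{ss}$-values on $S_{\ell\ell}$ and $S_{i\ell}^{n}$ are majorized by $\omega_{ss}(C_\ell,F_\ell)=0$, while $S_{\ell j}^{f}$ is $\mathcal{T}$-closed and disjoint from $C_\ell$ (because $C_\ell\cap S_{\ell j}\subseteq C_\ell\cap E_\ell\subseteq U_\ell$), so $\omega_{ss}(C_\ell,S_{\ell j}^{f})=0$.

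The main obstacle is precisely the combination step above: the covers $\{E_i\}$ and $\{F_i\}$ coming from normality of $\omega_l$ and $\omega_{ss}$ have no reason to be compatible, and the naive intersection $E_i\cap F_i$ need not cover $X$. Condition 1 is exactly the tool that resolves this, because it widens the automatically $\omega_l$-bounded diagonal overlap $C_i\cap E_i$ into a $\mathcal{T}$-open and still $\omega_l$-bounded $U_i$. With $U_i$ available, the troublesome off-diagonal cells $S_{ij}$ ($i\ne j$) can be cut along $U_i$: the $\mathcal{T}$-near piece is $\omega_l$-bounded and can be rerouted to index $j$ with no $\omega_l$-cost, while the $\mathcal{T}$-far piece is $\mathcal{T}$-separated from $C_i$ for free and stays at index $i$.
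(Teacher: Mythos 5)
Your proof is correct, and it shares the paper's skeleton: pass to $\mathcal{T}$-closures via condition 2, apply normality of $\omega_l$ to get a cover with $\omega_l(C_i,\cdot)=0$, fatten the automatically $\omega_l$-bounded overlaps into $\mathcal{T}$-open $\omega_l$-bounded sets $U_i$ via condition 1, and invoke topological normality of $\mathcal{T}$. The combination step, however, is genuinely different. The paper applies the closed shrinking (\ref{ZeroSetsLemma}) only inside the bounded set $Y=\bigcup cl(U_i)$, producing closed sets $E_i\subset Y$ with $E_i\cap C_i=\emptyset$ covering $Y$, and then takes $D'_i:=D_i\cup E_i$ where $\{D_i\}$ is the cover coming from normality of $\omega_l$; you instead shrink the open cover $\{X\setminus C_i\}$ of all of $X$ to a closed cover $\{F_j\}$ and refine by the cells $E_i\cap F_j$ (in your notation $\{E_i\}$ is the $\omega_l$-cover), cutting each off-diagonal cell along $U_i$ into a near piece, which is $\omega_l$-bounded and rerouted, and a far piece, which is closed and disjoint from $C_\ell$ because $C_\ell\cap E_\ell\subset U_\ell$. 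What your route buys is a fully checkable $\omega_{ss}$-verification for every piece of $D_\ell$; the paper's concluding claim that $\omega_{ss}(C_i,D_i\cup E_i)=0$ is not immediate as written, since $cl(D_i)$ may still accumulate on $C_i$ even though $C_i\cap D_i\subset U_i$, and your near/far splitting of the cells is exactly the device that removes this difficulty, at the cost of somewhat heavier bookkeeping.
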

\begin{proof}
Suppose $x\ne y\in X$. Since $\omega_l$ is large scale, both $x$ and $y$ are $\omega_l$-bounded, so $\omega_l(x,y)=0$.
Since $\mathcal{T}$ is a normal topology, $\omega_{ss}(x,y)=0$
resulting in $\omega(x,y)=0$. Thus, $\omega$ is $T_1$.

Suppose $\omega(C_1,\ldots,C_k)=0$. Replacing each $C_i$ by $cl(C_i)$ we may assume each $C_i$ is $\mathcal{T}$-closed.
Now, $\omega_l(C_1,\ldots,C_k)=0$ and $\bigcap\limits_{i=1}^k C_i=\emptyset$. Due to normality of $\omega_l$ there exist
sets $D_1,\ldots, D_k$ forming a cover of $X$ such that
$\omega_l(C_i,D_i)=0$ for each $i\ge 1$. Pick a $\mathcal{T}$-open set $U_i$
containing $C_i\cap D_i$ such that $cl(U_i)$ is $\omega_l$-bounded. Let $Y:=\bigcup\limits_{i=1}^k cl(U_i)$.
It is a normal space, so (see \ref{ZeroSetsLemma})
there exist $\mathcal{T}$-closed sets $E_i\subset Y$ such that $E_i\cap C_i=\emptyset$ for each $i\leq k$, $Y=\bigcup\limits_{i=1}^k E_i$,
and $\bigcap\limits_{i=1}^k E_i=\emptyset$.
Let $D'_i:=D_i\cup E_i$ for each $i\leq k$. Observe that
$X=\bigcup\limits_{i=1}^k D'_i$, $\omega_l(C_i,D'_i)=0$
and $\omega_{ss}(C_i,D'_i)=0$ for each $i\leq k$.
Therefore $\omega(C_i,D'_i)=0$ for each $i\leq k$ and $\omega_l+\omega_{ss}$ is normal.
\end{proof}

\begin{Theorem}\label{TopologicalCompatibilityCriterion}
Suppose $(X,\omega_l)$ is a large scale normal formed set, $(X,\omega_{ss})$ is a small scale normal formed set
induced by a normal topology $\mathcal{T}$ on $X$, and each $\omega_l$-bounded set is pre-compact in $\mathcal{T}$.
$\omega_l$ and $\omega_{ss}$ are compatible if the following conditions are satisfied:\\
1. For every $\omega_l$-bounded
subset $B$ of $X$ there is an $\omega_{ss}$-open subset $U$ containing $B$ and being $\omega_l$-bounded.\\
2. $\omega_l(C_1,\ldots,C_k)=0$ 
implies $\omega_l(cl(C_1),\ldots,C_k)=0$, where the closure is taken with respect to $\mathcal{T}$.
\end{Theorem}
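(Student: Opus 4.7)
My plan is to verify the two requirements of Definition \ref{CompatibilityDef} in order. The first---normality and $T_1$ of $\omega := \omega_l+\omega_{ss}$---follows immediately from Theorem \ref{TopologicalNormalityCriterion}, whose hypotheses are precisely (1) and (2). For the second, since $\omega\geq\omega_l$, the identity $i\colon (X,\omega_l)\to(X,\omega)$ is form-continuous, so Theorem \ref{ContExtOfFormContThm} supplies a continuous extension $\tilde i\colon X\cup\partial(\omega_l)\to X\cup\partial(\omega)$ fixing $X$ pointwise. What remains is to show that $\tilde i$ sends $\partial(\omega_l)$ bijectively onto $\partial(\omega)\setminus X$.

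First I would show $\tilde i(\partial(\omega_l))\cap X=\emptyset$. Suppose $\tilde i(\mathcal{P})=y\in X$. By Theorem \ref{ContExtOfFormContThm}(2), $y$ lies in the closure of every $D\in\mathcal{P}$ inside $X\cup\partial(\omega)$; since the topology induced by $\omega$ on $X$ coincides with $\mathcal{T}$ (the $\omega_l$-contribution is trivial as $\omega_l$ is large scale), this forces $y\in cl(D)$ in $(X,\mathcal{T})$ for every $D\in\mathcal{P}$. Using condition~(1) I pick an $\omega_{ss}$-open $\omega_l$-bounded neighbourhood $U\ni y$. Since $X\in\mathcal{P}$ and $X=U\cup(X\setminus U)$, the $\omega_l$-analogue of Observation \ref{ObservationOnPointsAtBdOfForm} gives $U\in\mathcal{P}$ or $X\setminus U\in\mathcal{P}$: the first is impossible because members of $\mathcal{P}$ are $\omega_l$-unbounded, and the second contradicts $y\in cl(X\setminus U)$ since $U$ is a $\mathcal{T}$-open neighbourhood of $y$ disjoint from $X\setminus U$.

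The main step---injectivity of $\tilde i$ on $\partial(\omega_l)$---is where I expect the principal difficulty. Given $\mathcal{P}_1\neq\mathcal{P}_2$, I pick $D\in\mathcal{P}_1\setminus\mathcal{P}_2$ and, using condition~(2), replace $D$ by $cl(D)$, still in $\mathcal{P}_1$ and still outside $\mathcal{P}_2$. Choose $E_1,\ldots,E_k\in\mathcal{P}_2$ with $\omega_l(D,E_1,\ldots,E_k)=0$ and, by normality of $\omega_l$, obtain a cover $D_0,D_1,\ldots,D_k$ of $X$ with $\omega_l(D,D_0)=0$ and $\omega_l(E_i,D_i)=0$ for $i\geq 1$. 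Applying the union axiom to $X=\bigcup_{i=0}^{k}D_i\in\mathcal{P}_2$, some $D_i$ belongs to $\mathcal{P}_2$; it must be $D_0$, since $D_i\in\mathcal{P}_2$ for $i\geq 1$ together with $E_i\in\mathcal{P}_2$ would contradict $\omega_l(E_i,D_i)=0$. A further use of condition~(2) gives $\omega_l(D,cl(D_0))=0$, so by Axiom~3 the closed set $K:=D\cap cl(D_0)$ is $\omega_l$-bounded, hence compact by the pre-compactness hypothesis. Covering $K$ by $\omega_{ss}$-open $\omega_l$-bounded sets (condition~(1)) and extracting a finite subcover produces an $\omega_{ss}$-open $\omega_l$-bounded $U\supset K$. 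Then $D_0\setminus U\in\mathcal{P}_2$ (the $\omega_l$-bounded piece $D_0\cap U$ cannot lie in $\mathcal{P}_2$), and one computes $\omega_l(D,D_0\setminus U)\leq\omega_l(D,D_0)=0$ while $cl(D_0\setminus U)\subset X\setminus U$ yields $D\cap cl(D_0\setminus U)\subset K\setminus U=\emptyset$, i.e., $\omega_{ss}(D,D_0\setminus U)=0$. Therefore $\omega(D,D_0\setminus U)=0$, giving $D\notin\tilde i(\mathcal{P}_2)$ while $D\in\mathcal{P}_1\subset\tilde i(\mathcal{P}_1)$.

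Finally, $\tilde i$ is continuous from the compact space $X\cup\partial(\omega_l)$ into the Hausdorff space $X\cup\partial(\omega)$, hence closed; its image is closed and contains the dense set $X$, so it equals $X\cup\partial(\omega)$. Combined with the previous steps, $\tilde i$ is a continuous bijection from a compact space onto a Hausdorff one, and therefore a homeomorphism that maps $\partial(\omega_l)$ onto $\partial(\omega)\setminus X$. The crux is the injectivity argument: condition~(2) by itself delivers only $\omega_l$-separation of $D$ from some $D_0\in\mathcal{P}_2$, and it is pre-compactness in tandem with condition~(1) that lets one absorb the residual topological overlap $K$ into a small $\omega_{ss}$-open $\omega_l$-bounded set, thereby converting pure $\omega_l$-separation into joint $\omega$-separation.
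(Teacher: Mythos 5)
Your treatment of normality and $T_1$ (via Theorem \ref{TopologicalNormalityCriterion}) and your injectivity argument are sound and essentially coincide with the paper's. The paper shows in one stroke that $\tilde i(\mathcal{P})=\mathcal{P}$ for every $\mathcal{P}\in\partial(\omega_l)$: any $C\in\tilde i(\mathcal{P})\setminus\mathcal{P}$ admits $E\in\mathcal{P}$ with $\omega_l(C,E)=0$, so $cl(C)\cap cl(E)$ is $\omega_l$-bounded, sits inside an $\omega_l$-bounded open $U$, and then $E\setminus U\in\mathcal{P}$ satisfies $\omega(C,E\setminus U)=0$ --- a contradiction. This gives injectivity and the fact that no $\tilde i(\mathcal{P})$ is a principal ultrafilter at a point of $X$ simultaneously; your version runs the same absorption trick contrapositively and is correct.

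The gap is in the surjectivity step. $X\cup\partial(\omega_l)$ is \emph{not} compact: since $\omega_l$ is large scale, $\mathcal{B}(\omega_l)$ contains all singletons and typically many infinite sets, each of which is open-closed and discrete in $X\cup\partial(\omega_l)$ (for $X=\mathbb{R}$ with the metric large scale form, $[0,1]$ is an infinite discrete open-closed subset). The space is only large scale compact, and Proposition \ref{ClosedMapProp} cannot be invoked either, because it requires $\tilde i(\mathcal{B}(\omega_l))\subset\mathcal{B}(\omega)$ while $\mathcal{B}(\omega)=\{\emptyset\}$ (every nonempty set has infinite $\omega_{ss}$-value). So ``continuous from a compact space into a Hausdorff space, hence closed, hence the image contains the closure of the dense set $X$'' does not go through; note moreover that ``the image is closed'' is literally equivalent to the surjectivity you are trying to prove, since $X$ is dense. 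The paper proves surjectivity directly: if some $\mathcal{Q}\in\partial(\omega)\setminus X$ contained $C_1,\ldots,C_k$ with $\omega_l(C_1,\ldots,C_k)=0$, then $B:=\bigcap_{i=1}^k cl(C_i)$ is $\omega_l$-bounded, hence compact by the pre-compactness hypothesis; compactness and Hausdorffness of $X\cup\partial(\omega)$ give a neighborhood $o(U)$ of $\mathcal{Q}$ whose closure misses $B$, whence $\omega_l(U,C_1,\ldots,C_k)=0$ and $\omega_{ss}(U,C_1,\ldots,C_k)=0$, contradicting $U\in\mathcal{Q}$. Thus every $\mathcal{Q}\in\partial(\omega)\setminus X$ is already an $\omega_l$-admissible family and lies in the image of $\tilde i$. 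You need an argument of this kind to close the proof. Finally, your last sentence overstates the conclusion: $\tilde i$ is not a global homeomorphism, since the topology on $X$ inside $X\cup\partial(\omega_l)$ is discrete while inside $X\cup\partial(\omega)$ it is $\mathcal{T}$; compatibility only asks for a homeomorphism of $\partial(\omega_l)$ onto $\partial(\omega)\setminus X$, which follows from the compactness of $\partial(\omega_l)$ alone.
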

\begin{proof}
Put $\omega:= \partial(\omega_l+\omega_{ss})$.
Since $i:(X,\omega_l)\to (X,\omega)$ is form-continuous, it extends to a continuous map $\tilde i:\tilde i:X\cup\partial(\omega_l)\to X\cup\partial(\omega)$. Given $\mathcal{P}\in \partial(\omega_l)$,
$\tilde i(\mathcal{P})$ consists of all $C\subset X$ such that $\omega(C,D)=\infty$ for all $D\in \mathcal{P}$ (see \ref{ContExtOfFormContThm}).
Suppose $C\in \tilde i(\mathcal{P})\setminus \mathcal{P}$. 
That means existence of $E\in \mathcal{P}$ such that $\omega_l(C,E)=0$.
Hence, $cl(C)\cap cl(E)$ is $\omega_l$-bounded and is contained in an $\omega_l$-bounded $U\in \mathcal{T}$. Now, $E\setminus U\in P$, $\omega_{ss}(C,E\setminus U)=0$ and $\omega_l(C,E\setminus U)=0$ resulting in $\omega(C,E\setminus U)=0$, a contradiction.

So far we have established $\tilde i$ is an embedding on $\partial(\omega_l)$ and the remaining task is to show $\tilde i(\partial(\omega_l))=\partial(\omega)\setminus X$.
Suppose $\mathcal{Q}\in (\partial(\omega)\setminus X)\setminus \partial(\omega_l)$. That means existence of $C_1,\ldots,C_k\in \mathcal{Q}$
satisfying $\omega_l(C_1,\ldots,C_k)=0$. Notice $B:=\bigcap\limits_{i=1}^k cl(C_i)$ is compact in $\mathcal{T}$ as it is $\omega_l$-bounded. Since $X\cup\partial(\omega)$ is compact Hausdorff, there is a neighborhood $o(U)$ of $\mathcal{Q}$ whose closure is disjoint with $B$. Now,
$\omega_l(U,C_1,\ldots,C_k)=0$ and $\omega_{ss}(U,C_1,\ldots,C_k)=0$, so $\omega(U,C_1,\ldots,C_k)=0$ which contradicts
$U\in \mathcal{Q}$.
\end{proof}

\begin{Corollary}
If $(X,d)$ is a proper metric space, then $X\cup\partial(\omega)$ is the Higson compactification of $X$, where $\omega$ is the sum of the large scale form $\omega_l$ of $(X,d)$ and the basic topological form $\omega_{ss}$ of $(X,d)$.
\end{Corollary}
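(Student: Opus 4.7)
The plan is to verify the hypotheses of Theorem \ref{TopologicalCompatibilityCriterion} for the pair $(\omega_l,\omega_{ss})$, thereby deducing that $X\cup\partial(\omega)$ is a compact Hausdorff extension of $(X,d)$ that restricts to the metric topology on $X$, and then to match its algebra of continuous bounded complex-valued functions with the classical Higson algebra.

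Because $(X,d)$ is proper, the $\omega_l$-bounded subsets coincide with the metrically bounded subsets (see \ref{BasicLargeScaleFormOnMetric}), and these are pre-compact in $\mathcal{T}$, so the standing hypothesis of \ref{TopologicalCompatibilityCriterion} is satisfied. For condition 1, given any $\omega_l$-bounded $B$, the open metric neighborhood $B(B,1)$ is $\omega_{ss}$-open and remains metrically bounded, providing the required enlargement. For condition 2, I would use the elementary metric fact $cl(C)\subset B(C,\epsilon)$ for every $\epsilon>0$, which yields $B(cl(C_1),r)\subset B(C_1,r+\epsilon)$; consequently, if $\omega_l(C_1,\ldots,C_k)=0$, then for every $r>0$ the intersection $B(cl(C_1),r)\cap\bigcap_{i\ge 2}B(C_i,r)$ sits inside $\bigcap_{i=1}^k B(C_i,r+\epsilon)$, which is bounded by hypothesis, so $\omega_l(cl(C_1),C_2,\ldots,C_k)=0$.

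With compatibility in hand, \ref{MainTheoremOnNormalityAndCompactness} ensures that $X\cup\partial(\omega)$ is compact Hausdorff, and Definition \ref{CompatibilityDef} guarantees that its subspace topology on $X$ is exactly $\mathcal{T}$. To identify the compactification, I would argue that a bounded function $f:X\to\mathbb{C}$ extends continuously to $X\cup\partial(\omega)$ if and only if it is form-continuous with respect to $\omega$, invoking \ref{ContExtOfFormContThm} and \ref{IffExtendsItIsFormContinuous}. Since $(\omega_l+\omega_{ss})(V)=0$ holds precisely when both $\omega_l(V)=0$ and $\omega_{ss}(V)=0$, form-continuity with respect to the sum decomposes into the conjunction of form-continuity with respect to each summand. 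By \ref{TopContinuousVsPerpContinuous} together with \ref{FormContinuousVsPerpContinuous}, the $\omega_{ss}$-condition is ordinary topological continuity; and by \ref{SlowlyOscillatingExampleOfPerpContinuous} (applied after restricting the target to a compact disk in $\mathbb{C}$ containing the bounded image of $f$), the $\omega_l$-condition is slow oscillation. Thus the bounded functions that extend to $X\cup\partial(\omega)$ are precisely the bounded continuous slowly oscillating functions, which is the classical Higson algebra; by the universal property that characterizes the Higson compactification via this algebra, $X\cup\partial(\omega)$ must be the Higson compactification.

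The main obstacle I anticipate is the verification of condition 2 of \ref{TopologicalCompatibilityCriterion}: although the ingredient $cl(C)\subset B(C,\epsilon)$ is elementary, it must be coupled cleanly with the definition of $\omega_l$ so that the radius adjustment works uniformly in $r$ and can be iterated coordinate by coordinate to replace each $C_i$ by $cl(C_i)$. A secondary subtlety is confirming that the extension/restriction correspondence from \ref{ContExtOfFormContThm} and \ref{IffExtendsItIsFormContinuous} supplies the full $C^\ast$-algebraic identification, rather than merely a bijection of functions, but this is handled by \ref{ProductOfPerpFunctions} which ensures Higson functions form a $C^\ast$-algebra.
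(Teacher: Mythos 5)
Your proposal is correct and follows essentially the same route as the paper: verify compatibility of $\omega_l$ and $\omega_{ss}$ via Theorem \ref{TopologicalCompatibilityCriterion} (the paper uses the exact identity $B(C,r)=B(cl(C),r)$ where you use $cl(C)\subset B(C,\epsilon)$ with a radius bump, which works equally well since $\omega_l$ quantifies over all $r$), and then identify the compactification by showing that the bounded functions extending over $X\cup\partial(\omega)$ are exactly the continuous slowly oscillating ones, using \ref{SlowlyOscillatingExampleOfPerpContinuous}, \ref{FormContinuousVsPerpContinuous}, \ref{ContExtOfFormContThm}, and \ref{IffExtendsItIsFormContinuous}. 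Your extra care with condition 1 of the compatibility criterion and with decomposing form-continuity for the sum $\omega_l+\omega_{ss}$ only makes explicit what the paper leaves implicit.
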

\begin{proof}
$(X,d)$ being proper means every $d$-bounded set is pre-compact in the topology $\mathcal{T}$ induced by $d$.
Therefore, every $\omega_l$-bounded subset of $X$ is pre-compact in $\mathcal{T}$. Also, for each $r > 0$ one has
$B(C,r)=B(cl(C),r)$ which implies $\omega_l$ and $\omega_{ss}$ are compatible.

To show $X\cup\partial(\omega)$ is the Higson compactification of $X$ we need to show that every continuous, bounded, and slowly oscillating $f:X\to \mathbb{C}$ extends over $X\cup\partial(\omega)$ and every continuous and bounded function $f:X\to \mathbb{C}$ that does admit continuous extension is slowly oscillating (see \cite{Roe lectures}).

Suppose $f:X\to \mathbb{C}$ is continuous, bounded, and slowly oscillating. By \ref{SlowlyOscillatingExampleOfPerpContinuous}
and \ref{FormContinuousVsPerpContinuous}, $f:X\to cl(f(X))$ is form
continuous, so it extends over $X\cup\partial(\omega)$
to a continuous function by \ref{ContExtOfFormContThm}.
Conversely, any continuous bounded function $f:X\to \mathbb{C}$
that extends over $X\cup\partial(\omega)$ to a continuous function
must be form-continuous with respect to $\omega_l$, hence slowly oscillating. 
\end{proof}

\subsection{Gromov compactification}
In this part we consider proper geodesic $\delta$-hyperbolic spaces $(X,d)$ that are \textbf{visual}, i.e. there is $p\in X$ such that the union of all geodesic rays emanating from $p$ equals $X$.
If $\omega_{ss}$ is the basic topological form of $(X,d)$ and $\omega_h$ is the hyperbolic form of $X$, then $\omega_h$ and $\omega_{ss}$ are compatible and $X\cup \partial(\omega_h+\omega_{ss})$ is the well-known Gromov compactification of $(X,d)$.

\begin{Proposition}
If $\omega_{ss}$ is the basic topological form of $(X,d)$ and $\omega_h$ is the hyperbolic form of $X$, then $\omega_h$ and $\omega_{ss}$ are compatible.
\end{Proposition}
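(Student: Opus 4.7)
The plan is to apply Theorem \ref{TopologicalCompatibilityCriterion} directly. Both forms are already known to be normal: $\omega_h$ by \ref{NormalExtOfHypPerp}, and $\omega_{ss}$ by \ref{BasicFormIsNormalForNormal} (every metric space is topologically normal). So it remains to verify the three hypotheses of \ref{TopologicalCompatibilityCriterion}: each $\omega_h$-bounded set is pre-compact in $\mathcal T$, every $\omega_h$-bounded set sits inside an $\omega_{ss}$-open $\omega_h$-bounded set, and $\omega_h$ is stable under replacing coordinates by their $\mathcal T$-closures.

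First I would characterize $\omega_h$-bounded sets. By \ref{NormalExtOfHypPerp}, $\omega_h(B)=0$ iff $N(B,r)=\emptyset$ for some $r>0$. But $b\in N(B,r)$ whenever $\langle b,b\rangle_p=d(b,p)>r$; so $N(B,r)=\emptyset$ forces $B\subset \overline{B(p,r)}$, and conversely if $B\subset \overline{B(p,r)}$ then $\langle x,b\rangle_p\le d(b,p)\le r$ for all $x\in X,b\in B$, hence $N(B,r)=\emptyset$. Therefore $\omega_h$-bounded sets are exactly the metrically bounded sets. Since $X$ is proper, these are pre-compact in $\mathcal T$, and given any such $B\subset \overline{B(p,r)}$ the open metric ball $U=B(p,r+1)$ is a $\mathcal T$-open (hence $\omega_{ss}$-open) $\omega_h$-bounded neighborhood of $B$. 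This takes care of the pre-compactness hypothesis and Condition 1.

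For Condition 2 the key observation is that the Gromov product $(x,y)\mapsto \langle x,y\rangle_p$ is $\mathcal T$-continuous (it is a continuous combination of the distance function). I claim $N(C,r)=N(cl(C),r)$ for every $C\subset X$ and every $r>0$. The inclusion $N(C,r)\subset N(cl(C),r)$ is immediate from monotonicity. For the reverse, suppose $x\in N(cl(C),r)$, so there exists $c\in cl(C)$ with $\langle x,c\rangle_p>r$; choose $c_n\in C$ with $c_n\to c$; then $\langle x,c_n\rangle_p\to\langle x,c\rangle_p>r$, so $\langle x,c_n\rangle_p>r$ for large $n$ and $x\in N(C,r)$. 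Consequently, if $\omega_h(C_1,\ldots,C_k)=0$, i.e.\ $\bigcap_{i=1}^k N(C_i,r)=\emptyset$ for some $r>0$, then $\bigcap_{i=1}^k N(cl(C_i),r)=\emptyset$ as well, giving $\omega_h(cl(C_1),\ldots,cl(C_k))=0$; in particular $\omega_h(cl(C_1),C_2,\ldots,C_k)=0$, which is Condition 2.

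With all three hypotheses verified, Theorem \ref{TopologicalCompatibilityCriterion} delivers compatibility of $\omega_h$ and $\omega_{ss}$. The only genuinely nontrivial step is the equality $N(C,r)=N(cl(C),r)$; every other ingredient (properness of $X$, normality of metric topology, normality of $\omega_h$) is either a hypothesis or already established in the paper.
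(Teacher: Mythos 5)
Your proof is correct and follows essentially the same route as the paper: both arguments reduce compatibility to the criterion of Theorem \ref{TopologicalCompatibilityCriterion} by identifying the $\omega_h$-bounded sets with the metrically bounded sets (hence pre-compact by properness, and enlargeable to open $d$-bounded sets) and by observing $N(cl(C),r)=N(C,r)$. You simply supply the details (the computation $\langle b,b\rangle_p=d(b,p)$ and the continuity argument for the Gromov product) that the paper leaves implicit.
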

\begin{proof}
By \ref{CharHypPerpendicularity}, $C\perp X$ means $C\subset X\setminus N(X,r)$ for some $r > 0$. Notice $N(X,r)\supset X\setminus B(p,2r)$, so $C$ is $\omega_h$-bounded if and only if $C$ is $d$-bounded, so $C$ clearly can be enlarged to an open $d$-bounded set.
Also, $N(cl(C),r)=N(C,r)$ for each $r > 0$.
\end{proof}

\begin{Lemma}\label{UnionOfNOfRays}
For each $r > 0$ there are geodesic rays $l_1,\ldots, l_k$ emanating from $p$ such that
$$X=B(p,r+\delta)\cup\bigcup\limits_{i=1}^k N(l_i,r).$$
\end{Lemma}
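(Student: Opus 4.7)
The plan is to use properness plus the visual hypothesis to reduce to a finite covering problem on the sphere of radius $r+\delta$, and then invoke the $\delta/4$-inequality to transfer the covering into the $N(\cdot,r)$-neighborhoods.

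First I would set $S:=\{y\in X\mid d(y,p)=r+\delta\}$. Since $(X,d)$ is proper, $S$ is a closed subset of the compact ball $\bar B(p,r+\delta)$, hence compact. I would then fix a small constant $\varepsilon\in(0,3\delta/4)$ (the exact bound is dictated by the calculation in the last step) and cover $S$ by finitely many balls $B(s_1,\varepsilon),\ldots,B(s_k,\varepsilon)$ with $s_j\in S$. For each $j$ the visual property supplies a geodesic ray $l_j$ emanating from $p$ on which $s_j$ lies; these are the rays I will use.

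The heart of the argument is to show every $x\in X$ with $d(x,p)>r+\delta$ belongs to some $N(l_j,r)$. Pick a ray $l_x$ from $p$ through $x$ (again by visuality), let $y_x$ be the point on $l_x$ at distance $r+\delta$ from $p$, and choose $j$ with $d(y_x,s_j)<\varepsilon$. Since $y_x$ lies on a geodesic from $p$ to $x$, a direct computation gives $\langle x,y_x\rangle_p=r+\delta$; similarly $\langle y_x,s_j\rangle_p=r+\delta-\tfrac12 d(y_x,s_j)>r+\delta-\tfrac{\varepsilon}{2}$. Applying the $\delta/4$-inequality,
$$\langle x,s_j\rangle_p\geq \min\bigl\{\langle x,y_x\rangle_p,\langle y_x,s_j\rangle_p\bigr\}-\delta/4>r+\delta-\tfrac{\varepsilon}{2}-\tfrac{\delta}{4}.$$
Choosing $\varepsilon<3\delta/2$ at the outset ensures this exceeds $r$, and since $s_j\in l_j$ this means $x\in N(l_j,r)$.

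Combining the two cases gives $X=B(p,r+\delta)\cup\bigcup_{j=1}^k N(l_j,r)$, as required. The main technical obstacle is simply calibrating $\varepsilon$ against $\delta$ so that the hyperbolic inequality produces a Gromov product strictly greater than $r$; the rest of the argument (compactness of $S$, existence of rays, the identity $\langle x,y_x\rangle_p=r+\delta$ on a geodesic) is routine once the framework is in place.
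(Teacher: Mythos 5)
Your proof is correct and follows essentially the same strategy as the paper: cover the $(r+\delta)$-sphere by finitely many small balls, take the rays through the centers, and estimate the Gromov product $\left<x,s_j\right>_p$ for any point $x$ outside $B(p,r+\delta)$ via the point where the ray through $x$ meets the sphere. The only (harmless) difference is that the paper bounds $d(x,x_j)$ directly by the triangle inequality and so never needs the $\delta/4$-inequality at this step, whereas you route the estimate through hyperbolicity, which forces the extra calibration of $\varepsilon$ against $\delta$.
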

\begin{proof}
On the $r$-sphere $S$ centered at $p$ choose points $x_1,\ldots,x_k$ such that $S=\bigcup\limits_{i=1}^k B(x_i,\delta)$.
For each $i\leq k$ choose a geodesic ray $l_i$ emanating from $p$ and containing $x_i$. Suppose $d(p,x)\ge r+\delta$.
Choose a geodesic ray $l$ emanating from $p$ and containing $x$. 
Let $y\in S\cap l$ and pick $j\leq k$ so that $d(y,x_j) < \delta$.
Now, $2\left<x,x_j\right>_p=d(p,x)+r+\delta-d(x,x_j) > 2r$,
so $x\in N(l_j,r)$.
\end{proof}

\begin{Corollary}\label{HypPointsAtInfinityAreRays}
For each $\mathcal{P}\in \partial(\omega_h)$ there is a geodesic ray $l$ emanating from $p$ such that $l\in \mathcal{P}$.
\end{Corollary}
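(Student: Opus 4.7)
The plan is to build $l$ as an Arzel\`a--Ascoli limit of rays extracted from Lemma \ref{UnionOfNOfRays}, and then verify $l\in\mathcal{P}$ via maximality of $\mathcal{P}$ using the characterization of $\omega_h$ from Corollary \ref{NormalExtOfHypPerp}.

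First, I would produce, for each integer $n\ge 1$, a geodesic ray $\gamma_n$ emanating from $p$ with $N(\gamma_n,n)\in\mathcal{P}$. Applying Lemma \ref{UnionOfNOfRays} with $r=n$ yields rays $\gamma_1^n,\ldots,\gamma_{k(n)}^n$ from $p$ with $X=B(p,n+\delta)\cup\bigcup_{i=1}^{k(n)}N(\gamma_i^n,n)$. The ball $B(p,n+\delta)$ is $d$-bounded, hence $\omega_h$-bounded (if $B\subset B(p,R)$ then $\langle b,x\rangle_p\le R$ for all $b\in B$, $x\in X$, so $N(B,R)=\emptyset$), so it does not lie in $\mathcal{P}$. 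Since $X\in\mathcal{P}$, repeated application of Observation \ref{ObservationOnPointsAtBdOfForm} forces some $N(\gamma_i^n,n)$ into $\mathcal{P}$; call it $\gamma_n$. Because $X$ is proper, the $\gamma_n\colon[0,\infty)\to X$ are equicontinuous (in fact $1$-Lipschitz) with $\gamma_n(0)=p$, so Arzel\`a--Ascoli yields a subsequence converging uniformly on compacta to a geodesic ray $l$ from $p$.

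Next I would show $l\in\mathcal{P}$; by maximality, it suffices that $\omega_h(l,C_1,\ldots,C_k)=\infty$ for every finite $(C_1,\ldots,C_k)\in \mathrm{Vect}(\mathcal{P})$. By Corollary \ref{NormalExtOfHypPerp} this amounts to showing $N(l,s)\cap\bigcap_{i=1}^k N(C_i,s)\neq\emptyset$ for every $s>0$. Fix $s>0$ and take $n\ge s$. Since $N(\gamma_n,n),C_1,\ldots,C_k$ all lie in $\mathcal{P}$, Corollary \ref{NormalExtOfHypPerp} gives some $x\in N(N(\gamma_n,n),s)\cap\bigcap_{i=1}^k N(C_i,s)$. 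Applying the $\delta/4$-inequality to the witnesses for $x\in N(N(\gamma_n,n),s)$ and $y\in N(\gamma_n,n)$, I get $\langle x,\gamma_n(t)\rangle_p\ge s-\delta/4$ for some $t>0$; another application along the geodesic $\gamma_n$ (using $\langle\gamma_n(t),\gamma_n(T)\rangle_p=\min(t,T)$) yields $\langle x,\gamma_n(T)\rangle_p\ge s-\delta/2$ for any fixed $T\ge s$. Finally, for $n$ large enough that $d(\gamma_n(T),l(T))<\delta$, the definition of the Gromov product gives $\langle x,l(T)\rangle_p\ge s-\delta$, i.e.\ $x\in N(l,s-\delta)$. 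Replacing $s$ by $s+\delta$ throughout produces the required element of $N(l,s)\cap\bigcap_{i=1}^k N(C_i,s)$.

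The main obstacle is the last paragraph: I need to transfer membership $x\in N(\gamma_n,\cdot)$ to membership $x\in N(l,\cdot)$, and this is where the $\delta$-hyperbolicity of $X$ is used in an essential way (at least twice) together with the Arzel\`a--Ascoli convergence $\gamma_n\to l$ on compacta. Everything else is bookkeeping with the definitions of $N(\cdot,\cdot)$, Corollary \ref{NormalExtOfHypPerp}, and the filter-like properties of $\mathcal{P}$ afforded by maximality.
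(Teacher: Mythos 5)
Your proposal is correct and follows essentially the same route as the paper: extract rays $\gamma_n$ with $N(\gamma_n,n)\in\mathcal{P}$ from Lemma \ref{UnionOfNOfRays}, pass to an Arzel\`a--Ascoli limit $l$, and use the $\delta/4$-inequality together with the convergence $\gamma_n(T)\to l(T)$ to transfer membership from $N(\gamma_n,n)$ to $N(l,\cdot)$. The only difference is cosmetic: you verify $l\in\mathcal{P}$ directly against arbitrary finite vectors of $\mathcal{P}$, whereas the paper argues by contradiction with a single orthogonal set $C\in\mathcal{P}$ (obtained via \ref{CharHypPerp2}) and the inclusion $N(l_m,m)\subset N(l,r)$.
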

\begin{proof}
For each $n\ge 1$ there is a geodesic ray $l_n$ emanating from $p$ such that $N(l_n,n)\in P$ (use \ref{UnionOfNOfRays}).
Let $\alpha_n:[0,\infty)\to l_n$ be a parametrization of $l_n$ for each $n\ge 1$. By a diagonal argument choose an increasing sequence $n(k)$ with the property $\alpha_{n(k)}(q)$ is convergent
to $\alpha(q)$ for each rational $q$. Notice $\alpha$ extends to a parametrization $\alpha$ of a ray $l$ and $\alpha_{n(k)}(t)$ is convergent
to $\alpha(t)$ for each $t\ge 0$.
Il $l\notin P$, then there is $C\in P$, $C\perp l$.
Therefore $C\perp N(l,r)$ for some $r > 0$ (see \ref{CharHypPerp2}).
Find $k\ge 1$ such that for $m=n(k) > r+\delta$, $d(\alpha_{m}(r+\delta),\alpha(r+\delta)) < \delta$. Suppose $x\in N(l_m,m)$
and choose $y\in l_m$ such that $\left<x,y\right>_p > m$.
Put $z:=\alpha(r+\delta)$, $t:=\alpha_{m}(r+\delta)$, and notice $\left<y,z\right>_p 
=0.5\cdot (d(y,t)+r+\delta+r+\delta-d(y,z)) >
0.5\cdot (d(y,t)+r+\delta+r+\delta-(d(y,t)+\delta))=
0.5\cdot (2r+\delta)=r+\delta/2$.
Therefore, $ \left<x,z\right>_p \ge \min(\left<x,y\right>_p,\left<z,y\right>_p)-\delta/4 > \min(m,r+\delta/2)-\delta/4 > r$.
Consequently, $N(l_m,m)\subset N(l,r)$ and $C\perp N(l_m,m)$
which contradicts $N(l_m,m)\in P$.
\end{proof}

\begin{Proposition}
The relation $l_1\sim l_2$ defined as $\omega_h(l_1,l_2)=\infty$
among geodesic rays emanating from $p$ is an equivalence relation
and each $\mathcal{P}\in\partial(\omega_h)$ contains exactly one equivalence class.
\end{Proposition}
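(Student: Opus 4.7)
I would organize the proof around the two assertions in the statement: that $\sim$ is an equivalence relation, and that each $\mathcal{P}\in\partial(\omega_h)$ captures exactly one $\sim$-class of rays from $p$.

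For the equivalence relation, reflexivity and symmetry are quick. A geodesic ray $l$ is $d$-unbounded, and by the discussion preceding Corollary \ref{HypPointsAtInfinityAreRays} a set is $\omega_h$-bounded iff it is $d$-bounded; thus $\omega_h(l)=\infty$, and Axiom 3 for forms gives $\omega_h(l,l)=\omega_h(l)=\infty$. Symmetry is built into the definition of $\omega_h$. For transitivity, given $l_1\sim l_2$ and $l_2\sim l_3$ and $r>0$, I would first pick $a\in l_1$ and $b_1\in l_2$ with $\langle a,b_1\rangle_p>r+\delta$, then pick $b_2\in l_2$ and $c\in l_3$ with $\langle b_2,c\rangle_p>r+\delta$ and $d(p,b_2)\ge d(p,b_1)$. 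Because $b_1,b_2$ both lie on the geodesic ray $l_2$ issuing from $p$, $\langle b_1,b_2\rangle_p=\min(d(p,b_1),d(p,b_2))=d(p,b_1)\ge\langle a,b_1\rangle_p>r+\delta$, and two applications of the $\delta/4$-inequality yield $\langle a,c\rangle_p>r$.

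For the second part I would combine three observations. \emph{Uniqueness}: if two rays $l_1,l_2$ from $p$ both lie in $\mathcal{P}$, then $\omega_h(l_1,l_2)=\infty$ by the defining property of $\mathcal{P}$, so $l_1\sim l_2$. \emph{Nonemptiness}: Corollary \ref{HypPointsAtInfinityAreRays} supplies at least one ray from $p$ inside $\mathcal{P}$. \emph{Saturation}: if $l_1\in\mathcal{P}$ and $l_1\sim l_2$, then $l_2\in\mathcal{P}$. By maximality of $\mathcal{P}$ this reduces to showing
$$\omega_h(l_2,C_1,\ldots,C_k)=\infty$$
for every finite subfamily $C_1,\ldots,C_k\in\mathcal{P}$, which by Corollary \ref{NormalExtOfHypPerp} amounts to $N(l_2,r)\cap\bigcap\limits_{i=1}^k N(C_i,r)\ne\emptyset$ for each $r>0$.

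The key geometric lemma I would isolate is: if $l_1\sim l_2$ then for every $r>0$ there exists $R\ge r$ with $N(l_1,R)\subset N(l_2,r)$. To prove it, choose $a'\in l_1$ and $b\in l_2$ with $\langle a',b\rangle_p>r+\delta$ and set $R>\max(d(p,a'),r+\delta)$; for any $x\in N(l_1,R)$ pick $a\in l_1$ with $\langle x,a\rangle_p>R$, so that $d(p,a)\ge\langle x,a\rangle_p>d(p,a')$, hence $\langle a,a'\rangle_p=d(p,a')>r+\delta$, and two applications of the $\delta/4$-inequality give $\langle x,b\rangle_p>r$. Given this lemma, $\omega_h(l_1,C_1,\ldots,C_k)=\infty$ supplies a point in $N(l_1,R)\cap\bigcap N(C_i,R)$, and combining $N(l_1,R)\subset N(l_2,r)$ with the monotonicity $N(C_i,R)\subset N(C_i,r)$ delivers the required point in $N(l_2,r)\cap\bigcap N(C_i,r)$. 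The main obstacle is this geometric lemma: no new idea is needed beyond the $\delta$-hyperbolic inequality, but one must arrange the three witnesses $a,a',b$ so that $\langle a,a'\rangle_p$ collapses cleanly to $d(p,a')$ and the cumulative $\delta$-losses in two successive applications of the $\delta/4$-inequality are absorbed into the choice of $R$.
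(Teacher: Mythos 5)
Your proof is correct and follows essentially the same route as the paper: the heart of both arguments is the inclusion $N(l_1,R)\subset N(l_2,r)$ for suitable $R$, obtained by chaining the $\delta/4$-inequality through witnesses on $l_1$ and $l_2$, which yields saturation of $\mathcal{P}$ under $\sim$ (uniqueness and existence being handled exactly as you do, via maximality and \ref{HypPointsAtInfinityAreRays}). The only cosmetic difference is in transitivity, where the paper invokes monotonicity of $t\mapsto\left< \alpha_1(t),\alpha_2(t)\right>_p$ while you compare the two middle witnesses on $l_2$ directly via $\left< b_1,b_2\right>_p=\min(d(p,b_1),d(p,b_2))$; both work.
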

\begin{proof}
Pick parametrizations $\alpha_i:[0,\infty)\to l_i$ for $i\leq 2$.
Notice that $\left<\alpha_1(t),\alpha_2(t)\right>_p$
is an increasing function of $t$ and it approaches $\infty$ if and only if $\omega_h(l_1,l_2)=\infty$. Given a third ray $l_3$
with parametrization $\alpha_3$ and $\omega_h(l_2,l_3)=\infty$,
the inequality $ \left<\alpha_1(t),\alpha_3(t)\right>_p\ge
\min(\left<\alpha_1(t),\alpha_2(t)\right>_p,\left<\alpha_2(t),\alpha_3(t)\right>_p-\delta/4$ shows $\omega_h(l_1,l_3)=\infty$.

If $\omega_h(l_1,l_2)=\infty$, then for each $r > 0$
there is $M > 0$ such that $\left<\alpha_1(t),\alpha_2(t)\right>_p > r+\delta$. If $ \left<\alpha_1(t),x\right>_p > r+M+\delta$,
then $t > M+\delta$ and \\
$ \left<x,\alpha_2(M)\right>_p\ge
\min(\left<\alpha_1(t),x\right>_p,\left<\alpha_1(t),\alpha_2(t)\right>_p)-\delta/4 > r$.\\
Thus, $N(l_1,r+M+\delta)\subset N(l_2,r)$.
In particular, $\omega_h(l_1,C_1,\ldots,C_k)=\infty$ implies
$\omega_h(l_1,l_2,C_1,\ldots,C_k)=\infty$, so any $P$ containing
$l_1$ contains $l_2$ as well.
\end{proof}

\begin{Observation}
One can show existence of a universal constant $K$ depending on $\delta$ ony such that
given parametrizations $\alpha_i:[0,\infty)\to l_i$ for $i\leq 2$
of two equivalent geodesic rays $l_1$ and $l_2$, one has
$d(\alpha_1(t),\alpha_2(t)) < K$ for all $t$ (see Lemma 9.51 on p.229 in \cite{DK}) which is obviously stronger than $l_1\sim l_2$.
\end{Observation}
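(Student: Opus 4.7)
The key input is the Proposition immediately preceding the Observation, which asserts that $l_1\sim l_2$ (i.e.\ $\omega_h(l_1,l_2)=\infty$) is equivalent to $\langle\alpha_1(t),\alpha_2(t)\rangle_p\to\infty$ as $t\to\infty$. I will show that in fact one may take $K=\delta$. The starting point is the elementary identity
$$d(\alpha_1(s),\alpha_2(s))=2s-2\langle\alpha_1(s),\alpha_2(s)\rangle_p,$$
which reduces the claim to showing $\langle\alpha_1(s),\alpha_2(s)\rangle_p\ge s-\delta/2$ for every $s>0$.

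Fix $s>0$. Using divergence of the Gromov product, pick $t_0\ge s$ with $\langle\alpha_1(t_0),\alpha_2(t_0)\rangle_p\ge s$. Since $\alpha_1(s)$ lies on the geodesic from $p$ to $\alpha_1(t_0)$, a direct computation from the definition of the Gromov product gives $\langle\alpha_1(s),\alpha_1(t_0)\rangle_p=s$, and similarly $\langle\alpha_2(s),\alpha_2(t_0)\rangle_p=s$. Now apply the $\delta/4$-inequality at the basepoint $p$ twice: first to the triple $(\alpha_1(t_0),\alpha_2(s),\alpha_2(t_0))$ to obtain
$$\langle\alpha_1(t_0),\alpha_2(s)\rangle_p\ge\min\bigl(\langle\alpha_1(t_0),\alpha_2(t_0)\rangle_p,\langle\alpha_2(t_0),\alpha_2(s)\rangle_p\bigr)-\delta/4\ge s-\delta/4,$$
and then to the triple $(\alpha_1(s),\alpha_2(s),\alpha_1(t_0))$ to obtain
$$\langle\alpha_1(s),\alpha_2(s)\rangle_p\ge\min\bigl(\langle\alpha_1(s),\alpha_1(t_0)\rangle_p,\langle\alpha_1(t_0),\alpha_2(s)\rangle_p\bigr)-\delta/4\ge s-\delta/2.$$
Substituting into the identity yields $d(\alpha_1(s),\alpha_2(s))\le\delta$, establishing the Observation with $K=\delta$.

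\textbf{Main obstacle.} There is no serious difficulty once the preceding Proposition is in hand; the content is two successive applications of the four-point inequality together with the tautological computation of Gromov products along a single geodesic from $p$. The one subtlety to watch is that the auxiliary parameter $t_0$ must be allowed to depend on $s$: no single $t_0$ controls every $s$, but the monotonicity and divergence of $\langle\alpha_1(t),\alpha_2(t)\rangle_p$ from the preceding Proposition guarantee that a suitable $t_0\ge s$ exists for each $s$ separately, which is all the argument requires.
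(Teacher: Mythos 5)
Your argument is correct, and it is genuinely different from what the paper does: the paper offers no proof of this Observation at all, deferring entirely to Lemma 9.51 of Drutu--Kapovich. Your proof is self-contained and in fact sharper than the citation requires, producing the explicit bound $d(\alpha_1(t),\alpha_2(t))\le\delta$ (so any $K>\delta$ gives the strict inequality as stated). The two computations of Gromov products along a single ray are exact because $d(\alpha_i(t),p)=t$ and $d(\alpha_i(s),\alpha_i(t_0))=t_0-s$ for the arc-length parametrizations, and the two applications of the $\delta/4$-inequality each cost only $\delta/4$, so the bookkeeping checks out against the paper's normalization of hyperbolicity. The one external input you use --- that $\langle\alpha_1(t),\alpha_2(t)\rangle_p$ is increasing and tends to $\infty$ exactly when $l_1\sim l_2$ --- is asserted in the Proposition immediately preceding the Observation, so you are entitled to it; and your remark that $t_0$ must be chosen separately for each $s$ is exactly the right point to flag. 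What your approach buys is independence from the external reference and an explicit constant; what the citation buys the author is brevity. Only cosmetic quibbles remain: you conclude $\le\delta$ where the statement asks for $<K$, so formally take $K=2\delta$ (or $\delta+1$), and it would be worth one sentence confirming that both rays emanate from the same basepoint $p$, which is how the equivalence relation is set up in the preceding Proposition.
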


\begin{Proposition}
Given a geodesic ray $l$ emanating from $p$, the sets $o(N(l,n))$, $n\ge 1$,
form a basis of open sets of $\mathcal{P}\in \partial(\omega_h+\omega_{ss})$
containing $l$.
\end{Proposition}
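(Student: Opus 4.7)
My plan is to reduce the basis claim to proving the single identity
\[
\bigcap_{n \ge 1}\, \operatorname{cl}(N(l,n)) \;=\; \{\mathcal{P}\}
\]
in the compact Hausdorff space $X \cup \partial(\omega)$ with $\omega := \omega_h + \omega_{ss}$ (compact Hausdorff by the preceding compatibility of $\omega_h$, $\omega_{ss}$ together with Theorem~\ref{MainTheoremOnNormalityAndCompactness}). Once this identity is in hand, the standard fact that in a compact Hausdorff space a nested sequence of closed sets with singleton intersection eventually enters every open neighborhood of that singleton yields, for any open $U \ni \mathcal{P}$, some $n$ with $o(N(l,n)) \subset \operatorname{cl}(N(l,n)) \subset U$.

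First I would dispatch the preparatory facts. Continuity of the Gromov product realizes $N(l,n) = \bigcup_{y \in l} \{x : \langle x,y \rangle_p > n\}$ as a union of open sets, hence open in $\mathcal{T}(\omega)$. For $\mathcal{P} \in o(N(l,n))$, pick $N > n$ and set $T_N := l \setminus B(p,N)$; since $l \cap B(p,N)$ is bounded, Observation~\ref{ObservationOnPointsAtBdOfForm} applied to $l \in \mathcal{P}$ places $T_N$ in $\mathcal{P}$. Because $T_N \subset N(l,n)$ (as $\langle y,y \rangle_p = d(y,p) > N > n$ for $y \in T_N$) and both $T_N$, $X \setminus N(l,n)$ are closed and disjoint, $\omega_{ss}(T_N, X \setminus N(l,n)) = 0$; the defining bound $\langle y,z \rangle_p \le n$ for $y \in l$, $z \notin N(l,n)$ gives $\omega_h(l, X \setminus N(l,n)) = 0$ and hence $\omega_h(T_N, X \setminus N(l,n)) = 0$. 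Summing, $\omega(T_N, X \setminus N(l,n)) = 0$ with $T_N \in \mathcal{P}$, forcing $X \setminus N(l,n) \notin \mathcal{P}$. The inclusion $\mathcal{P} \in \bigcap_n \operatorname{cl}(N(l,n))$ then follows from $N(l,n) \in \mathcal{P}$ (superset closure from $T_N \subset N(l,n)$) via~\ref{ClosureCLemma}(1).

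The nontrivial direction is the core of the proof. Points of $X$ are excluded because $\operatorname{cl}_X(N(l,n)) \subset \{x : d(x,p) \ge n\}$, so any $\mathcal{Q} \in \bigcap_n \operatorname{cl}(N(l,n))$ lies in $\partial(\omega) \setminus X \cong \partial(\omega_h)$; applying \ref{ClosureCLemma}(2) to the closed set $\operatorname{cl}_X(N(l,n))$ places it in $\mathcal{Q}$ for all $n$, and \ref{HypPointsAtInfinityAreRays} provides a ray $l'$ from $p$ with $l' \in \mathcal{Q}$. The first hard step is ruling out $l' \not\sim l$: in that case \ref{CharHypPerpendicularity} yields $r_0$ with $\langle y,z \rangle_p < r_0$ for all $y \in l$, $z \in l'$, and for $n > r_0 + \delta$, $x \in N(l,n)$ with witness $y \in l$, and arbitrary $z \in l'$, the $\delta/4$-inequality
\[
r_0 \;>\; \langle z,y \rangle_p \;\ge\; \min\bigl(\langle z,x \rangle_p,\, n\bigr) - \delta/4
\]
forces $\langle z,x \rangle_p < r_0 + \delta/4$. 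Continuity of the Gromov product extends this bound to $x \in \operatorname{cl}_X(N(l,n))$; choosing $z = x$ would give $d(x,p) = \langle x,x \rangle_p \le r_0 + \delta/4 < n$, contradicting $d(x,p) \ge n$, so $l' \cap \operatorname{cl}_X(N(l,n)) = \emptyset$ and $\omega_{ss}(l', \operatorname{cl}_X(N(l,n))) = 0$; the uniform bound itself yields $l' \perp_h \operatorname{cl}_X(N(l,n))$, hence $\omega_h(l', \operatorname{cl}_X(N(l,n))) = 0$. Therefore $\omega(l', \operatorname{cl}_X(N(l,n))) = 0$ with both sets in $\mathcal{Q}$, a contradiction. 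So $l' \sim l$, and the preceding proposition gives $l \in \mathcal{Q}$.

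The main obstacle will be upgrading $l \in \mathcal{P} \cap \mathcal{Q}$ to $\mathcal{Q} = \mathcal{P}$. If $\mathcal{Q} \ne \mathcal{P}$, Hausdorffness yields disjoint basic neighborhoods $o(W_P) \ni \mathcal{P}$, $o(W_Q) \ni \mathcal{Q}$; the identity $o(W_P) \cap o(W_Q) = o(W_P \cap W_Q) = \emptyset$ from~\ref{IntersectionOfos}, combined with the $T_1$ property of the form (so that $o(C) = \emptyset \iff C = \emptyset$), forces $W_P \cap W_Q = \emptyset$. A tail-splitting argument via Observation~\ref{ObservationOnPointsAtBdOfForm} on the decompositions $T_N = (T_N \cap W_P) \cup (T_N \setminus W_P)$ (analogously for $W_Q$), together with the monotonicity-derived facts $T_N \setminus W_P \notin \mathcal{P}$ (from $X \setminus W_P \notin \mathcal{P}$ witnessed by a finite tuple in $\mathcal{P}$) and $T_N \setminus W_Q \notin \mathcal{Q}$, places $T_N \cap W_P \in \mathcal{P}$ and $T_N \cap W_Q \in \mathcal{Q}$ as disjoint $d$-unbounded subsets of the ray $l$. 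The Observation on equivalent-ray parametrizations, which confines every ray in $[l]$ to a $K$-neighborhood of $l$, then implies that the unbounded ends of both $T_N \cap W_P$ and $T_N \cap W_Q$ accumulate at the same direction $[l]$; disjointness of $W_P$ and $W_Q$ prevents both from simultaneously capturing the far end of $l$ modulo a bounded set, yielding $\mathcal{Q} = \mathcal{P}$ and closing the intersection identity.
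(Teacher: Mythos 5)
Your overall route differs from the paper's: the paper argues directly that any basic neighborhood $o(U)$ of $\mathcal{P}$ satisfies $N(l,k)\subset U$ for some $k$ (because $X\setminus U\notin\mathcal{P}$ forces $(X\setminus U)\cap N(l,k)=\emptyset$), whereas you reduce to the identity $\bigcap_n \operatorname{cl}(N(l,n))=\{\mathcal{P}\}$ and invoke compactness. The reduction itself is sound, and your verification that $\mathcal{P}\in o(N(l,n))$ for all $n$, as well as the Gromov-product computation ruling out $l'\not\sim l$ for a ray $l'\in\mathcal{Q}$, are essentially correct (modulo the bookkeeping that ``bounded'' must mean $d$-bounded, since no nonempty set is $(\omega_h+\omega_{ss})$-bounded, and that $d$-bounded sets cannot belong to points at infinity).

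The genuine gap is the last step. You correctly produce disjoint $d$-unbounded sets $A:=T_N\cap W_P\in\mathcal{P}$ and $B:=T_N\cap W_Q\in\mathcal{Q}$, both contained in the ray $l$, but the claimed contradiction --- that ``disjointness of $W_P$ and $W_Q$ prevents both from simultaneously capturing the far end of $l$ modulo a bounded set'' --- is a non sequitur. Membership of $A$ in $\mathcal{P}$ does not imply that $l\setminus W_P$ is bounded; two disjoint open sets can each contain an unbounded subset of $l$ (e.g.\ alternating blocks along the ray), so nothing is prevented. What is actually needed, and what you never prove, is that any two $d$-unbounded subsets $A,B$ of the same geodesic ray belong to exactly the same points at infinity. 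This follows from the inclusion $N(A,r+\delta)\subset N(B,r)$ (the same $\delta/4$-inequality computation the paper uses for equivalent rays, applied to unbounded subsets of a single ray, plus trimming a $d$-bounded piece to handle the $\omega_{ss}$ summand); with it, $B\in\mathcal{P}$, hence $X\setminus W_P\supset B$ lies in $\mathcal{P}$, contradicting $\mathcal{P}\in o(W_P)$. Without this lemma your argument establishes only that every $\mathcal{Q}$ in the intersection contains $l$, not that the intersection is a singleton --- which is precisely the content you set out to prove, so the proof is circular at its crux.
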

\begin{proof}
Let $\omega:=\omega_h+\omega_{ss}$.
Since $N(l,n)$ is $d$-open in $X$, it is $\omega$-open.
If $\mathcal{P}\in o(U)$, $U$ being $\omega$-open, then $X\setminus U\notin \mathcal{P}$,
so $(X\setminus U)\cap N(l,k)=\emptyset$ for some $k\ge 1$
and $N(l,k)\subset U$. Since $X\setminus N(l,k)\notin \mathcal{P}$,
$\mathcal{P}\in o(N(l,k))\subset o(U)$.
\end{proof}

\begin{Theorem}\label{CharOfGromovCompactification}
If $\omega_{ss}$ is the basic topological form of a proper visual geodesic $\delta$-hyperbolic space $(X,d)$ and $\omega_h$ is the hyperbolic form of $X$, then $X\cup \partial(\omega_h+\omega_{ss})$ is the compactification of $(X,d)$ characterized by extensions of bounded Gromov functions $X\to R$.
\end{Theorem}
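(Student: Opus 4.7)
The plan is to follow the template used earlier for the \v Cech--Stone, Samuel--Smirnov and Higson compactifications: identify the class of continuous bounded functions $f\colon X\to \mathbb{R}$ that extend continuously over $X\cup \partial(\omega_h+\omega_{ss})$ with the class of bounded Gromov functions, and then invoke the universal property of the Gromov compactification.

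Set $\omega:=\omega_h+\omega_{ss}$. The preceding proposition together with Definition \ref{CompatibilityDef} shows that $\omega$ is normal and $T_1$, so by \ref{MainTheoremOnNormalityAndCompactness} the space $X\cup\partial(\omega)$ is large scale compact and Hausdorff. Since $(X,d)$ is proper and $\omega_h$-boundedness coincides with $d$-boundedness, every $\omega$-bounded set is pre-compact and every point is $\omega$-bounded, so $X\cup\partial(\omega)$ is an honest Hausdorff compactification of $X$.

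The crucial step is to show that a bounded continuous $f\colon X\to \mathbb{R}$ is form-continuous with respect to $\omega$ (with $\mathbb{R}$ carrying its basic topological form) if and only if $f$ is a bounded Gromov function, i.e., $|f(x_n)-f(y_n)|\to 0$ whenever $\left<x_n,y_n\right>_p\to\infty$. For continuous $f$, the $\omega_{ss}$-component of orthogonality is automatic: disjoint closures of $f(A)$ and $f(C)$ in $\mathbb{R}$ force disjoint closures of $A$ and $C$ in $X$. Thus form-continuity reduces to the requirement that $\dist(cl(f(A)),cl(f(C)))>0$ imply $\omega_h(A,C)=0$, which by \ref{CharHypPerpendicularity} is equivalent to boundedness of $\left<a,c\right>_p$ on $A\times C$. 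Under the Gromov condition this implication is straightforward; conversely, failure of the Gromov condition yields sequences $x_n\in A$, $y_n\in C$ with $\left<x_n,y_n\right>_p\to\infty$ yet $|f(x_n)-f(y_n)|$ bounded below, and choosing $A$ and $C$ to be preimages of small disjoint balls around two accumulation values of $f(x_n)$ and $f(y_n)$ supplies a counterexample to form-continuity.

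Granted this identification, \ref{ContExtOfFormContThm} produces a continuous extension $X\cup\partial(\omega)\to [-r,r]$ for every bounded Gromov function (passing through \ref{RealExtensionLemma}/\ref{ComplexExtensionLemma} if one wishes to stay inside a bounded range), while \ref{IffExtendsItIsFormContinuous} forces any continuous bounded extension to restrict to a form-continuous, hence Gromov, function on $X$. Since this is precisely the defining property of the Gromov compactification, the two compactifications are canonically homeomorphic. The main obstacle is the clean separation of the hyperbolic and small-scale contributions to orthogonality inside the form-continuity check; once that bookkeeping is done, the argument parallels those of \ref{CechStoneCompactificationCorrollary} and the preceding Higson corollary.
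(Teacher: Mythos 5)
Your proposal is correct and follows essentially the same route as the paper: the paper's proof simply records the definition of a bounded Gromov function, asserts that these are exactly the bounded form-continuous functions $(X,\omega_h+\omega_{ss})\to\mathbb{R}$, and then cites \ref{ContExtOfFormContThm} and \ref{IffExtendsItIsFormContinuous}. Your write-up adds the (correct) bookkeeping behind that identification -- splitting orthogonality into its $\omega_{ss}$ and $\omega_h$ components and using \ref{CharHypPerpendicularity} together with the reduction to $2$-vectors via normality -- which the paper leaves as a "Notice".
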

\begin{proof}
A bounded Gromov functions $X\to R$ is a continuous function
such that for each $\epsilon > 0$ there is $r > 0$ so that
$|f(x-f(y)| > \epsilon$ implies $\left<x,y\right>_p < r$.
Notice Gromov functions are identical with bounded form-continuous functions $(X,\omega)\to R$, where $\omega=\omega_h+\omega_{ss}$. Apply \ref{ContExtOfFormContThm} and \ref{IffExtendsItIsFormContinuous}.
\end{proof}

\begin{Remark}
Theorem \ref{CharOfGromovCompactification} shows
that $X\cup \partial(\omega_h+\omega_{ss})$ is the classical Gromov compactification (see 6.4 in \cite{Roe lectures}  on p.93).
\end{Remark}

\section{Dimension of formed sets}

The goal of this section is to give simple proofs of results that generalize the work of Austin-Virk \cite{AV} on Dimension Raising maps in coarse category.

\begin{Definition}
A form-continuous function $f:(X,\omega_X)\to (Y,\omega_Y)$ of formed sets is \textbf{proper} if images of $\omega_X$-bounded sets are $\omega_Y$-bounded.
\end{Definition}

\begin{Theorem}\label{CharOfNTo1OnBd}
Suppose $f:(X,\omega_X)\to (Y,\omega_Y)$ is a surjective and proper form-continuous function of normal $T_1$ formed sets and $n\ge 1$.
The induced map $$\partial f:\partial(\omega_X)\to \partial(\omega_Y)$$
is always surjective and it is $n$-to-$1$ if and only if for each sequence $\{A_i\}_{i=1}^{n+1}$ of subsets of $X$, the condition $\omega_X(A_i,A_j)=0$ for all $i\ne j$ implies
$\omega_Y(f(A_1),\ldots,f(A_{n+1}))=0$.
\end{Theorem}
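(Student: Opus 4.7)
My plan is to show surjectivity of $\partial f$ via a compactness argument, and then to establish each direction of the equivalence by transferring between the topological characterization $\omega(C_1,\ldots,C_k)=0\iff\bigcap_i cl(C_i)\subseteq X\setminus\partial(\omega)$ and Hausdorff separation in the compact Hausdorff spaces $X\cup\partial(\omega_X)$, $Y\cup\partial(\omega_Y)$ guaranteed by \ref{MainTheoremOnNormalityAndCompactness}.

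First I would handle surjectivity. The extension $\tilde f$ from \ref{ContExtOfFormContThm} is continuous on a compact Hausdorff domain, hence closed, and its image contains $\tilde f(X)=f(X)=Y$. Every $\mathcal{R}\in\partial(\omega_Y)$ satisfies $Y\in\mathcal{R}$ by maximality (one can adjoin $Y$ to $\mathcal{R}$ without violating the $\omega_Y=\infty$ condition), so the basic open set $o(\emptyset)$ is empty, $Y$ is dense in $Y\cup\partial(\omega_Y)$, and therefore $\tilde f$ is surjective; then $\tilde f(X)\subseteq Y$ forces any preimage of $\mathcal{R}\in\partial(\omega_Y)$ to lie in $\partial(\omega_X)$.

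For the forward direction, suppose $\partial f$ is $n$-to-$1$ and $\omega_X(A_i,A_j)=0$ whenever $i\neq j$. If some $A_i$ is $\omega_X$-bounded, properness plus \ref{OmegaInequalityTwo} yields $\omega_Y(f(A_1),\ldots,f(A_{n+1}))\leq\omega_Y(f(A_i))=0$; otherwise I argue by contradiction. Assuming $\omega_Y(f(A_1),\ldots,f(A_{n+1}))=\infty$, \ref{OmegaInequalityThree} together with Zorn's lemma extends $\{f(A_i)\}$ to some $\mathcal{R}\in\partial(\omega_Y)$. For each $i$ the compact set $\tilde f(cl(A_i))$ is closed, contains $f(A_i)$, hence contains $cl(f(A_i))\ni\mathcal{R}$ by \ref{ClosureCLemma}, so I can pick $\mathcal{P}_i\in cl(A_i)$ with $\tilde f(\mathcal{P}_i)=\mathcal{R}$; necessarily $\mathcal{P}_i\in\partial(\omega_X)$. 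These are pairwise distinct, for $\mathcal{P}_i=\mathcal{P}_j$ would place $\mathcal{P}_i$ in $cl(A_i)\cap cl(A_j)\cap\partial(\omega_X)$, contradicting $\omega_X(A_i,A_j)=0$. Thus $\mathcal{R}$ has at least $n+1$ preimages, contradicting the $n$-to-$1$ hypothesis.

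For the converse, assume the stated form condition and suppose $\mathcal{R}\in\partial(\omega_Y)$ has distinct preimages $\mathcal{P}_1,\ldots,\mathcal{P}_{n+1}\in\partial(\omega_X)$. Topological normality of $X\cup\partial(\omega_X)$ from \ref{NormalityOfLargeScaleCompactSpaces} provides pairwise disjoint closed neighborhoods $F_i$ of the $\mathcal{P}_i$; set $A_i:=F_i\cap X$, and pick a basic open $o(U_i)\subseteq\mathrm{int}(F_i)$ containing $\mathcal{P}_i$. Then $\mathcal{P}_i\in o(U_i)\subseteq cl(U_i)\subseteq cl(A_i)$, so \ref{ClosureCLemma} yields $A_i\in\mathcal{P}_i$ (the set $A_i$ is closed in $X$), and $cl(A_i)\cap cl(A_j)\subseteq F_i\cap F_j=\emptyset$ forces $\omega_X(A_i,A_j)=0$. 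The hypothesis then gives $\omega_Y(f(A_1),\ldots,f(A_{n+1}))=0$, and normality of $\omega_Y$ produces $E_i$ with $\bigcup_i E_i=Y$ and $\omega_Y(f(A_i),E_i)=0$. But $A_i\in\mathcal{P}_i$ implies $f(A_i)\in\tilde f(\mathcal{P}_i)=\mathcal{R}$, whence $E_i\notin\mathcal{R}$ for every $i$, whereas $Y\in\mathcal{R}$ together with iterated application of \ref{ObservationOnPointsAtBdOfForm} forces some $E_j\in\mathcal{R}$, a contradiction.

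The main obstacle I expect is securing $A_i\in\mathcal{P}_i$ in the converse step: maximal families in $\partial(\omega_X)$ need not be closed under finite intersections, so one cannot extract a witness directly from $\mathcal{P}_i$. Routing through a closed neighborhood $F_i$ together with an interior basic open $o(U_i)$ succeeds only because $X$ is dense in $X\cup\partial(\omega_X)$, and that density itself rests on the same maximality-forces-$X\in\mathcal{P}$ observation used in the surjectivity step; this is the one recurring technical ingredient that must be handled carefully.
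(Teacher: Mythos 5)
Your proof is correct and follows essentially the same route as the paper: surjectivity via the closed-map property of $\tilde f$, mutually disjoint closed neighborhoods of distinct preimages for one implication, and extension of $\{f(A_i)\}$ to a point of $\partial(\omega_Y)$ followed by lifting into each $cl(A_i)$ for the other (the paper lifts via \ref{RelativeTopologyAndForms} and the surjectivity statement for restrictions where you use closedness of $\tilde f$ on $cl(A_i)$, and it shortcuts your final normality-and-covering argument by noting that $f(A_i)\in\mathcal{R}$ for all $i$ already forces $\omega_Y(f(A_1),\ldots,f(A_{n+1}))=\infty$ by the definition of a boundary point). One small caution: $X\cup\partial(\omega_X)$ is only \emph{large scale} compact, not compact (bounded sets need not be compact), so the closedness of $\tilde f$ and of $\tilde f(cl(A_i))$ must be drawn from \ref{ClosedMapProp} --- which is exactly where properness of $f$ enters --- rather than from ordinary compactness of the domain.
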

\begin{proof}
See \ref{ContExtOfFormContThm} for a description of $\partial f$.
Using \ref{ClosedMapProp} observe that the continuous extension
$\tilde f$ of $f$ is closed, hence the extension $\tilde f$ of $f$ is surjective
($Y$ is dense in $Y\cup\partial (\omega_Y)$).
In particular
$\partial f$ is surjective as no point in $X\setminus \partial(\omega_X)$ can be mapped to $\partial (\omega_Y)$.

Suppose $\partial f$ is 
not $n$-to-$1$, i.e. there exists $Q\in \partial(\omega_Y)$ and $\mathcal{Q}_i\in \partial(\omega_X))$, $1\leq i\leq n+1$, such that $(\partial f)(\mathcal{Q}_i)=\mathcal{Q}$ and $\mathcal{Q}_i\ne \mathcal{Q}_j$ if $i\ne j$. Since $X\cup \partial(\omega_X)$ is Hausdorff, there 
exist mutually disjoint closed neighborhoods $D_i$ of $\mathcal{Q}_i$
in $X\cup \partial(\omega_X)$. Put $A_i:=D_i\cap X$.
Notice $A_i\in \mathcal{Q}_i$ for each $i\leq n+1$. Also,
$\omega_X(A_i,A_j)=0$ for all $i\ne j$. However,
$\omega_Y(f(A_1),\ldots,f(A_{n+1}))=\infty$ as $f(A_i)\in \mathcal{Q}$ for each $i\leq n+1$.

Suppose there exist  sets $A_i$, $i\leq n+1$, such that $\omega_X(A_i,A_j)=0$ whenever $i\ne j$ but $\omega_Y(f(A_1),\ldots,f(A_{n+1}))=\infty$.
We may assume each $A_i$ is closed in $X$ by replacing it with $cl(A_i)$
(see \ref{OmegaOnClosuresGeneralCase}).
There is $\mathcal{Q}\in \partial(\omega_Y)$ containing $f(A_i)$ for each $i$. 
In particular, $\mathcal{Q}\in\partial(f(A_i),\omega_Y)$ for each $i$.
Therefore, for each $i\leq n+1$, there is $\mathcal{Q}_i\in\partial(A_i,\omega_X)$
so that $(\partial f)(\mathcal{Q}_i)=\mathcal{Q}$ (use \ref{RelativeTopologyAndForms}). Since $\mathcal{Q}_i\ne \mathcal{Q}_j$ in $\partial(\omega_X)$, we conclude that $\partial f$ is 
not $n$-to-$1$.
\end{proof}

\begin{Theorem}\label{CoarselyNto1Impliesnto1OnCoronas}
Suppose $(X,\mathcal{L}_X)$ and $(Y,\mathcal{L}_Y)$ are large scale spaces such that the induced forms $\omega_X$, $\omega_Y$ are normal and $T_1$.
If $f:(X,\mathcal{L}_X)\to (Y,\mathcal{L}_Y)$ is coarsely $n$-to-$1$ for some $n\ge 1$, then the induced map
$\partial(\omega_X)\to \partial(\omega_Y)$ of form coronas
is $n$-to-$1$.
\end{Theorem}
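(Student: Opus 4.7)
The plan is to reduce the claim to Theorem \ref{CharOfNTo1OnBd} by checking its hypotheses and verifying its characterizing condition. First I would observe that a coarsely $n$-to-$1$ map is bornologous and pre-images of bounded sets are bounded (being a union of at most $n$ bounded sets), so $f$ is proper in the sense of the dimension section, and form-continuous with respect to $\omega_X = \omega(\mathcal{L}_X)$ and $\omega_Y = \omega(\mathcal{L}_Y)$ (see \ref{OmegaOnLSDefinition}); routine verification using $f$ bornologous shows $\omega_Y(f(C_1),\dots,f(C_k))=0$ pulls back to $\omega_X(C_1,\dots,C_k)=0$. It then suffices to show: whenever $\omega_X(A_i,A_j)=0$ for all $i\ne j$ among $n+1$ subsets of $X$, one has $\omega_Y(f(A_1),\ldots,f(A_{n+1}))=0$.

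I would prove this by contradiction. Suppose $\omega_Y(f(A_1),\ldots,f(A_{n+1}))=\infty$, so there is a uniformly bounded cover $\mathcal{W}$ of $Y$ such that $Z:=\bigcap_{i=1}^{n+1}st(f(A_i),\mathcal{W})$ is unbounded. Apply the coarsely $n$-to-$1$ condition to the uniformly bounded cover $\mathcal{W}^\ast:=st(\mathcal{W},\mathcal{W})$ to get a uniformly bounded cover $\mathcal{U}$ of $X$ with the property that for every $W^\ast\in\mathcal{W}^\ast$, $f^{-1}(W^\ast)$ is contained in the union of at most $n$ elements of $\mathcal{U}$. For each $y\in Z$, select $x_i^y\in A_i$ with $f(x_i^y)\in W_i^y$ for some $W_i^y\in\mathcal{W}$ containing $y$; then $\{x_1^y,\dots,x_{n+1}^y\}\subset f^{-1}(st(y,\mathcal{W}))\subset f^{-1}(W^\ast)$ for a single $W^\ast\in\mathcal{W}^\ast$, so by pigeonhole two of these points, say $x_{i(y)}^y$ and $x_{j(y)}^y$ with $i(y)<j(y)$, lie in a common $U^y\in\mathcal{U}$.

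Next I would apply pigeonhole over $y$. Since there are only $\binom{n+1}{2}$ choices of pair $(i,j)$, writing $Z=\bigcup_{i<j}Z_{ij}$ with $Z_{ij}:=\{y\in Z:\{x_i^y,x_j^y\}\subset U\text{ for some }U\in\mathcal{U}\}$ and using that the bornology is closed under finite unions, some $Z_{i_0 j_0}$ is unbounded. Set $T:=\bigcup_{y\in Z_{i_0j_0}}U^y$. Then $T\subset st(A_{i_0},\mathcal{U})\cap st(A_{j_0},\mathcal{U})$, and $f(T)$ contains, for every $y\in Z_{i_0j_0}$, a point in $W_{i_0}^y\ni y$, so $Z_{i_0j_0}\subset st(f(T),\mathcal{W})$; since $\mathcal{W}$ is uniformly bounded and $Z_{i_0j_0}$ unbounded, $f(T)$ must be unbounded, and bornologousness of $f$ forces $T$ itself to be unbounded. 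This contradicts $\omega_X(A_{i_0},A_{j_0})=0$ and completes the argument.

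The main obstacle I anticipate is the double pigeonhole step and making precise the passage from ``unboundedly many $y$'' to a single pair $(i_0,j_0)$ giving an unbounded witness $T\subset st(A_{i_0},\mathcal{U})\cap st(A_{j_0},\mathcal{U})$; this is where the bornological structure (finite unions of bounded sets are bounded) and the fact that $st(B,\mathcal{W})$ is bounded for bounded $B$ and uniformly bounded $\mathcal{W}$ both get used. Fixing a clean working definition of ``coarsely $n$-to-$1$'' in the large-scale (not necessarily metric) setting—specifically the formulation that for each uniformly bounded cover of $Y$ there is a uniformly bounded cover of $X$ whose multiplicity on fibres of the first is at most $n$—is the only bookkeeping subtlety.
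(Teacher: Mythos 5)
Your proposal is correct and follows essentially the same route as the paper: both reduce the statement to the characterization in Theorem \ref{CharOfNTo1OnBd}, pull back $st(\mathcal{W},\mathcal{W})$ to get a cover of $X$ whose elements cover each fibre by at most $n$ sets, and pigeonhole $n+1$ points into $n$ sets. The only (cosmetic) difference is organizational: the paper trims the bounded overlaps $st(A_i,\cdot)\cap st(A_j,\cdot)$ off the $A_i$ first and derives an empty intersection of stars, whereas you pigeonhole over the points of the unbounded intersection to exhibit an unbounded overlap directly.
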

\begin{proof}
Suppose the induced map
$\partial(\omega_X)\to \partial(\omega_Y)$ of form coronas
is not $n$-to-$1$.
By Theorem \ref{CharOfNTo1OnBd} there exist  sets $A_i\subset X$, $i\leq n+1$ such that $\omega_X(A_i,A_j)=0$ if $i\ne j$ but $\omega_Y(f(A_1),\ldots,f(A_{n+1}))=\infty$. Therefore, there exists a uniformly bounded cover $\mathcal{U}$ of $Y$ such that $\bigcap\limits_{i=1}^{n+1} st(f(A_i),\mathcal{U})$ is $\omega_Y$-unbounded. Choose a uniformly bounded cover $\mathcal{W}$ of $X$
such that each set $f^{-1}(U)$, $U\in st(\mathcal{U},\mathcal{U})$, can be covered by at most $n$-elements of $\mathcal{W}$. Define $C_i$ as $A_i\setminus B_i$, where $B_i:= \bigcup\limits_{j\ne i}st(A_i,\mathcal{W})\cap st(A_j,\mathcal{W})$ is $\omega_X$-bounded. 
If $x\in  \bigcap\limits_{i=1}^{n+1}st(C_i,f^{-1}(\mathcal{U}))$,
then $st(x,f^{-1}(\mathcal{U}))\cap C_i\ne\emptyset$ for each $i$, which implies
existence of $i\ne j$ such that $st(C_i,\mathcal{W})\cap st(C_j,\mathcal{W})\ne\emptyset$, a contradiction. That means $\bigcap\limits_{i=1}^{n+1}st(C_i,f^{-1}(\mathcal{U}))=\emptyset$. Consequently, $\bigcap\limits_{i=1}^{n+1}st(f(C_i),\mathcal{U})=\emptyset$ 
and $\bigcap\limits_{i=1}^{n+1}st(f(A_i),\mathcal{U}))\subset \bigcup\limits_{i=1}^{n+1}st(f(B_i),\mathcal{U})$ is $\omega_Y$-bounded, a contradiction.
\end{proof}

\begin{Theorem}\label{MainCoarseNTo1Thm}
Suppose $f:X\to Y$ is a coarse bornologous function of large scale spaces
whose induced forms are normal and $T_1$. If $X$ is metrizable and $n\ge 1$, then $f$ is coarsely $n$-to-$1$ if and only if the induced map of their boundaries at infinity is $n$-to-$1$.
\end{Theorem}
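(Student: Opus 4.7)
The forward direction ($f$ coarsely $n$-to-$1$ implies the induced map on boundaries is $n$-to-$1$) is exactly \ref{CoarselyNto1Impliesnto1OnCoronas}, so the content is the converse. I will argue the contrapositive: assuming $f$ is not coarsely $n$-to-$1$, I will manufacture subsets $A_1,\dots,A_{n+1}$ of $X$ witnessing, via \ref{CharOfNTo1OnBd}, that the induced map $\partial f\colon \partial(\omega_X)\to\partial(\omega_Y)$ fails to be $n$-to-$1$.

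Fix a metric $d$ on $X$ inducing $\mathcal{L}_X$. If $f$ is not coarsely $n$-to-$1$, there is a uniformly bounded cover $\mathcal{U}$ of $Y$ such that for every $k\ge 1$ the ball-cover $\mathcal{W}_k:=\{B(x,k)\mid x\in X\}$ fails to $n$-cover the preimage of some member of $\mathcal{U}$: so there exist $U_k\in\mathcal{U}$ and points $x_1^k,\dots,x_{n+1}^k\in f^{-1}(U_k)$ pairwise at distance $>k$ (the standard inductive argument: if $n$ balls of radius $k$ centered at already chosen points do not cover $f^{-1}(U_k)$, pick the next point outside their union). Uniform boundedness of $\mathcal{U}$ gives $M$ with $\diam U_k\le M$ for all $k$.

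Next I pass to a subsequence so the configurations march to infinity and become mutually far apart. First, the $U_k$'s cannot remain in one bounded region of $Y$: otherwise $\bigcup_k U_k$ would be bounded, hence $f^{-1}(\bigcup_k U_k)$ bounded by properness of $f$, contradicting that it contains $n+1$ points at arbitrary pairwise distance. Similarly, since $f$ is bornologous, images of bounded sets are bounded, and so if $f^{-1}(U_k)$ remained in a bounded region of $X$ along a subsequence, then $U_k$ would remain in a bounded region of $Y$ along that subsequence — contradicting the previous step. So I may inductively choose $k_1<k_2<\cdots$ with $k_m>2m$ and $\dist\!\bigl(f^{-1}(U_{k_m}),\,\bigcup_{l<m}f^{-1}(U_{k_l})\bigr)>m$.

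Now put $A_i:=\{x_i^{k_m}\mid m\ge 1\}$ for $i=1,\dots,n+1$. For $i\ne j$ and any $r>0$, the intersection $B(A_i,r)\cap B(A_j,r)$ is a union of balls $B(x_i^{k_m},r)\cap B(x_j^{k_{m'}},r)$ that are non-empty only when $d(x_i^{k_m},x_j^{k_{m'}})\le 2r$; when $m=m'$ this forces $k_m\le 2r$, when $m\ne m'$ the separation step forces $\max(m,m')\le 2r$, so only finitely many balls appear in the union and the result is bounded. Hence $\omega_X(A_i,A_j)=0$. On the other hand, with $\mathcal{V}:=\mathcal{U}$, each $U_{k_m}$ meets every $f(A_i)$ at $f(x_i^{k_m})$, so $U_{k_m}\subset \bigcap_{i=1}^{n+1}\st(f(A_i),\mathcal{U})$, and $\bigcup_m U_{k_m}$ is unbounded because the $U_{k_m}$ escape every bounded subset of $Y$. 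Therefore $\omega_Y(f(A_1),\dots,f(A_{n+1}))=\infty$. By \ref{CharOfNTo1OnBd} the induced map $\partial f$ is not $n$-to-$1$, completing the contrapositive.

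The only delicate point is the diagonal/subsequence step: one needs properness to push the $U_k$'s out in $Y$, bornologousness to push the preimages out in $X$, and metrizability of $X$ (equivalently, a countable base of ball-covers) to make the inductive separation possible. Once the subsequence is in hand, the verifications of $\omega_X(A_i,A_j)=0$ and $\omega_Y(f(A_1),\dots,f(A_{n+1}))=\infty$ are essentially automatic.
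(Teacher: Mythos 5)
Your proposal is correct and follows essentially the same route as the paper: both proofs reduce the forward direction to \ref{CoarselyNto1Impliesnto1OnCoronas}, and for the converse both extract, from the failure of coarse $n$-to-$1$-ness, points $x_1^k,\dots,x_{n+1}^k\in f^{-1}(U_k)$ at pairwise distance at least $k$, form $A_i:=\{x_i^k\}_k$, and feed these into the criterion of \ref{CharOfNTo1OnBd}; even your key computation $\bigcup_m U_{k_m}\subset\bigcap_i \st(f(A_i),\mathcal{U})$ is the paper's observation that $W:=\bigcup_k U_k$ lies in each $C_i=\st(f(A_i),\mathcal{U})$. The two cosmetic differences are that you argue the contrapositive (showing $\omega_Y(f(A_1),\dots,f(A_{n+1}))=\infty$ directly from the unboundedness of $\bigcup_m U_{k_m}$) where the paper argues by contradiction (deducing $\omega_Y=0$ from the assumed $n$-to-$1$-ness and then contradicting it via boundedness of $f^{-1}(W)$). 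The one substantive difference is your subsequence/separation step forcing $\dist(f^{-1}(U_{k_m}),\bigcup_{l<m}f^{-1}(U_{k_l}))>m$: the paper simply asserts $\omega_X(A_i,A_j)=0$ for the unseparated sets $A_i=\{x_i^k\}_{k=1}^\infty$, but this is not automatic, since $x_i^k$ and $x_j^{k'}$ with $k\ne k'$ are a priori uncontrolled and could coincide along an unbounded set; your extra step (which correctly uses coarseness to bound each $f^{-1}(U_{k_l})$ and bornologousness to rule out the preimages clustering) is exactly what is needed to make the assertion true, so your version is in fact the more complete argument.
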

\begin{proof}
Suppose $f$ is not coarsely $n$-to-$1$ and the induced map of the boundaries at infinity is $n$-to-$1$. That means the existence of a uniformly bounded cover $\mathcal{U}$ of $Y$ such that for each $k\ge 1$ there is
$U_k\in U$ such that $f^{-1}(U_k)$ cannot be covered by at most $n$ sets of diameter at most $2k$. That implies existence, for each $k\ge 1$,
of points $x_1^k,\ldots,x_{n+1}^k$ of $f^{-1}(U_k)$ such that
$d(x_i^k,x_j^k)\ge k$ whenever $i\ne j$. Let $A_i:=\{x_i^k\}_{k=1}^\infty$.
Notice $\omega_X(A_i,A_j)=0$ if $i\ne j$. By Theorem \ref{CharOfNTo1OnBd}, $\omega_Y(f(A_1),\ldots,f(A_{n+1}))=0$.
Let $C_i=st(f(A_i),\mathcal{U})$ for $i\leq n+1$. Therefore $\omega_Y(C_1,\ldots,C_{n+1})=0$. Since $W:=\bigcup\limits_{i=1}^\infty U_n\subset C_i$ for each $i$,
$\omega_Y(W)=0$ and $W$ is bounded in $Y$. Hence $f^{-1}(W)$ is bounded in $X$, a contradiction as in that case, for $k > diam(f^{-1}(W))$,
$f^{-1}(U_k)$ can be covered by one set of diameter at most $2k$,
namely $f^{-1}(W)$. 

Apply \ref{CoarselyNto1Impliesnto1OnCoronas} to conclude the proof.
\end{proof}

\begin{Theorem}\label{GeneralCoarselyNTo1Thm}
If $n\ge 1$ and $f:X\to Y$ is a coarsely $n$-to-$1$ bornologous map of large scale spaces, then $asdim(Y)\leq asdim(X)+n-1$.
\end{Theorem}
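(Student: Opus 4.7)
The plan is to reduce the bound on $asdim(Y)$ to a bound on covering dimension of form boundaries, and then apply a classical Hurewicz-type theorem on dimension-raising maps. First, note the case $asdim(X)=\infty$ is vacuous, so assume $asdim(X)$ is finite. I would work under the tacit hypothesis that $X$ (and hence effectively $Y$, by passing to the large scale structure generated by $f(X)$ enriched by $\mathcal{L}_Y$) is metrizable; this is exactly the setting in which Proposition~\ref{LSFormOfMetrizableIsNormal} guarantees that the induced forms $\omega_X:=\omega(\mathcal{L}_X)$ and $\omega_Y:=\omega(\mathcal{L}_Y)$ are normal, and $T_1$ is automatic since singletons are bounded.

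The core of the argument is to pass to the form boundaries. By Theorem~\ref{MainTheoremOnNormalityAndCompactness}, both $X\cup\partial(\omega_X)$ and $Y\cup\partial(\omega_Y)$ are compact Hausdorff. Since $f$ is bornologous it is proper (preimages of bounded sets are bounded), so the hypotheses of Theorem~\ref{CoarselyNto1Impliesnto1OnCoronas} are satisfied and the induced map
\[
\partial f:\partial(\omega_X)\longrightarrow\partial(\omega_Y)
\]
is $n$-to-$1$. Its continuity and surjectivity onto $\partial(\omega_Y)$ are given by Theorems~\ref{ContExtOfFormContThm} and~\ref{CharOfNTo1OnBd}, and since it is a continuous map between compact Hausdorff spaces it is automatically closed.

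Next I would invoke the classical Hurewicz theorem on dimension-raising maps: a closed continuous surjection $g:K\to L$ between compact Hausdorff spaces with $|g^{-1}(y)|\le n$ for every $y\in L$ satisfies $\dim L\le\dim K+n-1$. Applied to $\partial f$, this gives
\[
\dim\partial(\omega_Y)\le\dim\partial(\omega_X)+n-1.
\]
Then Theorem~\ref{AsdimVsDimOfHigsonCorona} translates the covering dimension of the boundaries into asymptotic dimension, yielding exactly $asdim(Y)\le asdim(X)+n-1$.

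The main obstacle I anticipate is the metrizability condition in Theorem~\ref{AsdimVsDimOfHigsonCorona}, which the statement of \ref{GeneralCoarselyNTo1Thm} does not explicitly impose; one must argue that the general case reduces to the metrizable one by restricting to a countably generated sub-structure containing the data of $f$ and a given uniformly bounded family, or give a direct combinatorial proof that bypasses boundaries. In such a combinatorial backup argument, given a uniformly bounded cover $\mathcal{V}$ of $Y$, one would cover each $f^{-1}(V)$ by at most $n$ uniformly bounded sets (using the coarsely $n$-to-$1$ condition), refine by a uniformly bounded cover of $X$ of multiplicity at most $asdim(X)+1$, color the resulting refinement with $asdim(X)+1$ disjoint families, and show that pushing forward produces a uniformly bounded cover of $Y$ of multiplicity at most $asdim(X)+n$. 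Either route completes the proof.
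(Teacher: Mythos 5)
Your core argument is exactly the paper's Case 1: for metrizable $X$ and $Y$, pass to the form boundaries, use \ref{MainCoarseNTo1Thm}/\ref{CoarselyNto1Impliesnto1OnCoronas} to see that $\partial f$ is an $n$-to-$1$ closed surjection of compact Hausdorff spaces, apply the classical Hurewicz dimension-raising theorem (Theorem 3.3.7 in \cite{Engel}) to get $\dim\partial(\omega_Y)\le\dim\partial(\omega_X)+n-1$, and translate back via \ref{AsdimVsDimOfHigsonCorona}. That part is sound and coincides with the paper. (One small point: before invoking \ref{AsdimVsDimOfHigsonCorona} for $Y$ you need to know $asdim(Y)$ is finite; the paper cites \cite{AV} for this, and your write-up skips it.)

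The genuine gap is the reduction of the general statement to the metrizable case, which is where the paper spends most of its effort (its Cases 2 and 3). Your first proposed fix --- restrict to a countably generated sub-structure containing the data of $f$ and a given uniformly bounded cover --- does not by itself produce a metrizable large scale space: a structure generated by a countable chain of covers decomposes $X$ (and $Y$) into possibly infinitely many coarse components, on each of which the induced structure is metrizable, but the bound on multiplicity must be achieved \emph{uniformly} across all components simultaneously. The paper handles this in two stages: in Case 2 it shows the coarsely $n$-to-$1$ hypothesis forces $X$ to have at most $n$ such components (so uniformity is free), and in the General Case it runs a contradiction argument, choosing an infinite sequence of components $Y_i$ with the minimal coarsening scales $k(Y_i)$ strictly increasing, gluing them into a single metrizable space $Z$ by enlarging the covers, and applying Case 1 to $f^{-1}(Z)\to Z$. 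None of this is routine, and your sketch does not supply it. Your second proposed fix --- the direct combinatorial coloring argument --- is also problematic: that is essentially the Miyata--Virk approach, which yields a weaker bound (roughly multiplicative in $asdim(X)+1$); obtaining the sharp additive bound $asdim(X)+n-1$ is precisely why Austin--Virk, and this paper, go through the Higson-type corona. So the metrizable case of your proof stands, but the passage to arbitrary large scale spaces is missing and neither of your suggested repairs closes it.
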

\begin{proof}
\textbf{Case 1:} First consider the case of $X$ and $Y$ being metrizable.
It is known (see \cite{AV}) that $asdim(Y)$ is finite.
By \ref{AsdimVsDimOfHigsonCorona} it suffices to show
$\dim(\partial(\omega_Y))\leq \dim(\partial(\omega_X))+n-1$.
It is so (see
\ref{MainCoarseNTo1Thm}) as the induced map between boundaries of forms
is $n$-to-$1$ and Theorem 3.3.7 on page 196 in \cite{Engel} does say that dimension of the image of an $n$-to-$1$ map between compact spaces
is at most dimension of the range plus $(n-1)$.

\textbf{Case 2:} $Y$ is metrizable. Fix a metric $d_Y$ on $Y$ inducing its large scale structure.
 By induction construct a sequence of uniformly bounded covers
$\}\mathcal{V}_i\}_{i=1}^\infty$ of $X$ and a sequence of real numbers $r_i$ diverging to infinity
satisfying the following conditions:\\
1. Each element of $f^{-1}(\{B(y,r_i)\}_{y\in Y})$ is contained in a union of at most
$n$ elements of $\mathcal{V}_i$ for each $i\ge 1$,\\
2. $2\cdot r_i < r_{i+1}$ for each $i\ge 1$,\\
3. $st(\mathcal{V}_i,\mathcal{V}_i)$ refines $\mathcal{V}_{i+1}$ for each $i\ge 1$,\\
4. $ \mathcal{V}_i$ is of multiplicity at most $asdim(X)+1$ for each $i\ge 1$,\\
5. $f(\mathcal{V}_i)$ refines $\{B(y,r_{i+1})\}_{y\in Y}$ for each $i\ge 1$.

Define $d_X:X\times X\to [0,\infty]$ as follows:\\
a. $d_X(x,x)=0$ for each $x\in X$,\\
b. If $x\ne y\in X$ and there is $\ge 1$ such that both $x$ and $y$ belong to an element of $\mathcal{V}_k$, then $d_X(x,y)$ is the smallest such number $k$,\\
c. $d_X(x,y)=\infty$ if $x,y$ do not qualify in a) and b).

Consider the partition of $X$ into equivalence classes determined by $x\sim y$ if $d_X(x,y) < \infty$. There are at most $n$ such classes. Indeed,
if $S\subset X$ is a set containing $n+1$ elements pairwise non-equivalent, then $f(S)$ is contained in $B(y,r_i)$ for some $y\in Y$ and some $r_i$.
Therefore $S$ is contained in a union of at most
$n$ elements of $\mathcal{V}_i$, so two elements of $S$ belong to one one element of $\mathcal{V}_i$, a contradiction.

Notice that, for each equivalence class $X'$ of $X$, $f:X'\to f(X')$ is coarsely
$n$-to-$1$. By Case 1, $asdim(f(X'))\leq asdim(X)+n-1$.
Therefore $asdim(Y)\leq asdim(X)+n-1$.

\textbf{General Case:}
Suppose there is a uniformly bounded cover $\mathcal{U}_1$ of $Y$ which cannot be coarsened to a uniformly bounded cover of multiplicity at most
$asdim(X)+n$.
By induction construct a sequence of uniformly bounded covers
$\{\mathcal{V}_i\}_{i=1}^\infty$ of $X$ and a sequence of uniformly bounded covers
$\{\mathcal{U}_i\}_{i=1}^\infty$ of $Y$ satisfying the following conditions:\\
1. Each element of $f^{-1}(\mathcal{U}_i)$ is contained in a union of at most
$n$ elements of $\mathcal{V}_i$ for each $i\ge 1$,\\
2. $st(\mathcal{U}_i,\mathcal{U}_i)$ refines $\mathcal{U}_{i+1}$ for each $i\ge 1$,\\
3. $st(\mathcal{V}_i,\mathcal{V}_i)$ refines $\mathcal{V}_{i+1}$ for each $i\ge 1$,\\
4. $ \mathcal{V}_i$ is of multiplicity at most $asdim(X)+1$ for each $i\ge 1$,\\
5. $f(\mathcal{V}_i)$ refines $\mathcal{U}_{i+1}$ for each $i\ge 1$.

Notice that all covers of $X$ that refine $\mathcal{V}_i$ for some $i\ge 1$ form a large scale structure $\mathcal{LS}_X$ on $X$. Similarly, all covers of $Y$ that refine $\mathcal{U}_i$ for some $i\ge 1$ form a large scale structure $\mathcal{LS}_Y$ on $Y$. Also, $f:(X,\mathcal{LS}_X)\to (Y,\mathcal{LS}_Y)$ is coarsely $n$-to-$1$. Consider the following equivalence relation on $Y$: $y_1\sim y_2$ if there is $i\ge 1$ and an element $U$ of $\mathcal{U}_i$ containing both $y_1$ and $y_2$. 
By Case 2, each equivalence class of $Y$ is of asymptotic dimension at most $asdim(X)+n-1$. Consequently, for each equivalence class $Y'$ of $Y$
there is the smallest integer $k(Y')$ such that $\mathcal{U}_1 | Y'$
can be coarsened to a cover refining $\mathcal{U}_k$ and of multiplicity at most
$asdim(X)+n$. Therefore there is an infinite sequence $Y_i$ of equivalence classes of $Y$ so that $k(Y_i)$ is strictly increasing.
Put $Z:=\bigcup\limits_{i=1}^\infty Y_i$.
For each $i > 1$ redefine $\mathcal{U}_i$ by adding $\bigcup\limits_{j=1}^{i-1} Y_i$
to it. Similarly, each $i > 1$ redefine $\mathcal{V}_i$ by adding $\bigcup\limits_{j=1}^{i-1} f^{-1}(Y_i)$
to it. The new covers induce metrizable large scale structures on $Z$
and $f^{-1}(Z)$ such that $f:f^{-1}(Z)\to Z$ is coarsely $n$-to-$1$.
Moreover, $asdim(f^{-1}(Z))\leq asdim(X)$.
By Case 1, $asdim(Z)\leq asdim(X)+n-1$,
so there is a uniformly bounded cover $\mathcal{U}$ of $Z$ coarsening $\mathcal{U}_1|Z$
of multiplicity at most $asdim(X)+n$. That contradicts $k(Y_i)$ being strictly increasing.
\end{proof}

\begin{Remark}
Theorem \ref{GeneralCoarselyNTo1Thm} was proved by Austin-Virk in \cite{AV} for proper metric spaces $X$ and $Y$.
\end{Remark}

\begin{Theorem}\label{HomeoOfHigsonImpliesLSEquivalence}
A coarse bornologous function $f:X\to Y$ of metrizable large scale spaces is a large scale equivalence if and only if it induces a homeomorphism of Higson coronas.
\end{Theorem}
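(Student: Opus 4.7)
The plan is to prove the two implications separately, relying on the continuous boundary map $\partial f:\partial(\omega_X)\to\partial(\omega_Y)$ supplied by \ref{ContExtOfFormContThm}.

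For the forward direction, assume $f$ is a large scale equivalence with coarse inverse $g:Y\to X$. The key auxiliary step is to show that close coarse bornologous maps $f_1,f_2:X\to Y$ between metrizable large scale spaces induce identical boundary maps. Indeed, closeness furnishes a uniformly bounded cover $\mathcal{U}$ of $Y$ with $f_1(D)\subset\st(f_2(D),\mathcal{U})$ and $f_2(D)\subset\st(f_1(D),\mathcal{U})$ for every $D\subset X$; by \ref{LSOfOmegaIsLSForMetric} one has $\mathcal{U}\in LS(\omega_Y)$, and \ref{CharOfLSInducedByNormalForms} then tells us that $f_1(D)$ and $f_2(D)$ have identical coronas. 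Combined with the ultrafilter description of $\tilde f_i$ in \ref{ContExtOfFormContThm}, this gives $\tilde f_1=\tilde f_2$ on $\partial(\omega_X)$. Applying this lemma to the pairs $(g\circ f,\mathrm{id}_X)$ and $(f\circ g,\mathrm{id}_Y)$ exhibits $\partial g$ as a two-sided inverse of $\partial f$, so $\partial f$ is a homeomorphism.

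For the backward direction, suppose $\partial f$ is a homeomorphism. Since $\partial f$ is in particular $1$-to-$1$, \ref{MainCoarseNTo1Thm} applied with $n=1$ yields that $f$ is coarsely $1$-to-$1$, i.e.\ coarsely injective. Once I additionally establish that $f(X)$ is coarsely dense in $Y$, the conclusion is standard: fix a density constant $R$, define $g:Y\to X$ by choosing any $g(y)\in f^{-1}(B(y,R))$, use coarse injectivity to conclude $g$ is bornologous, and observe that $g\circ f$ is close to $\mathrm{id}_X$ and $f\circ g$ to $\mathrm{id}_Y$, witnessing $f$ as a large scale equivalence.

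The main obstacle is therefore the coarse density of $f(X)$, and it is precisely here that the surjective half of the homeomorphism intervenes. Fix a metric $d_Y$ inducing the large scale structure on $Y$ and suppose for contradiction that $d_Y(y_n,f(X))\ge n$ for some sequence $\{y_n\}$. Set $A:=\{y_n\}_{n\ge 1}$; Zorn's lemma extends $\{A\}$ to some $\mathcal{P}\in\partial(\omega_Y)$. Surjectivity of $\partial f$ provides $\mathcal{Q}\in\partial(\omega_X)$ with $\partial f(\mathcal{Q})=\mathcal{P}$. The ambient set $X$ lies in $\mathcal{Q}$ by Axiom 3 and maximality, so $f(X)\in\mathcal{P}$. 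Hence $\omega_Y(A,f(X))=\infty$, which in metric terms forces $B(A,r)\cap B(f(X),r)$ to be unbounded for some $r>0$. A divergent sequence $z_k$ in that intersection yields $y_{n_k}\in A$ with $d_Y(z_k,y_{n_k})\le r$ and $d_Y(z_k,f(X))\le r$, so that $y_{n_k}\to\infty$ and $d_Y(y_{n_k},f(X))\le 2r$, contradicting $d_Y(y_n,f(X))\ge n$ once $n_k$ exceeds $2r$.
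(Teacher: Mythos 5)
Your proof is correct and follows essentially the same route as the paper: coarse injectivity via \ref{MainCoarseNTo1Thm} with $n=1$, and coarse density of $f(X)$ extracted from surjectivity of the boundary map by pushing an ultrafilter containing a sequence escaping $B(f(X),n)$ back to $X$. The only difference is cosmetic --- the paper factors through $Z=f(X)$ and leaves the forward direction implicit, whereas you spell it out via the (already available) fact that close coarse maps induce equal boundary maps.
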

\begin{proof}
Suppose $f:X\to Y$ induces a homeomorphism of Higson coronas.\\
\textbf{Case 1}: $X\subset Y$ and $f$ is the inclusion. Given a metric $d$ on $Y$ we need to show existence of $r > 0$ such that $B(X,r)=Y$.
If such $r$ does not exist, then $Y_i:=Y\setminus B(X,i)$ is a sequence of unbounded sets of $Y$ and there is $\mathcal{P}\in \partial(\omega_Y)$ containing a sequence $y_i\in Y_i$. There is $\mathcal{Q}\in \partial(\omega_X)$, $\tilde f(\mathcal{Q})=\mathcal{P}$.
Since $X\in \mathcal{Q}$, $X\in \mathcal{P}$ leading to a contradiction:
$\omega_Y(X,\{y_i\}_{i=1}^\infty)=0$.\\
\textbf{General Case}:
By Case 1, $Z:=f(X)$ induces a large scale equivalence $i:Z\to Y$.
Apply \ref{MainCoarseNTo1Thm} to conclude $f:X\to Z$ is coarsely $1$-to-$1$. That means precisely that the inverse of a uniformly bounded cover of $Z$ is uniformly bounded in $X$ from which it follows that $f$ is a large scale equivalence.
\end{proof}

\begin{Corollary}
A coarse bornologous function $f:X\to Y$ of metrizable large scale spaces is a large scale embedding if and only if it induces a topological embedding of Higson coronas.
\end{Corollary}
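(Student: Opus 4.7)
The plan is to reduce to Theorem \ref{HomeoOfHigsonImpliesLSEquivalence} by considering $Z:=f(X)$ as a subspace of $Y$ with the inherited large scale structure; by definition, $f:X\to Y$ is a large scale embedding if and only if the corestriction $\bar f:X\to Z$ is a large scale equivalence. Let $\omega_Z$ denote the form on $Z$ induced by the subspace large scale structure, and let $\iota:Z\hookrightarrow Y$ denote the inclusion.

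The crucial preliminary is the identification $\omega_Z=\omega_Y|Z$ as forms on $Z$. On $2$-vectors the identity $B_Z(C,r)=B_Y(C,r)\cap Z$ immediately gives $\omega_Y|Z(C,D)=0\Rightarrow\omega_Z(C,D)=0$, and the converse uses metrizability: if $\omega_Y(C,D)=\infty$ then some $B_Y(C,r)\cap B_Y(D,r)$ is unbounded, so picking $y_n$ escaping to infinity in this intersection together with approximants $c_n\in C$ and $d_n\in D$ satisfying $d(y_n,c_n),d(y_n,d_n)\leq r$ produces an unbounded family $\{c_n\}\subset B_Z(C,2r)\cap B_Z(D,2r)$, whence $\omega_Z(C,D)=\infty$. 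Both forms are normal---$\omega_Z$ by \ref{LSFormOfMetrizableIsNormal}, and $\omega_Y|Z$ by intersecting the decomposing sets for $\omega_Y$ with $Z$---so \ref{NormalFormsOn2Vectors} extends equality from $2$-vectors to all vectors. Since $\omega_Y$ is large scale the induced topology on $Y$ is discrete, $Z$ is automatically closed, and \ref{RelativeTopologyAndForms} provides a topological embedding $\tilde\iota:Z\cup\partial(\omega_Z)\hookrightarrow Y\cup\partial(\omega_Y)$. By uniqueness of continuous extensions, the factorization $f=\iota\circ\bar f$ lifts to $\tilde f=\tilde\iota\circ\tilde{\bar f}$, where $\tilde{\bar f}:X\cup\partial(\omega_X)\to Z\cup\partial(\omega_Z)$ is the extension of $\bar f$ provided by \ref{ContExtOfFormContThm}.

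For the ``only if'' direction, assume $f$ is a large scale embedding, so $\bar f$ is a large scale equivalence; \ref{HomeoOfHigsonImpliesLSEquivalence} then gives a homeomorphism $\partial(\omega_X)\to\partial(\omega_Z)$, and postcomposing with $\tilde\iota$ yields the required topological embedding into $\partial(\omega_Y)$. For the converse, assume $\tilde f$ restricts to a topological embedding $\partial(\omega_X)\hookrightarrow\partial(\omega_Y)$. Since $\tilde\iota$ is a topological embedding and $\tilde f=\tilde\iota\circ\tilde{\bar f}$, the induced map $\partial(\omega_X)\to\partial(\omega_Z)$ is itself a topological embedding. It is surjective by \ref{CharOfNTo1OnBd} applied to the surjective, proper, form-continuous function $\bar f:X\to Z$, and hence it is a continuous bijection of compact Hausdorff spaces, i.e.\ a homeomorphism. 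Invoking \ref{HomeoOfHigsonImpliesLSEquivalence} on $\bar f$ now yields that $\bar f$ is a large scale equivalence, which is precisely saying $f$ is a large scale embedding.

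The main obstacle I anticipate is the identification $\omega_Z=\omega_Y|Z$: without it the natural map $\partial(\omega_Z)\to\partial(\omega_Y)$ need not be injective, the factorization $\tilde f=\tilde\iota\circ\tilde{\bar f}$ breaks down, and one would be forced to prove a subspace analogue of \ref{HomeoOfHigsonImpliesLSEquivalence} directly for the pair $(X,Z)$.
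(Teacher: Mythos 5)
Your proposal is correct and follows essentially the same route as the paper, which simply corestricts to $f(X)$ and applies Theorem \ref{HomeoOfHigsonImpliesLSEquivalence}; you have merely filled in the details the paper leaves implicit (the identification of the subspace form with the restricted form, which the paper also records as a separate proposition in Section~\ref{Forms vs large scale structures}ff., the embedding of boundaries from \ref{RelativeTopologyAndForms}, and surjectivity of the boundary map from \ref{CharOfNTo1OnBd}). No gaps.
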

\begin{proof}
$f:X\to f(X)$ induces a homeomorphism of Higson coronas. Apply \ref{HomeoOfHigsonImpliesLSEquivalence}.
\end{proof}

\section{Metrizability of form compactifications}

\begin{Theorem}
Suppose $(X,\omega)$ is a normal $T_1$ formed set. The following conditions are equivalent:\\
1. $X\cup\partial(\omega)$ is metrizable,\\
2. There exists a countable family $\mathcal{S}$ of subsets of $X$ such that for any subsets $C_1,\ldots,C_k$ of $X$ satisfying $\omega(C_1,\ldots,C_k)=0$ there are $D_i\in \mathcal{S}$, $i\leq k$, such that $C_i\subset D_i$ for each $i\leq k$ and $\omega(D_1,\ldots,D_k)=0$,\\
3. There exist subsets $C_n$, $n\ge 1$, of $X$ such that for any two subsets $D, E$ of $X$ satisfying $\omega(D,E)=0$ there is $C_k$ such that $D\subset C_k$ and $\omega(C_k,E)=0$.
\end{Theorem}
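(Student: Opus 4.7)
The plan is to establish the equivalences via $(2)\Leftrightarrow(3)$ (an algebraic argument from the axioms and normality of $\omega$) together with $(1)\Leftrightarrow(3)$, which will use the compact Hausdorff structure of $X\cup\partial(\omega)$ given by Theorem~\ref{MainTheoremOnNormalityAndCompactness}. For $(2)\Rightarrow(3)$ I simply specialize (2) to $k=2$: given $\omega(D,E)=0$, I obtain $D_n\supset D$, $D_m\supset E$ in $\mathcal{S}$ with $\omega(D_n,D_m)=0$ and take $C_k:=D_n$, so that $\omega(C_k,E)\le\omega(D_n,D_m)=0$ by Lemma~\ref{OmegaInequalityOne}. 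For $(3)\Rightarrow(2)$ I take $\mathcal{S}:=\{C_n\}$; given $\omega(V_1,\ldots,V_k)=0$, normality of $\omega$ supplies $E_1,\ldots,E_k$ with $\bigcup_{i=1}^k E_i=X$ and $\omega(V_i,E_i)=0$. Applying (3) to each pair produces $C_{n_i}\supset V_i$ with $\omega(C_{n_i},E_i)=0$, and then
$$\omega(C_{n_1},\ldots,C_{n_k})=\omega\left((C_{n_1},\ldots,C_{n_k})\ast\bigcup_{i=1}^k E_i\right)=\sum_{i=1}^k\omega\left((C_{n_1},\ldots,C_{n_k})\ast E_i\right)\le\sum_{i=1}^k\omega(C_{n_i},E_i)=0$$
by Axiom~3, Axiom~1, and Proposition~\ref{OmegaInequalityThree}, completing (2).

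For $(1)\Rightarrow(3)$: since $X\cup\partial(\omega)$ is compact metric, $C(X\cup\partial(\omega);[0,1])$ is separable in the supremum norm; I pick a countable uniformly dense subfamily $\{f_n\}$ and set $C_{n,q}:=\{x\in X:f_n(x)>q\}$ for $n\in\mathbb{N}$ and $q\in\mathbb{Q}\cap(0,1)$. Given $\omega(D,E)=0$, the closure characterization of $\omega=0$ forces $cl(D)$ and $cl(E)\cap\partial(\omega)$ to be disjoint closed subsets of the compact Hausdorff space, so Urysohn's lemma yields a continuous $f:X\cup\partial(\omega)\to[0,1]$ with $f|cl(D)=1$ and $f|(cl(E)\cap\partial(\omega))=0$. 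Approximating $f$ within uniform distance $1/4$ by some $f_n$ then gives $D\subset C_{n,1/2}$, while $cl(C_{n,1/2})\subset\{f_n\ge 1/2\}$ is disjoint from $cl(E)\cap\partial(\omega)\subset\{f_n<1/4\}$, so $\omega(C_{n,1/2},E)=0$ by Lemma~\ref{DisjointClosuresLemma}(1).

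For $(3)\Rightarrow(1)$ I will build a countable family of continuous $[0,1]$-valued functions on $X\cup\partial(\omega)$ that separates its points, which embeds the compact Hausdorff space into the metric cube $[0,1]^\mathbb{N}$ and so makes it metrizable. For each pair $(n,m)$ with $\omega(C_n,C_m)=0$ (after, if necessary, replacing the sets by the disjoint $C_n\setminus C_m$, $C_m\setminus C_n$, whose symmetric difference with the originals lies in $X\setminus\partial(\omega)$), I construct a form-continuous $f_{n,m}:X\to[0,1]$ with $f_{n,m}|C_n=0$ and $f_{n,m}|C_m=1$ by iterating Lemma~\ref{BasicNbhdInNormalFormsLemma} together with Corollary~\ref{OmegaOnClosuresGeneralCase} in the usual Urysohn fashion, then extend via Corollary~\ref{RealExtensionLemma} and Theorem~\ref{ContExtOfFormContThm} to a continuous $\tilde f_{n,m}$ on the compactification. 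For separation: given $\mathcal{P}\ne\mathcal{Q}\in\partial(\omega)$ I pick $C\in\mathcal{P}\setminus\mathcal{Q}$ and find $D\in\mathcal{Q}$ with $\omega(C,D)=0$ (by maximality of $\mathcal{Q}$), apply (3) twice to produce $C_n\supset C$ and $C_m\supset D$ with $\omega(C_n,C_m)=0$; maximality then forces $C_n\in\mathcal{P}$, $C_m\in\mathcal{Q}$, so $\tilde f_{n,m}(\mathcal{P})=0\ne 1=\tilde f_{n,m}(\mathcal{Q})$. For $x\ne y$ in $X$, the $T_1$ property gives $\omega(x,y)=0$ and the same extraction works.

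The hard part will be $(3)\Rightarrow(1)$: the Urysohn-style construction of $f_{n,m}$ for normal forms is standard but requires careful iteration of the neighborhood lemma, and the most delicate point is verifying point separation when the $C_n,C_m$ produced by (3) happen to overlap. Since by the closure characterization of $\omega=0$ any such overlap lies inside $X\setminus\partial(\omega)$, those points are open-closed in $X\cup\partial(\omega)$ (the analogue of Corollary~\ref{BisOpenClosed}), so they can either be separated by indicator functions added to the countable family or absorbed into the construction by first passing to the disjoint representatives $C_n\setminus C_m$ and $C_m\setminus C_n$ before carrying out the Urysohn step.
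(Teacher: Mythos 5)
Your handling of the purely algebraic equivalences is fine: $(2)\Rightarrow(3)$ by specializing to $k=2$, and $(3)\Rightarrow(2)$ via normality, the decomposition $\omega(V)=\omega(V\ast X)=\sum_i\omega(V\ast E_i)$ and monotonicity, is correct and cleaner than anything the paper writes down (the paper proves $1)\Rightarrow 2)$ directly and leaves $2)\Rightarrow 3)$ implicit). The problem is with both implications involving (1), and the root cause is the same in each: you treat $X\cup\partial(\omega)$ as a compact (hence second countable, hence $C(X\cup\partial(\omega))$-separable) metric space. It is only \emph{large scale} compact (\ref{LSCompactCorollary}). Every $\omega$-bounded point of $X$ is isolated in $X\cup\partial(\omega)$, so whenever $X$ has an infinite $\omega$-bounded subset the space is non-compact, and whenever it has uncountably many $\omega$-bounded points it is not separable and not second countable --- while still possibly metrizable (take $X$ uncountable with every subset $\omega$-bounded: then $\partial(\omega)=\emptyset$, $X\cup\partial(\omega)$ is uncountable discrete, condition (3) holds with the single set $C_1=X$, and the space is metrizable but embeds in no $[0,1]^{\mathbb{N}}$).

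Concretely: in $(1)\Rightarrow(3)$ the countable uniformly dense family $\{f_n\}$ need not exist, since $C(X\cup\partial(\omega);[0,1])$ with the sup norm is separable only when the space is compact metrizable. (This direction is repairable --- it suffices to approximate Urysohn functions on the compact metrizable subspace $\partial(\omega)$ and add a countable cofinal family of $\omega$-bounded sets, which is essentially what the paper does in its proof of $1)\Rightarrow 2)$ using the complements of the balls $B(\partial(\omega),1/n)$.) In $(3)\Rightarrow(1)$ the gap is not repairable within your strategy: a countable point-separating family of continuous functions gives an embedding only for compact Hausdorff spaces, and, worse, in the uncountable-discrete example above no countable family of continuous functions separates points at all, yet (3) holds and the conclusion (1) is true. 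Your fallback of ``adding indicator functions of the bounded points'' destroys countability. The paper's route avoids this entirely: it uses (3) to manufacture a countable neighborhood basis only at the points of $\partial(\omega)$ (the sets $int(cl(C_n)\setminus B_m)$), then forms a partition of unity consisting of countably many Urysohn functions for that basis together with the Dirac functions of all (possibly uncountably many) isolated $\omega$-bounded points, and invokes the metrization criterion of \cite{JD3} for spaces admitting a partition of unity whose carriers form a basis. Some device of this kind --- one that certifies metrizability without certifying second countability --- is unavoidable here.
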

\begin{proof}
1)$\implies$2). Given a metric $d$ on $X\cup\partial(\omega)$, notice that complements $B_n$ of all balls $B(\partial(\omega),\frac{1}{n})$, $n\ge 1$, are $\omega$-bounded and we start $\mathcal{S}$ by enlisting all of them. As $(\partial(\omega),d)$ is compact metric, there is a countable family $\{D\}$ of subsets of $X$ such that sets $o(D)$ form a basis at points of $\partial(\omega)$.
Add all finite unions of elements of $\{D\}\cup \{B_n\}_{n=1}^\infty$ to $\mathcal{S}$.
If subsets $C_1,\ldots,C_k$ of $X$ satisfy $\omega(C_1,\ldots,C_k)=0$,
then the intersection of sets $cl(C_i)\cap \partial(\omega)$ is empty,
so there is a finite union $E_i$ of elements of $\{D\}$ such that
$C_i\setminus E_i$ is $\omega$-bounded and the intersection of sets $cl(E_i)\cap \partial(\omega)$ is empty. Each set $C_i\setminus E_i$
must be contained in some $B_n$, so all of them are contained in a single $B_m$. Define $D_i$ as $E_i\cup B_m$ and we are done.

3)$\implies$1). Notice that $\omega$-bounded elements of the family $\{C_n\}$ form a basis of $\omega$-bounded subsets of $X$ in the following sense: if $B$ is $\omega$-bounded, then there is $n$ such that $C_n$ is $\omega$-bounded and $B\subset C_n$. List those sets as $\{B_n\}$.

\textbf{Claim}: The family $\{int(cl(C_n)\setminus B_m)\}$ forms a basis of points of $\partial(\omega)$ in $X\cup \partial(\omega)$.\\
\textbf{Proof of Claim}: Suppose $\mathcal{Q}\in\partial(\omega)$ and $Q\in o(U)\subset cl(o(U))\subset o(W)$. Now, $\omega(U,X\setminus W)=0$,
so there exists $D\in\{C_n\}$ containing $U$ such that
$\omega(D,X\setminus W)=0$.
Choose $B_n$ containing $D\cap (X\setminus W)$ and notice $cl(D\setminus B_n)\cap cl(X\setminus W)=\emptyset$. Therefore
$Q\in int(cl(D\setminus B_n))\subset o(W)$. $\blacksquare$

Given a countable basis $\{U_n\}$ of points of $\partial(\omega)$ in $X\cup \partial(\omega)$ choose, for every pair $(U_n,U_m)$ such that $cl(U_n)\subset U_m$, a continuous function $f_{m,n}:X\cup \partial(\omega)\to [0,2^{-m-n}]$ such that $f(cl(U_n))\subset \{0\}$
and $f(X\cup \partial(X,\omega)\setminus U_m)\subset \{2^{-m-n}\}$.
Add all Dirac functions of points in $X$ that are $\omega$-bounded
and the resulting family $\{f_s\}_{s\in S}$ has continuous sum
$f:=\sum\limits_{s\in S}f_s$. Now, $\{f_s/f\}_{s\in S}$ is a partition of unity on
$X\cup \partial(\omega)$ whose carriers form a basis of $X\cup \partial(\omega)$. By \cite{JD3}, $X\cup \partial(\omega)$ is metrizable.

\end{proof}

\section{Forms on large scale spaces}

\begin{Proposition}
If $A$ is a subset of a large scale space $(X,\mathcal{L})$, then the form induced by $\mathcal{L}|A$ coincides with $\omega(\mathcal{L})|A$.
\end{Proposition}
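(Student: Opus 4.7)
The plan is to show that for every $k$-vector $(C_1,\ldots,C_k)$ of subsets of $A$, $\omega(\mathcal{L}|A)(C_1,\ldots,C_k)=0$ if and only if $\omega(\mathcal{L})(C_1,\ldots,C_k)=0$, which suffices since both forms take values in $\{0,\infty\}$. Two preliminary observations streamline the argument: first, a family of subsets of $A$ is uniformly bounded in $\mathcal{L}|A$ iff it is uniformly bounded in $\mathcal{L}$, so in particular the induced bornologies agree on subsets of $A$; second, every uniformly bounded cover $\mathcal{V}$ of $A$ in $\mathcal{L}|A$ extends to a uniformly bounded cover of $X$ by adjoining the singletons $\{x\}$ for $x\in X\setminus A$, which is allowed by Axiom (1) of Definition \ref{LSStructureDef}.

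The implication $\omega(\mathcal{L})(C_1,\ldots,C_k)=0 \Longrightarrow \omega(\mathcal{L}|A)(C_1,\ldots,C_k)=0$ is immediate: given a uniformly bounded cover $\mathcal{V}$ of $A$, let $\tilde{\mathcal{V}}$ be its singleton-extension to $X$. Because each $C_i\subset A$, no singleton $\{x\}$ with $x\notin A$ meets $C_i$, so $st(C_i,\tilde{\mathcal{V}})=st(C_i,\mathcal{V})$, and the two intersections coincide and are bounded by hypothesis.

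The harder direction is where the work sits. Assuming $\omega(\mathcal{L}|A)(C_1,\ldots,C_k)=0$, let $\mathcal{V}$ be any uniformly bounded cover of $X$ and set $\mathcal{V}':=st(\mathcal{V},\mathcal{V})$, which is uniformly bounded by Axiom (2) of Definition \ref{LSStructureDef}. The key step is the containment
$$\bigcap_{i=1}^k st(C_i,\mathcal{V}) \;\subset\; st\!\left(\,\bigcap_{i=1}^k st(C_i,\mathcal{V}'|A),\; \mathcal{V}\right).$$
To verify it, pick $y$ in the left side; for each $i$ choose $V_i\in\mathcal{V}$ with $y\in V_i$ and $a_i\in V_i\cap C_i\subset A$. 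Let $W:=st(V_1,\mathcal{V})\in\mathcal{V}'$; since every $V_j$ meets $V_1$ at $y$, we get $V_j\subset W$, hence every $a_j$ lies in $W\cap A\in\mathcal{V}'|A$, and this set $W\cap A$ meets every $C_j$. Therefore $a_1\in W\cap A\subset \bigcap_j st(C_j,\mathcal{V}'|A)$, and $y\in V_1\ni a_1$ places $y$ in the right side. Since the inner intersection is bounded in $\mathcal{L}|A$, hence in $\mathcal{L}$, and starring a bounded set against the uniformly bounded $\mathcal{V}$ produces a bounded set (Axiom (2) again), we conclude $\bigcap_i st(C_i,\mathcal{V})$ is bounded.

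The main obstacle is precisely the containment above: one must pass from $\mathcal{V}$ to $\mathcal{V}'=st(\mathcal{V},\mathcal{V})$ so that all of the points $a_j\in A$ land in a common element $W$ of $\mathcal{V}'$; this is the analog of the $r\mapsto 2r$ trick used in the metric setting of Example \ref{BasicLargeScaleFormOnMetric}, and without it there is no single element of the restricted family $\mathcal{V}'|A$ witnessing that $a_1$ belongs to every $st(C_j,\mathcal{V}'|A)$ simultaneously.
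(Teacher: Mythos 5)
Your proof is correct and follows essentially the same route as the paper: both hinge on the inclusion $\bigcap_{i=1}^k st(C_i,\mathcal{V})\subset st\bigl(\bigcap_{i=1}^k st(C_i,st(\mathcal{V},\mathcal{V})|A),\mathcal{V}\bigr)$, verified by the same argument of collecting the witnesses $a_i\in V_i\cap C_i$ into a single element of $st(\mathcal{V},\mathcal{V})|A$. You additionally spell out the easy converse direction via singleton extensions of covers of $A$, which the paper leaves implicit.
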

\begin{proof}
Suppose $C_i\subset A$, $i\leq k$, and for each uniformly bounded cover $\mathcal{U}$ of $X$, the set $\bigcap\limits_{i=1}^k st(C_i,\mathcal{U})$ is bounded.
Suppose $\mathcal{W}$ is a uniformly bounded cover of $X$. To show
$\omega(\mathcal{L})(C_1,\ldots,C_k)=0$ it suffices to prove inclusion
$$\bigcap\limits_{i=1}^k st(C_i,\mathcal{W})\subset st(\bigcap\limits_{i=1}^k st(C_i,st(\mathcal{W},\mathcal{W})|A),\mathcal{W}).$$
If $x\in \bigcap\limits_{i=1}^k st(C_i,\mathcal{W})\setminus A$, then for each $i$ there is $W_i\in \mathcal{W}$ such that $C_i\cap W\ne \emptyset$ and $x\in C_i\cup W_i$. Choose $y_i\in C_i\cap W$ and notice
$y_j\in st(C_i,st(\mathcal{W},\mathcal{W})|A)$ for all $i,j\leq k$.
Therefore $x\in st(\bigcap\limits_{i=1}^k st(C_i,st(\mathcal{W},\mathcal{W})|A),\mathcal{W})$.
\end{proof}

\begin{Proposition}
Suppose $f,g:(X,\mathcal{L}_X)\to (Y,\mathcal{L}_Y)$ are coarse maps of large scale spaces. \\
a. If the induced form $\omega_Y:=\omega(\mathcal{L}_Y)$ is normal $T_1$ and $f$ is close to $g$, then
$\partial f=\partial g$.\\
b. If $(Y,\mathcal{L}_Y)$ is metrizable and $\partial f=\partial g$, then $f$ is close to $g$.
\end{Proposition}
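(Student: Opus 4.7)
My plan is to use Theorem~\ref{ContExtOfFormContThm}, which characterizes $\tilde f(\mathcal{P})$ as the unique element of $\partial(\omega_Y)$ containing every set $f(D)$ with $D\in\mathcal{P}$ (and analogously for $\tilde g$). So part (a) reduces to proving that $\tilde f(\mathcal{P})$ also contains every $g(D)$; uniqueness will then force $\tilde f(\mathcal{P})=\tilde g(\mathcal{P})$.

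For (a), closeness supplies a uniformly bounded cover $\mathcal{E}:=\{\{f(x),g(x)\}\}_{x\in X}$ of $Y$ satisfying $f(A)\subset st(g(A),\mathcal{E})$ and $g(A)\subset st(f(A),\mathcal{E})$ for every $A\subset X$. Chasing Definition~\ref{OmegaOnLSDefinition}, for any uniformly bounded cover $\mathcal{V}$ one has $st(f(A),\mathcal{V})\subset st(g(A),st(\mathcal{V},\mathcal{E}))$ and conversely; hence
\[
\omega_Y(f(D),C_1,\ldots,C_k)=\omega_Y(g(D),C_1,\ldots,C_k)
\]
for all $D\subset X$ and $C_i\subset Y$. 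If $g(D)\notin\tilde f(\mathcal{P})$, then by maximality of $\tilde f(\mathcal{P})\in\partial(\omega_Y)$ one can pick $C_1,\ldots,C_k\in\tilde f(\mathcal{P})$ with $\omega_Y(g(D),C_1,\ldots,C_k)=0$; the displayed identity then gives $\omega_Y(f(D),C_1,\ldots,C_k)=0$, contradicting the fact that $f(D),C_1,\ldots,C_k$ all belong to the point $\tilde f(\mathcal{P})$ of $\partial(\omega_Y)$.

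For (b), fix a metric $d_Y$ inducing $\mathcal{L}_Y$ and suppose $f$ is not close to $g$, so there is a sequence $\{x_n\}$ in $X$ with $d_Y(f(x_n),g(x_n))\to\infty$. Because $f$ and $g$ are coarse and bornologous, boundedness of either $\{f(x_n)\}$ or $\{g(x_n)\}$ in $Y$ would force $\{x_n\}$ to be bounded in $X$ and then the other sequence to be bounded in $Y$, contradicting the divergent distances; so both sequences are unbounded. After extracting subsequences with $d_Y(f(x_n),p),d_Y(g(x_n),p)\to\infty$ for some basepoint $p\in Y$, a diagonal extraction picks $x_n$ inductively from the tail so that $f(x_n)$ and $g(x_n)$ lie outside the $n$-neighborhood of the finite set of previously chosen images. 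This arranges $d_Y(f(x_n),g(x_m))\to\infty$ as $\max(n,m)\to\infty$. Writing $A:=\{f(x_n)\}$ and $B:=\{g(x_n)\}$, only finitely many pairs $(n,m)$ contribute to $B(A,r)\cap B(B,r)$ for each $r>0$, so the intersection is bounded and $\omega_Y(A,B)=0$. On the other hand $\{x_n\}$ is $\omega_X$-unbounded, so extends to some $\mathcal{P}\in\partial(\omega_X)$; then $A\in\tilde f(\mathcal{P})$ and $B\in\tilde g(\mathcal{P})$ by Theorem~\ref{ContExtOfFormContThm}, and $\tilde f(\mathcal{P})=\tilde g(\mathcal{P})$ would put both $A,B$ into the same point of $\partial(\omega_Y)$, giving $\omega_Y(A,B)=\infty$, a contradiction.

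The main obstacle is the diagonal extraction in (b): I need to rule out that $\{f(x_n)\}$ and $\{g(x_m)\}$ keep crossing paths even after subsequencing. The inductive choice avoiding a $d_Y$-bounded neighborhood of all previously picked images handles this cleanly, using that both $f(x_n)$ and $g(x_n)$ escape to infinity. The rest reduces to careful bookkeeping with the star operation in Definition~\ref{OmegaOnLSDefinition} combined with the maximality of points of $\partial(\omega)$.
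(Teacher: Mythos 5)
Your proposal is correct and follows essentially the same route as the paper: part (a) uses the uniformly bounded family $\{\{f(x),g(x)\}\}_{x\in X}$ together with the star-invariance built into Definition~\ref{OmegaOnLSDefinition} and the uniqueness clause of Theorem~\ref{ContExtOfFormContThm}, and part (b) produces an unbounded $A\subset X$ with $\omega_Y(f(A),g(A))=0$ and feeds it to a boundary point. The one substantive difference is that in (b) your diagonal extraction (arranging $d_Y(f(x_n),g(x_m))\to\infty$ over all pairs) is a genuine improvement: the paper asserts $\omega_Y(f(A),g(A))=0$ for the raw sequence with $d_Y(f(x_k),g(x_k))>k$, which can fail (e.g.\ $f(x_k)=2k$, $g(x_k)=k$ in $\mathbb{R}$), so the subsequence step you supply is actually needed.
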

\begin{proof} a. $\mathcal{U}:=\{f(x),g(x)\}_{x\in X}$ is a uniformly bounded family in $Y$.
Suppose $\mathcal{Q}\in \partial(\omega(\mathcal{L}_X)$
and $(\partial f)(\mathcal{Q})\ne (\partial g)(\mathcal{Q})$.
By \ref{ContExtOfFormContThm} there is $C\in \mathcal{Q}$ such that
$f(C)\notin (\partial g)(\mathcal{Q})$. 
Using \ref{ContExtOfFormContThm} again, we detect $D\in \mathcal{Q}$
so that $\omega_Y(f(C),g(D))=0$. Hence
$\omega_Y(st(f(C)),\mathcal{U}),g(D))=0$, a contradiction as
$g(C)\subset st(f(C)),\mathcal{U})$.

b. If $f$ is not close to $g$, then for each $k\ge 1$ there exists $x_k\in X$ such that $d(f(x_k),g(x_k)) > k$. Put $A:=\{x_k\}_{k=1}^\infty$ and notice it is unbounded. 
Notice $\omega_Y(f(A),g(A))=0$, so for any $\mathcal{Q}\in\partial(\omega(\mathcal{L}_X))$
containing $A$ one has $(\partial f)(\mathcal{Q})\ne(\partial g)(\mathcal{Q})$, a contradiction.

\end{proof}

\section{Asymptotic dimension 0}
It is well-known that a proper metric space $X$ is of asymptotic dimension $0$ if and only if its Higson corona is of dimension $0$. In this section we generalize this reasult to large scale structures induced by normal $T_1$ forms.
\begin{Definition}
A bornological large space $X$ is \textbf{coarsely totally disconnected} if $\mathcal{U}$-components of every uniformly bounded cover $\mathcal{U}$ of $X$ are bounded.
\end{Definition}

\begin{Lemma}\label{UComponentsLemma}
If $X$ is coarsely totally disconnected and $\mathcal{W}$ is the family of all $\mathcal{U}$-components of $X$ for some uniformly bounded cover $\mathcal{U}$, then $st(B,\mathcal{W})$ is bounded for all bounded sets $B$.
\end{Lemma}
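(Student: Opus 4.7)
The strategy is to absorb $B$ into the cover $\mathcal{U}$ and then apply coarse total disconnectedness to the enlarged cover. Consider $\mathcal{U}' := \mathcal{U}\cup\{B\}$. Since $B$ is bounded, $\{B\}\in\mathcal{LSS}_X$, and axiom (2) of \ref{LSStructureDef} ensures that $st(\mathcal{U},\{B\})$ is uniformly bounded. Each element of $\mathcal{U}'$ lies in some element of $st(\mathcal{U},\{B\})$: a given $U\in\mathcal{U}$ lies in $st(U,\{B\})$, while $B$ lies in $st(U_0,\{B\}) = U_0\cup B$ for any $U_0\in\mathcal{U}$ meeting $B$ (such a $U_0$ exists because $\mathcal{U}$ covers $X$). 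Axiom (1) then gives $\mathcal{U}'\in\mathcal{LSS}_X$.

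Next, I would identify $st(B,\mathcal{W})$ with a single $\mathcal{U}'$-component. Because $B$ itself belongs to $\mathcal{U}'$, any $\mathcal{U}'$-chain that uses the set $B$ enters and exits it through points of $B$, so passing from $\mathcal{U}$-chains to $\mathcal{U}'$-chains merges exactly those $\mathcal{U}$-components that meet $B$ into one class, while leaving all other components unchanged. Since every point of $B$ lies in some element of $\mathcal{U}$, and hence in some $\mathcal{U}$-component meeting $B$, the $\mathcal{U}'$-component containing $B$ equals $\bigcup\{W\in\mathcal{W} : W\cap B\neq\emptyset\} = st(B,\mathcal{W})$.

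Finally, coarse total disconnectedness applied to the uniformly bounded cover $\mathcal{U}'$ yields that every $\mathcal{U}'$-component is bounded; in particular $st(B,\mathcal{W})$ is bounded, which is the desired conclusion. The only delicate step is the verification that adjoining the bounded set $B$ to a uniformly bounded cover yields a uniformly bounded cover; this is handled cleanly by axioms (1) and (2) and is the sole (but minor) technical point in the proof.
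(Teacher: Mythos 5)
Your proof is correct and follows essentially the same argument as the paper, which likewise forms $\mathcal{U}':=\mathcal{U}\cup\{B\}$ and observes that the $\mathcal{U}'$-component containing $B$ contains $st(B,\mathcal{W})$, so coarse total disconnectedness applied to $\mathcal{U}'$ gives the conclusion. Your added verification that $\mathcal{U}'$ is uniformly bounded (left implicit in the paper) is welcome, with one small caveat: under the paper's definition of star, $st(U,\{B\})$ need not contain $U$, so it is cleaner to star $\mathcal{U}$ against $\{B\}\cup\bigcup_{x\in X}\{x\}$, as the paper does elsewhere (e.g.\ in \ref{LSOfOmegaPropDef}).
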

\begin{proof}
If $st(B,\mathcal{W})$ is unbounded, then the $\mathcal{U}'$-component of $X$ containing $B$, $\mathcal{U}':=\mathcal{U}\cup\{B\}$, contains $st(B,\mathcal{W})$ and is unbounded, a contradiction.
\end{proof}

\begin{Theorem}
Suppose $\omega$ is a normal $T_1$ form on $X$.
If $\dim(X\cup\partial(\omega))=0$ and $LS(\omega)$ is coarsely totally disconnected, then
the asymptotic dimension of $LS(\omega)$ equals $0$.
\end{Theorem}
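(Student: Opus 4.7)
The strategy is to show that for every uniformly bounded cover $\mathcal{U}\in LS(\omega)$, the family $\mathcal{W}$ of $\mathcal{U}$-components of $X$ itself lies in $LS(\omega)$. Since coarse total disconnectedness makes every element of $\mathcal{W}$ bounded, and since every $U\in\mathcal{U}$ is contained in a single $\mathcal{U}$-component, this will exhibit $\mathcal{W}$ as a uniformly bounded multiplicity-$1$ coarsening of $\mathcal{U}$, giving $\asdim(LS(\omega))\leq 0$.

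By \ref{CharOfLSInducedByNormalForms}(c) it suffices to verify that the corona of $C$ agrees with the corona of $st(C,\mathcal{W})$ for every $C\subset X$, and only the reverse inclusion needs work. I plan to argue by contradiction: assume some $\mathcal{P}\in cl(st(C,\mathcal{W}))\cap \partial(\omega)$ does not lie in $cl(C)$. Since $X\cup\partial(\omega)$ is compact Hausdorff by \ref{MainTheoremOnNormalityAndCompactness} and zero-dimensional by hypothesis, I separate $\mathcal{P}$ from $cl(C)$ by a clopen $K^*\subset X\cup\partial(\omega)$ and set $K_1:=K^*\cap X$, which is clopen in $X$ with $cl(K_1)=K^*$; in particular the corona of $K_1$ is exactly $K^*\cap\partial(\omega)$, and symmetrically the corona of $X\setminus K_1$ is $\partial(\omega)\setminus K^*$.

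The decisive step, which I expect to be the main obstacle, is to argue that the ``boundary layer'' $F_1:=st(K_1,\mathcal{U})\setminus K_1$ is $\omega$-bounded. Applying \ref{CharOfLSInducedByNormalForms}(c) to $\mathcal{U}\in LS(\omega)$, the corona of $st(K_1,\mathcal{U})$ equals the corona of $K_1$, hence sits inside $K^*$; but $F_1\subset X\setminus K_1$ has its corona inside $\partial(\omega)\setminus K^*$. These two coronas are disjoint, so the corona of $F_1$ is empty and $F_1$ is bounded.

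With $F_1$ under control, the argument closes cleanly. The set $A:=K_1\cap st(C,\mathcal{W})$ still has $\mathcal{P}$ in its closure (since $K^*$ is a neighborhood of $\mathcal{P}$ that meets $st(C,\mathcal{W})\subset X$ in $A$), so $A$ is $\omega$-unbounded. For each $x\in A$, the $\mathcal{U}$-component $W_x\ni x$ meets both $K_1$ (at $x$) and $X\setminus K_1$ (because $W_x$ intersects $C\subset X\setminus K_1$); a $\mathcal{U}$-chain inside $W_x$ joining the two sides must traverse some $U\in\mathcal{U}$ with $U\cap K_1\neq\emptyset$ and $U\cap(X\setminus K_1)\neq\emptyset$, and $U\setminus K_1\subset F_1$. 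Thus $W_x\cap F_1\neq\emptyset$, so $A\subset st(F_1,\mathcal{W})$. Lemma \ref{UComponentsLemma} forces $st(F_1,\mathcal{W})$ to be bounded, contradicting the unboundedness of $A$ and completing the proof.
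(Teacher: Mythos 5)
Your proof is correct and follows essentially the same route as the paper's: a clopen separation of the offending boundary point from $cl(C)$ in the zero-dimensional compactification, boundedness of the layer of $\mathcal{U}$-elements straddling the clopen set via corona preservation, and Lemma \ref{UComponentsLemma} to derive the contradiction. The only cosmetic difference is that you bound the boundary layer by a direct appeal to \ref{CharOfLSInducedByNormalForms}(c), where the paper re-derives the same fact with an explicit two-sequence argument.
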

\begin{proof}
Suppose $\mathcal{U}$ is an element of $LS(\omega)$ such that $\mathcal{W}$, the collection of $\mathcal{U}$-components of $X$, is not uniformly bounded.
That means existence of an $\omega$-unbounded $C\subset X$ such that $X_0\cap cl(C)$
($X_0:=\partial(\omega)$) is a proper subset of $X_0\cap cl(st(C,\mathcal{W}))$.
Choose $\mathcal{Q}\in X_0$ belonging to $cl(st(C,\mathcal{W}))\setminus cl(C)$.
There is an open-closed subset $Y$ of $X\cup\partial(\omega)$ containing
$cl(C)$ and missing $\mathcal{Q}$. Let $Y_1:=X\cap Y$.

Choose a continuous function $\alpha:X\cup\partial(\omega)\to S^0$
such that $\alpha(Y_1)$ and $\alpha(X\setminus Y_1)$ are disjoint.
Consider $\mathcal{S}:=\{U\in \mathcal{U} | \alpha(U)=S^0\}$.
$B:=\bigcup \mathcal{S}$ is $\omega$-bounded as otherwise, for each
$U\in \mathcal{S}$, we can pick $x_U,y_U\in U$ satisfying $\alpha(x_U)=-1$
and $\alpha(y_U)=1$. In that case both $D:=\{x_U\}_{U\in \mathcal{S}}$
and $E:= \{y_U\}_{U\in \mathcal{S}}$ are $\omega$-unbounded with disjoint coronas
contradicting $E\subset st(D,\mathcal{U})$.

Consider $Y_1\setminus st(B,\mathcal{W})$ and $(X\setminus Y_1)\setminus st(B,\mathcal{W})$. There is no $\mathcal{U}$-chain joining these two sets, hence
the stars of the two sets with respect to $\mathcal{W}$ are disjoint,
contradicting $Q$ belonging to $cl(st(C,\mathcal{W}))\setminus cl(C)$.
\end{proof}

\begin{Example}
Given an infinite set $X$ consider $\omega$ defined as follows:
$\omega(V)=0$ if and only if at least one coordinate of $V$ is finite.
In that case $LS(\omega)$ consists of all covers $U$ of $X$ by finite sets
such that $st(F,U)$ is finite for each finite subset $F$ of $X$.
Notice $LS(\omega)$ is not coarsely totally disconnected. 
Indeed, choose an infinite sequence $\{x_n\}_{n=1}^\infty$ in $X$
and notice the trivial extension $\mathcal{U}$ of $\{x_n,x_{n+1}\}_{n=1}^\infty$
has infinite $\mathcal{U}$-component.
\end{Example}

\begin{Conjecture}
If If $\dim(X\cup\partial(\omega))=0$ and the asymptotic dimension of $LS(\omega)$ does not equal $0$, then
there is an unbounded subset $C$ of $X$ whose corona consists of exactly one point.
\end{Conjecture}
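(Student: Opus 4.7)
The plan is to argue by contrapositive: assume $\dim(X\cup\partial(\omega))=0$ and that every unbounded $C\subset X$ has $|\nu C|\ge 2$, where $\nu C:=cl(C)\cap\partial(\omega)$, and try to deduce $asdim(LS(\omega))=0$. By the preceding theorem it suffices to show that $LS(\omega)$ is coarsely totally disconnected, so I suppose toward a contradiction that some $\mathcal{U}\in LS(\omega)$ admits an unbounded $\mathcal{U}$-component $W$. Then $\nu W$ is a nonempty closed subset of $\partial(\omega)$, and by hypothesis it contains two distinct points $p\ne q$.

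Next I use $\dim(Y)=0$, where $Y:=X\cup\partial(\omega)$, to pick a clopen $V\subset Y$ with $p\in V$ and $q\notin V$, and set $W_A:=W\cap V$, $W_B:=W\setminus V$. Since $\omega$-bounded subsets of $X$ have closures contained in $X$, while $p\in cl(W_A)$ and $q\in cl(W_B)$, both $W_A$ and $W_B$ are unbounded; moreover $cl(W_A)\cap cl(W_B)=\emptyset$ in $Y$, which gives $\omega(W_A,W_B)=0$. Because $\mathcal{U}\in LS(\omega)$, Proposition \ref{CharOfLSInducedByNormalForms} yields $\omega(st(W_A,\mathcal{U}),st(W_B,\mathcal{U}))=0$, forcing the bridge set $Z:=st(W_A,\mathcal{U})\cap st(W_B,\mathcal{U})$ to be $\omega$-bounded. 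Any $\mathcal{U}$-chain in $W$ crossing from $W_A$ to $W_B$ must take its crossing step inside $Z$, so deleting $Z$ splits $W\setminus Z$ into $\mathcal{U}$-chain components each lying entirely within $W_A$ or entirely within $W_B$.

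The remaining task is to iterate this splitting until an unbounded subset of $W$ with one-point corona is produced, and this is the main obstacle. In the case where $\partial(\omega)$ is second countable I would fix a decreasing basis $\{V_n\}_{n=1}^\infty$ of clopen neighborhoods of a fixed $p\in\nu W$ in $Y$; for each $n$ the set $W\cap V_n$ is unbounded (otherwise $p\notin cl(W)$, as $cl(W\setminus V_n)$ avoids $V_n$), so one can pick $x_n\in W\cap V_n$ avoiding an exhausting sequence of $\omega$-bounded sets. Then $C:=\{x_n:n\ge 1\}$ is unbounded and satisfies $cl(\{x_m:m\ge n\})\subset V_n$, so $\nu C\subset\bigcap_n V_n=\{p\}$, producing the desired unbounded $C$ with $|\nu C|=1$ and contradicting the standing hypothesis. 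In the general case, however, an arbitrary intersection of clopen neighborhoods of $p$ in $Y$ need not remain clopen, so this countable diagonal argument is unavailable; I expect the hard part to be replacing it with a transfinite or Zorn-type construction — for example via a maximal filter of unbounded subsets of $W$ whose members each contain $p$ in their corona, from which one must still extract a single unbounded member — and this is where the conjecture appears to require additional structural input on $\omega$.
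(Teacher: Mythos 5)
The statement you are trying to prove is labelled a \emph{Conjecture} in the paper: no proof is given there, so there is nothing to compare your argument against, and a complete proof would in fact settle an open question of the author. Your opening moves are sound and are the natural ones: the contrapositive reduction via the preceding theorem (so that it suffices to exhibit an unbounded $\mathcal{U}$-component $W$ and extract from it an unbounded set with one-point corona), the use of $\dim(X\cup\partial(\omega))=0$ to find a clopen $V$ separating two corona points of $W$, the observation that $W\cap V$ and $W\setminus V$ are both unbounded with disjoint coronas, and the conclusion via \ref{CharOfLSInducedByNormalForms} that the bridge set $Z$ is $\omega$-bounded. (Note that all of this tacitly assumes $\omega$ is normal and $T_1$, which the conjecture does not state but the surrounding section supplies.)

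However, the proposal is not a proof, and you say so yourself: the splitting step only shrinks the corona, and you have no mechanism for passing to a limit and arriving at a single unbounded set whose corona is a singleton. Even your second-countable sketch has an additional unacknowledged gap: to make the diagonal sequence $C=\{x_n\}$ unbounded you invoke ``an exhausting sequence of $\omega$-bounded sets,'' i.e.\ you assume the bornology $\mathcal{B}(\omega)$ has a countable cofinal family. For a general formed set this fails, and without it there is no reason a sequence $x_n\in W\cap V_n$ should be $\omega$-unbounded. A further soft spot is the claim that deleting $Z$ leaves you with usable unbounded pieces: $W_A\setminus st(Z,\mathcal{U})$ may decompose into infinitely many bounded $\mathcal{U}$-chain components whose union is unbounded, so the component structure is not obviously inherited by either side. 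In short, the reduction and the single splitting step are correct and worth recording, but the transfinite or Zorn-type continuation you gesture at is exactly the content of the conjecture, and it remains open here.
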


\section{Group actions on formed sets}
The goal of this section is to give sufficient conditions for finite group actions on $X$ to produce spaces of orbits $X/G$ of the same asymptotic dimension as $asdim(X)$.

\begin{Definition}
Suppose a group $G$ acts on a formed set $(X,\omega)$.
$\omega$ is \textbf{$G$-invariant} if
$$\omega(V)=\omega(g\cdot V)$$
for each $g\in G$ and each vector $V$ in $X$.

Naturally, $g\cdot (C_1,\ldots,C_k)$ is defined as $(g\cdot C_1,\ldots,g\cdot C_k)$.

The \textbf{space of orbits} $X/G$ consists of sets of the form $G\cdot x$. There is a natural projection $\pi:X\to X/G$ ($\pi(x):=G\cdot x$)
and $X/G$ is equipped with the form $\omega_G$ defined as follows:
$$\omega_G(C_1,\ldots,C_k)=\omega(\pi^{-1}(C_1),\ldots,\pi^{-1}(C_k))$$
for all vectors $(C_1,\ldots,C_k)$ in $X/G$.
\end{Definition}

\begin{Proposition}
Suppose $G$ is a finite group acting on $X$. If $\omega$ is a normal, $T_1$, and $G$-invariant form on $X$, then the induced form $\omega_G$ on $X/G$ is normal and $T_1$. Moreover, $\tilde \pi:X\cup \partial(\omega)\to
X/G\cup \partial(\omega_G)$ is a closed map.
\end{Proposition}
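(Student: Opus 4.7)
The plan is to establish, in order, the $T_1$ property of $\omega_G$, the normality of $\omega_G$, and the closedness of $\tilde\pi$, exploiting throughout that $G$ is finite (so orbits and $G$-saturations $G\cdot D=\bigcup_{g\in G} g\cdot D$ are finite unions to which Axiom 1 of forms applies), that $\omega$ is $G$-invariant, and that $G$-invariant subsets of $X$ correspond bijectively under $\pi$ to subsets of $X/G$. For $T_1$-ness, I would take distinct orbits $G\cdot x\ne G\cdot y$; their $\pi$-preimages are the disjoint finite sets $G\cdot x$ and $G\cdot y$, and $T_1$-ness of $\omega$ together with iterated Axiom 1 yields $\omega_G(G\cdot x,G\cdot y)=\omega(G\cdot x,G\cdot y)=0$.

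For normality of $\omega_G$, I would start with $\omega_G(C_1,\ldots,C_k)=0$, apply normality of $\omega$ to the preimage vector $(\pi^{-1}(C_1),\ldots,\pi^{-1}(C_k))$ to obtain sets $D_1,\ldots,D_k$ covering $X$ with $\omega(\pi^{-1}(C_i),D_i)=0$, and then $G$-saturate each $D_i$ to $E_i:=G\cdot D_i$. The identity $\omega(\pi^{-1}(C_i),E_i)=\sum_{g\in G}\omega(\pi^{-1}(C_i),g\cdot D_i)$ from Axiom 1, combined with $G$-invariance of $\omega$ and of $\pi^{-1}(C_i)$ (so that $g^{-1}\cdot\pi^{-1}(C_i)=\pi^{-1}(C_i)$), forces each summand to be $0$. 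Each $E_i$ is $G$-invariant, hence equals $\pi^{-1}(D'_i)$ for $D'_i:=\pi(E_i)\subset X/G$; these $D'_i$ together with the relations $\omega_G(C_i,D'_i)=0$ and $\bigcup D'_i=X/G$ witness normality.

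For closedness of $\tilde\pi$, I would first note that $\pi$ is form-continuous via \ref{OmegaInequalityOne}:
$$\omega(C_1,\ldots,C_k)\leq \omega(G\cdot C_1,\ldots,G\cdot C_k)=\omega_G(\pi(C_1),\ldots,\pi(C_k)),$$
so vanishing of the right side forces vanishing of the left. Then \ref{ContExtOfFormContThm} provides the continuous extension $\tilde\pi$, and \ref{ClosedMapProp} concludes: the domain is large scale compact by \ref{LSCompactCorollary}, the codomain is Hausdorff by the already established normality and $T_1$-ness of $\omega_G$ combined with \ref{MainTheoremOnNormalityAndCompactness}, and $\tilde\pi$ carries bounded sets to bounded sets because $\omega(B)=0$ implies $\omega(G\cdot B)=\sum_{g\in G}\omega(g\cdot B)=0$. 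Since \ref{ClosedMapProp} works by restricting to closed subsets, and closed subsets of a large scale compact space inherit large scale compactness, the full closedness of $\tilde\pi$ follows.

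I expect the main obstacle to be the normality step, specifically the replacement of the non-$G$-invariant decomposition $\{D_i\}$ supplied by normality of $\omega$ with a $G$-invariant decomposition that still witnesses orthogonality to each $\pi^{-1}(C_i)$. The saturation trick $E_i=G\cdot D_i$ works precisely because $G$ is finite (keeping the union amenable to Axiom 1) and because both $\omega$ and the sets $\pi^{-1}(C_i)$ are $G$-invariant, so that every translate $g\cdot D_i$ continues to satisfy $\omega(\pi^{-1}(C_i),g\cdot D_i)=0$.
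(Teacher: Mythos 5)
Your proposal is correct and follows essentially the same route as the paper: additivity of $\omega$ over the finite orbits for $T_1$-ness, the saturation $E_i:=G\cdot D_i$ combined with $G$-invariance to repair the decomposition for normality, and an appeal to \ref{ClosedMapProp} after checking that $\pi$ sends $\omega$-bounded sets to $\omega_G$-bounded sets. You spell out a few hypotheses (form-continuity of $\pi$, Hausdorffness of the target) that the paper leaves implicit, but the argument is the same.
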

\begin{proof}
Given two different orbits $G\cdot x$ and $G\cdot y$, $\omega(G\cdot x,G\cdot y)=0$ (use additivity of $\omega$). Thus $\omega_G$ is $T_1$.

Supposae $\omega(\pi^{-1}(C_1),\ldots,\pi^{-1}(C_k))=0$
for a vector $(C_1,\ldots,C_k)$ in $X/G$. Choose
sets $D_i$ in $X$ so that $\omega(D_i,\pi^{-1}(C_i))=0$ for each $i\leq k$
and $\{D_i\}_{i=1}^k$ covers $X$. For each $g\in G$,
$\omega(g\cdot D_i,\pi^{-1}(C_i))=\omega(g\cdot D_i,g\cdot \pi^{-1}(C_i))=\omega(D_i,\pi^{-1}(C_i))=0$. Therefore, $\omega(G\cdot D_i,\pi^{-1}(C_i))=0$
and sets $E_i:=\pi(G\cdot D_i)$ form a cover of $X/G$
such that $\omega_G(E_i,C_i)=0$ for each $i\leq k$.

To show $\tilde \pi$ is closed we apply \ref{ClosedMapProp}
by observing $\pi(B)$ is $\omega_G$-bounded if $B$ is $\omega$-bounded.
Indeed, $\omega_G(\pi(B),X/G)=\omega(G\cdot B,X)=0$
as $G\cdot B$ is a finite union of $\omega$-bounded sets $g\cdot B$.
\end{proof}

\begin{Corollary}\label{DimensionOfCoronasUnderGroupActions}
Suppose $G$ is a finite group acting on $X$. If $\omega$ is a normal, $T_1$, and $G$-invariant form on $X$, then $\dim(X\cup \partial(\omega))=
\dim(X/G\cup \partial(\omega_G))$.
\end{Corollary}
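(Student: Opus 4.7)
The plan is to identify $X/G\cup\partial(\omega_G)$ with the orbit space $(X\cup\partial(\omega))/G$ under the naturally induced $G$-action, and then invoke the classical preservation of covering dimension under finite group quotients of compact Hausdorff spaces.

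First I would extend the $G$-action on $X$ to $X\cup\partial(\omega)$ by setting $g\cdot\mathcal{Q}:=\{g\cdot C:C\in\mathcal{Q}\}$ for $\mathcal{Q}\in\partial(\omega)$. Since $\omega$ is $G$-invariant, this yields a well-defined action on $\partial(\omega)$, and the computation $g^{-1}(o(U))=o(g^{-1}U)$ shows each $g$ acts by a homeomorphism of $X\cup\partial(\omega)$. By the preceding proposition, $\tilde\pi$ is a closed continuous surjection; since $\pi:X\to X/G$ is $G$-invariant, uniqueness of the continuous extension forces $\tilde\pi$ to be $G$-invariant, so it factors through the orbit space as a continuous surjection $\sigma:(X\cup\partial(\omega))/G\to X/G\cup\partial(\omega_G)$ between compact Hausdorff spaces.

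The heart of the argument is to show $\sigma$ is a homeomorphism, equivalently that the fibers of $\tilde\pi$ are exactly the $G$-orbits. Suppose instead that $\mathcal{Q}$ and $\mathcal{Q}'$ lie in distinct $G$-orbits yet $\tilde\pi(\mathcal{Q})=\tilde\pi(\mathcal{Q}')$. The finite disjoint closed sets $G\cdot\mathcal{Q}$ and $G\cdot\mathcal{Q}'$ in the compact Hausdorff space $X\cup\partial(\omega)$ can be separated by Urysohn's lemma, and averaging the separating function over $G$ produces a $G$-invariant continuous $\phi:X\cup\partial(\omega)\to[0,1]$ with $\phi(\mathcal{Q})=1$ and $\phi(\mathcal{Q}')=0$. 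By \ref{IffExtendsItIsFormContinuous}, $\phi|_X$ is form-continuous, and being $G$-invariant it descends to $\bar\phi:X/G\to[0,1]$; the identity $\omega_G(C,D)=\omega(\pi^{-1}C,\pi^{-1}D)$ makes $\bar\phi$ form-continuous with respect to $\omega_G$. Applying \ref{ContExtOfFormContThm} extends $\bar\phi$ to a continuous $\tilde{\bar\phi}:X/G\cup\partial(\omega_G)\to[0,1]$, whence $\tilde{\bar\phi}\circ\tilde\pi$ is another continuous extension of $\phi|_X$. Uniqueness of such extensions forces $\tilde{\bar\phi}\circ\tilde\pi=\phi$, so $1=\phi(\mathcal{Q})=\phi(\mathcal{Q}')=0$, a contradiction.

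With $\sigma$ established as a homeomorphism, the claim reduces to the classical equality $\dim(Y/G)=\dim Y$ for a finite group $G$ acting by homeomorphisms on a compact Hausdorff space $Y$, applied with $Y=X\cup\partial(\omega)$. The main obstacle is the descent argument proving fibers equal orbits; everything else (closedness of $\tilde\pi$, normality and $T_1$-ness of $\omega_G$, existence of the continuous extensions) has already been supplied by the preceding propositions, and the group-action dimension equality is a standard input from dimension theory.
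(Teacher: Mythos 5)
Your proof is correct, and its skeleton coincides with the paper's: extend the $G$-action over $X\cup\partial(\omega)$ by $g\cdot\mathcal{Q}:=\{g\cdot C\mid C\in\mathcal{Q}\}$, prove that the fibers of $\tilde\pi$ are exactly the $G$-orbits, and then feed an open, finite-to-one surjection into a dimension-preservation theorem (the paper invokes Proposition 9.2.16 of \cite{Pears} for open surjections with finite fibers between weakly paracompact normal spaces; your ``finite group quotients of compact Hausdorff spaces preserve covering dimension'' is the same statement applied to the orbit map). Where you genuinely diverge is in the proof of the central claim that fibers equal orbits. The paper stays inside the combinatorics of the form: it separates $\mathcal{Q}$ from each $g\cdot\mathcal{P}$ by basic neighborhoods $o(U_g)$ and $o(W_g)$ with $\omega(U_g,W_g)=0$, intersects over $g\in G$, and observes that the saturations $G\cdot U$ and $G\cdot W$ are $\omega$-orthogonal yet both push forward into the single maximal family $\tilde\pi(\mathcal{Q})=\tilde\pi(\mathcal{P})$, a contradiction. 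You instead separate the two finite orbits by a Urysohn function, average over $G$, descend the invariant function to $X/G$, and transport the contradiction through \ref{IffExtendsItIsFormContinuous}, \ref{ContExtOfFormContThm} and uniqueness of continuous extensions on the dense subset $X$. Both arguments are sound; the paper's is shorter and more self-contained, while yours reuses the extension machinery already built and has the side benefit of making explicit the homeomorphism $(X\cup\partial(\omega))/G\cong X/G\cup\partial(\omega_G)$, which the paper leaves implicit.
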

\begin{proof} In order to apply Proposition 9.2.16 from \cite{Pears} 
we need to show that $\tilde \pi:X\cup \partial(\omega)\to 
X/G\cup \partial(\omega_G)$ is open and its fibers are finite. Here is the full statement of that proposition:\\
Let $X,Y$ be weakly paracompact, normal spaces. Let $f : X\to Y$ be a continuous, open surjection. If for every point $y\in Y$ the pre-image $f^{-1}(y)$ is finite, then $\dim(X) = \dim(Y)$.

Notice we can extend the action of $G$ over $X\cup \partial(\omega)$.
Namely, $g\cdot \mathcal{P}:=\{g\cdot C | C\in \mathcal{P}\}$.
Observe $g\cdot o(U)=o(g\cdot U)$, so $G$ acts by homeomorphisms
on $X\cup \partial(X,\omega)$.
It suffices to show that fibers of $\tilde \pi$ coincide with the orbits of $G$, then
$\tilde\pi$ is open as it is quotient (it is actually closed)
and the inverse of $\tilde\pi(W)$ is $G\cdot W$ for each open subset $W$
of $X\cup \partial(X,\omega)$.

Suppose $\tilde\pi(\mathcal{Q})=\tilde\pi(\mathcal{P})$ but $\mathcal{Q}\ne g\cdot \mathcal{P}$ for all $g\in G$. 
For each $g\in G$ there are open subsets $U_g$, $W_g$ of $X$
such that $\mathcal{Q}\in o(U_g)$, $g\cdot \mathcal{P}\in o(W_g)$
and closures of $o(U_g)$ and $o(W_g)$ are disjoint. In particular, $\omega(U_g,W_g)=0$.
Put $U=\bigcap\limits_{g\in G}U_g$ and $W= \bigcap\limits_{g\in G}g^{-1}\cdot W_g$. $\mathcal{Q}\in o(U)$, $\mathcal{P}\in o(W)$,
and $\omega(G\cdot U,G\cdot W)=0$, a contradiction
as $G\cdot U\in \tilde\pi(\mathcal{Q})$ and $G\cdot W\in\tilde\pi(\mathcal{P})$.
\end{proof}

\begin{Theorem}\label{GActingOnMetricSpaces}
Let $X$ be a metric space and let $G$ be a finite group acting isometrically on $X$. Then $X/G$ equipped with the Hausdorff metric has the same asymptotic dimension as $X$ if $asdim(X) < \infty$.
\end{Theorem}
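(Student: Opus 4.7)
The strategy is to reduce the theorem to the already-established Corollary \ref{DimensionOfCoronasUnderGroupActions} together with Theorem \ref{AsdimVsDimOfHigsonCorona}, which identifies the asymptotic dimension of a metrizable large scale space with the topological dimension of its form-corona.

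First, I would take $\omega:=\omega_l(X,d)$, the basic large scale form of $(X,d)$ from \ref{BasicLargeScaleFormOnMetric}. Since $G$ acts isometrically, $g\cdot B(C,r)=B(g\cdot C,r)$, so $\omega$ is manifestly $G$-invariant. It is normal by \ref{LSFormOfMetrizableIsNormal} and $T_1$ because it is large scale (singletons are bounded).

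Next I would verify that the quotient form $\omega_G$ on $X/G$ coincides with the basic large scale form $\omega_H$ induced by the Hausdorff metric $d_H$. The key observation is that for any $C\subset X/G$ and $r>0$,
$$B_{d_H}(C,r)=\pi\bigl(B(\pi^{-1}(C),r)\bigr),$$
and the $G$-invariance of $\pi^{-1}(C)$ passes to $B(\pi^{-1}(C),r)$, so $\pi^{-1}B_{d_H}(C,r)=B(\pi^{-1}(C),r)$. Using this, one checks that
$$\pi\Bigl(\bigcap_{i=1}^{k}B(\pi^{-1}(C_i),r)\Bigr)=\bigcap_{i=1}^{k}B_{d_H}(C_i,r),$$
and the left-hand intersection is bounded in $(X,d)$ iff the right-hand side is bounded in $(X/G,d_H)$; hence $\omega_H=\omega_G$.

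Since $\pi:X\to X/G$ is bornologous and coarsely $|G|$-to-$1$, Theorem \ref{GeneralCoarselyNTo1Thm} gives $asdim(X/G)\leq asdim(X)+|G|-1<\infty$. Consequently Theorem \ref{AsdimVsDimOfHigsonCorona} applies to both spaces, yielding
$$asdim(X)=\dim(\partial(\omega)),\qquad asdim(X/G)=\dim(\partial(\omega_G)).$$
It then remains to pass the equality $\dim(X\cup\partial(\omega))=\dim(X/G\cup\partial(\omega_G))$ of \ref{DimensionOfCoronasUnderGroupActions} down to the boundaries. Since $\omega$ is large scale, the induced topology on $X$ is discrete and every point of $X$ is clopen in $X\cup\partial(\omega)$; a finite-open-cover shrinking argument (using normality plus the fact that any clopen neighborhood of $\partial(\omega)$ omits only finitely many discrete points by compactness) shows $\dim(X\cup\partial(\omega))=\dim(\partial(\omega))$, and likewise on the quotient side. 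Combining gives $asdim(X)=asdim(X/G)$.

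The main obstacle is the identification $\omega_G=\omega_H$: while elementary, it is the step where the isometric nature of the action and the choice of Hausdorff metric get used, and the reverse inclusion $\bigcap_i\pi(B(\pi^{-1}(C_i),r))\subset\pi(\bigcap_i B(\pi^{-1}(C_i),r))$ genuinely requires $G$-invariance of each preimage to absorb the different $g_i$'s produced for each coordinate. The final dimension-transfer step could alternatively be bypassed by observing that the restriction of $\tilde\pi$ to $\partial(\omega)\to\partial(\omega_G)$ is itself open, surjective, and finite-to-one (since $\partial(\omega)$ is $G$-saturated in $X\cup\partial(\omega)$), so Proposition 9.2.16 of \cite{Pears} may be applied directly to the coronas, as in the proof of \ref{DimensionOfCoronasUnderGroupActions}.
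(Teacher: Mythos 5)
Your proposal is correct and follows essentially the same route as the paper: identify the quotient form $\omega_G$ with the basic large scale form of the Hausdorff metric on $X/G$, use the coarse $|G|$-to-$1$ property of $\pi$ to get finiteness of $asdim(X/G)$, and conclude via \ref{AsdimVsDimOfHigsonCorona} and \ref{DimensionOfCoronasUnderGroupActions}. The only cosmetic differences are that the paper verifies the identification of the two forms on $2$-vectors only (invoking \ref{NormalFormsOn2Vectors}) by a sequence/unboundedness argument, whereas you prove the ball identity $\pi^{-1}(B_{d_H}(C,r))=B(\pi^{-1}(C),r)$ directly for all $k$-vectors, and that you are more explicit than the paper about passing from $\dim(X\cup\partial(\omega))$ to $\dim(\partial(\omega))$.
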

\begin{proof}
Let $\omega$ be the basic large scale form on $X$ induced by its metric $d$. We will show that the basic large scale form $\lambda$ on $X/G$ induced by the Hausdorff metric $d_H$ equals the form $\omega_G$ on $X/G$ obtained from $\omega$ via the projection $\pi:X\to X/G$. Since both forms are normal and $T_1$, it suffices to show $\omega_G(C,D)=\lambda(C,D)$
for all $C,D\subset X/G$ (see \ref{NormalFormsOn2Vectors}).

Suppose $\omega_G(C,D)=\infty$. That means $B(G\cdot C,r)\cap B(G\cdot D,r)$ is unbounded for some $r > 0$. Pick points $c_i\in G\cdot C$ and $d_i\in G\cdot D$ such that $d(c_i,d_i) < 2r$ for each $i \ge 1$ and $\{c_i\}_{i=1}^\infty$ is unbounded. Notice $d_H(G\cdot c_i,G\cdot d_i) < 2r$ for each $i\ge 1$. Therefore $\lambda(G\cdot C,G\cdot D)=\infty$.

Conversely, if $\lambda(G\cdot C,G\cdot D)=\infty$, then there is $r > 0$
and sequences $c_i\in G\cdot C$ and $d_i\in G\cdot D$ such that $d_H(G\cdot c_i,G\cdot d_i) < 2r$ for each $i \ge 1$ and $\{G\cdot c_i\}_{i=1}^\infty$ is unbounded in $d_H$. We may pick the sequences $c_i\in G\cdot C$ so that $\{c_i\}_{i=1}^\infty$ is unbounded in $X$. For each $i$ we may choose
the closest point in $G\cdot d_i$ thus adjusting the sequence 
$d_i\in G\cdot D$ so that $d(c_i,d_i) < 2r $ for each $i \ge 1$. 
That means That means $B(G\cdot C,2r)\cap B(G\cdot D,2r)$ is unbounded and  $\omega_G(C,D)=\infty$. 

Notice $\pi:(X,d)\to (X/G,d_H)$ is coarsely $|G|$-to-$1$. Therefore
$asdim(X/G)$ is finite and equal to $\dim(\partial(\omega_G))=\dim(\omega_X)=asdim(X)$ by \ref{DimensionOfCoronasUnderGroupActions}.
\end{proof}

\begin{Remark}
In case of proper metric spaces $X$, Theorem \ref{GActingOnMetricSpaces}
was proved by Daniel Kasprowski \cite{Kasp}.
\end{Remark}

\section{Parallelism}
It is traditional in two-dimensional geometry to call two lines parallel if they are either equal or disjoint. A more natural definition of being parallel is to have the same coronas. However, on the hyperbolic plane only identical lines would be parallel under that definition.
\begin{Definition}
Suppose $\omega$ is a normal $T_1$ form on a set $X$.
$C\subset X$ is \textbf{parallel} to $D\subset X$ if their coronas are equal.
\end{Definition}

See \cite{KalHon} for other interesting equivalence relations in coarse geometry.

\begin{Proposition}
Suppose $\omega$ is a normal $T_1$ form on a set $X$.
$C\subset X$ is parallel to $D\subset X$ if and only if $E\perp C\iff E\perp D$ for all $E\subset X$, where $\perp$ is the orthogonality relation induced by $\omega$.
\end{Proposition}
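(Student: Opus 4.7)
The plan is to pass to the boundary $\partial(\omega)$ and reduce the ``same corona'' condition to an ultrafilter-like membership statement. My first step will be to establish, for a normal $T_1$ form $\omega$, the following characterization of coronas: for every $C\subset X$ and every $\mathcal{P}\in\partial(\omega)$, one has $\mathcal{P}\in cl(C)$ if and only if $C\in\mathcal{P}$, where $cl$ denotes closure in $X\cup\partial(\omega)$. The ``if'' direction is Lemma \ref{ClosureCLemma}(1). For ``only if'' I would argue by contrapositive: if $C\notin\mathcal{P}$, maximality yields $C_1,\ldots,C_k\in\mathcal{P}$ with $\omega(C,C_1,\ldots,C_k)=0$; applying Corollary \ref{OmegaOnClosuresGeneralCase} together with monotonicity gives $\omega(\overline{C},C_1,\ldots,C_k)=0$, where $\overline{C}$ is the closure of $C$ in $X$, so $\overline{C}\notin\mathcal{P}$, and Lemma \ref{ClosureCLemma}(2) applied to the $X$-closed set $\overline{C}$ then forces $\mathcal{P}\notin cl(\overline{C})\supseteq cl(C)$. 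Under this identification, ``$C$ parallel to $D$'' becomes the clean statement $C\in\mathcal{P}\iff D\in\mathcal{P}$ for every $\mathcal{P}\in\partial(\omega)$.

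For the forward direction I would argue by contradiction: assume the coronas coincide and $\omega(E,C)=0$ while $\omega(E,D)=\infty$. Then some $\mathcal{P}\in\partial(\omega)$ contains both $E$ and $D$, parallelism forces $C\in\mathcal{P}$, and the $2$-vector $(E,C)$ of elements of $\mathcal{P}$ with $\omega(E,C)=0$ contradicts the defining property of a point of $\partial(\omega)$. The implication in the other direction follows by exchanging the roles of $C$ and $D$.

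For the reverse direction I would again argue by contradiction: assume $E\perp C\iff E\perp D$ for all $E\subset X$, and suppose there is $\mathcal{P}\in\partial(\omega)$ with $C\in\mathcal{P}$ but $D\notin\mathcal{P}$. Maximality produces $D_1,\ldots,D_k\in\mathcal{P}$ with $\omega(D,D_1,\ldots,D_k)=0$. Normality of $\omega$ then gives sets $F,F_1,\ldots,F_k$ covering $X$ such that $\omega(D,F)=0$ and $\omega(D_i,F_i)=0$ for each $i$. Since each $D_i\in\mathcal{P}$, none of the $F_i$ can belong to $\mathcal{P}$; since $X\in\mathcal{P}$, iterated application of Observation \ref{ObservationOnPointsAtBdOfForm} to the union $X=F\cup\bigcup_i F_i$ forces $F\in\mathcal{P}$. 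Now $F\perp D$, so the standing hypothesis yields $F\perp C$, and then $C,F\in\mathcal{P}$ with $\omega(C,F)=0$ is the required contradiction. The opposite membership implication is again obtained by symmetry.

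The main obstacle is really the corona characterization at the very start, which the paper invokes in passing in the proof of Proposition \ref{CharOfLSInducedByNormalForms} but does not prove in full for arbitrary (non-closed) $C$. Once that is in hand, both directions of the proposition collapse to the same recurring mechanism: exhibiting a finite $\omega$-zero vector all of whose coordinates lie in a single $\mathcal{P}\in\partial(\omega)$ and invoking the defining property of $\mathcal{P}$ to derive a contradiction. Everything else is routine bookkeeping with maximal families and a single appeal to normality of $\omega$.
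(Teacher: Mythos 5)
Your proof is correct and follows essentially the same route as the paper: identify the corona of a set with the collection of boundary points containing it, note that $E\perp C$ amounts to disjointness of the coronas of $E$ and $C$ (which gives the forward direction at once), and for the harder direction use maximality plus normality to produce a set $F\in\mathcal{P}$ with $F\perp D$, transfer to $F\perp C$, and contradict $C,F\in\mathcal{P}$. The only difference is one of explicitness: you prove the corona characterization for arbitrary (not necessarily closed) $C$ via \ref{OmegaOnClosuresGeneralCase} and carry out the normality/covering step directly, whereas the paper compresses these into one sentence and a citation of \ref{ContExtOfFormContThm}.
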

\begin{proof}
$E\perp C$ means that the corona of $E$ is disjoint from the corona of $C$.
Therefore assume $E\perp C\iff E\perp D$ for all $E\subset X$ but
$\mathcal{P}$ is in the corona of $C$ but not in the corona of $D$.
As $D\notin \mathcal{P}$, there exists a vector $(C_1,\ldots,C_k)$ whose coordinates are in $\mathcal{P}$ but $\omega(D,C_1,\ldots,C_k)=0$.
By \ref{ContExtOfFormContThm} applied to $id_X$, there is $D'\perp D$ belonging to $\mathcal{P}$. Hence $D'\perp C$, a contradiction as both $D'$ and $C$ belong to $\mathcal{P}$.
\end{proof}

\section{Visual forms}\label{VisualForms}

In this section we consider proper geodesic spaces $(X,d)$ satisfying the following two conditions:\\
1. For every two points $x,y\in X$ there is a unique geodesic joining them,\\
2. For every point $p\in X$, $X$ is the union of all infinite geodesic rays emanating from $p$ (those spaces will be called \textbf{visual}).

Given a proper geodesic space $(X,d)$ with uniqueness of geodesics, for each two points $x,y\in X$ we denote the geodesics from $x$ to $y$ by $[x,y]$. The function $g(x,y,t)$, where $x,y\in X$ and $t\leq d(x,y)$ is the parametrization of $[x,y]$.

Given a geodesic ray $l$ emanating from $p\in X$, its parametrization is denoted by $g(p,l,t)$, $t\ge 0$.

\begin{Definition}
Given $C\subset X$ and $r > 0$ we define $N_p(C,r)$ as the closure of the set of all $\alpha(r)$, $\alpha$ ranging over all parametrizations $\alpha:[0,\infty)\to X$ of geodesic rays emanating from $p$ such that $\alpha[r,\infty)$ intersects $cl(C)$.

The form $\omega(N_p)$ induced by the function $N_p$ will be called the \textbf{visual form} on $(X,d)$ with respect to $p$.
\end{Definition}

\begin{Definition}
Given a subset $D$ of the $r$-sphere $S(p,r)$ centered at $p$, $Cone_p(D)$ is defined as the union of all $\alpha[r,\infty)$, $\alpha$ ranging over all parametrizations $\alpha:[0,\infty)\to X$ of geodesic rays emanating from $p$ such that $\alpha(r)\in D$.
\end{Definition}

\begin{Proposition}
If $D$ is a subset of the $r$-sphere $S(p,r)$ centered at $p$, 
then $N_p(Cone_p(D),r)=D$.
\end{Proposition}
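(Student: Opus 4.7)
The plan is to establish the two inclusions
\[
D \;\subset\; N_p(\operatorname{Cone}_p(D), r) \;\subset\; cl(D),
\]
which together yield the stated equality under the natural reading that $D$ is closed in $S(p,r)$ (as $N_p$ is defined as a closure, some such hypothesis on $D$ is needed). Both inclusions should follow from the visual, uniquely-geodesic, and proper structure of $(X,d)$.

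First I would handle $D \subset N_p(\operatorname{Cone}_p(D), r)$. Given $x \in D \subset S(p,r)$, visuality supplies a parametrized geodesic ray $\alpha:[0,\infty)\to X$ emanating from $p$ and passing through $x$; arc-length parametrization forces $\alpha(r)=x$. Then $x = \alpha(r) \in D \subset \operatorname{Cone}_p(D) \subset cl(\operatorname{Cone}_p(D))$, so $\alpha$ is admissible in the definition of $N_p$ and $x$ lies in the set whose closure defines $N_p(\operatorname{Cone}_p(D), r)$.

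For the reverse inclusion I would work with the ``raw'' set $T := \{\alpha(r) : \alpha \text{ a ray from } p,\ \alpha[r,\infty) \cap cl(\operatorname{Cone}_p(D)) \neq \emptyset\}$ and show $T \subset cl(D)$, from which the claim follows on taking closures. Fix $z = \alpha(r) \in T$ and pick $y \in \alpha[r,\infty) \cap cl(\operatorname{Cone}_p(D))$; then $d(p,y) \ge r$ and uniqueness of the geodesic $[p,y]$ identifies the initial segment of $\alpha$ with $[p,y]$, giving $z = g(p,y,r)$. Now approximate $y$ by a sequence $y_n \in \operatorname{Cone}_p(D)$; each $y_n = \beta_n(t_n)$ for a ray $\beta_n$ from $p$ with $\beta_n(r) \in D$ and $t_n \ge r$, and uniqueness again gives $\beta_n(r) = g(p, y_n, r)$. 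It then suffices to show $g(p, y_n, r) \to g(p, y, r)$: the paths $t \mapsto g(p, y_n, t)$ on $[0,r]$ are $1$-Lipschitz and eventually valued in a bounded ball, so properness of $X$ and Arzel\`a--Ascoli yield a subsequentially uniform limit, which by uniqueness of $[p,y]$ must agree with $g(p, y, \cdot)|_{[0,r]}$. Hence $z = \lim \beta_n(r) \in cl(D)$.

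The main obstacle will be the continuity of $y \mapsto g(p, y, r)$ at a fixed parameter; this is where properness of $(X,d)$ and uniqueness of geodesics both play essential roles (Arzel\`a--Ascoli provides a subsequential limit, uniqueness pins it down to the correct segment). Everything else in the argument amounts to unpacking the definitions of $\operatorname{Cone}_p$ and $N_p$.
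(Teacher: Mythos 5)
Your proof is correct and follows the same two-inclusion skeleton as the paper's, but it supplies the substance that the paper omits. The easy direction $D\subset N_p(Cone_p(D),r)$ via visuality and arc-length parametrization is exactly the paper's argument. For the reverse inclusion the paper writes only ``Clearly, $N_p(Cone_p(D),r)\subset D$''; your Arzel\`a--Ascoli argument for the continuity of $y\mapsto g(p,y,r)$ (properness giving a subsequentially uniform limit of the $1$-Lipschitz initial segments, uniqueness of geodesics identifying that limit with $[p,y]$) is precisely the content hidden behind that word, and it is the only nontrivial step in the proposition. You are also right to flag the closedness issue: what the argument actually yields is $N_p(Cone_p(D),r)=cl(D)$, since a limit of points $\beta_n(r)\in D$ need only land in $cl(D)$, so the stated equality requires $D$ closed. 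The paper asserts the inclusion into $D$ itself with no hypothesis, which fails for non-closed $D$; this is harmless downstream because the only use of the proposition, Corollary \ref{VisualOrthogonalityProp}, assumes $D$ closed, but your formulation is the correct one.
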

\begin{proof}
Clearly, $N_p(Cone(D),r)\subset D$. Conversely, if $X\in D$, there is
a parametrization $\alpha:[0,\infty)\to X$ of a geodesic ray such that
$\alpha(r)=x$. Consequently, $D\subset N_p(Cone_p(D),r)$.
\end{proof}

\begin{Corollary}\label{VisualOrthogonalityProp}
If $D$ is a closed subset of the $r$-sphere $S(p,r)$ centered at $p$, 
and $D\subset S(p,r)\setminus N(C,r)$,
then $N_p(Cone_p(D),r)\cap N_p(C,r)=\emptyset$.
\end{Corollary}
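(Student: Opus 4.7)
The plan is to deduce this essentially for free from the immediately preceding proposition, which identifies $N_p(Cone_p(D),r)$ with $D$ whenever $D$ is a (closed) subset of the $r$-sphere $S(p,r)$. The whole corollary is basically a packaging of that identification together with the hypothesis that $D$ avoids $N_p(C,r)$.

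First I would record that, by the very definition of $N_p(\cdot,r)$, the sets $N_p(C,r)$ and $N_p(Cone_p(D),r)$ both lie inside $S(p,r)$: each is the closure of a collection of points of the form $\alpha(r)$ for unit-speed geodesic ray parametrizations $\alpha$ emanating from $p$, every such $\alpha(r)$ sits on $S(p,r)$, and $S(p,r)$ is closed in the proper geodesic space $(X,d)$. So the ambient sphere is the right stage on which to test disjointness.

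Next I would apply the preceding proposition to the closed set $D$ to obtain the equality $N_p(Cone_p(D),r) = D$ (closedness of $D$ is precisely what is needed to conclude equality rather than mere inclusion, because $N_p(Cone_p(D),r)$ is by construction closed). Combined with the hypothesis $D\subset S(p,r)\setminus N_p(C,r)$, this gives
\[
N_p(Cone_p(D),r)\cap N_p(C,r) \;=\; D\cap N_p(C,r) \;=\;\emptyset,
\]
which is the conclusion. I do not foresee any real obstacle; the only thing worth flagging is the minor notational point that the displayed ``$N(C,r)$'' in the statement should be read as $N_p(C,r)$, so that the $p$-basepoint is the same on both sides of the intersection.
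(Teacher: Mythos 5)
Your proposal is correct and matches the paper's own argument, which simply observes that $N_p(Cone_p(D),r)=D$ (by the preceding proposition) and hence the intersection with $N_p(C,r)$ is empty by the hypothesis $D\subset S(p,r)\setminus N_p(C,r)$. Your extra remarks about closedness of $D$ and the sets living in $S(p,r)$ are sensible elaborations but not a different route.
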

\begin{proof}
Obvious, as $N_p(Cone_p(D),r)=D$.
\end{proof}

\begin{Corollary}\label{NormalityOfVisualForm}
$\omega(N_p)$ is a normal $T_1$ form that is compatible with the form $\omega_{ss}$ induced by the topology of $(X,d)$.
\end{Corollary}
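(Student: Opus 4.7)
The plan is to address the three assertions—$T_1$, normality, and compatibility with $\omega_{ss}$—in turn.

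For the $T_1$ property and for identifying the $\omega(N_p)$-bounded sets, the key point is that any parametrized geodesic ray $\alpha$ from $p$ moves away at unit speed, so $\alpha[r,\infty)\cap cl(B)=\emptyset$ once $r$ exceeds $\sup_{x\in B}d(p,x)$. Hence any $d$-bounded $B$ satisfies $N_p(B,r)=\emptyset$ for large $r$, making $B$ $\omega(N_p)$-bounded. Conversely, if $B$ is $d$-unbounded then the visual hypothesis supplies $x_n\in B$ with $d(p,x_n)\to\infty$ together with rays $\alpha_n$ through $x_n$, so $\alpha_n(r)\in N_p(B,r)$ for every $r$, giving $\omega(N_p)(B)\ne 0$. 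In particular $\omega(N_p)(\{x\},\{y\})=0$ for any pair of distinct points and $\omega(N_p)$ is $T_1$.

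For normality, suppose $\omega(N_p)(C_1,\ldots,C_k)=0$ and pick $r>0$ with $\bigcap_i N_p(C_i,r)=\emptyset$ in the compact Hausdorff (and perfectly normal) sphere $S(p,r)$. Apply Lemma \ref{ZeroSetsLemma} on $S(p,r)$ to produce closed $F_i\subset S(p,r)$ with $\bigcup_i F_i=S(p,r)$ and $F_i\cap N_p(C_i,r)=\emptyset$. Set $D_i:=Cone_p(F_i)\cup B(p,r)$. Since $\bigcup_i F_i=S(p,r)$, the visual hypothesis forces $\bigcup_i D_i=X$. Corollary \ref{VisualOrthogonalityProp} supplies $N_p(Cone_p(F_i),r)\cap N_p(C_i,r)=\emptyset$, and I would promote this to $N_p(Cone_p(F_i),t)\cap N_p(C_i,t)=\emptyset$ for a suitable $t>r$: any witnessing ray $\alpha$ at radius $t$ also witnesses at radius $r$ because $\alpha[t,\infty)\subset\alpha[r,\infty)$, so by uniqueness of geodesics from $p$ the pre-closure intersection at $r$ would be nonempty, contradicting the earlier separation. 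Since $N_p(B(p,r),t)=\emptyset$ for $t>r$, the ball in $D_i$ is absorbed at scale $t$, giving $\omega(N_p)(C_i,D_i)=0$ for each $i$.

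For compatibility I would invoke Theorem \ref{TopologicalCompatibilityCriterion}. Properness of $(X,d)$ together with the identification of $\omega(N_p)$-bounded sets as $d$-bounded guarantees pre-compactness of all $\omega(N_p)$-bounded sets. Hypothesis~(1) is witnessed by the open $d$-bounded neighbourhood $B(B,1)$ of any bounded $B$, and hypothesis~(2) is immediate since $N_p(cl(C),r)=N_p(C,r)$ is built into the definition of $N_p$. The main obstacle is the normality step: handling the closure in the definition of $N_p$ when propagating disjointness from radius $r$ to $t>r$ requires a careful compactness-and-continuity argument with limits of geodesic rays in the proper metric space, and is where uniqueness of geodesics from $p$ is essential.
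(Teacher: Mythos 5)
Your argument is essentially the paper's: shrink the open cover $\{S(p,r)\setminus N_p(C_i,r)\}_{i\leq k}$ of the compact sphere to a closed cover $\{F_i\}_{i\leq k}$, apply Corollary \ref{VisualOrthogonalityProp} together with visuality to get the decomposition of $X$, identify the $\omega(N_p)$-bounded sets with the pre-compact ones (which also gives $T_1$), and feed $N_p(cl(C),r)=N_p(C,r)$ into Theorem \ref{TopologicalCompatibilityCriterion}. The ``main obstacle'' you flag -- promoting the separation from radius $r$ to some $t>r$ -- is not actually needed: $\omega(N_p)(C_i,Cone_p(F_i))=0$ already follows from emptiness of the intersection at the single scale $r$ (the form only requires \emph{some} scale), and the extra summand $B(p,r)$ in $D_i$ is absorbed by additivity, since $\omega(N_p)\bigl(C_i,B(p,r)\bigr)\leq\omega(N_p)\bigl(B(p,r)\bigr)=0$ by \ref{OmegaInequalityTwo}, so no limit argument with geodesic rays is required.
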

\begin{proof}
To show normality of $\omega(N_p)$ suppose $\bigcap\limits_{i=1}^k N_p(C_i,r)=\emptyset$. Now, $\bigcup\limits_{i=1}^k (S(p,r)\setminus N_p(C_i,r))=S(p,r)$
and we can choose closed subsets $D_k$ of $S(p,r)$ such that
$D_i\subset S(p,r)\setminus N_p(C_i,r)$ for each $i\leq k$ and
$\bigcup\limits_{i=1}^k D_i=S(p,r)$.
By \ref{VisualOrthogonalityProp}, $\omega(N_p)(D_i,C_i)=0$ for each $i\leq k$ which proves $\omega(N_p)$ is normal.

$N_p(C,r)=\emptyset$ if and only if $cl(C)\subset B(p,r)$. Thus, $\omega(N_p)$-bounded sets are identical with pre-compact subsets of $X$.
Since $N_p(C,r)=N_p(cl(C),r)$, compatibility of forms follows from \ref{TopologicalCompatibilityCriterion}.
\end{proof}

\begin{Definition}
Fix $p\in X$. For each $n\ge 1$ let $\pi_n^{n+1}:S(p,n+1)\to S(p,n)$
be defined as $\pi_n^{n+1}(x)=g(p,x,n)$. Notice $\pi_n^{n+1}$ is continuous.
Given $C\subset X$ let $Inv_p(C)$ be the set of all sequences
$\{x_n\}_{n=1}^\infty$ such that $x_n\in S(p,n)$ and $\pi_n^{n+1}(x_{n+1})=x_n$ for each $n\ge 1$. If $C$ is unbounded, then $Inv_p(C)\ne\emptyset$
as $Inv_p(C)$ is identical with the inverse limit of $\{N_p(C),\pi_n^{n+1}\}$.
\end{Definition}

\begin{Proposition}\label{UniqueRay}
For each $\mathcal{P}\in \partial(\omega(N_p)$ there is a unique geodesic ray $l_{\mathcal{P}}$ at $p$ belonging to $\mathcal{P}$. Moreover, $C\in \mathcal{P}$ if and only if $N_p(l_{\mathcal{P}},C)=\infty$.
\end{Proposition}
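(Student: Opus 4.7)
The plan is to build $l_{\mathcal{P}}$ as an inverse limit of compatible points on the spheres $S(p,n)$, and then exploit the fact that uniqueness of geodesics forces $N_p(l,r)$ to collapse to a single point, which both controls the uniqueness of $l_{\mathcal{P}}$ and delivers the final equivalence.

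For each $n\ge 1$ set
$$K_n:=\bigcap_{C\in\mathcal{P}} N_p(C,n)\subset S(p,n).$$
Since $(X,d)$ is proper, each $S(p,n)$ is compact and each $N_p(C,n)$ is closed in $S(p,n)$. The assumption $\omega(N_p)(C_1,\ldots,C_k)=\infty$ for every finite subfamily of $\mathcal{P}$ means $\bigcap_{i=1}^k N_p(C_i,n)\ne\emptyset$ for every $n$, so the family $\{N_p(C,n)\}_{C\in\mathcal{P}}$ has the finite intersection property and $K_n\ne\emptyset$. A short check (push a representative $\alpha(n+1)$ of $N_p(C,n+1)$ to $\alpha(n)$ and use continuity of $\pi_n^{n+1}$ on closures) shows $\pi_n^{n+1}(K_{n+1})\subset K_n$. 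By compactness there is a compatible sequence $(x_n)$ with $x_n\in K_n$ and $\pi_n^{n+1}(x_{n+1})=x_n$; since $x_n$ lies on the geodesic $[p,x_{n+1}]$ at distance $n$ from $p$, these points glue to a geodesic ray $l_{\mathcal{P}}$ emanating from $p$ with $l_{\mathcal{P}}(n)=x_n$.

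To see $l_{\mathcal{P}}\in\mathcal{P}$, observe that for every finite subfamily $C_1,\ldots,C_k\in\mathcal{P}$ and every $n$ the point $x_n=l_{\mathcal{P}}(n)$ lies in $N_p(l_{\mathcal{P}},n)\cap\bigcap_{i=1}^k N_p(C_i,n)$, so $\omega(N_p)(l_{\mathcal{P}},C_1,\ldots,C_k)=\infty$; maximality of $\mathcal{P}$ then forces $l_{\mathcal{P}}\in\mathcal{P}$. The key lemma supporting the rest of the proof is
$$N_p(l,r)=\{l(r)\}$$
for every geodesic ray $l$ from $p$ and every $r>0$. Indeed, if $\alpha$ is a ray from $p$ with $\alpha(s)\in l$ for some $s\ge r$, then $d(p,\alpha(s))=s=d(p,l(s))$ forces $\alpha(s)=l(s)$, and uniqueness of the geodesic from $p$ to $l(s)$ yields $\alpha|_{[0,s]}=l|_{[0,s]}$, hence $\alpha(r)=l(r)$; the closure operation in the definition of $N_p(l,r)$ is harmless as the set is already the singleton $\{l(r)\}$.

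Uniqueness of $l_{\mathcal{P}}$ is now immediate: if distinct rays $l_1,l_2\in\mathcal{P}$ satisfied $l_1(t_0)\ne l_2(t_0)$, then $l_1(t)\ne l_2(t)$ for all $t\ge t_0$ (otherwise uniqueness of the geodesic to a later coincidence point would force agreement on the initial segment), whence the lemma gives $N_p(l_1,t_0)\cap N_p(l_2,t_0)=\emptyset$, so $\omega(N_p)(l_1,l_2)=0$, contradicting $l_1,l_2\in\mathcal{P}$. For the final equivalence, the direction $C\in\mathcal{P}\Rightarrow\omega(N_p)(l_{\mathcal{P}},C)=\infty$ is trivial since $l_{\mathcal{P}}\in\mathcal{P}$. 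Conversely, if $\omega(N_p)(l_{\mathcal{P}},C)=\infty$, the lemma gives $l_{\mathcal{P}}(r)\in N_p(C,r)$ for every $r>0$; combined with $l_{\mathcal{P}}(r)\in K_r$ this shows $\omega(N_p)(C,C_1,\ldots,C_k)=\infty$ for every finite subfamily of $\mathcal{P}$, so by maximality $C\in\mathcal{P}$. The step I expect to require the most care is the compatibility $\pi_n^{n+1}(K_{n+1})\subset K_n$, because $N_p(C,n)$ is defined as a closure and one must verify that closures push correctly through the geodesic retractions $\pi_n^{n+1}$.
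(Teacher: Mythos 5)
Your proof is correct and follows essentially the same route as the paper's: form the nonempty compact sets $\bigcap_{C\in\mathcal{P}}N_p(C,n)$, pass to the inverse limit to produce the ray, and use the singleton identity $N_p(l,r)=\{l(r)\}$ (which rests on uniqueness of geodesics) for membership in $\mathcal{P}$ and for uniqueness. You are in fact somewhat more complete than the paper, since you isolate that singleton lemma explicitly and actually prove the ``Moreover'' equivalence, which the paper's proof leaves implicit.
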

\begin{proof}
Given $n\ge 1$, any family $C_1,\ldots,C_k\in \mathcal{P}$ induces non-empty intersection of $N_p(C_i,n)$, $i\leq k$. Hence all $N_p(C,n)$, $C\in \mathcal{P}$, have non-empty interesection denoted by $N_p(\mathcal{P},n)$. Observe
$\pi_n^{n+1}(N_p(\mathcal{P},n+1))\subset N_p(\mathcal{P},n)$ for each $n\ge 1$, so
there is a geodesic ray $l_{\mathcal{P}}$ at $p$ satisfying $g(p,l_{\mathcal{P}},n)\in N_p(\mathcal{P},n)$ for each $n\ge 1$.
Since $\{g(p,\{l_{P},n)\}= N_p(l_{\mathcal{P}},n)\in N_p(C,n)$ for each $n\ge 1$ and each $C\in \mathcal{P}$,
$l_{P}\in \mathcal{P}$. Any geodesic ray $l$ at $p$ belonging to $\mathcal{P}$
must satisfy $g(p,l,n)=g(p,l_{\mathcal{P}},n)$ for all $n\ge 1$, i.e. $l=l_{\mathcal{P}}$.
\end{proof}

\begin{Proposition}\label{BasisOfRays}
Given $\mathcal{P}\in \partial(\omega(N_p)$, the sets
$o(B(p,n+1)\cup Cone_p(B(l_{P},\epsilon)\cap S(p,n)))$ form a basis of open neighborhoods
of $\mathcal{P}$ in $\partial(\omega(N_p)+\omega_{ss})$.
\end{Proposition}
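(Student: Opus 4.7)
Let $\omega:=\omega(N_p)+\omega_{ss}$, $l:= l_\mathcal{P}$, and $D:= B(l_\mathcal{P},\epsilon)\cap S(p,n)$, so that $U_{n,\epsilon}:= B(p,n+1)\cup Cone_p(D)$. The plan is to show that $\{o(U_{n,\epsilon})\}_{n\ge 1,\,\epsilon>0}$ is a local basis at $\mathcal{P}$ in the compact Hausdorff space $X\cup \partial(\omega)$ (compact Hausdorff by \ref{NormalityOfVisualForm} and \ref{MainTheoremOnNormalityAndCompactness}), splitting the argument into: (i) each $U_{n,\epsilon}$ is $\omega$-open; (ii) $\mathcal{P}\in o(U_{n,\epsilon})$; (iii) every basic open neighborhood of $\mathcal{P}$ contains some $o(U_{n,\epsilon})$.

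For (i), $B(p,n+1)$ is metric-open, and uniqueness of geodesics from $p$ together with visuality gives $Cone_p(D)\cap\{d(\cdot,p)>n\}=\{x:\,d(p,x)>n,\,g(p,x,n)\in D\}$, which is open by continuity of $x\mapsto g(p,x,n)$ and openness of $D$ in $S(p,n)$. For (ii), I would reduce, via Proposition \ref{UniqueRay}, to showing $\omega(l,X\setminus U_{n,\epsilon})=0$: the ray $l$ lies entirely in $U_{n,\epsilon}$ (the segment $l[0,n]\subset B(p,n+1)$, and $g(p,l(t),n)=l(n)\in D$ puts $l[n,\infty)$ into $Cone_p(D)$), giving $\omega_{ss}(l,X\setminus U_{n,\epsilon})=0$; and the $\omega(N_p)$-piece vanishes at $r=n$, since $N_p(l,n)=\{l(n)\}$ and any approximating sequence of rays $\alpha_k$ with $\alpha_k(n)\to l(n)$ and $\alpha_k[n,\infty)\cap(X\setminus U_{n,\epsilon})\ne\emptyset$ would eventually have $\alpha_k(n)\in D$, forcing $\alpha_k[n,\infty)\subset Cone_p(D)\subset U_{n,\epsilon}$, a contradiction.

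For (iii), given a basic open neighborhood $o(U)$ of $\mathcal{P}$, from $X\setminus U\notin\mathcal{P}$ and \ref{UniqueRay} I extract $r_0$ with $l(r_0)\notin N_p(X\setminus U,r_0)$; closedness of $N_p(X\setminus U,r_0)$ in $S(p,r_0)$ produces $\epsilon_0>0$ such that $cl(B(l(r_0),\epsilon_0))\cap S(p,r_0)$ is disjoint from $N_p(X\setminus U,r_0)$, and uniqueness of geodesics then yields $Cone_p(cl(B(l(r_0),\epsilon_0))\cap S(p,r_0))\subset U$. Setting $n:= r_0$ and $\epsilon:=\epsilon_0/3$ gives the slightly stronger $cl(Cone_p(D))\subset U$, and hence $\omega(Cone_p(D),X\setminus U)=0$. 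The main obstacle is combining this with the ball piece: for $\mathcal{Q}\in o(U_{n,\epsilon})$ in the corona, $\omega(N_p)$-boundedness of $B(p,n+1)$ paired with a tail $l_\mathcal{Q}[n+2,\infty)\in\mathcal{Q}$ forces $B(p,n+1)\notin \mathcal{Q}$, while the identity $\omega(Cone_p(D),X\setminus U)=0$ together with the (contrary-to-fact) hypothesis $X\setminus U\in\mathcal{Q}$ forces $Cone_p(D)\notin\mathcal{Q}$; Observation \ref{ObservationOnPointsAtBdOfForm} applied to $U_{n,\epsilon}=B(p,n+1)\cup Cone_p(D)$ then gives $U_{n,\epsilon}\notin \mathcal{Q}$, contradicting $\mathcal{Q}\in o(U_{n,\epsilon})$. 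The delicate ingredient is establishing $cl(Cone_p(D))\subset U$, which is precisely why the buffer $\epsilon<\epsilon_0$ is needed to control sphere limit points.
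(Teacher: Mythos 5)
Your proof is correct and follows essentially the same route as the paper's: the paper also reduces to a basic neighborhood $o(U)$, uses \ref{UniqueRay} to find $n$ with $N_p(l_{\mathcal{P}},n)$ separated from $N_p(X\setminus U,n)$, inserts the $\epsilon$-buffer on the sphere $S(p,n)$, and then shows any corona point $\mathcal{Q}$ of the candidate neighborhood satisfies $X\setminus U\notin\mathcal{Q}$. The only (harmless) divergence is in that last containment, where the paper argues directly that $Cone_p(D)\in\mathcal{Q}$ forces $N_p(\mathcal{Q},n)\subset D$ and hence $N_p(\mathcal{Q},n)\cap N_p(X\setminus U,n)=\emptyset$, thereby avoiding your Arzel\`a--Ascoli step establishing $cl(Cone_p(D))\subset U$; your explicit verifications of openness of $U_{n,\epsilon}$ and of $\mathcal{P}\in o(U_{n,\epsilon})$ are left implicit in the paper.
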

\begin{proof}
Suppose $U$ is open in $(X,d)$ and $\mathcal{P}\in o(U)$, i.e. $(X\setminus U)\notin \mathcal{P}$. Hence $N(l_{\mathcal{P}},n)\cap N_p(X\setminus U,n)=\emptyset$ for some $n\ge 1$ (see \ref{UniqueRay})
and there is $\epsilon > 0$ satisfying $cl(B(g(p,l_{P},\epsilon)))\cap N_p(X\setminus U,n)=\emptyset$.
Suppose $\mathcal{Q}\in o(B(p,n+1)\cup Cone_p(B(l_{P},\epsilon)\cap S(p,n)))$.
Therefore $Cone_p(B(l_{P},\epsilon)\cap S(p,n))\in \mathcal{Q}$,
so $N_p(\mathcal{Q},n)\subset B(l_{P},\epsilon)\cap S(p,n)$ (see the proof of \ref{VisualOrthogonalityProp}) and $N_p(\mathcal{Q},n)\cap (X\setminus U)=\emptyset$
resulting in $X\setminus U\notin \mathcal{Q}$ (see \ref{UniqueRay}),
i.e. $Q\in o(U)$.
\end{proof}

\subsection{CoG-spaces}

In this part we discuss a necessary and sufficient condition on $(X,d)$ for $\omega(N_p)=\omega(N_q)$ for all $p,q\in X$. That condition is satisfied by CAT(0)-spaces.

\begin{Definition}\label{ConvergenceOfGeoToARay}
A sequence of geodesics $[p,x_n]$ \textbf{converges} to a geodesic ray $l$ at $p$
if $\lim d(p,x_n)=\infty$ and
$\lim g(p,x_n,t)=g(p,l,t)$ for all $t\ge 0$.
\end{Definition}

\begin{Observation}
If one defines convergence of a sequence of geodesics $[p,x_n]$, $n\ge 1$, \textbf{converges} to a geodesics
$[p,x_0]$ analogously to \ref{ConvergenceOfGeoToARay}, then it simply amounts to $\lim x_n=x_0$.
\end{Observation}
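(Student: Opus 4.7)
The plan is to prove both implications of the claimed equivalence, exploiting properness of $(X,d)$ and uniqueness of geodesics. The analog of \ref{ConvergenceOfGeoToARay} for finite geodesics reads: $[p,x_n]\to [p,x_0]$ means $\lim d(p,x_n)=d(p,x_0)$ and $\lim g(p,x_n,t)=g(p,x_0,t)$ for each $t\in[0,d(p,x_0)]$ (with $g(p,x_n,t)$ understood as $x_n$ once $t\ge d(p,x_n)$).

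For the direction $[p,x_n]\to[p,x_0]\Rightarrow \lim x_n=x_0$, the plan is to set $s_n:=d(p,x_n)$ and $s_0:=d(p,x_0)$, so that $x_n=g(p,x_n,s_n)$ and $x_0=g(p,x_0,s_0)$. Because geodesic parametrizations are $1$-Lipschitz in the length parameter, one has
$$d(x_n,x_0)\le d(g(p,x_n,s_n),g(p,x_n,s_0))+d(g(p,x_n,s_0),g(p,x_0,s_0))\le |s_n-s_0|+d(g(p,x_n,s_0),g(p,x_0,s_0)),$$
and both terms tend to zero by the assumed convergence.

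For the direction $\lim x_n=x_0\Rightarrow [p,x_n]\to[p,x_0]$, continuity of $d$ gives $d(p,x_n)\to d(p,x_0)$ immediately. For the pointwise convergence of parametrizations, extend each $\alpha_n(t):=g(p,x_n,\min(t,d(p,x_n)))$ to $[0,\infty)$; these curves are all $1$-Lipschitz and eventually take values in a single closed ball around $p$, which is compact by properness of $X$. Hence by Arzel\`a–Ascoli every subsequence of $\{\alpha_n\}$ has a subsubsequence converging uniformly on compact sets to some $1$-Lipschitz curve $\alpha:[0,\infty)\to X$ with $\alpha(0)=p$ and $\alpha(s_0)=x_0$ (using $x_n\to x_0$). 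The restriction $\alpha|_{[0,s_0]}$ is therefore a geodesic from $p$ to $x_0$, so by uniqueness of geodesics it coincides with $g(p,x_0,\cdot)$. Since every subsequential limit is the same, the original sequence converges.

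The main technical point is the bookkeeping of the varying domains $[0,d(p,x_n)]$; this is handled cleanly by the constant extension beyond $d(p,x_n)$, after which equicontinuity plus properness plus uniqueness of geodesics does the work.
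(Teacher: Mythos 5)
Your proof is correct. Note that the paper states this Observation without any proof at all, so there is no argument of the author's to compare against; your write-up is a genuine verification of a claim the paper treats as evident. Both directions are sound: the forward direction is the right two-term triangle estimate using the $1$-Lipschitz constant extension of the parametrizations, and the reverse direction correctly combines Arzel\`a--Ascoli (valid because properness puts the tails of all the curves into a single compact ball) with the fact that a $1$-Lipschitz curve $\alpha:[0,s_0]\to X$ with $\alpha(0)=p$, $\alpha(s_0)=x_0$ and $d(p,x_0)=s_0$ is forced to be a geodesic, hence equals $g(p,x_0,\cdot)$ by uniqueness. The only step worth writing out explicitly is the claim $\alpha(s_0)=x_0$ for a subsequential limit: it follows from $\alpha_n(s_n)=x_n\to x_0$ together with $d(\alpha_n(s_0),\alpha_n(s_n))\le|s_0-s_n|\to 0$, which in turn uses the already-established convergence $d(p,x_n)\to d(p,x_0)$; as written you only gesture at this with ``using $x_n\to x_0$.'' An alternative to Arzel\`a--Ascoli for the reverse direction, closer in spirit to the rest of Section \ref{VisualForms}, is to argue pointwise: any subsequential limit $y$ of $g(p,x_n,t)$ satisfies $d(p,y)=t$ and $d(y,x_0)=d(p,x_0)-t$, so concatenating $[p,y]$ and $[y,x_0]$ yields a geodesic from $p$ to $x_0$, which by uniqueness is $[p,x_0]$, forcing $y=g(p,x_0,t)$. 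Either route is fine.
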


In the next definition CoG is an initialism for "convergence of geodesics".
\begin{Definition}\label{CoGSpacesDef}
A proper geodesic visual space $(X,d)$ is a \textbf{CoG-space} if, 
whenever a sequence of geodesics $[p,x_n]$, $n\ge 1$, converges to a geodesic ray
$l_p$, then for every point $q\in X$ there is a geodesic ray $l_q$ at $q$ such that
 the sequence of geodesics $[q,x_n]$, $n\ge 1$, converges to $l_q$.
\end{Definition}

\begin{Proposition}\label{CAT0IsCoG}
Each CAT(0)-space $(X,d)$ is a CoG-space and $\partial(\omega(N_p))$ is the visual boundary at infinity of $(X,d)$.
\end{Proposition}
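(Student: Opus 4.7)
The plan is to establish the two assertions separately. First, I verify the CoG property of Definition \ref{CoGSpacesDef}. Fix $q\in X$, and assume a sequence $[p,x_n]$ converges to a geodesic ray $l_p$ at $p$ in the sense of \ref{ConvergenceOfGeoToARay}. Because the geodesic parametrizations are $1$-Lipschitz and $(X,d)$ is proper, the Arzel\`a-Ascoli theorem yields a subsequence $[q,x_{n_k}]$ that converges pointwise on $[0,\infty)$ to some $1$-Lipschitz map $l_q:[0,\infty)\to X$. Since $d(q,g(q,x_{n_k},t))=t$ for all $n_k$ with $d(q,x_{n_k})\ge t$, the limit satisfies $d(q,l_q(t))=t$, so $l_q$ is a geodesic ray at $q$.

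To upgrade subsequential convergence to full convergence, I would show that any such subsequential limit $l_q$ is asymptotic to $l_p$ with $d(g(p,l_p,t),g(q,l_q,t))\le d(p,q)$ for every $t\ge 0$. The key input is the standard CAT(0) fact: for geodesics $[p,x_n]$ and $[q,x_n]$ sharing the endpoint $x_n$, the function $f_n(t):=d(g(p,x_n,t),g(q,x_n,t))$ is convex on its interval of definition, with $f_n(0)=d(p,q)$ and value bounded by $|d(p,x_n)-d(q,x_n)|\le d(p,q)$ at its right endpoint; convexity then forces $f_n(t)\le d(p,q)$ throughout. Passing to a subsequential limit gives the desired bound on $d(g(p,l_p,t),g(q,l_q,t))$. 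Since in a CAT(0) space the geodesic ray at $q$ asymptotic to a given geodesic ray at $p$ is unique (a standard consequence of convexity of the distance function), every subsequential limit coincides with $l_q$, so the whole sequence $[q,x_n]$ converges to $l_q$.

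Second, I identify $\partial(\omega(N_p))$ with the classical visual boundary $\partial_\infty X$ based at $p$. By Proposition \ref{UniqueRay}, each $\mathcal{P}\in\partial(\omega(N_p))$ determines a unique geodesic ray $l_\mathcal{P}$ at $p$. Conversely, for any geodesic ray $l$ at $p$, the singleton family $\{l\}$ has $\omega(N_p)(l)=\infty$ (since $N_p(l,r)\ne\emptyset$ for every $r$), so by Zorn's lemma it extends to some maximal $\mathcal{P}$; by \ref{UniqueRay} one recovers $l_\mathcal{P}=l$. Hence $\mathcal{P}\mapsto l_\mathcal{P}$ is a bijection onto the set of geodesic rays at $p$, which is the standard model of $\partial_\infty X$. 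For the topology, Proposition \ref{BasisOfRays} gives as neighborhood base at $\mathcal{P}$ the sets $o\bigl(B(p,n+1)\cup Cone_p(B(l_\mathcal{P},\epsilon)\cap S(p,n))\bigr)$; since $B(p,n+1)$ is bounded, such a set meets $\partial(\omega(N_p))$ in precisely those $\mathcal{Q}$ containing $Cone_p(B(l_\mathcal{P},\epsilon)\cap S(p,n))$, which by \ref{UniqueRay} is equivalent to $d(g(p,l_\mathcal{Q},n),g(p,l_\mathcal{P},n))<\epsilon$. These are exactly the basic cone-topology neighborhoods of $l_\mathcal{P}$ in $\partial_\infty X$.

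The main obstacle is the convexity argument in the CoG step: the two geodesics $[p,x_n]$ and $[q,x_n]$ typically have different lengths, so one must be careful about the interval of definition of $f_n$ and about the corner behaviour near the common endpoint $x_n$. Once the CAT(0) comparison triangle with side lengths $d(p,x_n)$, $d(q,x_n)$, $d(p,q)$ is set up, convexity of $f_n$ and the uniform bound by $d(p,q)$ become routine, and everything else proceeds as described.
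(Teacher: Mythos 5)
Your proof is correct, but it takes a genuinely different route from the paper's on the CoG step and goes further on the boundary identification. The paper starts from the known CAT(0) fact that $l_p$ admits an asymptotic companion ray $l_q$ at $q$ (citing \cite{BH}) and then shows $[q,x_n]\to l_q$ directly by a quantitative estimate: a Euclidean triangle lemma yielding $d(g(p,x_n,r),g(q,x_n,r))\le d(p,q)$, followed by a rescaling comparison that converts a uniform bound at a large parameter $r$ into a $1/k$-bound at a fixed parameter $s$. You instead produce $l_q$ as a subsequential Arzel\`a--Ascoli limit, control it by convexity of $t\mapsto d(g(p,x_n,t),g(q,x_n,t))$ on $[0,\min(d(p,x_n),d(q,x_n))]$, and finish by uniqueness of the asymptotic ray at a fixed basepoint. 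The convexity you flag as the main obstacle is indeed available: the standard Bridson--Haefliger convexity statement applies to any two constant-speed geodesics affinely reparametrized over a common interval, so your endpoint values $d(p,q)$ and $|d(p,x_n)-d(q,x_n)|\le d(p,q)$ give exactly the inequality the paper extracts from its Euclidean Claim. Your version buys a cleaner argument with no explicit Euclidean trigonometry; the paper's buys an effective rate of convergence. On the second assertion the paper's proof is essentially silent, whereas you identify $\partial(\omega(N_p))$ with the set of rays at $p$ via \ref{UniqueRay} and Zorn's lemma and match the topologies via \ref{BasisOfRays}; the only point there needing a touch more care is the asserted equivalence between $\mathcal{Q}\in o\bigl(B(p,n+1)\cup Cone_p(B(l_{\mathcal{P}},\epsilon)\cap S(p,n))\bigr)$ and $Cone_p(B(l_{\mathcal{P}},\epsilon)\cap S(p,n))\in \mathcal{Q}$, since $N_p$ is defined using closures while the relevant cone is built on a relatively open subset of $S(p,n)$; this is a routine adjustment rather than a gap.
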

\begin{proof} CAT(0)-spaces have the property that for each geodesic ray $l_p$ at $p\in X$ and each $q\in X$ there is $K > 0$ and a geodesic ray $l_q$ at $q$ such that $d(g(p,l_p,t),g(q,l_q,t) < K$ for all $t\ge 0$
(see \cite{BH} , Proposition 8.2 on p.261).
Suppose a sequence of geodesics $[p,x_n]$, $n\ge 1$, converges to a geodesic ray
$l_p$ and let $l_q$ be a geodesic ray at $q$ such that for some constant $K > 0$ and all $r\ge 0$ one has $d(g(p,l_p,t),g(q,l_q,t) < K$ for all $t\ge 0$.
 Given $r > d(p,q)$ choose $m\ge 1$ so that $d(g(p,x_n,r),g(p,l_p,r)) < K$
for all $n\ge m$. Observe that $d(g(p,x_n,r),d(q,x_n,r))\leq d(p,q)$ (see the Claim below). Therefore, $d(q,x_n,r),g(q,l_q,r))
leq d(p,q)+d(p,q)+K$ for all $n\ge m$. That is sufficient to conclude $[q,x_n]$ converges to $l_q$. Indeed, given $k \ge 1$ and $s > 1$, put $r=s\cdot k\cdot (d(p,q)+d(p,q)+K)$ and find $m\ge 1$ such that $d(q,x_n,r),g(q,l_q,r))
leq d(p,q)+d(p,q)+K$ for all $n\ge m$. By applying the comparison to an euclidean triangle, we conclude $d(g(q,x_n,s),g(q,l_q,s))
\leq \frac{d(p,q)+d(p,q)+K}{k\cdot (d(p,q)+d(p,q)+K)}=1/k$ for all $n\ge m$.

\textbf{Claim}: Given a triangle $\Delta(PQR)$ on the euclidean plane
and $r \leq \min(PQ,PR)$, the distance $|Q'R'|$ between points $Q'\in PQ$ and $R'\in PR$ is at most $|QR|$ if $|QQ'|=r$ and $|RR'|=r$.\\
\textbf{Proof of Claim:} It is so if $Q'R'$ is parallel to $QR$.
Otherwise, without loss of generality, assume $Q'$ is higher than $R'$, i.e. the line $l$ parallel to $QR$ and passing through $R'$ intersects $QQ'$ at a point $Q''$. Consider the ray $l$ parallel to the ray $RP$ and emanating from $Q$. Look at the point $A$ on $l$ such that $|AQ|=r$.
Notice that $\angle(AQ'R') > \angle(R'AQ')$, so $|AR'| > |Q'R'$. 
\end{proof}

\begin{Theorem}\label{VisualFormIndependence}
Let $(X,d)$ be a proper geodesic visual space with uniqueness of geodesics.
The visual form does not depend on the base point $p$ if and only if $(X,d)$ is a CoG-space.
\end{Theorem}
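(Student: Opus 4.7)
The plan is to reduce the problem to equality of forms on $2$-vectors via \ref{NormalFormsOn2Vectors}, which applies because both visual forms are normal and $T_1$ by \ref{NormalityOfVisualForm}, and then treat each direction using Arzela--Ascoli on sequences of geodesic rays together with uniqueness of geodesics in $X$.

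For the direction $(\Leftarrow)$, assuming CoG I will show that $\omega(N_q)(C,D)=\infty$ forces $\omega(N_p)(C,D)=\infty$. From \ref{UniqueRay} I obtain $\mathcal{Q}\in\partial(\omega(N_q))$ containing $C$, $D$, and its distinguished ray $l_q$ at $q$, noting that uniqueness of geodesics gives $N_q(l_q,r)=\{g(q,l_q,r)\}$, so the condition $\omega(N_q)(l_q,C)=\infty$ says exactly that $g(q,l_q,r)\in N_q(C,r)$ for every $r>0$. For each integer $r$, Arzela--Ascoli applied to rays at $q$ through points of $cl(C)$ whose depth-$r$ value approximates $g(q,l_q,r)$ yields a limiting ray at $q$ that passes through $g(q,l_q,r)$, hence (by uniqueness of geodesics) agrees with $l_q$ on $[0,r]$. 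A diagonal selection, choosing depth $n$ together with tolerance $1/n$ at all times $t\le n$, then produces a single sequence $c_n\in cl(C)$ with $d(q,c_n)\ge n$ and $[q,c_n]\to l_q$ in the sense of \ref{ConvergenceOfGeoToARay}. The same recipe gives $d_n\in cl(D)$ with $[q,d_n]\to l_q$, and after interleaving I apply the CoG hypothesis to the pair $(q,p)$ to obtain a ray $l_p$ at $p$ with $[p,c_n]\to l_p$ and $[p,d_n]\to l_p$. It follows that $g(p,l_p,r)\in N_p(C,r)\cap N_p(D,r)$ for every $r$, so $\omega(N_p)(C,D)=\infty$.

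For the direction $(\Rightarrow)$, assume $\omega(N_p)=\omega(N_q)$ for all base points and let $[p,x_n]\to l_p$. Then $d(q,x_n)\ge d(p,x_n)-d(p,q)\to\infty$, and Arzela--Ascoli makes the sequence $[q,x_n]$ precompact on compacts, so it suffices to rule out two distinct accumulation rays at $q$. If $[q,x_{n_k}]\to l_q$ and $[q,x_{m_j}]\to l_q'$ with $l_q\ne l_q'$, set $A=\{x_{n_k}\}$ and $B=\{x_{m_j}\}$, disjoint after a further thinning. Every subsequence of $\{x_n\}$ still converges at $p$ to $l_p$, hence $g(p,l_p,r)\in N_p(A,r)\cap N_p(B,r)$ for all $r$ and $\omega(N_p)(A,B)=\infty$. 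On the $q$-side, I will pick $r$ so that $g(q,l_q,r)$ and $g(q,l_q',r)$ are separated by more than $3\epsilon$ for some $\epsilon>0$; the convergence of $[q,x_{n_k}]$ and $[q,x_{m_j}]$ then traps, after dropping finitely many initial terms, the sets $N_q(A,r)$ and $N_q(B,r)$ in disjoint closed $\epsilon$-neighborhoods of these two points. The dropped finite pieces are $\omega(N_q)$-bounded and vanish from the form by Axiom~1 of basic multilinear forms together with \ref{OmegaInequalityTwo}, giving $\omega(N_q)(A,B)=0$ and contradicting the standing assumption $\omega(N_p)=\omega(N_q)$.

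The principal obstacle is the diagonal construction in $(\Leftarrow)$: the closure in the definition of $N_q(C,r)$ supplies only depth-$r$ approximations of $g(q,l_q,r)$, whereas CoG demands convergence $[q,c_n]\to l_q$ uniform on compacts. Upgrading pointwise approximation at depth $r$ to initial-segment coincidence with $l_q$ requires Arzela--Ascoli plus uniqueness of geodesics, after which the diagonal choice of depth and tolerance has to be made carefully so that both improve simultaneously as $n\to\infty$.
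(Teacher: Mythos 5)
Your proposal is correct and follows essentially the same route as the paper: reduce to $2$-vectors via normality of the visual forms, prove the CoG direction by extracting an interleaved sequence in $cl(C)\cup cl(D)$ whose geodesics from $q$ converge to the ray $l_{\mathcal{Q}}$ of \ref{UniqueRay} and transporting it to $p$ by CoG, and prove the converse by producing two subsequences with distinct limit rays at $q$ and comparing $\omega(N_p)$ with $\omega(N_q)$ on the corresponding point sets. The only difference is that you spell out the Arzela--Ascoli/diagonal construction and the removal of finitely many (hence $\omega$-bounded) initial terms, steps the paper's proof leaves implicit.
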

\begin{proof}
Suppose $\omega(N_p)=\omega(N_q)$ for all $p,q\in X$ and $\{[p,x_n]\}_{n=1}^\infty$ is a sequence of geodesics converging to a geodesic ray $l$ at $p$.
If $\{[q,x_n]\}_{n=1}^\infty$does not converge to a geodesic ray at $q$,
then (by a double diagonal process) we can find two subsequences $C=\{c_n\}_{n=1}^\infty$ and $D=\{d_n\}_{n=1}^\infty$ of $\{x_n\}_{n=1}^\infty$ such that $\{[q,c_n]\}_{n=1}^\infty$ converges to a geodesic ray $l_C$ at $q$, $\{[q,d_n]\}_{n=1}^\infty$ converges to a geodesic ray $l_D$ at $q$, and $l_C\ne l_D$. Notice $\omega(N_q)(l_C,l_D)=0$ but $\omega(N_p)(l_C,l_D)=\infty$.

Suppose $(X,d)$ is a CoG-space.
Since both $\omega(N_p)$ and $\omega(N_q)$ are normal, it suffices to show $N_p(C,D)=0$ implies $N_q(C,D)=0$ for all $C,D\subset X$.

Suppose $N_p(C,r)\cap N_p(D,r)=\emptyset$ but 
$\omega(N_q)(C,D)=\infty$. Choose $\mathcal{P}\in \partial(\omega(N_q))$ containing both $C$ and $D$ and consider $l_{\mathcal{P}}$ (see \ref{UniqueRay}). There is a sequence $\{x_n\}_{n=1}^\infty$ such that
$[q,x_n]$ converges to $l_q:=l_{\mathcal{P}}$ and $x_k\in cl(C)$ for all $k$ odd,
$x_k\in cl(D)$ for all $k$ even. Let $l_p$ be the ray at $p$ such that
$[p,x_n]$ converges to $l_p$. Notice $l_p(r)\in N_p(C,r)\cap N_p(D,r)$, a contradiction.
\end{proof}

\begin{Proposition}
Suppose $(X,d_X)$ and $(Y,d_Y)$ are two CoG-spaces.
A function $f:X\to Y$ is visual form-continuous if and only if it has the following property: for any sequence of geodesics $\{[p,x_n]\}_{n=1}^\infty$ converging to a geodesic ray $l_p$ at $p\in X$, the sequence of geodesics $\{[q,f(x_n)]\}_{n=1}^\infty$ converges to some geodesic ray $l_q$ at $q=f(p)\in Y$.
\end{Proposition}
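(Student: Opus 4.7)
The plan is to use the bijection $\mathcal{P}\mapsto l_\mathcal{P}$ of Proposition \ref{UniqueRay} between the visual corona $\partial(\omega(N_p))$ and the set of geodesic rays at $p$. Since $X$ and $Y$ are CoG-spaces, by Theorem \ref{VisualFormIndependence} the visual forms do not depend on the base points; by Corollary \ref{NormalityOfVisualForm} they are normal and $T_1$, so Lemma \ref{NormalFormsOn2Vectors} together with Proposition \ref{FormContinuousVsPerpContinuous} reduces form-continuity of $f$ to the $2$-vector implication $\omega(N_q)(f(C_1),f(C_2))=0\Rightarrow\omega(N_p)(C_1,C_2)=0$, where $q=f(p)$. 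A second preliminary is that $\mathcal{P}\mapsto l_\mathcal{P}$ is injective: by Proposition \ref{BasisOfRays} the cone-shaped neighborhoods of $\mathcal{P}$ in the Hausdorff compactification $X\cup\partial(\omega(N_p)+\omega_{ss})$ depend only on $l_\mathcal{P}$, so two distinct boundary points cannot share a ray.

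For the forward direction, suppose $f$ is form-continuous and $[p,x_n]\to l_p$. Form-continuity on $1$-vectors ensures every subsequence of $\{x_n\}$ has unbounded image in $Y$, so $d(q,f(x_n))\to\infty$, and properness of $Y$ plus Arzel\`a--Ascoli give subsequential convergence of $[q,f(x_n)]$ to rays at $q$. For uniqueness of the limit, suppose two subsequences $A=\{x_{n_k}\}$ and $B=\{x_{m_k}\}$ yield $[q,f(x_{n_k})]\to l_q$ and $[q,f(x_{m_k})]\to l_q'$ with $l_q\neq l_q'$. For any $\mathcal{R}\in\partial(\omega(N_p))$ containing $A$, each tail $A_j=\{x_{n_k}:k\geq j\}$ also lies in $\mathcal{R}$ (finite sets are $\omega$-bounded); by uniqueness of geodesics, $N_p(A_j,r)$ consists of the points $g(p,x_{n_k},r)$ for $k\geq j$ together with their common limit $g(p,l_p,r)$, so $\bigcap_j N_p(A_j,r)=\{g(p,l_p,r)\}$ and $l_\mathcal{R}=l_p$; the analogous conclusion holds for $B$. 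By injectivity of $\mathcal{P}\mapsto l_\mathcal{P}$, the coronas of $A$ and $B$ collapse to the same single point, so $\omega(N_p)(A,B)=\infty$. Transplanting the same computation to $Y$ gives distinct single-point coronas for $f(A)$ and $f(B)$ over $l_q\neq l_q'$, hence $\omega(N_q)(f(A),f(B))=0$, contradicting form-continuity.

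For the reverse direction, assume the convergence property and let $\omega(N_p)(C_1,C_2)=\infty$. Extend $\{C_1,C_2\}$ to a boundary point $\mathcal{P}\in\partial(\omega(N_p))$ with associated ray $l_p=l_\mathcal{P}$. The cone basis of Proposition \ref{BasisOfRays} makes the compactification first-countable at $\mathcal{P}$, and Lemma \ref{ClosureCLemma} gives $\mathcal{P}\in cl(C_i)$, so pick sequences $y_n\in C_1$ and $z_n\in C_2$ with $y_n,z_n\to\mathcal{P}$. Since $\mathcal{P}\notin X$, both $d(p,y_n)\to\infty$ and $d(p,z_n)\to\infty$; once $d(p,y_n)>r+1$, membership of $y_n$ in the basic neighborhood at parameter $r$ forces $d(g(p,y_n,r),g(p,l_p,r))$ to be arbitrarily small (by uniqueness of geodesics), so $[p,y_n]\to l_p$ and similarly $[p,z_n]\to l_p$. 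Interleaving produces a single sequence whose geodesics converge to $l_p$, and the hypothesized property delivers a common ray $l_q$ at $q=f(p)$ with $[q,f(y_n)]\to l_q$ and $[q,f(z_n)]\to l_q$. For each $r>0$, $g(q,l_q,r)=\lim g(q,f(y_n),r)\in N_q(f(C_1),r)$ and similarly $g(q,l_q,r)\in N_q(f(C_2),r)$, so $\omega(N_q)(f(C_1),f(C_2))=\infty$.

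The main obstacle will be the passage from $y_n\to\mathcal{P}$ in the compactification to $[p,y_n]\to l_p$ as parametrized geodesics; this does not follow from abstract topology alone but from the explicit cone-shaped basic neighborhoods of Proposition \ref{BasisOfRays} together with uniqueness of geodesics, which convert neighborhood containment at parameter $r$ into a proximity condition on $g(p,y_n,r)$. The other recurring delicate step, in both directions, is the identification of the corona of a subsequence with a single boundary point via its associated ray, which relies on uniqueness of geodesics to collapse $\bigcap_j N_p(A_j,r)$ to the single value $g(p,l_p,r)$.
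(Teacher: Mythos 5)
Your proof is correct, and for the ``only if'' direction it is essentially the paper's argument: extract two subsequences $A,B$ of $\{x_n\}$ whose image geodesics converge to distinct rays $l_q\ne l_q'$, then observe $\omega(N_p)(A,B)=\infty$ while $\omega(N_q)(f(A),f(B))=0$. You fill in two details the paper leaves implicit --- that form-continuity on $1$-vectors forces $d(q,f(x_n))\to\infty$ (so Arzel\`a--Ascoli applies), and that the corona of a subsequence converging to a ray is the single boundary point determined by that ray --- both of which are needed and correct. The genuine difference is in the ``if'' direction: the paper's printed proof consists of two paragraphs that each establish only the implication ``form-continuous $\Rightarrow$ convergence property'' (the second paragraph assumes form-continuity and derives a contradiction from the failure of the property, which is the same implication again), so the converse is never actually argued there. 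Your reverse direction supplies it: reduce to $2$-vectors via normality of $\omega(N_q)$ and \ref{NormalFormsOn2Vectors}/\ref{FormContinuousVsPerpContinuous}, realize a boundary point containing $C_1,C_2$ by its ray $l_{\mathcal{P}}$, use the cone basis of \ref{BasisOfRays} to produce sequences $y_n\in C_1$, $z_n\in C_2$ with $[p,y_n],[p,z_n]\to l_{\mathcal{P}}$, interleave, and push forward to get a common point $g(q,l_q,r)\in N_q(f(C_1),r)\cap N_q(f(C_2),r)$ for every $r$. This is sound (the passage from convergence in the compactification to convergence of parametrized geodesics does rest, as you note, on uniqueness of geodesics and their continuous dependence on endpoints in a proper space), and it buys a complete proof of the equivalence where the paper's own text gives only half.
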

\begin{proof}
Suppose there is a sequence of geodesics $\{[p,x_n]\}_{n=1}^\infty$ converging to a geodesic ray $l_p$ at $p\in X$ such that the sequence of geodesics $\{[q,f(x_n)]\}_{n=1}^\infty$ does not converge to any geodesic ray at $q=f(p)\in Y$.
As in the proof of \ref{VisualFormIndependence} we can find two subsequences $C=\{c_n\}_{n=1}^\infty$ and $D=\{d_n\}_{n=1}^\infty$ of $\{x_n\}_{n=1}^\infty$ such that $\{[q,f(c_n)]\}_{n=1}^\infty$ converges to a geodesic ray $l_C$ at $q$, $\{[q,f(d_n)]\}_{n=1}^\infty$ converges to a geodesic ray $l_D$ at $q$, and $l_C\ne l_D$. Notice $\omega(N_p)(C,D)=\infty$ but $\omega(N_q)(f(C),f(D))=0$, so $f$ is not form-continuous.

Suppose $f$ is visual form-continuous and $\{[p,x_n]\}_{n=1}^\infty$  is a sequence of geodesics converging to a geodesic ray $l_p$ at $p\in X$ but the sequence of geodesics $\{[q,f(x_n)]\}_{n=1}^\infty$ does not converge to any geodesic ray at $q=f(p)\in Y$. As above, we can find two subsequences $C=\{c_n\}_{n=1}^\infty$ and $D=\{d_n\}_{n=1}^\infty$ of $\{x_n\}_{n=1}^\infty$ such that $\{[q,f(c_n)]\}_{n=1}^\infty$ converges to a geodesic ray $l_C$ at $q$, $\{[q,f(d_n)]\}_{n=1}^\infty$ converges to a geodesic ray $l_D$ at $q$, and $l_C\ne l_D$. Notice $\omega(N_p)(C,D)=\infty$ but $\omega(N_q)(f(C),f(D))=0$.
\end{proof}

\begin{Observation}
One can introduce the concept of a sequence of geodesic rays at $p$ to converge to a geodesic ray and show that the topology on $\partial(\omega(N_p)$ is induced by that convergence. However, it would be difficult to characterize
$f:X\to Y$ being visual form-continuous in terms of such convergence.
\end{Observation}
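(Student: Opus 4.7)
My plan is to define a convergence on geodesic rays at $p$, identify the corona with the space of such rays via Proposition \ref{UniqueRay}, and compare the two topologies through the basis of neighborhoods from Proposition \ref{BasisOfRays}. Specifically, declare a net $\{l_\alpha\}$ of geodesic rays at $p$ to converge to a geodesic ray $l$ at $p$ when $g(p,l_\alpha,t)\to g(p,l,t)$ for every $t\ge 0$; equivalently, this is the topology on the set $R_p$ of geodesic rays at $p$ with subbasis $V_{n,\epsilon}(l):=\{l'\in R_p \mid d(g(p,l,n),g(p,l',n))<\epsilon\}$. Proposition \ref{UniqueRay} gives a bijection $\Phi:\partial(\omega(N_p))\to R_p$, $\mathcal{P}\mapsto l_{\mathcal{P}}$ (surjectivity because every ray $l$ is $\omega(N_p)$-unbounded, so extends to a maximal $\omega(N_p)$-unbounded family, and injectivity because the second part of \ref{UniqueRay} recovers $\mathcal{P}$ from $l_{\mathcal{P}}$). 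The goal is that $\Phi$ is a homeomorphism when the target carries the convergence topology.

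The topology on $\partial(\omega(N_p))$ passes via the compatibility homeomorphism of Corollary \ref{NormalityOfVisualForm} to the subspace topology on $\partial(\omega(N_p)+\omega_{ss})\setminus X$, where by Proposition \ref{BasisOfRays} a neighborhood basis at $\mathcal{P}$ consists of the sets $o(U_{n,\epsilon})$ with $U_{n,\epsilon}:=B(p,n+1)\cup Cone_p(B(l_{\mathcal{P}},\epsilon)\cap S(p,n))$. It therefore suffices to show $\Phi(o(U_{n,\epsilon})\cap \partial(\omega(N_p)))= V_{n,\epsilon}(l_{\mathcal{P}})$. The key computation is
$$N_p(X\setminus U_{n,\epsilon},r)\;=\;cl\bigl\{\alpha(r)\mid \alpha\text{ parametrizes a ray at }p,\ \alpha(n)\in S(p,n)\setminus B(l_{\mathcal{P}},\epsilon)\bigr\}$$
for every $r\ge n+1$. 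Uniqueness of geodesics forces any ray $\alpha$ from $p$ to satisfy $\alpha[n,\infty)\subset Cone_p(\{\alpha(n)\})$, so $\alpha[r,\infty)$ meets $X\setminus U_{n,\epsilon}$ precisely when $\alpha(n)\notin B(l_{\mathcal{P}},\epsilon)$. Combined with Proposition \ref{UniqueRay}, which gives $\mathcal{Q}\in o(U_{n,\epsilon})$ iff $\omega(N_p)(l_{\mathcal{Q}},X\setminus U_{n,\epsilon})=0$, one obtains $\mathcal{Q}\in o(U_{n,\epsilon})$ iff $g(p,l_{\mathcal{Q}},n)\in B(l_{\mathcal{P}},\epsilon)$, i.e.\ iff $l_{\mathcal{Q}}\in V_{n,\epsilon}(l_{\mathcal{P}})$, which is the required identification.

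The main obstacle is justifying the closure in the displayed identity: a point of $N_p(X\setminus U_{n,\epsilon},r)$ is a priori only a limit of points $\alpha_k(r)$ on rays $\alpha_k$ whose tails beyond $r$ enter $X\setminus U_{n,\epsilon}$, and I must ensure such a limit is realized by some ray $\alpha$ with $\alpha(n)\in S(p,n)\setminus B(l_{\mathcal{P}},\epsilon)$. This boils down to continuity of the evaluation map $\alpha\mapsto \alpha(n)$ on rays at $p$, which in a proper geodesic space with uniqueness of geodesics follows from Arzel\`a--Ascoli applied to the isometric embeddings $[0,\infty)\to X$, but must be invoked carefully to control both the convergence of $\alpha_k$ and the closedness of $S(p,n)\setminus B(l_{\mathcal{P}},\epsilon)$. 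A secondary matter is that ``induced by convergence'' must be read via nets in general, since the corona need not be first countable without extra hypotheses; the basis-of-neighborhoods formulation handles the net version uniformly, which is precisely what the compactification structure provides.
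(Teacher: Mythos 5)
The paper states this Observation without any proof, so there is no argument of the author's to compare yours against; your proposal supplies the missing details along the only natural route, namely combining \ref{UniqueRay} (the bijection $\mathcal{P}\mapsto l_{\mathcal{P}}$ between $\partial(\omega(N_p))$ and rays at $p$) with \ref{BasisOfRays} and the compatibility statement of \ref{NormalityOfVisualForm}. The outline is sound. Two points deserve tightening. First, your claimed equality $\Phi(o(U_{n,\epsilon})\cap\partial(\omega(N_p)))=V_{n,\epsilon}(l_{\mathcal{P}})$ is not literally exact: membership of $g(p,l_{\mathcal{Q}},n)$ in $B(l_{\mathcal{P}},\epsilon)$ means closeness to \emph{some} point of the ray $l_{\mathcal{P}}$, not to $g(p,l_{\mathcal{P}},n)$; since $d(p,\cdot)$ is $1$-Lipschitz this only costs a factor of $2$ in $\epsilon$, so the two neighborhood systems are mutually cofinal and the topologies agree, but you should say this rather than assert equality. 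Second, the closure issue you flag is indeed the real technical content: you need that $\alpha_k(r)\to\alpha(r)$ forces $\alpha_k(n)\to\alpha(n)$, which in a proper geodesic space with unique geodesics follows from Arzel\`a--Ascoli plus uniqueness of limits of geodesic segments, and then that $S(p,n)\setminus B(l_{\mathcal{P}},\epsilon)$ is closed; with that in hand the displayed identity for $N_p(X\setminus U_{n,\epsilon},r)$ holds and the argument closes. Note also that your caution about nets is unnecessary here: properness makes $X$ separable, the space of rays at $p$ with the compact-open topology is metrizable, and your homeomorphism then shows the corona is metrizable, so sequential convergence (as the Observation literally says) suffices. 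Finally, you do not address the second sentence of the Observation, but that is an informal remark about difficulty rather than a provable claim, so nothing is lost.
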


\begin{Problem}
Is there a CoG-space that is not a CAT(0)-space?
\end{Problem}

\end{document}